%% file: main.tex
\documentclass[11pt]{amsart}
\input{commands.tex}

\usepackage{dynkin-diagrams}

\usetikzlibrary{decorations.pathreplacing}

\usepackage{csvsimple}

\usepackage{dsfont}

\def\cE{\mathcal{E}}
\def\cX{\mathcal{X}}
\def\cM{\mathcal{M}}
\def\cS{\mathcal{S}}
\def\cL{\mathcal{L}}
\def\cSopen{\mathcal{S}^\circ}
\def\cB{\mathcal{B}}
\def\cO{\mathcal{O}}
\def\Aut{\mathrm{Aut}}
\def\Stab{\mathrm{Stab}}
\def\Mon{\mathrm{Mon}}

\def\C{\mathbb{C}}
\def\A{\mathbb{A}}
\def\P{\mathbb{P}}
\def\Spec{\mathrm{Spec}\ \!}
\def\GL{\mathrm{GL}}
\def\PGL{\mathrm{PGL}}
\def\cY{\mathcal{Y}}
\def\cZ{\mathcal{Z}}
\def\cA{\mathcal{A}}
\def\cW{\mathcal{W}}
\def\cD{\mathcal{D}}
\def\cG{\mathcal{G}}
\def\cI{\mathcal{I}}
\def\cF{\mathcal{F}}
\def\cU{\mathcal{U}}

\def\BPQ{\mathrm{Bitan}_{2,4}}
\def\Lines{\mathrm{Lines}_{3,3}}

\providecommand{\ASL}{\mathrm{ASL}}

\providecommand{\GO}{\mathrm{GO}}
\providecommand{\PGO}{\mathrm{PGO}}

\providecommand{\Mongeom}{\text{Mon}^{\text{geom}}}
\providecommand{\Monmarked}{\text{Mon}^{\text{mark}}}
\providecommand{\Monpar}{\text{Mon}^{\text{par}}}
\providecommand{\Monstack}{\text{Mon}^{\text{stack}}}

\newcommand{\gitquot}{/\!\!/}

\newcommand{\LinesProblem}{\mathfrak{L}}
\newcommand{\BitangentsProblem}{\mathfrak{B}}

\title{Symmetry and solvability: Galois groups of equivariant enumerative problems}
\author{Thomas Brazelton}
\author{Alberto Landi}
\author{Sidhanth Raman}
\date{\today}
\begin{document}

\begin{abstract}
At the genesis of Galois theory at the end of the 19th century, enumerative geometers were concerned with the \emph{solvability} of various problems: for instance whether one can express lines on a cubic surface in radicals in terms of the coefficients that define it. By work of Hermite and a celebrated theorem of Harris, this is equivalent to determining whether the monodromy group of a certain finite \'etale cover is solvable. This problem, as well as the related problem of computing bitangents to plane quartics, are both unsolvable. When one restricts to the locus of $G$-symmetric cubic surfaces, monodromy will drop and can become solvable. In this setting there are four flavors of monodromy one can consider: the one from restricting the classical cover, the one over the GIT quotient after modding out by projective transformations, and two related notions of ``monodromy'' coming from the stacky cover of $G$-symmetric lines on $G$-symmetric cubic surfaces --- unlike in the non-symmetric setting these are all different. In this paper we characterize the relationships between all four notions of monodromy in the general setting of quotient stacks, and use these connections to compute all these monodromy groups for all 11 possible automorphism groups of smooth cubic surfaces, as well as for all 12 possible automorphism groups for smooth planar quartics. We demonstrate that, in the presence of \emph{any symmetry} whatsoever, the problem of solving for lines on cubic surfaces or bitangents to plane quartics is solvable.
\end{abstract}

\maketitle

\section{Introduction}

In Chapter III of Jordan's \emph{Trait\'e des substitutions}, the first textbook account of Galois theory, he investigates the ``geometric applications'' of Galois theory \cite{jordanTraiteSubstitutions1870}. The leap to this idea is not so conceptually difficult---rather than considering the Galois group of one polynomial in one variable (as in field theory), we investigate the Galois group of a system of polynomials in many variables. When the system of equations arises in nature from geometry, the resulting Galois group is very rich and can encode beautiful structure about the underlying geometry.

As early as the 1850's, it was known that the Galois group of a geometric problem should be related to the monodromy group of the associated covering space obtained by taking complex points \cite{hermiteFonctionsAlgebriques1851}. This relationship, that ``Galois equals monodromy", is now a celebrated theorem of Harris \cite{Harris-Galois} and has seen many applications in enumerative geometry (see \cite{sottileGaloisGroupsEnumerative2021} for a lovely introduction to these ideas). In this paper we explore two related problems: solving for the 27 lines on smooth cubic surfaces (which has Galois group $W(E_6)$), and solving for the 28 bitangents to smooth planar quartic curves (which has Galois group $W^+(E_7)$). Both of these groups are unsolvable, which implies there exists no formula in radicals for the general lines to a smooth cubic surface in terms of the coefficients of its defining equation, and similarly for bitangents.

It is a theorem of the first named author that problems such as these admit an equivariant analogue of Schubert's conservation of number \cite{brazeltonEquivariantEnumerativeGeometry2025} --- for instance if $G$ is the automorphism group of a smooth cubic surface over the complex numbers, then any two $G$-symmetric cubic surfaces admit the same $G$-symmetry on their 27 lines. It is natural then to ask: can we solve for the lines on $G$-symmetric cubic surfaces in radicals? The first and third named author answered this question in the affirmative in the case of $G=S_4$, using techniques from Hodge theory and hyperbolic geometry \cite{brazeltonMonodromySpaceSymmetric2025}. This computation was numerically certified by Duff and Lee in \cite{duff2026certifyinggaloismonodromyactionshomotopy}. Using entirely different techniques coming from the theory of fundamental groups of stacks \cite{noohiFundamentalGroupsAlgebraic2004}, the second-named author computed the Galois groups of lines on cubic surfaces for the groups $G = C_2$, $S_3$, $S_3\times C_2$, and $S_4$ \cite{landiStacksMonodromySymmetric2025}. For $G$ a subgroup of $S_5$, Pichon-Pharabod and Telen numerically certified the monodromy groups of lines on $G$-symmetric cubic surfaces by certifying the monodromy action on homology in \cite{pichon-pharabodGaloisGroupsSymmetric2025}.

In this paper we introduce four related notions of monodromy that one could feasibly investigate for equivariant enumerative geometry problems: these are \emph{parameter-level monodromy, geometric monodromy, stacky monodromy}, and \emph{marked stacky monodromy}. The parameter-level monodromy is the classical one which would be recognizable to Hermite or Jordan, the geometric one takes place over the GIT quotient by projective symmetries, and the stacky ones are defined as in \cite{landiStacksMonodromySymmetric2025}. While these all agree in the absence of symmetry, they become quite different once we restrict our attention to $G$-symmetric objects --- in \Cref{sec:four-flavors}, we characterize the relationships between these different notions of monodromy. 
While the stacky monodromy group determines the others, in practice it is often necessary to compute the parameter-level or geometric monodromy and deduce the stacky monodromy from it.
We compute all of these monodromy groups for the 11 possible automorphism groups of smooth cubic surfaces and the 12 possible automorphism groups of smooth planar quartics.

\begin{introtheorem}[{As~\Cref{thm:main}}]
Let $G$ be an automorphism group of a smooth cubic surface over the complex numbers. Then the parameter-level, geometric, stacky, and marked stacky monodromy groups of the 27 lines on $G$-symmetric cubic surfaces are as follows:
\input{TABLE-cubics}
\end{introtheorem}

\begin{introtheorem}[{As~\Cref{thm:main2}}] Let $G$ be an automorphism group of a smooth planar quartic curve over the complex numbers. Then the parameter-level, geometric, stacky, and marked stacky monodromy groups of the 28 bitangents to $G$-symmetric planar quartics are as follows:\footnote{See \Cref{rmk:table-notation} for an explanation of the notation used here.}
\input{TABLE-quartics}
\end{introtheorem}

In problems such as these there is an expected answer that the parameter-level monodromy is the centralizer of the group $G$ in the ambient Galois group of the problem, and the stacky monodromy is its normalizer. Interestingly this fails in a handful of cases in the above theorems.

With these computations and a bit more work we can conclude that:
\begin{corollary}[{See~\Cref{sec:radicals}}]
Lines on symmetric cubic surfaces (with any symmetry) admit formulas in radicals in terms of the coefficients defining the cubic surface. Similarly bitangents to symmetric planar quartics (with any symmetry) admit formulas in radicals in terms of the coefficients defining the quartic.
\end{corollary}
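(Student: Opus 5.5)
The plan is to reduce solvability in radicals to solvability of the parameter-level monodromy group, via Hermite's principle and Harris's theorem ``Galois equals monodromy''. Fix a nontrivial $G$ arising as the automorphism group of a smooth cubic surface (respectively a smooth plane quartic). Let $\cS_G$ be the parameter space of $G$-symmetric smooth cubic forms (respectively quartic forms). This is an open subset of a linear subspace (or a finite union of such, one for each choice of representation of $G$ on the ambient space), and it is irreducible on each component. Restricting the incidence correspondence of lines (respectively bitangents) to $\cS_G$ gives a finite étale cover of degree $27$ (respectively $28$). The lines on the generic $G$-symmetric cubic are defined over a finite extension of the function field $\C(\cS_G)$. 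Harris's theorem identifies the Galois group of the Galois closure of this extension with $\Monpar$, the parameter-level monodromy group. Hence the lines admit formulas in radicals in the coefficients if and only if $\Monpar$ is solvable. The small point to check is that one may work over $\C(\cS_G)$ rather than over the field generated by the free coefficients of the symmetric form. These fields coincide, because $\cS_G$ is an open subset of an affine space whose coordinates are exactly the free coefficients.

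Given the main theorems (\Cref{thm:main} and \Cref{thm:main2}), the proof is then a case check. For each of the $11$ groups of cubic automorphisms and the $12$ groups of quartic automorphisms, excluding the trivial group, I read off $\Monpar$ from the tables and verify that it is solvable. Solvability of $\Monpar$ can be deduced from either of two facts, whichever is convenient in a given case.

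\begin{itemize}
\item $\Monpar$ injects into the centralizer of the image of $G$ in $W(E_6)$ (respectively $W^+(E_7)$), since monodromy commutes with the $G$-action on lines.
\item $\Monpar$ is a subgroup of the stacky monodromy group, by the relations established in \Cref{sec:four-flavors}.
\end{itemize}

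For most groups the table entry is visibly a small group, for example a product of cyclic, dihedral and symmetric groups on at most $4$ letters, so solvability is immediate. The cases needing care are those with $|G|$ smallest, namely $G = C_2$ for cubics and $C_2$ for quartics. There the centralizer is largest: for a reflection-type involution in $W(E_6)$ it is $C_2\times W(D_4)$-like or $C_2\times W(A_5)$-like, and the latter contains $S_6$, which is not solvable. So I cannot rely on the centralizer bound. I must instead use the actual computed group, and verify solvability from its order or a composition series, for instance by checking that it is contained in a $W(D_4)$- or $W(F_4)$-type subgroup, which is solvable.

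The main obstacle is exactly this small-symmetry regime. The corollary requires that even a single involution forces solvability, and this fails at the level of the naive centralizer bound, so the argument leans entirely on the precise monodromy computed in \Cref{thm:main}. If the tabulated group for $C_2$ were phrased only as a stacky group, I would pass back to $\Monpar$ as a subgroup, using the exact-sequence relationship from \Cref{sec:four-flavors}, and use the fact that subgroups of solvable groups are solvable. A final remark handles disconnected parameter loci, where different linear actions of $G$ give non-conjugate families. Solvability is checked on each component separately, and the formulas in radicals are given componentwise.
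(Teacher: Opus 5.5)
Your conclusion is right, and the route you actually rely on is valid for formulas in radicals over $\C$. That route is to read $\Monpar_G$ off the tables in \Cref{thm:main} and \Cref{thm:main2} and check that every entry is solvable, which is exactly the preliminary observation the paper makes at the start of \Cref{sec:radicals}. However, the ``main obstacle'' you identify does not exist, and it is precisely where the paper's proof of \Cref{cor:solvability} goes the other way. The $C_2$ of Type XI is generated by an Eckardt involution. Its fixed locus is a plane cubic plus a point, so by Lefschetz it has trace $-1$ on $H^2$. It therefore acts on $K_S^\perp\cong E_6$ with a $4$-dimensional $(-1)$-eigenspace: it lies in the class $4A_1$ of $45$ elements, not in the reflection class. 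Its centralizer in $W(E_6)$ has order $51840/45=1152$ and is $W(F_4)\cong\GO_4^+(3)$, of order $2^7\cdot 3^2$, hence solvable by Burnside; the table even shows this bound is attained. Since every nontrivial $G$ in the cubic list contains an Eckardt involution, every $C_{W(E_6)}(G)$ sits inside $W(F_4)$. On the quartic side, $C_{W^+(E_7)}(C_2)$ has order $4608=2^9\cdot 3^2$. So your first bullet point, combined with solvability of the centralizers, already proves the corollary, and that is the paper's argument.

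The two routes buy different things. Yours depends on the full monodromy computations of \Cref{sec:galois-groups-of-lines} and \Cref{sec:galois-groups-of-bitangents}, and it only controls the geometric Galois group over $\C$. The centralizer argument needs none of those computations, and it descends to the number field $K_G$ over which $G$ is defined. The arithmetic group $\Monpar_{G,K_G}(\LinesProblem)$ is an extension of the complex monodromy by the Galois group of a constant field extension, about which the tables say nothing. But each $g\in G$ acts on the lines by deck transformations defined over $K_G$. So the whole arithmetic Galois group still commutes with $G$ and lies in the solvable centralizer. This gives formulas in radicals with coefficients in a number field, which your approach cannot reach once you have discarded the centralizer bound.
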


\subsection*{Related work} We would like to thank Igor Dolgachev for pointing us to the work of Naruki \cite{Naruki}, from which one can solve in radicals for lines on $C_2$-symmetric cubic surfaces in terms of the coordinates of the marked moduli space. Also related in spirit is \cite{Minahan} where we can solve for all 27 lines given the input of three skew lines.

\subsection{Overview}
In \Cref{sec:preliminaries} we recap the definitions of monodromy and Galois groups of covers of varieties, before introducing stacky monodromy and some basic techniques used to compute it. In \Cref{sec:four-flavors} we define the four kinds of monodromy groups under consideration for a general ``equivariant enumerative problem'' and discuss how they relate to one another. In \Cref{sec:moduli-spaces} we define the moduli space and stack of smooth degree $d$ hypersurfaces in $\P^n$, we show that the moduli spaces of $G$-symmetric smooth cubic surfaces (resp. $G$-symmetric smooth planar quartics) are irreducible, and we compute the group of projective symmetries which centralize and normalize $G$ in each case. In \Cref{sec:galois-groups-of-lines} we compute the monodromy groups of lines on $G$-symmetric cubic surfaces. In \Cref{sec:correspondence-thm} we reframe the classical correspondence between the 27 lines on a cubic surface and the 28 bitangents to a plane quartic in the language of stacks, and investigate how symmetry interplays with this correspondence. This is used in \Cref{sec:galois-groups-of-bitangents} to compute the monodromy groups of bitangents to $G$-symmetric plane quartics. Finally we discuss solvability in radicals of the rational Galois groups in \Cref{sec:radicals}.

\subsection{Notation and conventions} We are always working over a field of characteristic zero, and when we need it to be algebraically closed we will state this explicitly. All of our stacks will be assumed to be algebraic.

\textbf{Covers:} A ``covering space'' or ``cover'' of algebraic stacks is in the sense of \cite{noohiFundamentalGroupsAlgebraic2004}, meaning a representable finite \'etale morphism. A cover of schemes means a finite \'etale morphism of schemes.

\textbf{Algebraic groups:} Let $P$ be an algebraic group over our base field $k$. We denote by $P^\circ$ the schematic connected component of the identity (\cite[Tag 0B7R]{Stacks}), and by $\pi_0(P)=P/P^\circ$ the quotient.

\textbf{Notation:} In this paper, $f\colon V \to U$ will generally denote a covering space of schemes, $\mathcal{F}\colon \mathcal{Y} \to \mathcal{X}$ will be a covering space of algebraic stacks, and $F \colon Y \to X$ the associated covering space of the coarse moduli. We denote by $\pi_{\mathcal{X}} \colon \mathcal{X} \to X$ the structure map to the coarse moduli space.

\subsection{Acknowledgements} The first-named author would like to thank Kisun Lee for help with certified tracking software in numerical algebraic geometry which was tremendously helpful in catching errors in parameter-level monodromy groups in an early draft. TB was partially supported by NSF Grant No. DMS-2303242. AL was partially supported by NSF Grant No. DMS-2401358. SR was partially supported by NSF Grant No. DMS-2503485.

\subsection{Code availability} The groups $\Monpar$, $\Monstack$, and $\Monmarked$ are all subgroups of the ambient Galois group $W(E_6)$ or $W^+(E_7)$ and are therefore accessible as finite groups, which have been implemented in GAP \cite{GAP4}. These are publicly available at the following url, where various group theory assertions in the paper are verified:
\begin{center}
    \url{https://github.com/tbrazel/symmetry_solvability_code}
\end{center}

\subsection{AI disclosure} The first-named author used AI to design the architecture for the repository linked above, to sanity check computations done by hand in \Cref{thm:irred-UG} and \Cref{thm:irred-VG}, and to locate small errors throughout a final draft of the paper.

\setcounter{tocdepth}{1}
\tableofcontents{}

\section{Covers of stacks}\label{sec:preliminaries}

\subsection{Galois groups of schemes and monodromy}

Attached to a suitable ``cover'' of schemes, we can associate a Galois group. In this work we are always in characteristic zero, however we remark that the general theory of Galois categories \cite[V.4]{grothendieckRevetementsEtalesGroupe1971}, \cite[Tag 0BQ6]{Stacks}, holds in much more generality. We refer the reader to \cite{szamuelyGaloisGroupsFundamental2009} for a lovely overview of these ideas, which we assert mostly without proof.

\begin{theorem} Let $\pi \colon Y \to X$ be a finite \'etale morphism of integral complex varieties. Then the Galois group of the cover (in the sense of \cite{grothendieckRevetementsEtalesGroupe1971}) is isomorphic to the Galois group of the normal closure of $\C(Y)$ over $\C(X)$, which furthermore is isomorphic to the monodromy group of the covering space $Y(\C) \to X(\C)$.
\end{theorem}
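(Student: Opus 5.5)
The plan is to pass through the étale fundamental group and identify all three groups with the image of one and the same homomorphism. Since $X$ is integral, $\C(X)$ makes sense. Fix a geometric generic point $\bar\eta = \Spec \overline{\C(X)}$ of $X$, and a closed point $x \in X(\C)$ for the topological side. Grothendieck's theory \cite[V.4]{grothendieckRevetementsEtalesGroupe1971} makes the fiber functor an equivalence between finite étale covers of $X$ and finite continuous $\pi_1^{\text{ét}}(X,\bar\eta)$-sets. By definition, the Galois group of $\pi$ is the image of
\[ \rho\colon \pi_1^{\text{ét}}(X,\bar\eta) \to \mathrm{Sym}(\pi^{-1}(\bar\eta)). \]
Equivalently, it is the automorphism group of the Galois closure $\tilde Y \to X$, which is the connected component of the $n$-fold fiber product $Y\times_X\cdots\times_X Y$ minus diagonals through a chosen point of the fiber.

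\textbf{Field-theoretic identification.} First I use that $X$ is normal, or else pass to the normal locus. A finite étale cover of a normal integral scheme is again normal, and each component is the normalization of $X$ in a finite separable extension of $\C(X)$. Restricting to the generic point therefore gives a surjection
\[ \mathrm{Gal}(\overline{\C(X)}/\C(X)) \twoheadrightarrow \pi_1^{\text{ét}}(X,\bar\eta). \]
Its kernel consists of the automorphisms fixing the maximal extension unramified over $X$. The fiber $\pi^{-1}(\bar\eta)$ is the set of $\C(X)$-embeddings $\C(Y)\hookrightarrow\overline{\C(X)}$. So the image of $\rho$ equals the image of the absolute Galois group acting on these embeddings, which is $\mathrm{Gal}(L/\C(X))$ for $L$ the normal closure of $\C(Y)$. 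I need to check that $L$ is indeed the function field of $\tilde Y$; this holds because the Galois closure cover is the normalization of $X$ in $L$, and it is étale since the compositum of unramified extensions is unramified.

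\textbf{Comparison with topology.} Next I invoke the Riemann existence theorem (SGA1 XII) in its Grauert–Remmert form. Analytification gives an equivalence between finite étale covers of $X$ and finite topological covers of $X(\C)$. Consequently $\pi_1^{\text{ét}}(X)$ is the profinite completion of $\pi_1(X(\C),x)$, compatibly with fiber functors after a choice of path from $x$ to $\bar\eta$, or equivalently after comparing fiber functors. The monodromy group is the image of $\pi_1(X(\C),x)\to\mathrm{Sym}(\pi^{-1}(x))$. Its target is finite, so this map factors through the profinite completion and has the same image as $\rho$. Combining this with the field-theoretic identification gives the theorem.

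The main obstacle is the Riemann existence theorem, specifically the algebraization of finite topological covers of $X(\C)$. This is deep (GAGA plus the Grauert–Remmert extension across a boundary divisor), so I would cite it rather than reprove it. The remaining care goes into a few points: checking that fiber functors at different base points, generic versus closed, are isomorphic, so the images agree up to conjugation; and handling the normality hypothesis, since for non-normal $X$ one replaces $X$ by its normalization or uses that étale covers of integral $X$ remain generically determined.
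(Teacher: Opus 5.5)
The paper gives no proof of this statement; it is quoted as standard from SGA1 and Szamuely. For \emph{normal} $X$ your argument is exactly that standard proof, and it is correct: the surjection $\Gal(\overline{\C(X)}/\C(X))\twoheadrightarrow\pi_1^{\text{\'et}}(X,\bar\eta)$, the identification of the geometric generic fiber with the $\C(X)$-embeddings of $\C(Y)$, and Riemann existence to compare with $\pi_1(X(\C),x)$. The genuine gap is your treatment of non-normal $X$. Passing to the normal locus, replacing $X$ by its normalization, or asserting that \'etale covers of an integral $X$ are ``generically determined'' all change $\pi_1^{\text{\'et}}$, and hence the monodromy, while leaving $\C(Y)/\C(X)$ unchanged. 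No such reduction can work, because the statement itself is false for integral non-normal $X$, even with $Y$ integral.

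Here is a counterexample. Let $X$ be an elliptic curve $E$ with two points $p\neq q$ glued to an ordinary node. Then $X(\C)\simeq T^2\vee S^1$ and $\pi_1(X(\C))=\langle a,b,c\mid aba^{-1}b^{-1}=1\rangle$, with $a,b$ coming from $E$.
\begin{itemize}
\item Riemann existence, which holds for any finite type $\C$-scheme, gives a degree $3$ finite \'etale cover $Y\to X$ with monodromy $a\mapsto(1\,2\,3)$, $b\mapsto 1$, $c\mapsto(1\,2)$.
\item The pullback $Y\times_X E\to E$ corresponds to $\langle a,b\rangle$, which acts transitively. So it is the connected cyclic triple cover $E'\to E$.
\item Since $Y\times_X E\to Y$ is surjective and birational, $Y$ is integral with $\C(Y)=\C(E')$.
\end{itemize}
Thus $\C(Y)/\C(X)$ is already Galois with group $C_3$, while the monodromy group and the SGA1 Galois group are both $S_3$.

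This example defeats each of your reductions. Over the normal locus $E\setminus\{p,q\}$, the small loops around $p$ and $q$ become trivial in $\pi_1(X(\C))$, so the monodromy drops to $C_3$. The cover with $c\mapsto 1$ has the same generic restriction as $Y$ but a different group. The correct statement therefore needs $X$ normal (geometrically unibranch suffices). That hypothesis is exactly what makes connected finite \'etale covers irreducible and $\Gal(\overline{\C(X)}/\C(X))\to\pi_1^{\text{\'et}}(X)$ surjective. You should add it as a hypothesis rather than try to reduce to it.

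A minor aside: the topological comparison does not need the algebraization half of Riemann existence. It suffices that the connected components of $Y\times_X\cdots\times_X Y$ minus the diagonals agree with those of its analytification.
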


This can be extended to generically finite dominant morphisms by \cite{Harris-Galois}, however an extra assumption is needed. We can only define monodromy for such a cover by restricting to an affine open over which the cover is unbranched. In order to guarantee this is well-defined, we need to require $X$ to be normal, or to require both $X$ and $Y$ to be integral. We therefore have:

\begin{theorem}[{\cite{Harris-Galois}}] Let $\pi \colon Y \to X$ be a generically finite and dominant morphism between integral complex varieties. Then the Galois group and monodromy group are isomorphic as permutation groups of a generic fiber.
\end{theorem}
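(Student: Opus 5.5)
The plan is to reduce to the finite étale case, which is the preceding theorem, by shrinking $X$ to a dense open set over which $\pi$ is finite étale. Neither the Galois group nor the monodromy group should change under this shrinking, and checking that is the real content of the statement.

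\textbf{Step 1 (branch locus).} Since $\pi$ is dominant and generically finite between integral varieties, $\C(Y)/\C(X)$ is a finite extension. In characteristic zero this extension is separable, so generic smoothness applies. By generic flatness and generic finiteness there is a dense affine open $U \subset X$ such that $\pi^{-1}(U) \to U$ is finite and étale. We take $V = \pi^{-1}(U)$. Because $Y$ is integral and $\pi$ is dominant, $V$ is a dense open subset of $Y$, hence integral with $\C(V)=\C(Y)$; likewise $\C(U)=\C(X)$.

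\textbf{Step 2 (Galois side).} The Galois group of $\pi$ is defined as the Galois group of the normal closure of $\C(Y)/\C(X)$, viewed as a permutation group on the $\C(X)$-embeddings of $\C(Y)$. This depends only on function fields, so it equals the Galois group of $V \to U$. Applying the preceding theorem to $V\to U$ identifies it with the monodromy group of $V(\C)\to U(\C)$, acting on a fiber over a base point $u\in U(\C)$. The identification of embeddings with points of the fiber comes from evaluating at $u$ after passing to the normal closure.

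\textbf{Step 3 (independence of $U$).} This is the main obstacle: monodromy of $\pi$ must be well defined, so shrinking $U$ to a smaller dense open $U'\subset U$ must not change it. The point is that $U$ is a normal (indeed smooth, after further shrinking) irreducible complex variety and $U\setminus U'$ has real codimension at least $2$. Hence $\pi_1(U'(\C),u)\to\pi_1(U(\C),u)$ is surjective; this is the standard fact for complements of proper analytic subsets of normal irreducible spaces. The image of monodromy is therefore the same. Alternatively, and more cleanly, both monodromy groups equal the Galois group of the same function field extension by Step 2, so they agree.

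Putting these together, the Galois group and the monodromy group of the restriction to any such $U$ coincide as permutation groups of a generic fiber, which is the statement. This is exactly where the paper's hypothesis enters: integrality of $X$ and $Y$ (or normality of $X$) ensures that $V$ is irreducible with the correct function field and that the result is independent of the chosen open.
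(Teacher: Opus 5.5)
The overall route is the one the paper intends: it gives no argument beyond citing Harris and noting that one restricts to an open over which the cover is unbranched. Your $\pi_1$-surjectivity argument in Step~3 is sound once $U$ is smooth. The gap is that Steps~1--2 allow an arbitrary unbranched open $U$. Over such a $U$, identifying the monodromy of $V(\C)\to U(\C)$ with the Galois group of the normal closure of $\C(Y)/\C(X)$ can fail when $U$ is not normal, even if $X$ and $Y$ are both integral. As a result, your ``more clean'' alternative in Step~3 is circular, and your closing claim that integrality of $X$ and $Y$ makes the answer independent of the open is false.

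Here is a counterexample. Let $X$ be the affine nodal curve $y^2=x^2(x+1)$ with the point $(-1,0)$ removed. Its normalization is $\mathbb{G}_m$ (coordinate $t$) with $t=\pm1$ glued, so $\pi_1(X(\C))$ is free on a loop $a$ generating $\pi_1(\C^\times)$ and a loop $c$ through the node. Take the degree-$3$ \'etale cover $Y\to X$ with monodromy $a\mapsto(123)$, $c\mapsto(12)$. Its total space is integral, because its pullback to $\mathbb{G}_m$ is the connected cover $s\mapsto s^3$. Hence $\C(Y)/\C(X)=\C(s)/\C(s^3)$ has Galois group $C_3$. Yet the monodromy over the unbranched affine open $U=X$ is $S_3$, while over $X$ minus the node it is $C_3$.

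Two things go wrong over a non-normal base. A connected \'etale cover, here the Galois closure, can be reducible. And $\pi_1(U'(\C))\to\pi_1(U(\C))$ need not be surjective even though $U\setminus U'$ has real codimension $2$; what matters is local irreducibility, not codimension.

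The repair is easy:
\begin{enumerate}
\item In Step~1, intersect $U$ with the smooth locus of $X$, which is dense in characteristic zero, and define the monodromy group using only such opens.
\item Over a normal $U$, the \'etale Galois closure of $V\to U$ is normal, so its connected components are irreducible and it realizes the normal closure of $\C(Y)/\C(X)$. This justifies Step~2.
\item Your surjectivity argument then gives independence of the choice of smooth $U$.
\end{enumerate}
The paper's parenthetical ``or to require both $X$ and $Y$ to be integral'', and its preceding finite \'etale theorem stated for merely integral bases, have the same problem.
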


As a mild generalization, we do not need to require $Y$ to be connected.

\begin{definition} Let $Y \to X$ be a generically finite and dominant morphism, where $X$ is integral and normal. We define the \emph{normal core cover} $\widetilde{Y} \to X$ to be the smallest cover of $X$ which is Galois and refines each of the irreducible components of $Y$.
\end{definition}

We can see (either by the theory of Galois categories or by direct inspection), that even when $Y$ is disconnected, the Galois group of the normal core cover coincides with the monodromy group of $Y(\C) \to X(\C)$. This is an essential observation needed to work with equivariant enumerative geometry problems, since the covers under consideration are generally disconnected.

\subsection{Stacky monodromy groups}

Here we recap the key features of fundamental groups and monodromy groups of covers of pointed stacks, as defined in \cite{landiStacksMonodromySymmetric2025} following~\cite{noohiFundamentalGroupsAlgebraic2004}. Let us first say what we mean by a stack being pointed.

\begin{definition}[{\cite[Definition 3.3]{noohiFundamentalGroupsAlgebraic2004}}]\label{def:pointed-stack}
    A \emph{pointed algebraic stack} is a pair $(\cX,x)$, where $\cX$ is an algebraic stack and $x\colon\Spec K\to\cX$ is a \emph{geometric point}, i.e., $K$ is an algebraically closed field. We will often write simply $x\in\cX$, suppressing the corresponding morphism $\Spec K\to\cX$.

    A \emph{morphism of pointed stacks} (or \emph{pointed morphism}) is a pair $(f,\phi)\colon(\cY,y)\rightarrow(\cX,x)$ where $f\colon\cY\rightarrow\cX$ is a morphism and $\phi\colon x\rightarrow f(y)$ is a natural transformation of functors. We will always omit $\phi$ in the notation, as it will not play a direct role in the paper.
\end{definition}
The above definition is rather technical, but for practical purposes one can think of pointed morphisms simply as morphisms which preserve base points.

Associated to every pointed connected algebraic stack $(\mathcal{X},x)$ is a Galois category of finite \'etale covers $\FEt_{(\mathcal{X},x)}$. Mirroring the construction of the \'etale fundamental group, Noohi constructs the \emph{fundamental group} of the pointed stack $(\mathcal{X},x)$ \cite{noohiFundamentalGroupsAlgebraic2004}, which is profinite.

We will denote the fundamental group of a pointed stack $(\cX,x)$ by $\pi_1{(\cX,x)}$. Different choices of base points in the same connected component yield isomorphic fundamental groups, thus we will often suppress the notation of the base point, especially when $\cX$ is connected.

Since an extensive treatment of the theory of fundamental groups is beyond the scope of the paper, we limit ourselves to reminding the main feature of stacky fundamental groups compared to schemes, before moving on to monodromy groups. As usual, we refer to~\cite{noohiFundamentalGroupsAlgebraic2004,landiStacksMonodromySymmetric2025} for details.

\subsubsection{Stabilizers and hidden loops}

We start by recalling the definition of the stabilizer of a geometric point. Since we will need it in the future, we also introduce the inertia stack.

\begin{definition}\label{def:inertia-stack-geometric-stabilizer}
    Let $\cX$ be an algebraic stack. The \emph{inertia stack} $\cI_{\cX}$ of $\cX$ is the fiber product
    \[
    \begin{tikzcd}
        \cI_{\cX}\arrow[r]\arrow[d] & \cX\arrow[d,"\Delta"]\\
        \cX\arrow[r,"\Delta"] & \cX\times\cX
    \end{tikzcd}
    \]
    where $\Delta\colon\cX\rightarrow\cX\times\cX$ is the diagonal morphism.
    
    Given a scheme $S$ and a morphism $s\colon S\rightarrow\cX$, equivalently an object $s\in\cX(S)$, the \emph{stabilizer} or \emph{automorphism group space} of $\cX$ at $s$ is the fiber product
    \[
    \begin{tikzcd}
        \underline{\Aut}_{\cX}(s)\arrow[d]\arrow[r] & S\arrow[d,"{(s,s)}"]\\
        \cX\arrow[r,"\Delta"] & \cX\times\cX.
    \end{tikzcd}
    \]
    When the object is a geometric point $x\colon \Spec K\rightarrow\cX$, so with $K$ an algebraically closed field, we often drop the subscript $\cX$, and denote by $\Aut(x)$ the group of $K$-points of $\underline{\Aut}_{\cX}(x)$.
\end{definition}

\begin{remark}\label{rmk:inertia-stack}
    Note that $\underline{\Aut}_{\cX}(s)\cong\cI_{\cX}\times_{\cX,s}S$. In particular, $\cI_{\cX}\rightarrow\cX$ is the `universal' stabilizer group space of $\cX$, and the fiber over a point $x\in\cX$ is its stabilizer.
\end{remark}

Let $(\cX,x)$ be a pointed algebraic stack. The key feature of Noohi's theory of stacky fundamental groups is that every element $g\in\Aut(x)$ should be considered as an \emph{infinitesimal loop} centered at $x$, also called \emph{hidden loops} in the terminology of \cite{noohiFundamentalGroupsAlgebraic2004}. More precisely, Noohi constructs a functorial natural homomorphism
\[
    \omega_x\colon\Aut(x)\rightarrow\pi_1(\cX,x).
\]
Moreover, if $g$ lies in the connected component $\Aut(x)^{\circ}$ of $\Aut(x)$, by definition $g$ can be continuously deformed into the identity, thus the infinitesimal loop is homotopically trivial. This shows that $\omega_x$ further factors as a homomorphism
\[
\omega_x\colon\Aut(x)/\Aut(x)^{\circ}\rightarrow\pi_1(\cX,x);
\]
see~\cite[\S3]{noohiFundamentalGroupsAlgebraic2004} for the rigorous construction and \cite[Corollary 5.4]{noohiFundamentalGroupsAlgebraic2004} for the characterization of when $\omega_x$ is injective. For any other choice of geometric base point $y\in\cX$ in the same connected component as $x$, there is an isomorphism $\alpha_{x,y}:\pi_1(\cX,y)\xrightarrow{\cong}\pi_1(\cX,x)$ unique up to conjugation; in particular, we can compose $\omega_y$ with $\alpha_{x,y}$ to obtain maps $\Aut(y)/\Aut(y)^{\circ}\rightarrow\pi_1(\cX,x)$. This provides a huge source of interesting loops that would be otherwise hard to construct.

\begin{example}[{\cite[Example 4.2]{noohiFundamentalGroupsAlgebraic2004}}]\label{exm:classifying-stack}
    Let $G$ be a finite group, and let $BG=[\Spec\C/G]$ be the corresponding classifying stack, together with the natural geometric point $x\colon\Spec\C\rightarrow BG$. Then, $G\cong \Aut(x)\xrightarrow{\omega_x}\pi_1(BG,x)$ is an isomorphism, and the fundamental group is generated by infinitesimal loops only. Note that $x\colon\Spec\C\rightarrow BG$ also defines the universal cover of $BG$, as it is a $G$-torsor from a simply-connected scheme. This shows the necessity of including infinitesimal loops when working with stacks.
\end{example}

\subsubsection{Monodromy groups and Galois closures}

We are ready to introduce monodromy groups.

\begin{definition}[{\cite[p.~71]{noohiFundamentalGroupsAlgebraic2004}}]
\label{def:covering-map-of-stacks}
Let $f\colon \mathcal{Y} \to \mathcal{X}$ be a morphism of algebraic stacks. We say that $f$ is a \emph{covering map} if $f$ is representable, finite, and \'etale. It is a \emph{connected covering map} (resp. \emph{irreducible covering map}) if $\mathcal{Y}$ is further assumed to be connected (resp. irreducible).
\end{definition}

Attached to a cover of stacks, we can define a monodromy group, as in \cite[\S2]{landiStacksMonodromySymmetric2025}. Note that we do not require the covering map to be connected; this generality is very important for our scope, as the covers obtained by restricting to symmetric loci naturally split in several connected components.

\begin{definition}[{\cite[Definition 2.17]{landiStacksMonodromySymmetric2025}}]\label{def:stacky-monodromy}
Let $f\colon\cY\rightarrow\cX$ be a covering map of a connected stack $\cX$, with $\cY$ not necessarily connected. Let $\cY_1,\ldots,\cY_r$ be the connected components, with induced maps $f_i\colon\cY_i\rightarrow\cX$. Let $x$ be a geometric point of $\cX$, and for every $i$ fix a geometric point $y_i$ such that $f_i(y_i)=x_i$. We denote by $N_{f_i}$ the largest normal subgroup of $\pi_1(\cX,x)$ contained in the image of $f_{i*}\colon\pi_1(\cY_i,y_i)\rightarrow\pi_1(\cX,x)$. Let $N_f:=\cap_i N_{f_i}$. We define the \emph{stacky monodromy group} $\Mon(f)$ of $f$ to be
\[
\Mon(f)=\pi_1(\cX,x)/N_f.
\]
\end{definition}

\begin{remark}\label{rmk:map-on-pi1-injective}
    From general principles of Galois categories, for every morphism $f\colon(\cY,y)\rightarrow(\cX,x)$ pointed morphism of stacks, there is an induced pushforward $f_*\colon\pi_1(\cY,y)\rightarrow\pi_1(\cX,x)$ of fundamental groups \cite[Tag 0BN5, Tag 0BND]{Stacks}. When $f$ is a covering map, $f_*$ is injective \cite[Tag 0BN0]{Stacks}.
\end{remark}

Note that Definition~\ref{def:stacky-monodromy} is well-posed and $\Mon(f)$ is a finite group. Indeed, the image of $f_{i*}$ is of finite index in $\pi_1(\cX,x)$, hence so is $N_{f_i}$. Moreover, the normality of $N_{f_i}$ implies that of $N_f$, which is also of finite index as $\pi_1(\cX,x)$ is profinite.

As in Galois theory, it is technically easier to work with Galois covers, which we introduce now.

\begin{definition}
Let $f\colon (\mathcal{Y},y) \to (\mathcal{X},x)$ be a pointed connected covering map. We say that $f$ is \emph{Galois} if for every (equivalently, any) choice of geometric point $y\in \mathcal{Y}$, the image of the induced map $\pi_1(\mathcal{Y},y) \to \pi_1(\mathcal{X},f(y))$ is a normal subgroup of $\pi_1(\mathcal{X},f(y))$.

Let $f\colon\cY\rightarrow\cX$ be a covering map of a connected stack $\cX$, and $\cY=\cup\cY_i$ the decomposition in connected components, with induced connected covering maps $f_i\colon\cY_i\rightarrow\cX$. The \emph{Galois closure} of $f$ is the smallest Galois cover $\widetilde{f}\colon\widetilde{\cY}\rightarrow\cX$ that factors through $f_i$ for all $i$'s.
\end{definition}

\begin{remark}\label{rmk: Galois covers are torsors}
	A Galois cover $f$ with monodromy group $\Mon(f)$ is the same as a connected $\Mon(f)$-torsor, see~\cite[\href{https://stacks.math.columbia.edu/tag/03SF}{Tag 03SF}, \href{https://stacks.math.columbia.edu/tag/0BMQ}{Tag 0BMQ}]{Stacks}. In particular, the Galois closure always exists and is the cover corresponding to the open subgroup $N_f\leq\pi_1(\cX,x)$, with the notation of Definition~\ref{def:stacky-monodromy}. When $\cY$ is connected, the construction is rather explicit, see~\cite[Proposition 2.21]{landiStacksMonodromySymmetric2025}. When $\cY$ is disconnected, let $\widetilde{\cY}_i$ the Galois cover of each of the $\cY_i$; then $\widetilde{\cY}$ can be taken to be any connected component of $\widetilde{\cY}_1\times_{\cX}\ldots\times_{\cX}\widetilde{\cY}_r$. Note that all connected components are isomorphic to each other as they are permuted by the component-wise action of $\prod_i \Mon(f_i)$. Moreover, it is clear that any Galois cover of $\cX$ that factors through all $\cY_i$ factors through $\widetilde{\cY}$, and we conclude by noticing that $\widetilde{\cY}\rightarrow\cX$ is a torsor under the subgroup $H$ of $\prod_i \Mon(f_i)$ that fixes $\widetilde{\cY}$. Note that $H\cong\Mon(f)$.

    As for fundamental groups, we omit basepoints when the base $\cX$ is connected, since the monodromy group does not depend on it when viewed as an abstract finite group.
\end{remark}

\subsubsection{Basic properties of stacky monodromy}

We recall a few useful results about monodromy groups that we will use in the following. We start with a very well-known general fact of Galois theory, whose proof is therefore omitted.

\begin{lemma}\label{lem:classical-exact-sequence-Galois-covers}
    Let $f\colon\cX\rightarrow\cY$ and $h:\cY\rightarrow\cZ$ be Galois covers. Then, there exists a short exact sequence
    \[
        1\rightarrow\Mon(f)\rightarrow\Mon(h\circ f)\rightarrow\Mon(h)\rightarrow1.
    \]
\end{lemma}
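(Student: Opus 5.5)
We will prove the statement by translating it into group theory through the Galois correspondence for the pointed stack $\cZ$, using Remark~\ref{rmk: Galois covers are torsors} and the description of monodromy in Definition~\ref{def:stacky-monodromy}. Fix a geometric point $x\in\cX$ and set $y=f(x)$, $z=h(y)$. By Remark~\ref{rmk:map-on-pi1-injective}, the pushforwards $f_*$ and $h_*$ along covering maps are injective. Write
\[
A=(h\circ f)_*\pi_1(\cX,x)\subseteq B=h_*\pi_1(\cY,y)\subseteq \Pi=\pi_1(\cZ,z).
\]
The composite $h\circ f$ is again representable, finite and étale, so it is a covering map.

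First, since $\cX$ is connected, the largest normal subgroup of $\Pi$ contained in $A$ is the core of $A$. The cover $h\circ f$ is Galois exactly when $A$ is normal in $\Pi$. I expect this normality to be the main obstacle: it is not automatic from $f$ and $h$ each being Galois, since a composite of Galois covers need not be Galois. I would handle it in one of two ways. The first option is to read the lemma, as in classical Galois theory, with the standing assumption that the composite $h\circ f$ is Galois. The second is to replace $\Mon(h\circ f)$ by $\Pi/N$, where $N$ is the core of $A$ in $\Pi$, and prove the sequence in that form. Under the first reading, $A$ is normal in $\Pi$, and $\Mon(h\circ f)=\Pi/A$, $\Mon(h)=\Pi/B$, and $\Mon(f)=\pi_1(\cY,y)/f_*\pi_1(\cX,x)\cong B/A$, where the last isomorphism is induced by the injective map $h_*$.

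The exact sequence is then the third isomorphism theorem,
\[
1\to B/A\to \Pi/A\to \Pi/B\to 1,
\]
and it remains to check that these maps are the natural ones. The surjection $\Mon(h\circ f)\to\Mon(h)$ should come from the morphism of torsors: $\cX\to\cY$ is equivariant for the quotient map $\Pi/A\to\Pi/B$. The injection $\Mon(f)\to\Mon(h\circ f)$ should come from the deck transformations of $\cX$ over $\cY$, viewed as automorphisms over $\cZ$. Geometrically, the kernel of $\Aut_{\cZ}(\cX)\to\Aut_{\cZ}(\cY)$ is $\Aut_{\cY}(\cX)$, which gives the same sequence in torsor language.

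Finally, I would remark that base points may be suppressed, since any change of base point conjugates all three subgroups simultaneously. Hence the sequence is well defined up to isomorphism.
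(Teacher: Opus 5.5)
Under the reading you actually carry out, where $h\circ f$ is itself Galois (i.e.\ $A$ is normal in $\Pi$), your argument is correct. It is the standard one the paper has in mind; the paper omits the proof as well known. Concretely, $h_*$ identifies $\Mon(f)=\pi_1(\cY,y)/f_*\pi_1(\cX,x)$ with $B/A$, and the third isomorphism theorem gives $1\to B/A\to\Pi/A\to\Pi/B\to 1$.

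The problem is your ``second option'', which cannot be carried out. Taking $\Mon(h\circ f)=\Pi/N$ with $N=\mathrm{core}_\Pi(A)$ is not a replacement at all. It is exactly \Cref{def:stacky-monodromy}, i.e.\ the literal statement, and in that form the lemma is false.

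\textbf{Order count.} We have $|\Mon(f)|\cdot|\Mon(h)|=[B:A]\,[\Pi:B]=[\Pi:A]$, whereas $|\Mon(h\circ f)|=[\Pi:N]=[\Pi:A]\,[A:N]$. So any short exact sequence as in the statement forces $N=A$, i.e.\ forces $h\circ f$ to be Galois.

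\textbf{Counterexample in the paper's framework.} Let $\Pi=\langle r,s\mid r^4=s^2=(sr)^2=1\rangle$ be dihedral of order $8$, with $B=\{1,r^2,s,r^2s\}$ and $A=\{1,s\}$. Consider the tower $\mathrm{B}A\to\mathrm{B}B\to\mathrm{B}\Pi$ of classifying stacks (\Cref{exm:classifying-stack}).
\begin{itemize}
\item Both steps are Galois of degree $2$, since index-two subgroups are normal and $B$ is abelian.
\item However $rAr^{-1}=\{1,r^2s\}$, so $\mathrm{core}_\Pi(A)=1$.
\item Hence $\Mon(\mathrm{B}A\to\mathrm{B}\Pi)\cong\Pi$ has order $8\neq 2\cdot 2$.
\end{itemize}

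So the Galois hypothesis on the composite is necessary, not just a convenient reading. You should state it as an explicit hypothesis and discard the second option. The hypothesis holds wherever the paper invokes the lemma. For example, in the proof of \Cref{prop:invariance-properties-monodromy}\eqref{point:magic-square} the composite $\cD\to\cX$ corresponds to the normal subgroup $N_f\cap N_g$.
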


The following result summarizes the main invariance properties of monodromy groups under pullback.

\begin{proposition}\label{prop:invariance-properties-monodromy}
    Let $f\colon\cY\rightarrow\cX$ be a covering map, with $\cX$ irreducible, and $g\colon\cW\rightarrow\cX$ a morphism from a connected stack. Form the pullback diagram
    \[
    \begin{tikzcd}
        g^*\cY\arrow[r]\arrow[d,"g^*f"'] & \cY\arrow[d,"f"]\\
        \cW\arrow[r,"g"] & \cX
    \end{tikzcd}
    \]
    Then,
    \begin{enumerate}
        \item\label{point:injectivity-pullback} $g^*f$ is a covering map and there is an inclusion $\Mon(g^*f)\hookrightarrow\Mon(f)$;
        \item\label{point:magic-square} if $g$ and $f$ are Galois covers, then the cokernel of the injective map $\Mon(g^*f)\hookrightarrow\Mon(f)$ is isomorphic to a common quotient of $\Mon(g)$ and $\Mon(f)$;
        \item\label{point:restriction-to-open} if $g$ is an open immersion and all connected components of $\cY$ are irreducible, then $\Mon(g^*f)=\Mon(f)$;
        \item\label{point:pullback-of-monodromy-along-flat-map} if $g$ is a faithfully flat morphism with geometrically irreducible fibers, then $\cW$ is irreducible and $\Mon(g^*f)=\Mon(f)$.
    \end{enumerate}
\end{proposition}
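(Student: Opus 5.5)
We prove the four items by translating into statements about subgroups of fundamental groups and their normal cores, using the Galois correspondence for Noohi's fundamental groups throughout. Fix a geometric point $w\in\cW$ with image $x=g(w)$. Let $\widetilde{f}\colon\widetilde{\cY}\to\cX$ be the Galois closure of $f$ from Remark~\ref{rmk: Galois covers are torsors}. It is a connected $\Mon(f)$-torsor corresponding to the open normal subgroup $N_f\le\pi_1(\cX,x)$.

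\textbf{Item (1).} Representability, finiteness and étaleness are stable under base change, so $g^*f$ is a covering map. The pullback $g^*\widetilde{\cY}\to\cW$ is a $\Mon(f)$-torsor; it is possibly disconnected, and the stabilizer of any one component $\cW'$ is a subgroup $H\le\Mon(f)$. Each $g^*\cY_i$ is a quotient of $g^*\widetilde{\cY}$, so $\cW'$ dominates every component of $g^*\cY$. The Galois closure of $g^*f$ is therefore a quotient of $\cW'$, which gives a surjection $H\twoheadrightarrow\Mon(g^*f)$.

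Equivalently, and more cleanly, $\Mon(g^*f)$ is the image of $\pi_1(\cW,w)\xrightarrow{g_*}\pi_1(\cX,x)\to\Mon(f)$. This is because $\Mon$ of a cover is exactly the image of $\pi_1$ acting on the fibre $f^{-1}(x)$, and the fibre of $g^*f$ over $w$ is identified with that fibre, compatibly with $g_*$. Taking the image gives the inclusion.

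\textbf{Item (2).} Now assume $f$ and $g$ are Galois, so that $\Mon(f)=\pi_1(\cX)/N_f$ and $\Mon(g)=\pi_1(\cX)/g_*\pi_1(\cW)$. Write $A=g_*\pi_1(\cW)$. By item (1), $\Mon(g^*f)$ is $AN_f/N_f$, and its cokernel is
\[
\pi_1(\cX)/AN_f.
\]
This group is simultaneously a quotient of $\pi_1(\cX)/N_f=\Mon(f)$ and of $\pi_1(\cX)/A=\Mon(g)$. Here $AN_f$ is normal because both factors are normal. This is the second isomorphism theorem, which is the stacky version of the ``magic square''.

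\textbf{Items (3) and (4).} By the description in item (1), it suffices in both cases to show that every connected component of $g^*\widetilde{\cY}$ over $\cW$ is the whole of $g^*\widetilde{\cY}$. Equivalently, $g^*$ of the connected cover $\widetilde{\cY}$ must remain connected.

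For (3), I will first reduce to $\cY$ connected and irreducible, which is legitimate since $\cY$ is a disjoint union of irreducible components. The Galois closure $\widetilde{\cY}$ is then a connected component of a fibre product of irreducible covers. Because the covers are étale over the irreducible $\cX$, every component of such a fibre product is irreducible; more precisely, it is normal-ish, and a connected cover of an irreducible normal stack is irreducible. An open dense substack of an irreducible stack is connected, so $g^*\widetilde{\cY}=\widetilde{\cY}\times_\cX\cW$ is connected.

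The main obstacle is the irreducibility of $\widetilde{\cY}$ when $\cX$ is merely irreducible and not normal. To handle it, I will use the hypothesis that the components of $\cY$ are irreducible, together with the fact that the étale map $\widetilde{\cY}\to\cY_i$ preserves local irreducibility at points. Alternatively, I will pass to a smooth atlas and argue there.

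For (4), $g$ is faithfully flat with geometrically irreducible fibres. The base change $\widetilde{\cY}\times_\cX\cW\to\widetilde{\cY}$ is then open and surjective with irreducible fibres over an irreducible base, so its source is irreducible, hence connected. This uses the standard lemma that an open map with irreducible fibres onto an irreducible space has irreducible source, applied on atlases. The same lemma applied to $g$ itself shows that $\cW$ is irreducible. In both (3) and (4) we conclude that $H=\Mon(f)$, and hence $\Mon(g^*f)=\Mon(f)$.
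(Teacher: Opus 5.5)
Your items (1) and (2) are correct. For (2) your route is cleaner than the paper's. Once (1) identifies $\Mon(g^*f)$ with the image of $\pi_1(\cW)\to\pi_1(\cX)\to\Mon(f)$ through the fibre functors, the cokernel $\pi_1(\cX)/AN_f$ follows from the isomorphism theorems. The paper instead builds the cover $\cD$ attached to $N_f\cap N_g$, identifies it with a component of $g^*\cY$, and then applies Lemma~\ref{lem:classical-exact-sequence-Galois-covers}. Your first sketch for (1), which only yields a surjection $H\twoheadrightarrow\Mon(g^*f)$, points the wrong way and can be dropped; the fibre-functor argument is what gives the inclusion. For (3) and (4) the paper gives no argument and cites \cite[Corollaries 2.23, 2.27]{landiStacksMonodromySymmetric2025}. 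Your direct attempt there has a genuine gap.

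\textbf{The gap.} You claim the Galois closure $\widetilde{\cY}$ is irreducible. Over a non-normal irreducible base this fails even when $\cY$ is irreducible, and the repair via local irreducibility is unavailable because irreducible does not imply unibranch. Here is an explicit counterexample.
\begin{itemize}
\item Let $\tilde C$ be a smooth connected curve with $\pi_1(\tilde C)\twoheadrightarrow C_3$, for example $\mathbb{G}_m$ with $u\mapsto u^3$. Glue two points $p\neq q$ of $\tilde C$ to a node, obtaining $C$.
\item A finite \'etale cover of $C$ is a finite \'etale cover of $\tilde C$ together with a bijection between its fibres over $p$ and $q$. Glue the connected cyclic triple cover of $\tilde C$ by a bijection that becomes a transposition after transport along a path from $p$ to $q$.
\item The resulting $Y\to C$ has monodromy $S_3$. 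It is irreducible, because its normalization $Y\times_C\tilde C$ is connected.
\item Its Galois closure, however, has two irreducible components, since $C_3$ has two orbits on $S_3$.
\item Over the dense open $W=C\smallsetminus\{\text{node}\}\cong\tilde C\smallsetminus\{p,q\}$ the monodromy is only $C_3$.
\end{itemize}
So this step cannot be closed under the hypotheses of (3) as written: (3) itself needs an extra hypothesis such as $\cX$ normal. That is the situation in every application in the paper, where the bases are smooth. Under normality your argument works. $\widetilde{\cY}$ is \'etale over a normal stack, hence normal. A connected normal locally Noetherian stack is irreducible. Therefore its nonempty open substack $g^*\widetilde{\cY}$ is connected.

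\textbf{Item (4).} Here you do not need irreducibility of $\widetilde{\cY}$ at all. The map $g^*\widetilde{\cY}\to\widetilde{\cY}$ is surjective and open, being a base change of a flat morphism locally of finite presentation (everything here is of finite type over a field). Its fibres are connected, since the fibres of $g$ are geometrically irreducible. Such a map onto the connected $\widetilde{\cY}$ has connected source, and that is exactly what gives $H=\Mon(f)$. Your argument that $\cW$ is irreducible is fine as it stands, because $\cX$ is irreducible by hypothesis.
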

\begin{proof}
    To prove point~\eqref{point:injectivity-pullback}, eventually after taking a Galois closure of $\cY$, we may assume $f$ to be a Galois cover. Then, $g^*f$ is a $\Mon(f)$-torsor, in particular all connected components of $g^*\cY$ are permuted by $\Mon(f)$ and are thus isomorphic. Let $\cZ$ be any of the connected components, and $H\leq\Mon(f)$ the stabilizer of it. Then, $g^*f|_{\cZ}\colon\cZ\rightarrow\cW$ is Galois and
    \[
        \Mon(g^*f)=\Mon(g^*f|_{\cZ})=H\leq\Mon(f).
    \]
    Now, we prove point~\eqref{point:magic-square}. The Galois covers $f$ and $g$ correspond to normal subgroups $N_{f}$ and $N_{g}$ of $\pi_1(\mathcal{X})$. Set $N:=N_{f}\cap N_{g}$, and let $\psi\colon\cD\rightarrow\cX$ be the associated covering map. Note that this cover is Galois as $N$ is normal in $\pi_1(\cX)$, and $\psi$ factors through $g$ and $f$, by definition of $N$. In particular, $\psi$ factors as $\cD\xrightarrow{a}g^*\cY\xrightarrow{b}\cX$; let $\cA:=\mathrm{im}(a)\subset g^*\cY$ be the connected component that is the image of $a$. Then, $\cA\rightarrow\cX$ is a Galois cover that factors through both $\cY$ and $\cW$, hence the associated subgroup $N_{\cA}\leq\pi_1(\cX)$ is contained in $N_{f}\cap N_{g}=N$. On the other hand, the existence of $a$ implies the opposite containment, that is, $N_{\cA}=N$, so $\cD=\cA\subset g^*\cY$. We have proved that $\Mon(g\circ g^*f)=\pi_1(\cX)/N$. By Lemma~\ref{lem:classical-exact-sequence-Galois-covers}, we have then an exact sequence
    \[
        1\rightarrow\Mon(g^*f)\rightarrow\pi_1(\cX)/N\rightarrow\pi_1(\cX)/N_g\rightarrow1.
    \]
    Therefore,
    \[
        \Mon(g^*f)\cong N_{g}/N=N_g/(N_f\cap N_g)\cong N_gN_f/N_f.
    \]
    This identifies the injection $\Mon(g^*f)\hookrightarrow\Mon(f)$ with the first two terms of the exact sequence
    \begin{equation}\label{eq: exact sequence magic square}
    \begin{tikzcd}
        1\rightarrow N_g/(N_g\cap N_f)\rightarrow \pi_1(\cX)/N_f\rightarrow \pi_1(\cX)/N_gN_f \rightarrow 1.
    \end{tikzcd}
    \end{equation}
    The cokernel is then clearly a quotient of $\Mon(g)\cong\pi_1(\cX)/N_g$ and of $\Mon(f)=\pi_1(\cX)/N_f$, proving point~\eqref{point:magic-square}.
    
    Finally, points~\eqref{point:restriction-to-open} and~\eqref{point:pullback-of-monodromy-along-flat-map} are~\cite[Corollary 2.23]{landiStacksMonodromySymmetric2025} and~\cite[Corollary 2.27]{landiStacksMonodromySymmetric2025}, respectively.
\end{proof}

\begin{remark}\label{rmk: maximal intermediate cover}
    The roles of $f$ and $g$ in the setting of \Cref{prop:invariance-properties-monodromy}\eqref{point:magic-square} are completely symmetric, hence the same statement holds for $\Mon(f^*g\colon g^*\cY\rightarrow\cY)$. Moreover, the proof shows that the quotients $\Mon(f)/\Mon(g^*f)$ and $\Mon(g)/\Mon(f^*g)$ coincide and are isomorphic to $\pi_1(\cX)/N_gN_f$. This can be geometrically interpreted as follows. Form the cover $\pi:\mathcal{E}\rightarrow\mathcal{X}$ corresponding to $N_g N_f$, which is Galois. This cover is dominated by both $\mathcal{Y}$ and $\mathcal{W}$. The natural morphism $\mathcal{W}\times_{\mathcal{E}} \mathcal{Y}\to g^*\cY$ is both a closed and open immersion, as it is a morphism of finite étale covers with the target having smaller degree. Its image is the stack $\cA$ defined in the proof of Proposition~\ref{prop:invariance-properties-monodromy}\eqref{point:magic-square}. Therefore, the sequence~\eqref{eq: exact sequence magic square} can be rewritten as
    \[
    1\rightarrow\Mon(g^*f)\rightarrow\Mon(f)\rightarrow\Mon(\pi)\rightarrow 1,
    \]
    and similarly for $\Mon(f^*g)$.
\end{remark}

Proposition~\ref{prop:invariance-properties-monodromy}\eqref{point:restriction-to-open} allows us to extend the definition of monodromy groups to ramified finite covers of irreducible schemes.

\begin{definition}\label{def:monodromy-generically-covering-maps}
Let $f\colon\cY\rightarrow\cX$ be a representable, generically finite and generically étale cover between stacks, where $\cX$ and all irreducible components of $\cY$ are irreducible. Let $\cU\subset\cX$ be any non-empty open substack over which the restriction $f_{\cU}$ of $f$ is a covering map. We define the monodromy group of $f$ as $\Mon(f):=\Mon(f_{\cU})$.
\end{definition}

\begin{example}
It is extremely important that the connected components of $\cY$ are irreducible for Proposition~\ref{prop:invariance-properties-monodromy}\eqref{point:restriction-to-open} to hold, even for schemes. As an example, let $C$ be a rational curve with only one node $c\in C$. Let $\pi\colon\P^1\rightarrow C$ be the normalization map, and $p,q\in\P^1$ the (distinct) ramification points. Let $Y=\P^1\cup_{p,q}\P^1$ be the union of two copies of $\P^1$ along the corresponding pairs of points. Then, $\pi$ induces a Galois double cover $f\colon Y\rightarrow C$, whose restriction to $C\setminus\{c\}\cong\P^1\setminus\{p,q\}$ is trivial.
\end{example}

\begin{remark}\label{rmk:normality-base-implies-irreducibility}
If the base $\cX$ is normal, then $\cY$ is always automatically normal, hence all connected components are irreducible. In particular, every ramified finite cover of a connected normal stack has a well-defined monodromy group.
\end{remark}

Recall that the key computational feature of stacky monodromy is the ability to exhibit elements of monodromy via symmetries of points in the base stack. Explicitly, for a geometric point $x\in \mathcal{X}$, there is a natural group homomorphism
\[
\omega_x \colon \Aut(x) \to \pi_1(\mathcal{X},x).
\]
This need not be injective in general, but the following lemma gives a useful criterion for when this happens.

\begin{lemma}\label{lem:injectivity-hidden-loops}
Let $f\colon \mathcal{Y} \to \mathcal{X}$ be a covering map between connected locally-Noetherian algebraic stacks, and let $x\in \mathcal{X}$, $y\in\cY$ be geometric points with $f(y)=x$. If $\cY$ is an algebraic space (e.g. a scheme) at $y$, then the composite
\[
\begin{tikzcd}
    \Aut(x) \arrow[r,"\omega_x"] & \pi_1(\mathcal{X},x)\arrow[r] & \Mon(f)
\end{tikzcd}
\]
is injective.
\end{lemma}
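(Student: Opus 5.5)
The plan is to reduce the statement to the action of $\pi_1(\cX,x)$ on the geometric fiber $f^{-1}(x)$, and then to describe how hidden loops act on that fiber. By Definition~\ref{def:stacky-monodromy} and Remark~\ref{rmk: Galois covers are torsors}, $\Mon(f)$ is the image of $\pi_1(\cX,x)$ in the permutation group of the fiber functor evaluated on $\cY$. Concretely, $\Mon(f)$ is the quotient of $\pi_1(\cX,x)$ by the kernel of the action on the finite set
\[
F_x(\cY)=\{\text{isomorphism classes of pairs } (y',\phi)\colon y'\in\cY(K),\ \phi\colon x\xrightarrow{\sim} f(y')\}.
\]
This set is the disjoint union of the fibers of the components $\cY_i$; the kernel of the action on the $i$-th piece is $N_{f_i}$. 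So it suffices to show that $\Aut(x)$ acts faithfully on $F_x(\cY)$ through $\omega_x$.

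The key step is to identify the action of a hidden loop $\omega_x(g)$, for $g\in\Aut(x)$, on $F_x(\cY)$. Following Noohi's construction \cite[\S3--4]{noohiFundamentalGroupsAlgebraic2004}, I expect it to act by precomposition: $(y',\phi)\mapsto(y',\phi\circ g^{-1})$, up to a choice of sign convention. Since $f$ is representable, $\cY\times_{\cX,x}\Spec K$ is a finite étale algebraic space over $K$, hence a finite set of points. Its points are exactly $F_x(\cY)$, and the action above is the natural $\Aut(x)$-action on this fiber product. Checking this compatibility with Noohi's definition of $\omega_x$ is the main technical point. I would first try to deduce it from functoriality of $\omega$ under the pointed morphism $\Spec K\to\cX$ together with the description of $\pi_1$ via the fiber functor.

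Next I use the hypothesis at $y$. Take the pair $(y,\mathrm{id})$, using $f(y)=x$. Suppose $g$ fixes its class, so $(y,\mathrm{id})\cong(y,g^{-1})$. Then there is $h\in\Aut(y)$ with $f(h)=g^{-1}$ as an automorphism of $x$. Because $\cY$ is an algebraic space near $y$, $\Aut(y)$ is trivial, so $h=\mathrm{id}$ and $g=\mathrm{id}$. Thus the stabilizer of this single point of $F_x(\cY)$ in $\Aut(x)$ is trivial, so the composite $\Aut(x)\to\pi_1(\cX,x)\to\Mon(f)$ has trivial kernel.

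Local Noetherianity is needed only so that $\pi_1$ and the fiber functor behave as in Noohi's framework, in particular to have the Galois category $\FEt_{(\cX,x)}$. Connectedness of $\cY$ is not essential to the argument. The only real obstacle is the identification of the action of $\omega_x(g)$ on the fiber as precomposition. Once that is in place, triviality of $\Aut(y)$ gives faithfulness immediately.
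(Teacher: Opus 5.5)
Your argument is correct, but it takes a different route from the paper.

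\textbf{The paper's route.} The paper never unpacks how $\omega_x$ acts on fibers. It pulls $f$ back along the residual gerbe $\mathcal{G}_x\to\mathcal{X}$; this is where local Noetherianity enters. It then uses the injectivity $\Mon(g^*f)\hookrightarrow\Mon(f)$ from Proposition~\ref{prop:invariance-properties-monodromy}\eqref{point:injectivity-pullback}. After trivializing the gerbe as $B\Aut(x)$, it reduces to the component $\Spec K\to B\Aut(x)$ through $y$. That component is the universal $\Aut(x)$-torsor, so its monodromy is $\Aut(x)$ by Example~\ref{exm:classifying-stack}, and functoriality of $\omega$ finishes the proof.

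\textbf{Your route.} You work directly with the fiber functor and observe that the stabilizer in $\Aut(x)$ of the fiber point $(y,\mathrm{id})$ is the image of $\Aut(y)$, which is trivial. The step you flag as the main obstacle is essentially definitional. In Noohi's construction \cite{noohiFundamentalGroupsAlgebraic2004}, the hidden loop attached to $g\in\Aut(x)$ is the automorphism of the fiber functor given by $g$ acting on each $\mathcal{Y}\times_{\mathcal{X},x}\Spec K$ through the identification $\phi$. So only unwinding is needed.

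Your proposed fallback, functoriality along $\Spec K\to\mathcal{X}$, would not work by itself, since $\Spec K$ has no automorphisms to transport. You would have to pass through $B\Aut(x)$, which is exactly the paper's argument.

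\textbf{What each buys.} Your approach gives a sharper statement. The kernel of $\Aut(x)\to\Mon(f)$ is the intersection, over all fiber points $(y',\phi)$, of the subgroups $\phi^{-1} f(\Aut(y'))\,\phi$. So one point with trivial stabilizer suffices, and no residual gerbe is needed. The paper's approach instead relies only on results already proved in the paper plus the basic $BG$ computation, without depending on the internal construction of $\omega_x$.
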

\begin{proof}
    This is already implicit in~\cite{landiStacksMonodromySymmetric2025}, but we still provide a proof for completeness. Let $g\colon\mathcal{G}_x\rightarrow\cX$ be the residual gerbe of $\cX$ at $x$ (\cite[Tag 06MU]{Stacks}), which exists as $\cX$ is locally-Noetherian (\cite[Tag 0H22]{Stacks}), and let $g^*\cY\rightarrow\cG_x$ be the covering map obtained by pullback. Note that $y$ defines a schematic point of $g^*\cY$. By Proposition~\ref{prop:invariance-properties-monodromy}\eqref{point:injectivity-pullback}, it is enough to prove the result for $h\colon\{y\}\rightarrow B\mathcal{G}_x$, and after further base change, we may assume the existence of a trivialization $B\mathcal{G}_x\cong B\Aut(x)$ of the residual gerbe. Then, $\Mon(h)\cong\Aut(x)$ (Example~\ref{exm:classifying-stack}), and we are done.
\end{proof}

Thus when we can bound the monodromy above via geometric constraints (some of which has been well-understood since the latter half of the 19th century), we can show that stacky monodromy attains this upper bound by lifting hidden loops arising from symmetries of given objects. A key example is \cite[Propositions 2.33, 2.38]{landiStacksMonodromySymmetric2025}.

Suppose that the algebraic stack $\cX$ is a quotient stack $\cX=[U/P]$ for some scheme $U$ over a field $k$ and a $k$-algebraic group $P$. We want to compare the monodromy of a cover over $\cX$ with the monodromy of the corresponding $P$-equivariant cover over $U$.

\begin{remark}
    Note that $\pi_0(P)$ is still a $k$-algebraic group as $P^\circ$ is normal in $P$. Moreover, the construction of $P^{\circ}$ and $\pi_0(P)$ commute with extension of fields, as $P^{\circ}$ is geometrically irreducible by~\cite[Tag 0B7R]{Stacks}.
\end{remark}

\begin{proposition}\label{prop:gal-f-vs-quotient}
Let $f\colon V\rightarrow U$ be a covering map of schemes of finite type over a base field $k$, and let $P$ be a $k$-algebraic group acting on both $X$ and $Y$ so that $f$ is $P$-equivariant. Let $\cF\colon\cY=[V/P]\rightarrow[U/P]=\cX$ be the induced covering map, and assume $U$ to be connected. Then we have a short exact sequence of groups
\[
1 \to \Mon(f) \to \Mon(\cF) \to Q \to 1,
\]
where $Q$ is some quotient of $\pi_0(P)$. Moreover, if $V$ is connected, then $Q=\pi_0(P)$.
\end{proposition}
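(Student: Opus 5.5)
The plan is to realize $V\to U$ as the pullback of $\cF$ along the quotient map, and then apply \Cref{prop:invariance-properties-monodromy}\eqref{point:magic-square}.

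\textbf{Setup.} Let $q\colon U\to\cX=[U/P]$ be the quotient map, which is a $P$-torsor. Then $f$ is the pullback $q^*\cF$, since $V\cong U\times_{\cX}\cY$. By \Cref{prop:invariance-properties-monodromy}\eqref{point:injectivity-pullback} we get an injection $\Mon(f)\hookrightarrow\Mon(\cF)$, and we want to identify its cokernel. The map $q$ is not finite when $P$ is positive dimensional, so I factor it as
\[
U\xrightarrow{\ q_1\ }[U/P^\circ]\xrightarrow{\ q_2\ }[U/P]=\cX.
\]
Here $q_2$ is a $\pi_0(P)$-torsor, hence a finite étale cover; it is representable because $\pi_0(P)$ is finite étale in characteristic zero. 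The map $q_1$ is a $P^\circ$-torsor with $P^\circ$ geometrically irreducible, so it is faithfully flat with geometrically irreducible fibers.

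\textbf{Step 1 (the connected part).} By \Cref{prop:invariance-properties-monodromy}\eqref{point:pullback-of-monodromy-along-flat-map} applied to $q_1$, pulling back $\cF':=q_2^*\cF$ along $q_1$ does not change monodromy, so $\Mon(f)=\Mon(\cF')$. This needs $[U/P^\circ]$ irreducible, or at least connected; I will handle this by noting that $U$ is connected and $q_1$ is surjective, and if necessary by first passing to normalizations or irreducible components. This is a technical point to check.

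\textbf{Step 2 (the finite part).} Now $\cF'$ is the pullback of $\cF$ along the finite étale cover $q_2$, whose connected components I must track.
\begin{itemize}
\item Replace $\cF$ by its Galois closure $\widetilde{\cY}\to\cX$. Let $g$ be the connected component of the Galois cover $q_2$ containing the base point; since $[U/P^\circ]$ is connected, $q_2$ itself is already a connected $\pi_0(P)$-torsor, hence Galois with $\Mon(q_2)=\pi_0(P)$.
\item \Cref{prop:invariance-properties-monodromy}\eqref{point:magic-square} and \Cref{rmk: maximal intermediate cover} then give an exact sequence
\[
1\to\Mon(q_2^*\widetilde{\cF})\to\Mon(\widetilde{\cF})\to\pi_1(\cX)/N_gN_f\to1.
\]
The last term is a common quotient of $\Mon(\cF)$ and of $\pi_0(P)$. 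Since $\Mon(\cF)=\Mon(\widetilde\cF)$ and $\Mon(q_2^*\widetilde\cF)=\Mon(\cF')=\Mon(f)$, this yields the sequence with $Q$ a quotient of $\pi_0(P)$.
\item I must also check that monodromy of a cover equals that of its Galois closure after pullback, i.e. that $\Mon(q_2^*\cF)=\Mon(q_2^*\widetilde{\cF})$. This follows since $N$ is the intersection of the cores and pullback corresponds to intersecting with $\pi_1([U/P^\circ])$.
\end{itemize}

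\textbf{Step 3 ($V$ connected).} If $V$ is connected, then $\cF'\colon[V/P^\circ]\to[U/P^\circ]$ has connected source. The surjection $\pi_1([U/P^\circ])\to\pi_1(\cY_i)$ implies $N_gN_f=\pi_1(\cX)$ is impossible to shrink in the wrong direction. Concretely, I use the symmetric roles in \Cref{rmk: maximal intermediate cover}: the intermediate Galois cover $\cE$ is dominated by $\cY$. Pulling $\cE$ back along $q_2$ gives a cover of $[U/P^\circ]$ dominated by the connected $[V/P^\circ]$. If $\cE$ had degree $>1$ over $\cX$, then $\cY\times_\cX[U/P^\circ]$ would be disconnected, because it maps onto $\cE\times_\cX\cE\supset$ diagonal plus complement. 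Contrapositively, connectedness of $V$, and hence of $[V/P^\circ]$, forces $\cE=\cX$, so $Q=\pi_0(P)$.

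\textbf{Main obstacle.} I expect the main obstacle to be the bookkeeping in Steps 2–3: identifying the image of $\pi_1([U/P^\circ])$ in $\pi_1(\cX)$ as $N_g$, and showing that connectedness of $V$ kills the common quotient $\pi_1(\cX)/N_gN_f$.
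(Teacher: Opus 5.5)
Your Steps 1 and 2 are the paper's own proof of the exact sequence. You factor $U\to\cX$ through the $P^\circ$-torsor $U\to[U/P^\circ]$ and apply part (4) of Proposition~\ref{prop:invariance-properties-monodromy} there. You then apply part (2) to the connected $\pi_0(P)$-torsor $[U/P^\circ]\to\cX$ after passing to Galois closures. That part is fine. For the technical point you flag in Step 1, all that is used is that pulling back along a torsor under a connected group keeps connected covers connected, so $\pi_1(U)\to\pi_1([U/P^\circ])$ is surjective.

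Step 3 has a genuine gap, and it cannot be closed, because the ``moreover'' clause is false as stated. Take $f=\mathrm{id}_U$ with $\pi_0(P)\neq 1$, e.g.\ $U=V=\Spec k$ with a nontrivial finite constant group $P$ acting trivially. Then $\cF=\mathrm{id}_{BP}$, so $\Mon(f)=\Mon(\cF)=1$ and $Q=1\neq\pi_0(P)$, even though $V$ is connected. The paper's one-line justification, which even concludes ``$Q=\pi_0(V)$'', fails on this example too.

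Your own argument actually points the other way. By Remark~\ref{rmk: maximal intermediate cover} the cokernel is $Q\cong\Mon(\cE\to\cX)$, so showing $\cE=\cX$ would prove $Q=1$, not $Q=\pi_0(P)$. Moreover, $\cE$ is dominated by the Galois closure $\widetilde{\cY}$, not in general by $\cY$. So your disconnection argument only applies to the pullback of $\widetilde{\cY}$.

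The correct bookkeeping is as follows. Let $H_f\le\pi_1(\cX)$ be the image of $\pi_1(\cY)$, $N_f$ its normal core, and $N_g$ the image of $\pi_1([U/P^\circ])$. Then $Q\cong\pi_1(\cX)/N_gN_f$, whereas $V$ (equivalently $[V/P^\circ]$) is connected iff $N_gH_f=\pi_1(\cX)$. This gives two separate criteria:
\begin{itemize}
\item $Q=\pi_0(P)$ iff $N_f\le N_g$, i.e.\ iff the Galois closure of $\cF$ dominates $[U/P^\circ]$.
\item $Q=1$ iff the pullback of $\widetilde{\cY}$ to $[U/P^\circ]$ is connected. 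This holds for instance when $\cF$ is Galois and $V$ is connected.
\end{itemize}
Connectedness of $V$ alone decides neither. Besides $f=\mathrm{id}_U$, take a non-Galois cubic cover $Y\to X$ of connected schemes with monodromy $S_3$. Let $U\to X$ be its discriminant double cover, with $P=C_2$ acting by deck transformations, and set $V=U\times_XY$. Then $V$ is connected, $\Mon(f)\cong C_3$, $\Mon(\cF)\cong S_3$, and $Q=C_2=\pi_0(P)$.
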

\begin{proof}
    First, suppose that $P$ is geometrically irreducible. Then, Proposition~\ref{prop:invariance-properties-monodromy}\eqref{point:pullback-of-monodromy-along-flat-map} implies that $\Mon(f)=\Mon(\cF)$, so $Q=\pi_0(P)=0$. Note that these assumptions are satisfied by $P^\circ$, hence it is enough to compare $\Mon(f)$ with $\Mon([V/P^\circ]\rightarrow[U/P^\circ])$. Note that the induced map $[U/P^\circ]\rightarrow\cX=[U/P^\circ]$ is a $\pi_0(P)$-Galois cover, hence the result follows from Proposition~\ref{prop:invariance-properties-monodromy}\eqref{point:magic-square}. When $V$ is connected, then both the proof of Proposition~\ref{prop:invariance-properties-monodromy}\eqref{point:magic-square} and Remark~\ref{rmk: maximal intermediate cover} imply that $Q=\pi_0(V)$.
\end{proof}

\section{Four flavors of monodromy}\label{sec:four-flavors}

Here we discuss the four types of monodromy which one can consider for an enumerative problem, and demonstrate that while they classically agree for the problems under consideration in this paper, they are quite different when one wishes to compute Galois groups of equivariant enumerative problems. We provide a full comparison between the four types of monodromy, and show in many cases that the stacky monodromy determines all of the others, improving the results in~\cite[\S3.4]{landiStacksMonodromySymmetric2025}.

\subsection{Marked, stacky, and geometric monodromy}

We start by introducing the symmetric loci of an algebraic stack with respect to a finite group, and define the setup we will work in.

\begin{definition}\label{def:XG-stacks}
Let $\mathcal{X}$ be a connected, separated, Deligne-Mumford stack which is smooth over an algebraically closed field of characteristic zero, and let $G$ be a finite group.
\begin{enumerate}
    \item Define $\mathcal{X}_G$ to be the stack whose objects over a scheme $S$ are pairs $(x,\rho)$ where $x\in\cX(S)$ and $\rho:G_S\hookrightarrow\underline{\Aut}_{\cX}(s)$ is a closed immersion over $S$.
    \item We let $\mathcal{X}_{=G} \subseteq \mathcal{X}_G$ be the open substack where $\rho$ is required to be an isomorphism.
\end{enumerate}
\end{definition}

In \Cref{def:XG-stacks}, the stack $\mathcal{X}_G$ is a smooth Deligne-Mumford stack with a representable, finite, unramified forgetful map
\begin{align*}
    \Phi \colon \mathcal{X}_G \to \mathcal{X}
\end{align*}
sending $(x,\rho)$ to $x$ \cite[Proposition 3.6]{landiStacksMonodromySymmetric2025}. This motivates the following definition:

\begin{definition}\label{def:X-upper-G}
Under the assumptions of Definition~\ref{def:XG-stacks}, we define $\mathcal{X}^G\subset\cX$ to be the image of $\Phi$, and let $\mathcal{X}^{=G} \subseteq \mathcal{X}^G$ the open substack where $\Aut(x) \cong G$.

We further denote by $\pi_{\cX}\colon\cX\rightarrow X$ the coarse moduli space of $\cX$, and by $X^{=G}\subset X^G\subset X$ the images of $\cX^{=G}$ and $\cX^G$ along $\pi_{\cX}$, respectively.
\end{definition}

\begin{proposition}\label{prop:X-lower-G-properties}
With the assumptions of Definition~\ref{def:XG-stacks}, we have that
\begin{enumerate}
    \item The natural map
    \begin{align*}
        \Phi_{=G} \colon \mathcal{X}_{=G} \to \mathcal{X}^{=G}
    \end{align*}
    is an $\Aut(G)$-torsor, and
    \[
        \pi_{\cX}|_{\cX^{=G}}\colon\cX^{=G}\rightarrow X^{=G}
    \]
    is a $G$-gerbe.
    \item $\mathcal{X}^{=G}$ and $X^{=G}$ are smooth.
    \item\label{lowerG-can-be-disconnected} When $\mathcal{X}^{=G}$ is connected, all connected components of $\mathcal{X}_{=G}$ are isomorphic, dominate $\mathcal{X}^{=G}$, and are permuted by the action of $\Aut(G)$.
    \item\label{point:stabilizer-X_G} The stabilizer of a point in $\cX_{=G}$ is isomorphic to the center $Z(G)$ of $G$.
\end{enumerate}
\end{proposition}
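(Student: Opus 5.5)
The plan is to work étale-locally on $\cX$, using that a separated smooth Deligne--Mumford stack is étale-locally of the form $[W/H]$. Here $W$ is a smooth affine scheme and $H=\Aut(x)$ is the finite stabilizer of a point $x$, which fixes a point $w\in W$. Since $\Phi$ is representable, finite and unramified by \cite[Proposition 3.6]{landiStacksMonodromySymmetric2025}, every statement can be checked after this étale base change. In that chart, $\cX_G$ is described as pairs $(w',\rho)$ with $\rho\colon G\hookrightarrow \Stab_H(w')$. This gives
\[
\cX_G\times_{\cX}[W/H]\cong\Big[\bigsqcup_{\rho\colon G\hookrightarrow H} W^{\rho(G)}\Big/H\Big],
\]
where $H$ acts on the embeddings $\rho$ by conjugation.

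For (2), each fixed locus $W^{\rho(G)}$ of a finite group acting on a smooth scheme in characteristic zero is smooth, by linearization via Luna slice or Cartan's lemma. Hence $\cX_G$ is smooth. The locus $\cX^{=G}$ is open in the image of $\Phi$; at points where the stabilizer is exactly $G$, $\Phi$ is a monomorphism up to the $\Aut(G)$ ambiguity. So $\cX^{=G}$ is étale-locally $[W^{G}_{\circ}/G]$, where $W^G_\circ$ is the open subset of $W^G$ on which the stabilizer is exactly $G$. Because $G$ acts trivially on $W^G_\circ$, this chart equals $W^G_\circ\times BG$. That shows $\cX^{=G}$ is smooth, its coarse space $X^{=G}$ is étale-locally $W^G_\circ$ and hence smooth, and $\cX^{=G}\to X^{=G}$ is étale-locally $W^G_\circ\times BG\to W^G_\circ$, i.e. a $G$-gerbe. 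This settles the second half of (1) together with (2).

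For the torsor claim in (1), note that $\Aut(G)$ acts on $\cX_{=G}$ by $(x,\rho)\mapsto(x,\rho\circ\alpha)$. Over the chart $W^G_\circ\times BG$, the objects of $\cX_{=G}$ are isomorphisms $\rho\colon G\to\Aut(x)=G$, modulo conjugation by $\Aut(x)$ acting on the choice. A careful look shows the fibre of $\Phi_{=G}$ is the set of isomorphisms $G\xrightarrow{\sim}\Aut(x)$, which is an $\Aut(G)$-torsor. I expect this to be the main obstacle: the automorphisms of $x$ act on the set of $\rho$'s by conjugation, so one has to check that the stacky fibre product $\cX_{=G}\times_{\cX^{=G}}S$ is genuinely the scheme $\mathrm{Isom}(G_S,\underline{\Aut}(s))$ with free transitive $\Aut(G)$-action. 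Here $\Phi$ being representable is the key input, because it means the inner automorphisms are absorbed into isomorphisms of the objects $(x,\rho)$ rather than into stabilizers.

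Point (4) follows from the same description. An automorphism of $(x,\rho)$ is an element $g\in\Aut(x)\cong G$ with $g\rho g^{-1}=\rho$, that is, an element commuting with all of $\rho(G)=\Aut(x)$, so the stabilizer is $Z(G)$. For (3), $\Phi_{=G}$ is an $\Aut(G)$-torsor over the connected stack $\cX^{=G}$, hence finite étale and surjective. Its connected components are therefore open and closed, each one surjects onto $\cX^{=G}$ by finiteness and openness, and $\Aut(G)$ acts transitively on them because it acts transitively on fibres. So the components are all isomorphic via elements of $\Aut(G)$.
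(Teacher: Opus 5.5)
Your proof is correct, but it takes a different route from the paper. The paper proves nothing from scratch here. It quotes \cite[Proposition 3.15]{landiStacksMonodromySymmetric2025} for the torsor and gerbe statements and \cite[Lemma 3.12]{landiStacksMonodromySymmetric2025} for the stabilizer $Z(G)$. It then says that (2) and (3) follow easily from (1). The natural deduction is by descent: $\cX_{=G}$ is open in the smooth stack $\cX_G$, so its smoothness descends along the \'etale surjection $\Phi_{=G}$ to $\cX^{=G}$, and then along the gerbe (\'etale, since $G$ is finite) to $X^{=G}$. Point (3) is the same torsor-over-a-connected-base argument you give.

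You instead reprove the cited inputs from the local structure $\cX\times_X U\cong[W/H]$ and the smoothness of fixed loci. You read (2) and the gerbe statement directly off the chart $W^H\times BH$ rather than descending smoothness. This buys a self-contained, transparent picture, and (4) becomes a one-line check on automorphisms of $(x,\rho)$. It costs a few details you should make explicit:
\begin{itemize}
\item The chart must be stabilizer-preserving, so that your description of $\cX_G$ transfers to it. It is, being a base change along the \'etale map $U\to X$.
\item $X^{=G}$ is defined as an image in $X$, not as the coarse space of $\cX^{=G}$. To identify it \'etale-locally with $W^H$, you need $W^H\to W/H$ to be a closed immersion. This holds in characteristic zero, since averaging makes $A^H\to A/I$ surjective.
\end{itemize}

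Finally, the step you flag as the main obstacle is immediate in your own chart. Centered at a point of $\cX^{=G}$ one has $H\cong G$, so every embedding $G\hookrightarrow H$ is an isomorphism. Then $\cX_{=G}\times_{\cX}[W/H]\cong[(\mathrm{Isom}(G,H)\times W^H)/H]$, which is the pullback along $W^H\times BH\to BH$ of $[\mathrm{Isom}(G,H)/H]\to BH$. The fibre of the latter over $\Spec k\to BH$ is $\mathrm{Isom}(G,H)$, on which $\Aut(G)$ acts simply transitively. Equivalently, the marking in the $2$-fibre product over $S$ rigidifies away the conjugation by $\Aut(x)$, so no quotient by conjugation occurs.
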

\begin{proof}
    The first point is~\cite[Proposition 3.15]{landiStacksMonodromySymmetric2025}, and it easily implies all the second and third point. Point~\eqref{point:stabilizer-X_G} is \cite[Lemma 3.12]{landiStacksMonodromySymmetric2025}.
\end{proof}

\begin{setup}\label{setup:stacky-problem} Let $\mathcal{X}$ be a connected, separated, Deligne-Mumford stack which is smooth over a field of characteristic zero. Let $\cF\colon \mathcal{Y} \to \mathcal{X}$ be a connected covering map in the sense of \Cref{def:covering-map-of-stacks}, with Galois group $\Gamma$, and denote by $F\colon Y\rightarrow X$ the induced ramified finite cover between coarse moduli spaces. Let $G$ be a finite group, and assume that $\mathcal{X}^{=G}$ is connected.
\end{setup}

We are ready to define the covers induced over the $G$-symmetric loci, and their respective monodromy groups.

\begin{definition}\label{def:XG-YG-square}
In \Cref{setup:stacky-problem}, we define $\cY^{=G}:=\cX^{=G}\times_{\cX}\cY$ and $\cY_{=G}:=\cX_{=G}\times_{\cX}\cY$, thus fitting into a cartesian diagram
\begin{equation}\label{diag:def-cYG}
\begin{tikzcd}
    \cY_{=G}\arrow[r]\arrow[d,"\cF_{G}"] & \cY^{=G}\arrow[r]\arrow[d,"\cF^{G}"] & \cY\arrow[d,"\cF"]\\
    \cX_{=G}\arrow[r,"\Phi_{=G}" below] & \cX^{=G}\arrow[r] & \cX
\end{tikzcd}
\end{equation}
Similarly, we define $Y^{=G}:=X^{=G}\times_XY$, which fits into a cartesian diagram of smooth stacks
\begin{equation}\label{diag:def-YG}
\begin{tikzcd}
    Y^{=G}\arrow[r]\arrow[d,"F^G"] & Y\arrow[d,"F"]\\
    X^{=G}\arrow[r] & X
\end{tikzcd}
\end{equation}
\end{definition}

\begin{remark}
    Note that the definitions of $\cY^{=G}$ and $\cY_{=G}$ (Definition~\ref{def:XG-YG-square}) differ from those of $\cX^{=G}$ and $\cX_{=G}$ (Definitions~\ref{def:XG-stacks}, \ref{def:X-upper-G}). We nevertheless retain this slight abuse of notation, since the distinct roles of $\cX$ and $\cY$ make the intended meaning clear, and we use this notation consistently throughout.
    Note also that the vertical morphisms in diagrams~\eqref{diag:def-cYG} and~\eqref{diag:def-YG} are $\Gamma$-torsors, and they are usually disconnected except for $\cF$ and $F$.
\end{remark}

\begin{definition}\label{def:stacky-and-marked-stacky-monodromy} Let $\cF\colon \mathcal{Y} \to \mathcal{X}$, and $G$ be as in \Cref{setup:stacky-problem}. We define:
\begin{enumerate}
    \item the \emph{stacky monodromy of $\cF$ relative to $G$} to be
    \begin{align*}
        \Monstack_G(\cF) = \Mon(\mathcal{Y}^{=G} \xto{\cF^G}\mathcal{X}^{=G})
    \end{align*}
    \item the \emph{marked stacky monodromy of $\cF$ relative to $G$} to be
    \begin{align*}
        \Monmarked_G(\cF) = \Mon(\mathcal{Y}_{=G} \xto{\cF_G} \mathcal{X}_{=G});
    \end{align*}
    If $\mathcal{X}_{=G}$ happens to be disconnected, we define marked monodromy by restriction to a connected component --- note that this is well-defined, as it is independent on the component chosen by \Cref{prop:X-lower-G-properties}(\ref{lowerG-can-be-disconnected});
    \item the \emph{geometric monodromy of $\cF$ relative to $G$} to be
    \begin{align*}
        \Mongeom_G(\cF)=\Mon(Y^{=G}\xto{F^G}X^{=G}).
    \end{align*}
\end{enumerate}
\end{definition}

\begin{remark}\label{rmk:connectedness-XG-not-restrictive}
    We show that the assumption of $\cX^{=G}$ being connected is not restrictive at all. First of all, one could of course define the monodromy and state the next results for each connected component $\cZ\subset\cX^{=G}$ separately. We avoid this not only to simplify the notation, but because we can always reduce to the case where $\cZ$ is the only connected component of $\cX^{=G}$. Indeed, if $\cX^{=G}=\bigsqcup_i\cZ_i$ is the decomposition in connected components with $\cZ_1=\cZ$, let $\cW:=\overline{\bigsqcup_{i\geq2}\cZ_i}\subset\cX$ be the closure, which does not intersect $\cZ_1$. In particular, $\cW$ is a proper closed substack of $\cX$, hence the restriction of the cover $\cF\colon\cY\rightarrow\cX$ to $\cX_1:=\cX\setminus\cW$ has the same monodromy group $\Gamma$ by Proposition~\ref{prop:invariance-properties-monodromy}\eqref{point:restriction-to-open}, and $\cZ$ is now the only connected component of $\cX_1^{=G}$.
\end{remark}

\subsubsection{Comparison: marked, stacky, and geometric monodromy}

Now, we compare the three flavors of monodromy defined so far, in Setup~\ref{setup:stacky-problem} under the following further assumption.

\begin{assumption}\label{assumption:comparison-stacky-problem}
    With the notation of Setup~\ref{setup:stacky-problem}, we suppose further that $\cY^{=G}$ is an algebraic space; equivalently, for all $x\in\cX^{=G}(k)$ the action of $\Aut(x)$ on the fiber of $\cF^G$ is faithful.
\end{assumption}

Under the above assumption, by Lemma~\ref{lem:injectivity-hidden-loops} there is an inclusion $G\hookrightarrow\Monstack_G(\cF)\leq\Gamma$, by choosing any base point $x\in\cX^{=G}$. It turns out that the inclusion is canonical, i.e. it is independent of the base point. We recall the upper bounds on $\Monstack_G(\cF)$ and $\Monmarked_G(\cF)$ which are proven in~\cite{landiStacksMonodromySymmetric2025}, and are often attained.

\begin{theorem}[{\cite[Corollary 3.27]{landiStacksMonodromySymmetric2025}}]\label{thm:stacky-basic-bound}
    With the notation of Setup~\ref{setup:stacky-problem} and Assumption~\ref{assumption:comparison-stacky-problem}, there are natural subgroup inclusions
    \begin{align}
        \label{eq:inclusion-stacky} G\leq\Monstack_G(\cF)\leq N_{\Gamma}(G),\\
        \label{eq:inclusion-marked}\Monmarked_G(\cF)\leq C_{\Gamma}(G)\cap\Monstack_G(\cF)=C_{\Monstack_G(\cF)}(G),
    \end{align}
    and an isomorphism
    \begin{equation}\label{eq:isom-geometric-monodromy}
        \Mongeom_G(\cF)\cong\Monstack_G(\cF)/G.
    \end{equation}
\end{theorem}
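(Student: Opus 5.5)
The plan is to read everything off the Galois-theoretic description of the restricted covers. The relevant monodromy groups all sit inside $\Gamma$ by \Cref{prop:invariance-properties-monodromy}\eqref{point:injectivity-pullback}, since $\cY^{=G}$ and $\cY_{=G}$ are pullbacks of the $\Gamma$-torsor (after replacing $\cF$ by its Galois closure). Concretely, fix $x\in\cX^{=G}$ and a point $\tilde y$ of the Galois closure $\widetilde{\cY}$ over $x$. The fiber is then identified with $\Gamma$, and $\Monstack_G(\cF)$ is the stabilizer $H\le\Gamma$ of the connected component of $\widetilde{\cY}\times_\cX\cX^{=G}$ through $\tilde y$.

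\textbf{Stacky bound.} By \Cref{lem:injectivity-hidden-loops}, applied using Assumption~\ref{assumption:comparison-stacky-problem}, the hidden loops $\omega_x(\Aut(x))=G$ inject into $H$. This gives the lower inclusion $G\le H$.

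For the upper bound, consider any other point $x'$ of the connected stack $\cX^{=G}$ and a path from $x$ to $x'$. Transport along the path conjugates the image of $\Aut(x')$ into $H$ by the corresponding element of $H$. The key point is that the image of $\Aut(x')$ is again the same subgroup $G\le\Gamma$. To prove this, I would use the inertia: $\cI_{\cX^{=G}}\to\cX^{=G}$ is a finite étale group (since $\cX^{=G}\to X^{=G}$ is a $G$-gerbe by \Cref{prop:X-lower-G-properties}), and it acts on the $\Gamma$-torsor $\widetilde{\cY}|_{\cX^{=G}}$ commuting with the $\Gamma$-action. On a connected component, this action is therefore given by a locally constant, hence constant, embedding $G\hookrightarrow\Gamma$. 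Since the monodromy action commutes with the inertia action, $H$ normalizes this subgroup, so $H\le N_\Gamma(G)$. The same argument shows canonicity of the inclusion.

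\textbf{Marked bound.} Over $\cX_{=G}$ the identification $\rho\colon G\cong\Aut(x)$ is part of the data, so the inertia embedding $G\hookrightarrow\Gamma$ is rigidified pointwise. Monodromy along loops in $\cX_{=G}$ must therefore preserve it elementwise rather than just setwise, giving $\Monmarked_G(\cF)\le C_\Gamma(G)$.

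Next, $\cY_{=G}\to\cX_{=G}$ is the pullback of $\cF^G$ along $\Phi_{=G}$, so \Cref{prop:invariance-properties-monodromy}\eqref{point:injectivity-pullback} gives $\Monmarked_G\le\Monstack_G$. Together these give $\Monmarked_G(\cF)\le C_\Gamma(G)\cap\Monstack_G(\cF)$. The equality of this intersection with $C_{\Monstack_G}(G)$ is tautological, since $G\le\Monstack_G$.

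\textbf{Geometric monodromy.} Since $\cX^{=G}\to X^{=G}$ is a $G$-gerbe, and $\cY^{=G}$ is an algebraic space with $G$ acting freely (Assumption~\ref{assumption:comparison-stacky-problem}), the coarse space $Y^{=G}$ is $\cY^{=G}/G$. I would use the homotopy exact sequence for a gerbe,
\[
G\to\pi_1(\cX^{=G})\to\pi_1(X^{=G})\to 1,
\]
where the first map is the hidden-loop map. The map $\pi_1(\cX^{=G})\to\pi_1(X^{=G})$ is surjective because gerbes have connected fibers, and its kernel is generated by the hidden loops.

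The Galois closure of $F^G$ is then the quotient of that of $\cF^G$ by the image of $G$. So the kernel of $\pi_1(X^{=G})\to\Mongeom_G$ is the image of $N_{\cF^G}$, which yields $\Mongeom_G\cong H/G$.

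\textbf{Main obstacle.} The hardest step is verifying the gerbe exact sequence, specifically that the kernel is exactly the normal closure of the hidden loops, in Noohi's framework. I would try to deduce it from \Cref{prop:X-lower-G-properties} by étale-locally trivializing the gerbe as $BG\times U$ and comparing covers. The constancy of the embedding $G\hookrightarrow\Gamma$ is the other delicate point.
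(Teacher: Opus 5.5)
Your proof is correct, but it is organized differently from the paper's. The paper gives no argument beyond citing \cite[Corollary 3.27]{landiStacksMonodromySymmetric2025}. As the remark in \S\ref{sec:radicals} explains, that result comes from a single exact sequence $1\to G\to\Monstack_G(\cF)\to\Mongeom_G(\cF)\to 1$. This sequence is obtained from the $G$-gerbe $\cX^{=G}\to X^{=G}$ of \Cref{prop:X-lower-G-properties}, together with Noohi's description of $\pi_1$ of a coarse space as the quotient by hidden loops \cite[Theorem 7.11]{noohiFundamentalGroupsAlgebraic2004}. Normality of $G$ (hence the normalizer bound) and the geometric isomorphism come out simultaneously, and the argument does not need $k$ algebraically closed.

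You instead obtain the normalizer and centralizer bounds by hand, from constancy of the $\Gamma$-stabilizers along a component $C$ of the algebraic-space torsor $\cY^{=G}$. This is the same mechanism the paper uses in \Cref{lem: trivialization stab} and Theorem~\ref{thm: monodromy X_G vs X^G}. It has the advantage of making the marked/centralizer statement transparent, since the marking $\rho$ rigidifies each $g\in G$ separately.

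In fact this constancy already settles the geometric part, so your ``main obstacle'' can be bypassed. Every point of $C$ has $\Gamma$-stabilizer exactly $G$, so $G$ acts trivially on the algebraic space $C$; only the $2$-morphisms over $\cX^{=G}$ see it. Then $H/G$ acts simply transitively on the fibers $G\backslash H$ of $C\to X^{=G}$. Hence $C\to X^{=G}$ is the Galois closure of $F^G$, with group $H/G$, without invoking Noohi's kernel statement.

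Two small corrections:
\begin{enumerate}
\item Local constancy holds because $\cI_{\cX^{=G}}\times_{\cX^{=G}}C\to\Gamma\times C$ is a morphism of finite \'etale $C$-spaces, so its image is open and closed. What is constant is the image subgroup, not an embedding of the abstract group $G$. Over $\cX^{=G}$ the inertia is only a twisted form of $G$, the twist being the $\Aut(G)$-torsor $\cX_{=G}\to\cX^{=G}$.
\item $Y^{=G}$ is not the quotient of $\cY^{=G}$ by $G\le\Gamma$: that subgroup is not normal in $\Gamma$ and moves the other components. Since $\cY^{=G}$ is already an algebraic space, $Y^{=G}$ is $\cY^{=G}$ itself up to nilpotents. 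What is true is that its fibers over $X^{=G}$ are the fibers of $\cF^G$ modulo the free inertia action.
\end{enumerate}
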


Trying to understand when the upper bound of~\eqref{eq:inclusion-stacky} is an equality is the core difficulty of computing symmetric monodromy groups. The following theorem relates stacky monodromy and marked stacky monodromy, and shows that~\eqref{eq:inclusion-marked} is actually an equality.

\begin{theorem}\label{thm: monodromy X_G vs X^G}
In \Cref{setup:stacky-problem} and under Assumption~\ref{assumption:comparison-stacky-problem}, let $\cZ$ be any connected (therefore irreducible, since $\mathcal{X}_{=G}$ is smooth) component of $\cX_{=G}$, and let $H\leq\Aut(G)$ be the stabilizer of $\cZ$. Then:
\begin{enumerate}
    \item\label{point 1} there exists a factorization
    \[
    \begin{tikzcd}[column sep=large]
        \cY^{=G}\arrow[r,"g"] & \cZ\arrow[r,"\Phi_{=G}|_{\cZ}"] & \cX^{=G}
    \end{tikzcd}
    \]
    of $\cF^G$ with $\Phi_{=G}|_{\cZ}$ Galois.
    \item\label{point 2} There is an isomorphism
    \[
        \Monmarked_G(\cF)\cong C_{\Gamma}(G)\cap\Monstack_G(\cF)=C_{\Monstack_G(\cF)}(G),
    \]
    and a short exact sequence
    \begin{equation}\label{eq: exact sequence M_G vs M^G}
        1\rightarrow \Monmarked_G(\cF)\rightarrow \Monstack_G(\cF)\rightarrow H\rightarrow 1.
    \end{equation}
    In particular, $H\cong\Monstack_G(\cF)/C_{\Monstack_G(\cF)}(G)$ under the natural inclusions
    \[
    \begin{tikzcd}
        \Monstack_G(\cF)/C_{\Monstack_G(\cF)}(G)\arrow[r,hookrightarrow] & N_{\Gamma}(G)/C_{\Gamma}(G) \arrow[r,hookrightarrow] & \Aut(G).
    \end{tikzcd}
    \]
\end{enumerate}
\end{theorem}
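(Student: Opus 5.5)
We build the factorization in point~\eqref{point 1} from the moduli interpretation of $\cY^{=G}$, and then read off point~\eqref{point 2} from the exact sequence of Lemma~\ref{lem:classical-exact-sequence-Galois-covers} applied to Galois closures.

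For point~\eqref{point 1}, recall from Proposition~\ref{prop:X-lower-G-properties} that $\Phi_{=G}\colon\cX_{=G}\to\cX^{=G}$ is an $\Aut(G)$-torsor. Hence the restriction to a connected component $\cZ$ is a connected $H$-torsor, i.e.\ a Galois cover with group $H$ (Remark~\ref{rmk: Galois covers are torsors}).

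To produce $g$, note that a point of $\cY^{=G}$ is a pair $(x,y)$ with $\Aut(x)\cong G$ abstractly and $y$ in the fiber. Under Assumption~\ref{assumption:comparison-stacky-problem}, $\Aut(x)$ acts faithfully on the fiber of $\cF^G$ over $x$, so inside the symmetric group of the fiber $\Gamma$-torsor it is a subgroup isomorphic to $G$. The inclusion $G\hookrightarrow\Monstack_G(\cF)\le\Gamma$ is canonical, so it fixes a subgroup $G\le\Gamma$, and any local trivialization identifies $\Aut(x)$ with a conjugate of it. I would instead work with the Galois closure $\widetilde{\cY}\to\cX^{=G}$ of $\cF^G$. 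Its points carry an identification of the fiber with $\Mon\le\Gamma$, and transporting along this identification gives an isomorphism $\rho\colon G\xrightarrow{\sim}\Aut(x)$ that varies continuously in families. This defines a morphism $\widetilde{\cY}\to\cX_{=G}$. Its image is connected, so after translating by $\Aut(G)$ it lands in $\cZ$.

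A caveat on the order of the argument: the statement claims $\cY^{=G}$ itself factors through $\cZ$, which fails if $\cY^{=G}$ is disconnected. I would interpret it componentwise on the connected components of $\cY^{=G}$, and in any case I only need the factorization of the Galois closure.

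For point~\eqref{point 2}, apply Lemma~\ref{lem:classical-exact-sequence-Galois-covers} to $\widetilde{\cY}\to\cZ\to\cX^{=G}$. This gives a short exact sequence
\[
1\to\Mon(\widetilde{\cY}\to\cZ)\to\Monstack_G(\cF)\to H\to1.
\]
Next, identify $\Mon(\widetilde{\cY}\to\cZ)$ with $\Monmarked_G(\cF)$. The pullback $\cF_G|_{\cZ}$ is the pullback of $\cF^G$ along $\Phi_{=G}|_{\cZ}$. Since $\widetilde{\cY}$ already dominates $\cZ$, the intermediate cover of Remark~\ref{rmk: maximal intermediate cover} is all of $\cZ$, and Proposition~\ref{prop:invariance-properties-monodromy}\eqref{point:magic-square} gives that the kernel of $\Monstack_G(\cF)\to H$ is exactly $\Monmarked_G(\cF)$.

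It remains to identify this kernel with the centralizer. The map $\Monstack_G(\cF)\to H\le\Aut(G)$ is monodromy acting on the marking $\rho$. By construction, an element $\gamma\in\Monstack_G\le N_\Gamma(G)$ acts on $\rho$ by conjugation, $\rho\mapsto c_\gamma\circ\rho$. Its kernel is therefore $C_\Gamma(G)\cap\Monstack_G(\cF)$. Since $G\le\Monstack_G(\cF)$, this intersection equals $C_{\Monstack_G(\cF)}(G)$. The final inclusions then follow from $N_\Gamma(G)/C_\Gamma(G)\hookrightarrow\Aut(G)$.

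I expect the main obstacle to be constructing $\widetilde{\cY}\to\cX_{=G}$ rigorously and checking that the induced map on monodromy is the conjugation action. This amounts to showing that the hidden loops $\omega_x(\Aut(x))$ from Lemma~\ref{lem:injectivity-hidden-loops} vary compatibly with parallel transport, so that transporting around a loop $\gamma$ changes the marking by $c_\gamma$. I would try to verify this on the residual gerbe and then extend it by connectedness of $\cZ$.
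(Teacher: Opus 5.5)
Your overall architecture is sound, and for point~(2) you take a somewhat different route from the paper. The paper obtains $\Monmarked_G(\cF)\le C_{\Monstack_G(\cF)}(G)$ from the external bound of Theorem~\ref{thm:stacky-basic-bound}. It gets the reverse inclusion by checking that the trivialization of the pulled-back inertia is invariant under the centralizer, so $g$ descends to $[\cY^{=G}/C_{\Monstack_G(\cF)}(G)]$. You instead identify $\Monstack_G(\cF)\to H\le\Aut(G)$ directly as $\gamma\mapsto c_\gamma|_G$, which gives both inclusions and the description of $H$ at once. Your use of Proposition~\ref{prop:invariance-properties-monodromy}\eqref{point:magic-square} to identify the kernel with $\Monmarked_G(\cF)$ is also a fine substitute for the paper's trivialization of the $\Aut(G)$-torsor $\cY_{=G}\to\cY^{=G}$; the cokernel is all of $H$ because $\widetilde{\cY}$ dominates $\cZ$.

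The gap is the construction of $g$, on which everything hinges. The phrase ``transporting along this identification gives an isomorphism $\rho$ that varies continuously in families'' is exactly the content of point~(1). Invoking base-point independence of $G\hookrightarrow\Monstack_G(\cF)$ only restates it pointwise. Here is what is needed. The component $\widetilde{\cY}$ is an algebraic space and $\cX^{=G}\cong[\widetilde{\cY}/M]$, with $M=\Monstack_G(\cF)$ its deck group. So a lift of $\widetilde{\cY}\to\cX^{=G}$ to $\cX_{=G}$ is the same as a trivialization $G\times\widetilde{\cY}\cong\widetilde{\cY}\times_{\cX^{=G}}\cI_{\cX^{=G}}$. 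The right-hand side is the stabilizer group scheme $\bigsqcup_{\gamma\in M}\widetilde{\cY}^{\gamma}\times\{\gamma\}$ of the deck action. You must then show that the stabilizer subgroup is the same at every point of $\widetilde{\cY}$. This is Lemma~\ref{lem: trivialization stab}, which uses irreducibility. Connectedness also suffices: all stabilizers have order $|G|$, so for subgroups $K$ of that order the loci $\bigcap_{\gamma\in K}\widetilde{\cY}^{\gamma}$ are closed, pairwise disjoint, and cover $\widetilde{\cY}$; reducedness of $\widetilde{\cY}$ then upgrades this to an equality of schemes.

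Your proposed remedy does not supply this step. The residual gerbe only sees one point, and connectedness of $\cZ$ is irrelevant before a map into $\cZ$ exists; it is connectedness of $\widetilde{\cY}$ that matters. Conversely, once constancy is established, the conjugation formula you flag as the main obstacle needs no hidden-loop or parallel-transport argument. The deck transformation $\gamma$ carries $\Aut(x)\cong\Stab(\tilde y)$ to $\Aut(x)\cong\Stab(\gamma\tilde y)=\gamma\Stab(\tilde y)\gamma^{-1}$, twisted by $c_\gamma$. Hence $g(\gamma\tilde y)$ is $g(\tilde y)$ acted on by $c_\gamma|_G$, up to the convention on inverses, and $M\le N_\Gamma(G)$ comes for free.

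Finally, your caveat is mistaken: a disconnected $\cY^{=G}$ does factor through $\cZ$. You may choose the trivialization on each connected component separately and then move its image into $\cZ$ by an element of $\Aut(G)$, which acts over $\cX^{=G}$. This is exactly what the paper does.
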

\begin{proof}
Note that $\cY^{=G}$ is an algebraic space. We start by proving~\eqref{point 1}.
For any algebraic stack $\cS$ and morphism $\alpha\colon\cS\rightarrow\cX^{=G}$, a lift of $\alpha$ to $\beta\colon\cS\rightarrow\cX_{=G}$ is equivalent to giving a trivialization $G_{\cS}\cong\cS\times_{\cX^{=G}}\cI_{\cX^{=G}}$ of the pullback of the inertia stack $\cI_{\cX^{=G}}$ of $\cX^{=G}$. Therefore, a factorization as in point~\eqref{point 1} is equivalent to a trivialization $G_{\cY^{=G}}\cong\cY^{=G}\times_{\cX^{=G}}\cI_{\cX^{=G}}$. Note that we have an isomorphism
\[
    \cY^{=G}\times_{\cX^{=G}}\cI_{\cX^{=G}}\cong\underline{\Stab}_{\cY^{=G}}
\]
over $\cY^{=G}$, where $\underline{\Stab}_{\cY^{=G}}$ is the stabilizer group scheme of the $\Gamma$-action on $\cY^{=G}$, defined as the fiber product
\[
\begin{tikzcd}
    \underline{\Stab}_{\cY^{=G}}\arrow[d]\arrow[r,"\varphi"] & \cY^{=G}\arrow[d,hookrightarrow,"\Delta"]\\
    \Gamma\times\cY^{=G}\arrow[r,"\sigma"] & \cY^{=G}\times\cY^{=G}
\end{tikzcd}
\]
where $\sigma(g,y)=(y,g\cdot y)$. We are left with showing that $\underline{\Stab}_{\cY^{=G}}\cong G\times\cY^{=G}$, which follows from Lemma~\ref{lem: trivialization stab} below. This proves point~\eqref{point 1}, as the induced map $\cY^{=G}\rightarrow\cX_{=G}$ can be chosen to factor through an irreducible component $\cZ$ by irreducibility of $\cX^{=G}$. Indeed, note that any automorphism of $G$ induces a different trivialization of $\underline{\Stab}_{\cY^{=G}}$, and this is compatible with the $\Aut(G)$ action on $\cX_{=G}$. In particular, different choices of trivializations yield different choices of the irreducible component $\cZ$, as one expects. Choosing the trivialization compatibly for all irreducible components of $\cY^{=G}$ shows that $\cY^{=G}\rightarrow\cX_{=G}$ can be chosen to factor through $\cZ$. Moreover, the morphism $g$ is finite and étale as $\cF^G$ and $\Phi_{=G}$ are, and the cover is Galois as $\cF^G$ is Galois.

Point~\eqref{point 2} follows immediately. Indeed, the map $\cY^{=G}\xrightarrow{g}\cZ\subset\cX_{=G}$ induces a section $\cY^{=G}\rightarrow\cY_{=G}$, hence an isomorphism $\cY_{=G}\cong\cY^{=G}\times\Aut(G)$. It follows that $\Mon(\cF_G)=\Mon(g)$ over $\cZ$. In particular, the exact sequence in~\eqref{eq: exact sequence M_G vs M^G} follows immediately by Lemma~\ref{lem:classical-exact-sequence-Galois-covers}. Moreover, recall that
\[
    \Mon(\cF_G)\leq C_{\Gamma}(G)\cap\Monstack_G(\cF)=C_{\Monstack_G(\cF)}(G),
\]
by Theorem~\ref{thm:stacky-basic-bound}, hence it is enough to show the opposite containment. In turn, this is equivalent to saying that $g\colon\cY^{=G}\rightarrow\cZ$ factors through the projection $[\cY^{=G}/C_{\Gamma^G}(G)]$. On the other hand, the action of $C_{\Monstack_G(\cF)}(G)$ on $\cY^{=G}\times_{\cX^{=G}}\cI_{\cX^{=G}}$ is the one induced by $\Gamma$ on $\cY^{=G}$ and trivial on $\cI_{\cX^{=G}}$ (as $\Gamma$ acts by conjugation there). It follows that the isomorphism $\cY^{=G}\times G\cong\cY^{=G}\times_{\cX^{=G}}\cI_{\cX=G}$ induces an isomorphism
\[
    [\cY^{=G}/C_{\Monstack_G(\cF)}(G)]\times G\cong[\cY^{=G}/C_{\Monstack_G(\cF)}(G)]\times_{\cX^{=G}}\cI_{\cX^{=G}}.
\]
Since this is a trivialization of the pullback of the inertia of $\cI_{\cX^{=G}}$ to $[\cY^{=G}/C_{\Monstack_G(\cF)}(G)]$, this induces the required morphism $[\cY^{=G}/C_{\Monstack_G(\cF)}(G)]\rightarrow\cZ\subset\cY_{=G}$.
\end{proof}

The following simple lemma is used in the proof of Theorem~\ref{thm: monodromy X_G vs X^G}\eqref{point 1}. Not all assumptions are necessary.

\begin{lemma}\label{lem: trivialization stab}
    Let $\Gamma$ be a finite group acting on an irreducible, separated algebraic space $Y$ that is smooth over an algebraically closed field $k$, and suppose that all $k$-valued points of $Y$ have stabilizer group of the same cardinality $c$. Then, there exists a subgroup $G\leq\Gamma$ of cardinality $c$ and an isomorphism $\underline{\Stab}_Y\cong G\times Y$ over $Y$.
\end{lemma}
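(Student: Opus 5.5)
The plan is to decompose the stabilizer group scheme according to the subgroups of $\Gamma$. Recall that $\underline{\Stab}_Y\subset\Gamma\times Y$ is the fiber product of $\sigma$ and the diagonal. Since $Y$ is separated, the diagonal is a closed immersion. Hence $\underline{\Stab}_Y$ is a closed subspace of $\Gamma\times Y=\bigsqcup_{\gamma\in\Gamma}Y$. Its intersection with the copy indexed by $\gamma$ is the fixed locus $Y^\gamma$, which is closed in $Y$.

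The key input is smoothness in characteristic zero. For a finite group acting on a smooth space, each fixed locus $Y^\gamma$ is smooth, by linearizing the action at a fixed point (Cartan's lemma, or the étale-local structure of finite group actions). So $Y^\gamma$ is a disjoint union of smooth closed subspaces, and the first step is to show that for each $\gamma$ either $Y^\gamma=Y$ or $Y^\gamma$ is a proper closed subset.

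Next, consider the closed subgroup $G:=\{\gamma\in\Gamma : Y^\gamma=Y\}$, the kernel of the action. It is a normal subgroup of $\Gamma$, and $G\times Y\subset\underline{\Stab}_Y$ is a union of connected components, since each $Y^\gamma=Y$ is open and closed in the $\gamma$-copy. Every $k$-point has stabilizer containing $G$, so $|G|\le c$.

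The main step is to show $|G|=c$. Suppose not. Then each point $y$ has stabilizer $\Gamma_y\supsetneq G$, so $y\in Y^\gamma$ for some $\gamma\notin G$. This gives $Y=\bigcup_{\gamma\notin G}Y^\gamma$, a finite union of proper closed subsets. That contradicts the irreducibility of $Y$, since $Y(k)$ is dense, $k$ is algebraically closed, and $Y$ is of finite type or locally Noetherian. Hence every stabilizer $\Gamma_y$ contains $G$ and has cardinality $c=|G|$, so $\Gamma_y=G$ for every $k$-point $y$.

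Finally, we check that the closed immersion $G\times Y\hookrightarrow\underline{\Stab}_Y$ is an isomorphism. On $k$-points it is a bijection, because $\Gamma_y=G$. For $\gamma\notin G$, the fixed locus $Y^\gamma$ has no $k$-points, so it is empty by the Nullstellensatz. Therefore $\underline{\Stab}_Y=\bigsqcup_{\gamma\in G}Y^\gamma=\bigsqcup_{\gamma\in G}Y=G\times Y$ as spaces, including scheme structure, since these components are the full copies of $Y$.

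I expect the main obstacle to be this scheme-theoretic point: ensuring that $Y^\gamma$ is empty rather than a nonreduced or nonempty closed subspace without $k$-points. With $k$ algebraically closed and $Y$ locally of finite type, any nonempty closed subspace has a $k$-point, so I would handle it via the Nullstellensatz. Smoothness is in fact used only mildly; irreducibility does the real work.
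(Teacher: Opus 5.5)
Your proof is correct and follows essentially the paper's argument: both rest on the fact that the irreducible space $Y$ cannot be covered by the finitely many proper closed fixed loci $Y^\gamma$. The paper takes $G=\Stab(y)$ for a point $y$ with $Y^{\Stab(y)}=Y$, while you take $G$ to be the kernel of the action directly; the two coincide.

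Your last step (for $\gamma\notin G$ the locus $Y^\gamma$ has no $k$-points, hence is empty) makes explicit what the paper leaves as ``the result follows.'' Note that smoothness of the fixed loci $Y^\gamma$ is never needed; only reducedness of $Y$ is used, to pass from $|Y^\gamma|=|Y|$ to $Y^\gamma=Y$.
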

\begin{proof}
    Note that $\varphi\colon\underline{\Stab}_{Y}\rightarrow Y$ is surjective and finite of constant degree $|G|$. Moreover, $\underline{\Stab}_Y$ splits as the disjoint union $\bigsqcup_{\gamma\in\Gamma}Y^{\gamma}\times\{\gamma\}$, where $Y^{\gamma}$ is the closed locus of points fixed by $\gamma$. Suppose by contradiction that for all $y\in Y(k)$ there exists some $g_y\in\Gamma$ for which $y\in Y^{g_y}\not= Y$. Then, we have a finite decomposition $Y=\bigcup Y^{g_y}$ in proper closed subschemes, contradicting the irreducibility of $Y$. It follows that there exists some $y\in Y(k)$ for which $Y^{\Stab(y)}=Y$. Then, $G:=\Stab(y)$ is contained in the stabilizer of all points of $Y$, hence equal by cardinality. The result follows.
\end{proof}

\subsection{Parameter-level and geometric monodromy}
Here we are interested in the setting where $\mathcal{X}$ and $\mathcal{Y}$ are quotient stacks arising from schemes. We are interested in the following general setting:

\begin{setup}\label{setup:all-four-monodromies}
Let $f\colon V \to U$ be generically finite dominant morphism of schemes over an algebraically closed field of characteristic zero, where $U$ is smooth. Let $\Gamma$ denote the Galois group of $f$. Let $P$ be a (geometrically) connected algebraic group acting on $V$ and $U$ properly and with finite stabilizers, and suppose that $f$ is $P$-equivariant. Assume further that for any $x\in U$ over which $f$ is unramified, the normalizer $N_P(\Stab_P(x))$ acts transitively on the set of connected components of the subvariety $U^{=\Stab_P(x)}\subseteq U$ consisting of those points whose stabilizer in $P$ is exactly equal to that of $x$. We denote by $\mathcal{Y} = [V/P]$ and $\mathcal{X}= [X/P]$, and by $Y$ and $X$ their coarse moduli spaces, and we denote by $\mathcal{F}$ and $F$ the induced maps
\begin{align*}
    \mathcal{F} &\colon \mathcal{Y} \to \mathcal{X} \\
    F &\colon Y \to X.
\end{align*}
We represent this data by the tuple $\Psi = (f\colon V \to U, P)$, and note that all the other data can be extracted from this.
\end{setup}

\begin{remark} Regarding \Cref{setup:all-four-monodromies}:
\begin{enumerate}
    \item Often in \Cref{setup:all-four-monodromies} we think of $U$ as a parameter space for objects and we think of $P$ as a group which identifies objects in $U$. In this paper we will always be working with a space of hypersurfaces in projective space, and $P$ will be the automorphism group $\PGL_n$ of that projective space (hence the notation $P$).
    \item The assumptions on \Cref{setup:all-four-monodromies} are not very restrictive, by~\Cref{rmk:connectedness-XG-not-restrictive}.
    \item For any point $x\in U$, we have that the stabilizer $G = \Stab_P(x)$ acts naturally on the fibers $f^{-1}(x)$, and this action lands inside the monodromy group $\Gamma$.
    \item The assumptions are sufficient to identify $X$ and $Y$ with the GIT quotients $U\gitquot P$ and $Y\gitquot P$. In particular $F$ is generically finite and dominant, and unramified over the points in $X$ with trivial stabilizer.
\end{enumerate}
\end{remark}

\begin{definition} For $\Psi$ as in \Cref{setup:all-four-monodromies}, and $G = \Stab_P(x)$ the stabilizer of any $x\in U$, we have that the conditions in \Cref{setup:stacky-problem} and \Cref{assumption:comparison-stacky-problem} are satisfied. We can therefore define stacky, geometric, and marked stacky monodromy for $\Psi$ as
\begin{align*}
    \Monstack_G(\Psi) &:= \Monstack_G(\mathcal{F}) \\
    \Mongeom_G(\Psi) &:= \Mongeom_G(\mathcal{F}) \\
    \Monmarked_G(\Psi) &:= \Monmarked_G(\mathcal{F}).
\end{align*}
\end{definition}

We can define a fourth monodromy group in terms of $\Psi$, which we call parameter-level monodromy:

\begin{definition}\label{def:parameter-level-monodromy}
In \Cref{setup:all-four-monodromies}, let  $G = \Stab_P(x)$ be the stabilizer of a point $x\in U$. Denote by $U^G\subseteq U$ be the subscheme of points which are $G$-fixed, and let $U^{G\circ} \subseteq U^G$ be the connected component containing $x$. We define the $G$-\emph{parameter-level monodromy} to be
\begin{align*}
    \Monpar_G(\Psi) := \Mon(V|_{U^{G\circ}} \xto{f|_{U^{G\circ}}} U^{G\circ}).
\end{align*}
\end{definition}

\begin{proposition}\label{prop:stacky-MG-normalization}
In Setup~\ref{setup:all-four-monodromies}, let $G$ be the stabilizer of a point in $U$, and suppose further that the closure of $\cX^{=G}$ in $\cX$ coincides with $\cX^G$. Then,
\begin{enumerate}
    \item the natural morphism of stacks
        \begin{align*}
            [U^G / N_{P}(G)] \to \mathcal{\cX}^G,
        \end{align*}
        is a normalization. In particular, if $\cX^G$ is normal then $\Stab_P(u)\leq N_P(G)$ for all $u\in U^G$.
    \item the natural morphism of schemes
    \begin{align*}
        U^G\gitquot N_P(G) \to X^G,
    \end{align*}
    is a normalization.
\end{enumerate}
\end{proposition}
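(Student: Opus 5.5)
The plan is to establish that the morphism $\nu\colon[U^G/N_P(G)]\to\cX^G$ is finite and birational onto $\cX^G$, and that its source is normal. A finite birational morphism from a normal stack is the normalization. The scheme statement (2) will then follow by passing to coarse moduli, respectively good quotients.

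First, normality of the source. Since $G$ is finite and we are in characteristic zero, $G$ is linearly reductive. Hence the fixed locus $U^G$ of the smooth scheme $U$ is smooth, by the standard Luna/Cartan linearization argument on tangent spaces. So $[U^G/N_P(G)]$ is smooth, in particular normal. Its image in $\cX=[U/P]$ is exactly $\cX^G$: a point $[u]$ lies in $\cX^G$ iff $\Stab_P(u)$ contains a conjugate $pGp^{-1}$, iff $p^{-1}u\in U^G$.

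Next, finiteness and representability. The map factors as $[U^G/N_P(G)]\to[U/N_P(G)]\to[U/P]$. The first map is a closed immersion. The second is representable with fibers $P/N_P(G)$. I would rather argue directly that $\nu$ is the composite of the isomorphism $[U^G/N_P(G)]\cong[(P\times^{N_P(G)}U^G)/P]$ with the $P$-equivariant map $P\times^{N_P(G)}U^G\to U$, $(p,u)\mapsto pu$. This map is proper, since the action is proper with finite stabilizers and $U^G$ is closed; alternatively one can see this via $\cX_G\to\cX$, which is finite unramified by \cite[Proposition 3.6]{landiStacksMonodromySymmetric2025}. It is also quasi-finite, since a fiber over $u$ is in bijection with $\{p: p^{-1}u\in U^G\}/N_P(G)$, i.e.\ with conjugates of $G$ inside the finite group $\Stab_P(u)$. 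So $\nu$ is finite.

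Then birationality. Over the open dense $\cX^{=G}$ (dense in $\cX^G$ by hypothesis), a point $u$ with $\Stab_P(u)=G$ has exactly one conjugate of $G$ in its stabilizer, so $\nu$ is bijective there. On stabilizers, $\nu$ induces $\Stab_{N_P(G)}(u)=N_P(G)\cap G=G\to\Stab_P(u)=G$, an isomorphism. Hence $\nu$ is a monomorphism on that locus. Being finite étale of degree one, it is an isomorphism onto $\cX^{=G}$. Étaleness uses smoothness on both sides, with the tangent computation $T_uU^G=(T_uU)^G$ matching the tangent space of $\cX^{=G}$ modulo $\mathfrak p$. Here I will need that the preimage of $\cX^{=G}$ is dense in the source; this is where the transitivity hypothesis in Setup~\ref{setup:all-four-monodromies} on components of $U^{=G}$ enters, together with the closure hypothesis. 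This density/irreducibility bookkeeping is the step I expect to be the main obstacle: one must rule out components of $U^G$ consisting entirely of points with larger stabilizer. I would handle it by noting that such a component would map to a component of $\cX^G$ not meeting $\overline{\cX^{=G}}$, contradicting $\overline{\cX^{=G}}=\cX^G$.

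For the "in particular": if $\cX^G$ is normal, $\nu$ is an isomorphism. Comparing automorphism groups then gives $\Stab_{N_P(G)}(u)=\Stab_P(u)$, so $\Stab_P(u)\le N_P(G)$.

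For (2), apply coarse moduli / GIT quotients. $U^G\gitquot N_P(G)$ is normal, being a good quotient of a normal scheme. The induced map to $X^G$ is finite, since coarse spaces of a finite representable morphism give a finite map. It is birational, since over $X^{=G}$ both sides are coarse spaces of isomorphic stacks.
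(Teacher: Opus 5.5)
Your route is the same as the paper's. The source $[U^G/N_P(G)]$ is smooth, hence normal. Its open substack $[U^{=G}/N_P(G)]$ maps isomorphically onto $\cX^{=G}$, which is dense in $\cX^G$ by hypothesis. A finite birational morphism from a normal stack is then the normalization, and the coarse statement is analogous. You are more careful than the paper about finiteness, which it never mentions. Of your two finiteness arguments, the one via $\cX_G\to\cX$ is the one that works: $[U^G/N_P(G)]$ is the open and closed substack of $[\cX_G/\Aut(G)]$ where the marked subgroup is $P$-conjugate to $G$.

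The gap is exactly the step you flagged. Birationality needs $U^{=G}$ to be dense in $U^G$, and your argument for this fails. Take a component $Z$ of $U^G$ on which every stabilizer strictly contains $G$. It maps into $\cX^G\setminus\cX^{=G}$, and the hypothesis $\overline{\cX^{=G}}=\cX^G$ says precisely that this locus lies \emph{inside} $\overline{\cX^{=G}}$. So there is no contradiction: the hypothesis constrains the target, not which components of the source dominate it.

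Such $Z$ genuinely occur. Let $P=\PGL_2$ act on binary sextics with distinct roots, and let $G=\langle z\mapsto -z\rangle$. Then $U^G=Z_1\sqcup Z_2$, where:
\begin{itemize}
\item $Z_1$ consists of the sextics with roots $\{\pm r,\pm s,\pm t\}$, whose generic stabilizer is exactly $G$;
\item $Z_2$ consists of those with roots $\{0,\infty,\pm r,\pm s\}$, each of which is also fixed by $z\mapsto rs/z$.
\end{itemize}
That extra involution is conjugate to $z\mapsto -z$, and its fixed points $\pm\sqrt{rs}$ are never roots. Hence every point of $Z_2$ is $P$-equivalent to a point of $Z_1$, and the closure hypothesis holds. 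But $N_P(G)$ preserves the fixed set $\{0,\infty\}$ of $G$, hence preserves $Z_2$. So $[U^G/N_P(G)]$ acquires an extra one-dimensional component $[Z_2/N_P(G)]$ lying over the boundary, and it is not birational to the irreducible two-dimensional $\cX^G$.

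Density therefore cannot be derived from the stated hypotheses; it must be added as an input. Either require every connected component of $U^G$ to meet $U^{=G}$, or replace $U^G$ by $\overline{U^{=G}}$. The paper's proof tacitly uses the same density when it calls the morphism birational. In the cases the paper actually uses, density is supplied by the irreducibility of $U^G_{3,3}$ and $U^G_{2,4}$ (\Cref{thm:irred-UG}, \Cref{thm:irred-VG}). Part (2) and the ``in particular'' clause inherit the same issue.
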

\begin{proof}
Since we are working in characteristic zero in this paper, we have that $G$ is linearly reductive, and therefore $U^G$ is smooth. Thus the quotient stack $[U^G/N_{P}(G)]$ is also smooth and in particular normal. Moreover, $\cX^{=G}\cong[U^{=(G)}/P]\cong[U^{=G}/N_P(G)]$, and its closure in $\cX$ is $\cX^G$ by assumption. Therefore, $[U^G/N_P(G)]\rightarrow\cX^G$ is a representable birational morphism from a normal stack. Therefore, it coincides with the normalization.
Moreover, if $\cX^G$ is normal then the normalization map is an isomorphism, hence $\Stab(u)\cong\Aut(x)\leq N_P(G)$ for all $u\in U^G$ with image $x\in\cX^G$.

The second claim for coarse moduli spaces is analogous.
\end{proof}

\begin{corollary} In \Cref{setup:all-four-monodromies}, assuming that the closure of $\mathcal{X}^{=G}$ is $\mathcal{X}^G$, we have an isomorphism
\[
    \Mongeom_G(\Psi) \cong \Mon((V\gitquot P)|_{U^G \gitquot N_P(G)} \to U^{G} \gitquot N_P(G)).
\]
\end{corollary}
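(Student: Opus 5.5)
The plan is to identify $\Mongeom_G(\Psi)$, which by definition is $\Mon(F^G\colon Y^{=G}\to X^{=G})$, with the monodromy of the pullback of $F$ along the normalization $\nu\colon U^G\gitquot N_P(G)\to X^G$ of \Cref{prop:stacky-MG-normalization}(2). We then compare the latter with the cover written in the statement, with $V\gitquot P = Y$. There are three steps.

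\textbf{Step 1: restrict along an open immersion.} First I will check that $X^{=G}$ is open in $X^G$. This holds because $\cX^{=G}$ is open in $\cX^G$ and $\pi_{\cX}$ is a homeomorphism on underlying spaces. Next I will check that $X^{=G}$ is dense in $X^G$, which follows from the hypothesis that $\overline{\cX^{=G}}=\cX^G$. Since $\nu$ is a birational finite morphism, it restricts to an isomorphism over the normal open $X^{=G}$: that locus is smooth by \Cref{prop:X-lower-G-properties}, and a finite birational morphism onto a normal scheme is an isomorphism by Zariski's main theorem. Hence $X^{=G}$ sits as a dense open subscheme $W\subseteq U^G\gitquot N_P(G)$.

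\textbf{Step 2: identify the two covers over $W$.} Over $W$, the pullback of $Y\to X$ along $U^G\gitquot N_P(G)\to X^G\hookrightarrow X$ is exactly $Y^{=G}=X^{=G}\times_X Y$. This is immediate from the cartesian square \eqref{diag:def-YG} and the fact that $\nu$ is an isomorphism over $X^{=G}$.

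\textbf{Step 3: extend from $W$ to the whole normalization.} I will use \Cref{def:monodromy-generically-covering-maps}. Its hypothesis is met because $U^G\gitquot N_P(G)$ is normal: it is a good quotient of the smooth scheme $U^G$. Then the generically finite cover $(V\gitquot P)|_{U^G\gitquot N_P(G)}\to U^G\gitquot N_P(G)$ has a well-defined monodromy group by \Cref{rmk:normality-base-implies-irreducibility}. That group can be computed on any dense open over which the map is étale. Intersect $W$ with the étale locus of $F^G$; the latter is nonempty because $F^G$ is a $\Gamma$-torsor over the étale locus. \Cref{prop:invariance-properties-monodromy}\eqref{point:restriction-to-open} then shows that restricting to this open does not change the monodromy of either cover, and by Step 2 the two restricted covers agree. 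This gives the claimed isomorphism.

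\textbf{Main obstacle.} The delicate point is irreducibility. \Cref{prop:invariance-properties-monodromy}\eqref{point:restriction-to-open} requires an irreducible base and irreducible connected components of the cover. Normality gives irreducibility only componentwise, so I first need $U^G\gitquot N_P(G)$ to be connected. This is where the transitivity assumption in \Cref{setup:all-four-monodromies} enters: $N_P(G)$ permutes the components of $U^{=G}$ transitively, and $U^{=G}$ is dense in $U^G$ because $\overline{\cX^{=G}}=\cX^G$, so the quotient is irreducible. The components of the pulled-back cover are then irreducible by the normality remark.
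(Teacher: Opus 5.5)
Your proposal is correct and is essentially the paper's argument. The paper simply observes that the normalization $U^G\gitquot N_P(G)\to X^G$ from \Cref{prop:stacky-MG-normalization} is an isomorphism over a dense open (which you usefully identify as $X^{=G}$, via smoothness and Zariski's main theorem), so the monodromy groups agree after restricting to that open.

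The one step I would change is your irreducibility argument. $U^G\gitquot N_P(G)$ is irreducible simply because it is the normalization of the irreducible scheme $X^G=\overline{X^{=G}}$. By contrast, deducing that $U^{=G}$ is dense in $U^G$ from the closure hypothesis in $\cX$ needs an extra argument, since several components of $U^G$ may lie over the same locus of $\cX^G$.
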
 
\begin{proof}
    Since the map $U^G \gitquot N_P(G) \to X^G$ is a normalization, by \Cref{prop:stacky-MG-normalization} they are isomorphic on a dense open subset, and therefore the monodromy groups agree.
\end{proof}

\subsection{Comparing the four kinds of monodromy} The following observation is immediate:

\begin{proposition}\label{prop:G-is-e-all-four-agree}
In \Cref{setup:all-four-monodromies} if $G=e$ is the trivial group, and the generic stabilizer of $\mathcal{X}$ is trivial, then
\[
\Monpar_e(\Psi) \cong \Mongeom_e(\Psi) \cong \Monstack_e(\Psi) \cong \Monmarked_e(\Psi) \cong \Gamma.
\]
\end{proposition}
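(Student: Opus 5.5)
When $G=e$, every symmetric locus appearing in the definitions reduces to an open piece of the whole space. Each of the four monodromies will then be identified with $\Gamma$ by the restriction-to-open principle, \Cref{prop:invariance-properties-monodromy}\eqref{point:restriction-to-open}.

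\emph{Parameter-level.} Here $U^{e}=U$. We should first check that $U$ is connected; this follows because $U$ is smooth and $f$ has a well-defined Galois group $\Gamma$ in \Cref{setup:all-four-monodromies}, so $U$ is implicitly irreducible. Hence $U^{e\circ}=U$, and $\Monpar_e(\Psi)=\Mon(f)=\Gamma$ by definition.

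\emph{Stacky.} By definition $\cX^{=e}$ is the open substack of points with trivial automorphism group. It is open since stabilizers are finite and upper semicontinuous in the DM setting, and it is nonempty because the generic stabilizer is trivial. Since $\cX=[U/P]$ with $U$ irreducible and $P$ connected, $\cX$ is irreducible, so $\cX^{=e}$ is dense and connected. Also $\cY$ is normal over the smooth base, so its connected components are irreducible (\Cref{rmk:normality-base-implies-irreducibility}). Restriction to an open then gives $\Monstack_e(\Psi)=\Mon(\cF)$. Next, \Cref{prop:gal-f-vs-quotient} with $P$ connected, so that $\pi_0(P)$ is trivial, gives $\Mon(\cF)=\Mon(f)=\Gamma$. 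One caveat: \Cref{prop:gal-f-vs-quotient} is stated for covering maps, so we first restrict $f$ to the $P$-stable open set over which it is étale. That set is $P$-stable because $f$ is equivariant.

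\emph{Geometric.} Theorem~\ref{thm:stacky-basic-bound} gives $\Mongeom_e\cong\Monstack_e/e=\Monstack_e$. Its hypothesis, Assumption~\ref{assumption:comparison-stacky-problem}, holds trivially because all stabilizers over $\cX^{=e}$ are trivial, so $\cY^{=e}$ is an algebraic space. Alternatively, one can argue directly that $\cX^{=e}\to X^{=e}$ is an isomorphism, being a gerbe for the trivial group (\Cref{prop:X-lower-G-properties}).

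\emph{Marked.} Since $\Aut(e)$ is trivial, $\Phi_{=e}\colon\cX_{=e}\to\cX^{=e}$ is an isomorphism, so $\cF_e=\cF^e$ and the marked monodromy equals the stacky one. This also agrees with Theorem~\ref{thm: monodromy X_G vs X^G}, since $C_{\Gamma}(e)=\Gamma$.

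The only real care needed is the bookkeeping: verifying irreducibility and nonemptiness of $\cX^{=e}$ so that the restriction-to-open statement applies, and handling the ramification of $f$ by first passing to the étale locus. The group-theoretic identifications themselves are formal.
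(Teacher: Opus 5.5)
Your proposal is correct. The paper gives no argument beyond calling the statement immediate, and your write-up is the natural unpacking of that: for $G=e$ each symmetric locus is the dense open free locus, so restriction to an open, \Cref{prop:gal-f-vs-quotient} with $\pi_0(P)$ trivial, the isomorphism $\Mongeom_G\cong\Monstack_G/G$, and the triviality of $\Aut(e)$ identify all four groups with $\Gamma$. The only reordering needed is to pass to the $P$-stable finite \'etale locus of $f$ before invoking \Cref{rmk:normality-base-implies-irreducibility}, which your caveat already covers.
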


When $G$ is non-trivial, these can all be different from one another. Recall that we have already compared the marked, stacky, and geometric monodromy in Theorems~\ref{thm:stacky-basic-bound} and~\ref{thm: monodromy X_G vs X^G}. Therefore, we are left with studying the parameter monodromy and compare it to the other types.

In order to compare parameter-level monodromy with marked stacky monodromy, we first show that the projection morphism $U^G\rightarrow\cX^G$ factors through $\cX_G$.

\begin{lemma}\label{lem:factorization-UG-to-XG}
The natural $N_P(G)$-invariant map $U^G \to \mathcal{X}^{G}$ factors as
\begin{align*}
    U^G \xto{\varphi} \mathcal{X}_G \to \mathcal{X}^G,
\end{align*}
and $\varphi$ is $C_P(G)$-invariant. In particular, we get an induced morphism
\[
    \overline{\varphi}\colon{[U^{G}/C_P(G)]}\rightarrow \mathcal{X}_{G}.
\]
\end{lemma}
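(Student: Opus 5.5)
The plan is to construct $\varphi$ directly from the moduli description of $\mathcal{X}_G$ in \Cref{def:XG-stacks}. An $S$-point of $\mathcal{X}_G$ is a pair $(x,\rho)$ with $x\in\mathcal{X}(S)$ and $\rho\colon G_S\hookrightarrow\underline{\Aut}_{\mathcal{X}}(x)$ a closed immersion. For $\mathcal{X}=[U/P]$, the composite $U\to\mathcal{X}$ corresponds to the trivial $P$-torsor $P\times U\to U$ together with the action map. For any $u\colon S\to U$, the automorphism group space of the image point is the stabilizer group scheme
\[
\underline{\Stab}_P(u)=\{(p,s)\in P\times S : p\cdot u(s)=u(s)\}\subseteq P\times S.
\]
When $u$ factors through $U^G$, the constant subgroup $G_S\subseteq P_S$ lies in $\underline{\Stab}_P(u)$ by definition of $U^G$. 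This inclusion is a closed immersion, since $G$ is finite and hence closed in $P$. It gives a tautological $\rho_u$, and we set $\varphi(u)=(\bar u,\rho_u)$. This construction is functorial in $S$, so it defines $\varphi\colon U^G\to\mathcal{X}_G$. Composing with the forgetful map $\Phi\colon\mathcal{X}_G\to\mathcal{X}$ recovers $U^G\to\mathcal{X}$. Its image lies in $\mathcal{X}^G$, which gives the factorization.

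Next, I will check how $\varphi$ behaves under the $N_P(G)$-action. Take $n\in N_P(G)(S)$. The action map gives a canonical $2$-isomorphism $\bar u\cong\overline{n\cdot u}$ in $\mathcal{X}(S)$, induced by $n$ as a morphism of trivial torsors. Under this isomorphism, the stabilizers are identified by conjugation:
\[
\underline{\Stab}_P(u)\xrightarrow{\ \sim\ }\underline{\Stab}_P(n\cdot u),\qquad p\mapsto npn^{-1}.
\]
So the marking $\rho_u$ is carried to $\rho_{n\cdot u}\circ c_n$, where $c_n$ denotes conjugation by $n$ restricted to $G$. Consequently $\varphi(n\cdot u)\cong\varphi(u)$ as objects of $\mathcal{X}_G$ exactly when $c_n|_G$ is the identity, that is, when $n\in C_P(G)$. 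In general the two markings differ by the element of $\Aut(G)$ induced by $n$. This also confirms that the map to $\mathcal{X}^G$ is $N_P(G)$-invariant.

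The main point needing care is upgrading this to a $C_P(G)$-equivariant structure, which is what descent to $[U^G/C_P(G)]$ requires. Invariance on points is not enough; we need a $2$-isomorphism $\alpha\colon \varphi\circ a\Rightarrow\varphi\circ\mathrm{pr}_2$ on $C_P(G)\times U^G$ satisfying the cocycle condition on $C_P(G)\times C_P(G)\times U^G$. I will take $\alpha$ to be the isomorphism induced by the universal element $n$, as in the previous paragraph. It respects the markings precisely because $n$ commutes with $G$. The cocycle condition then follows from the corresponding cocycle condition for $U\to[U/P]$. This holds because the forgetful map $\mathcal{X}_G\to\mathcal{X}$ is faithful on isomorphisms: a morphism in $\mathcal{X}_G$ is simply a morphism in $\mathcal{X}$ compatible with the markings. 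Since $[U^G/C_P(G)]$ is the stack quotient, a $C_P(G)$-equivariant morphism $U^G\to\mathcal{X}_G$ descends to the required $\overline{\varphi}\colon[U^G/C_P(G)]\to\mathcal{X}_G$.
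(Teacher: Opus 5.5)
Your proposal is correct and is essentially the paper's argument: the marking is the tautological inclusion $G_S\subseteq\underline{\Stab}_P(u)\cong\underline{\Aut}(\bar u)$, and $C_P(G)$-invariance holds because $n\in C_P(G)$ induces the identity on $G$. Your containment is more accurate than the paper's ``$\underline{\mathrm{Stab}}(u)=G_T$'', which only holds over $U^{=G}$, and your cocycle/descent check via faithfulness of $\mathcal{X}_G\to\mathcal{X}$ makes explicit what the paper leaves implicit.

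One side claim you do not use is overstated: $\varphi(n\cdot u)\cong\varphi(u)$ does not force $n\in C_P(G)$. Isomorphisms $\bar u\cong\overline{n\cdot u}$ form a torsor under $\underline{\Stab}_P(u)$; for example, any $n\in G\setminus Z(G)$ fixes $u$. So the right condition is that $n$ lies pointwise in $C_P(G)\cdot\underline{\Stab}_P(u)$.
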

\begin{proof}
For every scheme $T$, and object $u\in U^G(T)$ mapping to $x\in\cX^G(T)$, the equalities
\[
    \underline{\Aut}(x)\cong\underline{\mathrm{Stab}}(u)=G_T\subset P_T
\]
yield a canonical closed immersion $\rho:G_T\hookrightarrow\underline{\Aut}(x)$, hence an object $(x,\rho)\in\cX_G(T)$. Moreover, $C_P(G)$ preserves the $G$-action, thus making $\varphi$ invariant.    
\end{proof}

\begin{theorem}\label{thm:comparison-UG/centralizer-MG}
    The restriction $\overline{\varphi}^{\circ}\colon[U^{=G}/C_P(G)]\rightarrow\cX_{=G}$ of $\overline{\varphi}$ to $\cX_{=G}$ sits in a commutative diagram
    \[
    \begin{tikzcd}
        {[X^{=G}/C_P(G)]}\arrow[rr,"\overline{\varphi}^{\circ}"]\arrow[rd,"\alpha"'] & & \cX_{=G}\arrow[ld,"\beta"]\\
        & \cX^{=G}
    \end{tikzcd}
    \]
    where $\alpha$ is a $N_P(G)/C_P(G)$-torsor, and $\beta$ is a $\Aut(G)$-torsor. In particular, $\overline{\varphi}^{\circ}$ is an open immersion.
\end{theorem}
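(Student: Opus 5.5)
We read the source of $\overline{\varphi}^{\circ}$ as $[U^{=G}/C_P(G)]$; the $X$ in the diagram is a typo. The plan is to build the triangle, identify $\alpha$ and $\beta$ as torsors, and then deduce the open immersion formally.

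\textbf{Commutativity and $\beta$.} The map $\beta=\Phi_{=G}$ is an $\Aut(G)$-torsor by \Cref{prop:X-lower-G-properties}(1). By construction in \Cref{lem:factorization-UG-to-XG}, $\overline{\varphi}$ composed with the forgetful map $\cX_G\to\cX^G$ is the projection $U^G\to\cX^G$. Restricting to $U^{=G}$, where stabilizers are exactly $G$, the map $\overline{\varphi}$ lands in $\cX_{=G}$, since the canonical $\rho\colon G_T\hookrightarrow\underline{\Aut}(x)\cong\underline{\Stab}(u)$ is then an isomorphism. The triangle therefore commutes, with $\alpha$ the natural projection $[U^{=G}/C_P(G)]\to\cX^{=G}$.

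\textbf{$\alpha$ is a torsor.} We use the identification $\cX^{=G}\cong[U^{=G}/N_P(G)]$ from the proof of \Cref{prop:stacky-MG-normalization}. This holds because $U^{=(G)}=P\times^{N_P(G)}U^{=G}$: any $u$ whose stabilizer is conjugate to $G$ can be moved into $U^{=G}$, uniquely up to $N_P(G)$. Under this identification, $\alpha$ is the map $[U^{=G}/C_P(G)]\to[U^{=G}/N_P(G)]$ induced by the normal subgroup $C_P(G)\trianglelefteq N_P(G)$. A change of group along a normal subgroup gives a torsor under the quotient $N_P(G)/C_P(G)$, which is finite since it embeds in $\Aut(G)$. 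Concretely, $[U^{=G}/C_P(G)]\cong[U^{=G}/N_P(G)]\times_{B(N_P(G))}B(C_P(G))$, and $B(C_P(G))\to B(N_P(G))$ is an $N_P(G)/C_P(G)$-torsor.

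\textbf{Open immersion.} Both $\alpha$ and $\beta$ are finite étale, so $\overline{\varphi}^{\circ}$ is finite étale: it is a morphism between covers of $\cX^{=G}$. We then check that it is a monomorphism, or equivalently that it is compatible with the group actions. The homomorphism $N_P(G)/C_P(G)\hookrightarrow\Aut(G)$ is given by conjugation and is injective by the definition of the centralizer. Moreover, $\overline{\varphi}^{\circ}$ is equivariant: translating $u$ by $n\in N_P(G)$ changes the marking $\rho$ by $c_n$. A morphism of torsors over $\cX^{=G}$ that is equivariant along an injective group homomorphism $K\hookrightarrow \Aut(G)$ is, étale locally on $\cX^{=G}$, the inclusion $K\hookrightarrow\Aut(G)$ of discrete sets. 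It is therefore an open and closed immersion, and in particular an open immersion. Its image is a union of connected components of $\cX_{=G}$.

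\textbf{Main obstacle.} The hardest step is making the equivariance precise at the level of stacks. One must check that the $N_P(G)$-action on $U^{=G}$ covers the twist of $\rho$ by conjugation, including the 2-categorical data (the natural transformations identifying $\underline{\Aut}(x)$ with $\underline{\Stab}(u)$ after translation). I would first verify this on $T$-points, where $n\cdot u$ has stabilizer $nGn^{-1}=G$ and the isomorphism $\underline{\Stab}(u)\to\underline{\Stab}(nu)$ is conjugation by $n$. Then I would note that fppf descent upgrades this pointwise equivariance to a morphism of torsors.
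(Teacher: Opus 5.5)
Your proposal is correct and follows essentially the same route as the paper. You build the triangle from the factorization lemma, get that $\alpha$ is an $N_P(G)/C_P(G)$-torsor from $\cX^{=G}\cong[U^{=G}/N_P(G)]$ and $C_P(G)\trianglelefteq N_P(G)$, get $\beta$ as an $\Aut(G)$-torsor from the general properties of $\cX_{=G}$, and deduce the open immersion from equivariance along the injection $N_P(G)/C_P(G)\hookrightarrow\Aut(G)$. You simply spell out the paper's one-line ``by Galois theory'' step as an étale-local translate of the inclusion of finite discrete groups, and you correctly read the source $[X^{=G}/C_P(G)]$ as a typo for $[U^{=G}/C_P(G)]$.
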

\begin{proof}
    The existence of the diagram and that $\alpha$ is a $N_P(G)/C_P(G)$-torsor follow immediately from Lemma~\ref{lem:factorization-UG-to-XG} and the isomorphism $\mathcal{X}^{=G}\cong[U^{=G}/N_P(G)]$. Moreover, $\beta$ is an $\Aut(G)$-torsor by Proposition~\ref{prop:X-lower-G-properties}. Note that there is a natural inclusion $N_P(G)/C_P(G)\hookrightarrow\Aut(G)$, and $\overline{\varphi}^{\circ}$ is $N_P(G)/C_P(G)$-equivariant. By Galois theory, it follows that $\overline{\varphi}^{\circ}$ is an open immersion.
\end{proof}

This yields an immediate comparison between the monodromy of a cover over $\cX_G$ and that over $U^G$.

\begin{corollary}\label{cor:comparison-monodromy-UG-MG}
In \Cref{setup:all-four-monodromies}, the following hold.
\begin{enumerate}
    \item There is an isomorphism
    \begin{align*}
        \Mon([V^{=G}/C_P(G)] \to [U^{=G}/C_P(G)]) \cong \Monmarked_G(\Psi)
    \end{align*}
    \item\label{point:exact-seq-parameter-marked} Let $U^{=G\circ}$ be any connected component of $U^{=G}$, and let $H\leq \pi_0(C_P(G))$ be the stabilizer of $U^{=G\circ}$ in the set of connected components of $U^{=G}$. Then, there exists a quotient of $Q$ over $H$ and an exact sequence
    \[
    1\rightarrow\Monpar_G(\Psi)\rightarrow\Monmarked_G(\Psi)\rightarrow Q\rightarrow 1.
    \]
    \item\label{point:isom-parameter-marked} If $C_P(G)$ is connected, there is an isomorphism
    \begin{align*}
        \Monpar_G(\Psi) \cong \Monmarked_G(\Psi).
    \end{align*}
\end{enumerate}
\end{corollary}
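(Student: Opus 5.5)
For point (1), the plan is to pull the cover $\cF_G\colon\cY_{=G}\to\cX_{=G}$ back along the open immersion $\overline{\varphi}^{\circ}\colon[U^{=G}/C_P(G)]\to\cX_{=G}$ from \Cref{thm:comparison-UG/centralizer-MG}. First I would identify this pullback: since $\cY=[V/P]$ and $\cX=[U/P]$, the fiber product $[U^{=G}/C_P(G)]\times_{\cX}\cY$ is $[V^{=G}/C_P(G)]$, where $V^{=G}:=V|_{U^{=G}}$. The base $\cX_{=G}$ is smooth, hence normal, so each connected component of the cover is irreducible by \Cref{rmk:normality-base-implies-irreducibility}. Restricting to the connected component $\cZ$ of $\cX_{=G}$ that contains the image of a chosen component of $[U^{=G}/C_P(G)]$, \Cref{prop:invariance-properties-monodromy}\eqref{point:restriction-to-open} then identifies the monodromy of the pulled-back cover with $\Monmarked_G(\Psi)$. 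This uses that marked monodromy is independent of the component, by \Cref{prop:X-lower-G-properties}\eqref{lowerG-can-be-disconnected}.

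A subtlety arises here: $[U^{=G}/C_P(G)]$ may be disconnected, so I need it to map into a single component. I would handle this with the transitivity hypothesis in \Cref{setup:all-four-monodromies}, which says $N_P(G)$ permutes the components of $U^{=G}$ transitively. Combined with the torsor description in \Cref{thm:comparison-UG/centralizer-MG}, this should show that the components of $[U^{=G}/C_P(G)]$ correspond to components of $\cX_{=G}$. Monodromy is then computed on one connected component, and all such components give isomorphic groups.

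For point (2), I would fix a component $U^{=G\circ}$ and let $C'\le C_P(G)$ be its stabilizer, so $C'\supseteq C_P(G)^\circ$ and $C'/C_P(G)^\circ=H$. Then $[U^{=G\circ}/C']$ is a connected component of $[U^{=G}/C_P(G)]$. Applying \Cref{prop:gal-f-vs-quotient} to the $C'$-equivariant cover $V|_{U^{=G\circ}}\to U^{=G\circ}$ gives
\[
1\to\Mon(f|_{U^{=G\circ}})\to\Mon([V|_{U^{=G\circ}}/C']\to[U^{=G\circ}/C'])\to Q\to1,
\]
with $Q$ a quotient of $\pi_0(C')=H$. By point (1), the middle group is $\Monmarked_G(\Psi)$. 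The left term equals $\Monpar_G(\Psi)$: $U^{=G\circ}$ is open and dense in the component $U^{G\circ}$ of the smooth scheme $U^G$, so \Cref{prop:invariance-properties-monodromy}\eqref{point:restriction-to-open} applies, as the components of the cover are irreducible by normality. The paper's $Q$ being ``a quotient of $Q$ over $H$'' I read as exactly this: $Q$ is a quotient of $H$.

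Point (3) then follows at once. If $C_P(G)$ is connected, then $\pi_0(C_P(G))$ is trivial, so $H$ and hence $Q$ are trivial, and the exact sequence collapses to an isomorphism. I expect the main obstacle to be the bookkeeping in point (1), namely confirming that the open immersion of a possibly disconnected quotient stack lands in, and has dense image in, a single component of $\cX_{=G}$. The remaining steps are direct applications of earlier results.
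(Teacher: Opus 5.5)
Your proposal is correct and follows the paper's proof. For point (1), both arguments use the open immersion of \Cref{thm:comparison-UG/centralizer-MG}; the paper notes that it identifies a pair of connected components, which makes your appeal to restriction to an open substack a harmless variant. For point (2), applying \Cref{prop:gal-f-vs-quotient} to the stabilizer $C'$ of the component (with $\pi_0(C')=H$) is exactly the paper's factorization through $[U^{=G}/C_P^\circ(G)]$ followed by \Cref{prop:invariance-properties-monodromy}\eqref{point:pullback-of-monodromy-along-flat-map} and \eqref{point:magic-square}, with $Q$ a quotient of $H$ as you read it, and point (3) then follows as you say.
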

\begin{proof}
    Recall that it makes sense to consider the monodromy over $\cX_{=G}$ even though $\cX_{=G}$ is not necessarily connected, as all its connected components are permuted by the action of $\Aut(G)$, hence isomorphic. A similar discussion holds for $[U^{=G}/C_P(G)]$ and $U^{=G}$.
    
    The first point follows from Theorem~\ref{thm:comparison-UG/centralizer-MG}, as it says that $[U^{=G}/C_P(G)]\rightarrow\cX_{=G}$ induces an isomorphism between a pair of connected components.

    For the second point, consider the factorization $U^{=G}\rightarrow[U^{=G}/C_P^\circ(G)]\rightarrow[U^{=G}/C_P(G)]$, and note that the first morphism induces an isomorphism between monodromy groups by Proposition~\ref{prop:invariance-properties-monodromy}\eqref{point:pullback-of-monodromy-along-flat-map}. The second morphism is a $\pi_0(C_P(G))=C_P(G)/C_P^{\circ}(G)$-torsor, and $[U^{=G\circ}/C_P^\circ(G)]$ is a connected component of $[U^{=G}/C_P^\circ(G)]$. Then, the result follows from Proposition~\ref{prop:invariance-properties-monodromy}\eqref{point:magic-square}.
    
    Point~\eqref{point:isom-parameter-marked} follows from the second point as $H=0$. Alternatively, apply directly Proposition~\ref{prop:invariance-properties-monodromy}\eqref{point:pullback-of-monodromy-along-flat-map} and the first point of this corollary.
\end{proof}

\begin{corollary}\label{cor:comparison-monodromy-UG-MG-specialcase}
In \Cref{setup:all-four-monodromies}, assume that $\mathcal{X}^{=G}$ is irreducible, that $G$ acts on the fibers of $\mathcal{Y} \to \mathcal{X}$ faithfully (equivalently $\mathcal{Y}^{=G}$ is an algebraic space). Then, there is an inclusion
\[
    \Monpar_G(\Psi) \leq C_{\Monstack_G(\Psi)}(G).
\]
Moreover, if $C_P(G)$ is (geometrically) connected, then the inclusion is an equality.

\end{corollary}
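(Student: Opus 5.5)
The proof is a direct combination of \Cref{cor:comparison-monodromy-UG-MG} with \Cref{thm: monodromy X_G vs X^G}. The first result compares parameter-level monodromy with marked stacky monodromy. The second identifies marked stacky monodromy with the centralizer of $G$ inside stacky monodromy.

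\textbf{Step 1: the hypotheses hold.} Irreducibility of $\mathcal{X}^{=G}$ gives connectedness, so \Cref{setup:stacky-problem} applies to $\cF\colon[V/P]\to[U/P]$. This is after restricting to the open locus over which $f$ is finite étale, which changes no monodromy group by \Cref{prop:invariance-properties-monodromy}\eqref{point:restriction-to-open} and \Cref{def:monodromy-generically-covering-maps}. The faithfulness assumption is exactly Assumption~\ref{assumption:comparison-stacky-problem}. Hence \Cref{thm: monodromy X_G vs X^G}\eqref{point 2} yields
\[
\Monmarked_G(\Psi)\cong C_{\Gamma}(G)\cap \Monstack_G(\Psi)=C_{\Monstack_G(\Psi)}(G).
\]

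\textbf{Step 2: the inclusion.} By \Cref{cor:comparison-monodromy-UG-MG}\eqref{point:exact-seq-parameter-marked}, $\Monpar_G(\Psi)$ injects into $\Monmarked_G(\Psi)$. Composing with Step 1 gives $\Monpar_G(\Psi)\leq C_{\Monstack_G(\Psi)}(G)$.

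A small compatibility check is needed here. All these groups are regarded as subgroups of $\Gamma$, acting on a fiber of $f$ over a point $u\in U^{=G}$, and the inclusions should be the natural ones. Fix $u\in U^{G\circ}\cap U^{=G}$. Its image in $\cX^{=G}$ and its lift to $\cX_{=G}$ via $\varphi$ from \Cref{lem:factorization-UG-to-XG} all have canonically the same fiber $f^{-1}(u)$. Moreover, the maps $U^{=G\circ}\to[U^{=G}/C_P(G)]\to\cX_{=G}\to\cX^{=G}$ are compatible pullbacks of the cover. So each monodromy map in \Cref{prop:invariance-properties-monodromy}\eqref{point:injectivity-pullback} is the inclusion of permutation groups of this common fiber.

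We should also make sure that $U^{G\circ}$ may be replaced by $U^{=G}\cap U^{G\circ}$. This open subset is nonempty, since it contains $x$, and $U^G$ is smooth, so \Cref{prop:invariance-properties-monodromy}\eqref{point:restriction-to-open} applies.

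\textbf{Step 3: equality when $C_P(G)$ is connected.} If $C_P(G)$ is geometrically connected, then $\pi_0(C_P(G))$ is trivial. \Cref{cor:comparison-monodromy-UG-MG}\eqref{point:isom-parameter-marked} then gives $\Monpar_G(\Psi)\cong\Monmarked_G(\Psi)$, and combining with Step 1 gives equality.

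The only genuinely delicate point is that this isomorphism is the inclusion from Step 2, not just an abstract isomorphism. Since $U^{=G\circ}\to[U^{=G}/C_P(G)]$ is faithfully flat with geometrically irreducible fibers, \Cref{prop:invariance-properties-monodromy}\eqref{point:pullback-of-monodromy-along-flat-map} shows this pullback preserves the monodromy group as a subgroup acting on the same fiber. Equality of the subgroups of $\Gamma$ follows.
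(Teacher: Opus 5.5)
Your proposal is correct and follows the paper's proof. The inclusion comes from Corollary~\ref{cor:comparison-monodromy-UG-MG}\eqref{point:exact-seq-parameter-marked} together with Theorem~\ref{thm: monodromy X_G vs X^G}\eqref{point 2}, and the equality when $C_P(G)$ is connected comes from Corollary~\ref{cor:comparison-monodromy-UG-MG}\eqref{point:isom-parameter-marked} together with the same theorem. Your extra checks are sound and fill in details the paper leaves implicit: restricting to the \'etale locus, replacing $U^{G\circ}$ by $U^{=G}\cap U^{G\circ}$, and checking that all groups embed compatibly in $\Gamma$ through a common fiber.
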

\begin{proof}
    The inclusion follows from Corollary~\ref{cor:comparison-monodromy-UG-MG}\eqref{point:exact-seq-parameter-marked} and Theorem~\ref{thm: monodromy X_G vs X^G}\eqref{point 2} (or Theorem~\ref{thm:stacky-basic-bound} \eqref{eq:inclusion-marked}). When $C_P(G)$ is connected, the result follows from Corollary~\ref{cor:comparison-monodromy-UG-MG}\eqref{point:isom-parameter-marked} and Theorem~\ref{thm: monodromy X_G vs X^G}\eqref{point 2}.
\end{proof}

\begin{remark}\label{rmk:classical-proof}
    The inclusion in Corollary~\ref{cor:comparison-monodromy-UG-MG-specialcase} can also be proved via more classical methods, as follows. As $G$ fixes every point in $U^G$, when we consider the covering space $V|_{U^{G\circ}} \to U^{G\circ}$, we can think about $G$ as acting via the deck group. It then suffices to argue the deck group action commutes with the monodromy action on the fiber. If $\gamma \in \pi_1(U^{G\circ},x)$, and $y \in V_x$, we  let $\gamma_{y}$ denote the lift of $\gamma$ from $U^{G\circ}$ to $V$ which starts at $y$. Acting via $g$, we get that $g\cdot \gamma_{y}$ is a lift of $\gamma$ which starts at $g\cdot y$. However by uniqueness of lifts, we necessarily have that $g\cdot \gamma_{y} = \gamma_{g\cdot y}$. Thus the deck group and monodromy group actions on the fiber commute.
\end{remark}

\begin{proposition}\label{prop:SES-par-stack-monodromy}
In Setup~\ref{setup:all-four-monodromies}, let $(U^{=G})^\circ$ be any connected component of $U^{=G}$, and let $H\le \pi_0 N_P(G)$ be its stabilizer in the set of connected components. Then we have a short exact sequence of groups
\[
    1 \to \Monpar_G(\Psi) \to \Monstack_G(\Psi) \to Q\to 1,
\]
and the cokernel $Q$ is a quotient of $H$.
\end{proposition}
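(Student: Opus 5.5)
We mimic the proof of Corollary~\ref{cor:comparison-monodromy-UG-MG}\eqref{point:exact-seq-parameter-marked}, replacing the centralizer $C_P(G)$ by the normalizer $N_P(G)$. The starting point is the identification $\cX^{=G}\cong[U^{=G}/N_P(G)]$ used in the proof of Proposition~\ref{prop:stacky-MG-normalization}. Pulling the cover $\cF$ back along this isomorphism gives
\[
\cY^{=G}\cong[V|_{U^{=G}}/N_P(G)]\longrightarrow[U^{=G}/N_P(G)].
\]
Therefore $\Monstack_G(\Psi)$ is the monodromy of the $N_P(G)$-equivariant cover $V|_{U^{=G}}\to U^{=G}$, taken in the stacky sense.

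We then factor the quotient map in two steps:
\[
U^{=G}\to[U^{=G}/N_P(G)^\circ]\to[U^{=G}/N_P(G)].
\]
The first morphism is faithfully flat with geometrically irreducible fibers, since $N_P(G)^\circ$ is connected. So Proposition~\ref{prop:invariance-properties-monodromy}\eqref{point:pullback-of-monodromy-along-flat-map}, applied to each connected component, shows that the monodromy of the cover over $[(U^{=G})^\circ/N_P(G)^\circ]$ equals the monodromy over $(U^{=G})^\circ$. One point needs checking here: $N_P(G)^\circ$ preserves each connected component of $U^{=G}$, because it is connected. Hence $[(U^{=G})^\circ/N_P(G)^\circ]$ is a connected component of $[U^{=G}/N_P(G)^\circ]$.

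The monodromy over $(U^{=G})^\circ$ is $\Monpar_G(\Psi)$. Definition~\ref{def:parameter-level-monodromy} uses a component of $U^G$ rather than of $U^{=G}$, so we must also reconcile these. Since $U^{=G}$ is open in $U^G$ and $U^G$ is smooth, the relevant component of $U^{=G}$ is a dense open subset of a component of $U^G$. Then Proposition~\ref{prop:invariance-properties-monodromy}\eqref{point:restriction-to-open} identifies the two monodromy groups, using Remark~\ref{rmk:normality-base-implies-irreducibility} to guarantee that the components of the cover are irreducible.

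The second morphism $[U^{=G}/N_P(G)^\circ]\to[U^{=G}/N_P(G)]$ is a $\pi_0(N_P(G))$-torsor. By the transitivity hypothesis in Setup~\ref{setup:all-four-monodromies}, $N_P(G)$ acts transitively on the components of $U^{=G}$. Consequently the connected component $\cW:=[(U^{=G})^\circ/N_P(G)^\circ]$ maps onto the connected stack $\cX^{=G}$. Moreover, $\cW\to\cX^{=G}$ is a Galois cover with group $H$, the stabilizer of $(U^{=G})^\circ$ in $\pi_0N_P(G)$.

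Now apply Proposition~\ref{prop:invariance-properties-monodromy}\eqref{point:injectivity-pullback},\eqref{point:magic-square} with $g\colon\cW\to\cX^{=G}$ and $f$ the Galois closure of $\cF^G$. This gives an injection $\Monpar_G(\Psi)=\Mon(g^*f)\hookrightarrow\Monstack_G(\Psi)$. Its cokernel is a common quotient of $\Mon(g)=H$ and of $\Monstack_G(\Psi)$; by Remark~\ref{rmk: maximal intermediate cover} it is the monodromy of the maximal common intermediate Galois cover. The cokernel is a group, and the image is normal, because the exact sequence~\eqref{eq: exact sequence magic square} exhibits it as a kernel.

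Passing to the Galois closure is harmless, since by Definition~\ref{def:stacky-monodromy} monodromy only depends on $N_f$. The fiber product with a disconnected $\cF^G$ is handled exactly as in the proof of Proposition~\ref{prop:invariance-properties-monodromy}\eqref{point:injectivity-pullback}.

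The main obstacle is bookkeeping rather than substance. We must verify that $\cW\to\cX^{=G}$ really is a connected Galois cover with group exactly $H$. This uses the transitivity assumption, and also that the $N_P(G)$-action on $\pi_0(U^{=G})$ factors through $\pi_0N_P(G)$. We must also check that the identification $\cX^{=G}\cong[U^{=G}/N_P(G)]$ is compatible with the covers. If the Galois property of $g$ proves awkward to state directly, the fallback is to argue via Proposition~\ref{prop:gal-f-vs-quotient} applied to the group $\mathrm{Stab}_{N_P(G)}((U^{=G})^\circ)$, whose component group is $H$.
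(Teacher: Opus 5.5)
Your proposal is correct and is essentially the paper's proof. You factor $U^{=G}\to[U^{=G}/N_P(G)]\cong\cX^{=G}$ through $[U^{=G}/N_P(G)^\circ]$, then apply \Cref{prop:invariance-properties-monodromy}\eqref{point:pullback-of-monodromy-along-flat-map} to the first map and \Cref{prop:invariance-properties-monodromy}\eqref{point:magic-square} to the $\pi_0 N_P(G)$-torsor, exactly as in \Cref{cor:comparison-monodromy-UG-MG}\eqref{point:exact-seq-parameter-marked}. Your extra bookkeeping is also correct and fills in details the paper leaves implicit: $N_P(G)^\circ$ preserves components, so the chosen component is an $H$-Galois cover of $\cX^{=G}$, and the component of $U^{=G}$ is dense open in the component of $U^G$ used in \Cref{def:parameter-level-monodromy}.
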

\begin{proof} Recall that $\cX^{=G} = [U^{=G}/N_P(G)]$. We can factor the quotient map $U^{=G} \to \mathcal{X}^{=G}$ first through the $N_P^{\circ}(G)$-torsor $U^{=G}\rightarrow[U^{=G}/N_P(G)]$, and then the $\pi_0 N_P(G)$-torsor $[U^{=G}/N_P(G)]\rightarrow\cX^{=G}$. Then, we can conclude as in the proof of Corollary~\ref{cor:comparison-monodromy-UG-MG}\eqref{point:exact-seq-parameter-marked}, applying Proposition~\ref{prop:invariance-properties-monodromy}\eqref{point:pullback-of-monodromy-along-flat-map} to the first torsor, and Proposition~\ref{prop:invariance-properties-monodromy}\eqref{point:magic-square} to the second.
\end{proof}

\section{Moduli spaces of symmetric hypersurfaces}\label{sec:moduli-spaces}

Recall that the parameter space of degree $d$ hypersurfaces in $\mathbb{P}^n$ is given by $\P^{\binom{n+d}{d}-1}$. Inside this locus is a discriminant hypersurface $\Delta_{n,d} \subset \P^{\binom{n+d}{d}-1}$ parametrizing singular hypersurfaces. We make the following notational convention:

\begin{notation}\label{nota:Und-Mnd} For $n,d\ge1$ we denote by $U_{n,d} := \mathbb{P}^{\binom{n+d}{d}-1} \setminus \Delta_{n,d}$ the (affine) space of smooth degree $d$ hypersurfaces in $\mathbb{P}^n$. The convention in this paper is to let $\PGL_n$ act on $U_{n,d}$ on the right: a matrix $M$ acts on a form $f$ by $M\cdot f= f\circ M^{-1}$. Thus we denote by
\[
\mathcal{M}_{n,d} = \left[ U_{n,d} / \PGL_{n+1}\right]
\]
the associated quotient stack, and $M_{n,d}$ its coarse moduli space.
\end{notation}

In this paper we will be primarily interested in $U_{3,3}$, parametrizing smooth cubic surfaces, and $U_{2,4}$, parametrizing smooth planar quartics. Nevertheless some general facts are true of these quotient stacks, which we record here:

\begin{proposition}[{\cite{matsumuraAutomorphismsHypersurfaces1963}}]
Over a field of characteristic zero, we have that $\mathcal{M}_{n,d}$ is a smooth and irreducible algebraic stack of dimension $\binom{n+d}{d} - (n+1)^2$, and it is Deligne-Mumford for $d\ge 3$. The coarse moduli space $M_{n,d}$ is isomorphic to the GIT quotient $U_{n,d}\gitquot\PGL_{n+1}$. For $d\ge 3$ in characteristic zero, it is irreducible and quasi-projective of dimension $\binom{n+d}{d} - (n+1)^2$. It is smooth at points parametrizing hypersurfaces with trivial automorphism group.
\end{proposition}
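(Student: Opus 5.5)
The plan is to deduce everything from standard facts about the $\PGL_{n+1}$-action on $U_{n,d}$, largely following Matsumura--Monsky.

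\emph{Smoothness, irreducibility and dimension of the stack.} First, $U_{n,d}$ is an open subset of a projective space, hence smooth and irreducible of dimension $\binom{n+d}{d}-1$. Second, $\PGL_{n+1}$ is smooth and connected, so the quotient stack $\mathcal{M}_{n,d}=[U_{n,d}/\PGL_{n+1}]$ is a smooth algebraic stack. It is irreducible because the atlas $U_{n,d}\to\mathcal{M}_{n,d}$ is smooth and surjective with irreducible source. Its dimension is
\[
\dim U_{n,d}-\dim\PGL_{n+1}=\binom{n+d}{d}-1-\bigl((n+1)^2-1\bigr)=\binom{n+d}{d}-(n+1)^2 .
\]

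\emph{Deligne--Mumford for $d\ge3$.} I would show that stabilizers are finite and reduced. In characteristic zero every group scheme is reduced, so it suffices to check finiteness. This is the theorem of Matsumura--Monsky: a smooth hypersurface of degree $d\ge3$ has finite linear automorphism group, except for $(n,d)=(1,3)$, and the case $(n,d)=(2,4)$ is treated separately there. The standard argument computes the Lie algebra of the stabilizer. Its tangent space consists of the derivations $\sum a_{ij}x_j\partial_i$ that preserve $F$ up to scalar. Smoothness means the partials $\partial_iF$ form a regular sequence, and for $d\ge3$ a Koszul/degree argument forces any linear relation $\sum \ell_i\partial_iF=\lambda F$ to be a multiple of the Euler relation. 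Hence the Lie algebra in $\mathfrak{pgl}$ vanishes, and the stabilizer is a finite (étale) group. Unramified diagonal plus finite stabilizers then gives the Deligne--Mumford property.

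\emph{Coarse space.} For the coarse moduli space I will use GIT. Smooth hypersurfaces of degree $d\ge3$ are GIT-stable, again by Matsumura--Monsky or Mumford's GIT: stability with finite stabilizers follows from the Hilbert--Mumford criterion, since the discriminant is an invariant not vanishing on $U_{n,d}$. Therefore $U_{n,d}$ is a $\PGL_{n+1}$-invariant open subset of the stable locus. In fact it is the complement of the invariant hypersurface $\Delta_{n,d}$, so the geometric quotient $U_{n,d}\gitquot\PGL_{n+1}$ is an affine open subset of the projective GIT quotient, and in particular quasi-projective. The action is proper with finite stabilizers, so Keel--Mori gives a coarse space for $\mathcal{M}_{n,d}$. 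The geometric quotient is categorical and has geometric points equal to orbits, hence it is this coarse space. Irreducibility and the dimension then follow from surjectivity of $U_{n,d}\to M_{n,d}$ and from the fibers being orbits of dimension $\dim\PGL_{n+1}$.

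\emph{Smoothness at points with trivial automorphisms.} By Luna's étale slice theorem, étale locally near the orbit of $u$ the quotient looks like $N_u/\Stab(u)$, where $N_u$ is the normal slice. When $\Stab(u)$ is trivial this is smooth. Equivalently, the coarse map $\mathcal{M}_{n,d}\to M_{n,d}$ is an isomorphism over the locus where the inertia is trivial.

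The main obstacle is the finiteness of stabilizers together with the stability of smooth hypersurfaces. I would cite Matsumura--Monsky and Mumford for these steps rather than reprove them.
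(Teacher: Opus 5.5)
Your argument is correct, and it is essentially the standard proof behind the Matsumura--Monsky citation the paper relies on (the paper gives no proof of its own): finite stabilizers via the Koszul/Lie algebra computation, stability of every point of the invariant affine open $U_{n,d}=\P^{\binom{n+d}{d}-1}\setminus\Delta_{n,d}$, and then GIT, Keel--Mori and Luna's slice theorem for the coarse space. Two small corrections: your own Koszul argument shows $\PGL_{n+1,X}$ is finite for \emph{every} $d\ge 3$, so drop the exception you attribute to Matsumura--Monsky (their exceptional pairs, plane cubics and quartic surfaces as in the remark after the proposition, concern whether $\Aut(X)=\PGL_{n+1,X}$, not finiteness); and stability needs no Hilbert--Mumford analysis, since every orbit in $U_{n,d}$ has dimension $\dim\PGL_{n+1}$ and is therefore closed in $U_{n,d}$.
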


We will be interested in studying \emph{symmetric} hypersurfaces, i.e. those that are fixed under some finite group of projective symmetries $G \le \PGL_{n+1}$. We define the \emph{isotropy group} of projective symmetries as follows:

\begin{notation}
For $X\in U_{n,d}$, we denote by
\[
\PGL_{n+1,X} = \{g\in \PGL_{n+1} \mid gX = X\}.
\]
\end{notation}

\begin{remark}
For $d\ge 3$, and for all $(n,d)$ except $(n,d) = (2,3)$ (plane cubics) and $(n,d) = (3,4)$ (quartic surfaces), we have that $\PGL_{n+1,X}$ is equal to the automorphism group of the variety $\Aut(X)$ \cite{matsumuraAutomorphismsHypersurfaces1963}.
\end{remark}

To that end, we make the following definitions:

\begin{notation} Let $G\le \PGL_{n+1}$ be a finite subgroup. Then we denote by
\begin{align*}
    U_{n,d}^G &= \{ X\in U_{n,d} \mid G \le \PGL_{n+1,X} \} \\
    U_{n,d}^{=G} &= \{ X\in U_{n,d} \mid G =\PGL_{n+1,X} \}
\end{align*}
In words, $U_{n,d}^G$ is the space of hypersurfaces fixed by $G$, and $U_{n,d}^{=G}$ is the space of hypersurfaces fixed by $G$ \emph{and nothing more}.
\end{notation}

We may also want to consider hypersurfaces which are $G$-fixed after a change of coordinates. That is, they may not be fixed literally by $G$, but by some conjugate copy of $G$ in $\PGL_{n+1}$. To that end we define the notation
\begin{align*}
    U_{n,d}^{(G)} &= \{X\in U_{n,d} \mid gGg^{-1} \leq \PGL_{n+1,X} \text{ for some } g\in \PGL_{n+1}\} \\
    U_{n,d}^{=(G)} &= \{X\in U_{n,d} \mid gGg^{-1} = \PGL_{n+1,X} \text{ for some } g\in \PGL_{n+1}\}.
\end{align*}
It is clear that
\begin{align*}
    U_{n,d}^{(G)} = \bigcup_{g\in \PGL_{n+1}}gU_{n,d}^G.
\end{align*}

Since $G$ is linearly reductive (by assumption, we are in characteristic zero), we have that $U_{n,d}^G$ is smooth, however note that it can fail to be irreducible. This is because the $G$-fixed locus may not be a single projective space, but a union of projective spaces coming from different 1-dimensional characters of the group.

\begin{example} Consider the space of cubic surfaces which are fixed by the cyclic group of order two acting via $[x:y:z:w] \mapsto [x:y:-z:-w]$. We have two character eigenspaces, both containing smooth cubic surfaces. Thus for \emph{this cyclic group} of order two, we have that $U_{3,3}^{C_2}$ is not irreducible.\footnote{This is not an issue for us because this $C_2$ is not itself the automorphism group of a cubic surface. When a cubic surface admits an involution of this form, it is necessarily of Type X, and is therefore fixed under the involution $w\mapsto -w$ as well. Under this $C_2\times C_2$ action, we only see one character eigenspace with smooth cubic forms in it, as we compute in \Cref{prop:Utype10}.}
\end{example}

\begin{remark} While the specific $U_{n,d}^G$ under consideration in this paper are in fact irreducible (this requires proof), when we start to take orbits of them under $\PGL_{n+1}$, we will see overlapping irreducible components. In particular this tells us that the $U_{n,d}^{(G)}$ varieties are not, in general, normal.
\end{remark}

\subsection{The moduli spaces of symmetric cubic surfaces}

We now explore the spaces $U_{3,3}^G$ as $G$ ranges over the possible automorphism groups of smooth cubic surfaces over the complex numbers. Recall the following classification, which can be found for instance in \cite{Dolgachev}\footnote{There are errata in the table there that have been corrected in the forthcoming second edition of Dolgachev's \emph{Classical Algebraic Geometry}.} (see also \cite{brundu2023construction}).
\begin{theorem}\label{thm:possible-aut-gps}
For $X \in U_{3,3}$, there are exactly 11 possible nontrivial values for $\Aut(X) = \PGL_4(\C)_X$, up to conjugacy in $\PGL_4(\C)$. These groups, laid out in a poset corresponding to subconjugacy in $W(E_6)$ or $\PGL_4$\footnote{These two notions can be verified to agree since we are working over a closed field of characteristic zero.}, and decorated with their ``Type,'' is as follows:
\[
\begin{tikzcd}[column sep=large]
     \gpdiagram{II}{S_5} & \gpdiagram{I}{C_3^{\times 3}\rtimes S_4} & \gpdiagram{III}{H_3(3)\rtimes C_4} & \gpdiagram{VII}{C_8}\\
    \gpdiagram{V}{S_4}\ar[u]\ar[ur] & \gpdiagram{VI}{S_3\times C_2}\ar[ul]\ar[u]   & \gpdiagram{IV}{H_3(3)\rtimes C_2}\uar\ar[ul] & \gpdiagram{IX}{C_4}\uar\ar[ul]\\
    \gpdiagram{X}{C_2^{\times 2}}\ar[u]\ar[ur]  & \gpdiagram{VIII}{S_3}\ar[ul]\ar[u]\ar[ur] &  & \\
      & \gpdiagram{XI}{C_2}\uar\ar[ul]\ar[uurr] &  & 
\end{tikzcd}
\]
\end{theorem}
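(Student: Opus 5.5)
The statement is classical (Segre, Hosoh, Dolgachev), so the plan is to reconstruct it along standard lines rather than invent a new argument. The first step reduces the problem to \emph{finite} subgroups. Since $X$ is a smooth cubic surface, $\Aut(X)=\PGL_4(\C)_X$ is finite by \cite{matsumuraAutomorphismsHypersurfaces1963}. It acts faithfully on $\mathrm{Pic}(X)\cong \mathbb{Z}^{1,6}$ preserving $K_X$, hence embeds into $W(E_6)$. Faithfulness holds because an automorphism fixing all 27 lines fixes the six points of a blowdown $X\to\P^2$; four of these are in general position, so the automorphism is trivial.

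The second step is to show that every nontrivial automorphism $g$ is conjugate, as a projective transformation, to one of a short list of diagonal forms. Lift $g$ to $\GL_4$ and diagonalize it (it has finite order). The cubic form $F$ must be semi-invariant, meaning that every monomial appearing in $F$ has the same eigencharacter. Smoothness then forces strong constraints: for each coordinate $x_i$, some monomial $x_i^2x_j$ or $x_i^3$ must occur. A case analysis on eigenvalue patterns of orders $2,3,4,5,6,8,12$ yields the conjugacy classes $4A,3A,3C,3D,2A,2B,\ldots$ together with normal forms. For example, an involution $w\mapsto -w$ gives $F=w^2L(x,y,z)+C(x,y,z)$. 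I would record, for each class, the normal form and the dimension of the fixed family.

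The third step assembles the groups. For each cyclic normal form, I would impose a second generator and identify the full stabilizer of the generic member of each resulting family. This produces Types I--XI: Fermat gives $C_3^3\rtimes S_4$, Clebsch gives $S_5$, and so on, ending with the generic involution family giving $C_2$. I would compare generic-member stabilizers to see which family specializes to which; this yields the arrows of the poset. Subconjugacy in $W(E_6)$ versus in $\PGL_4$ can be matched using the conjugacy class data of \cite{Dolgachev} (checked in GAP).

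The main obstacle is \emph{completeness and exactness} of the list. One must show both that no other finite group occurs and that each listed group is the full stabilizer of a generic member, rather than a proper subgroup of a larger group. For the first point, I would use that $\Aut(X)\le W(E_6)$ and that each element's class is among those found in the second step; together with Sylow and order bounds, this restricts the candidates. For the second point, I would verify genericity via the dimension count of $U^{=G}$ versus $U^{G'}$ for the overgroups $G'$.
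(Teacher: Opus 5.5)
The paper gives no proof of this statement; it is recalled from \cite{Dolgachev} (with the errata noted in the footnote) and \cite{brundu2023construction}. Your Steps 1 and 2 are the right opening of the classical argument. The step you yourself flag as the main obstacle, however, is a genuine gap, and the tool you propose for it cannot close it. Knowing which $W(E_6)$-classes of elements are realized, together with Sylow and order bounds, does not cut the candidates down to the eleven groups.

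\emph{A counterexample to your test.} The even sign changes in $W(D_4)\subset W(E_6)$ form a subgroup $C_2^{\times 3}$. Its nontrivial elements have Carter types $2A_1$ (six of them) and $4A_1$ (one), which are exactly the classes of the involutions occurring in Types X and XI. So this group passes every test you list, yet it acts on no smooth cubic surface. None of the eleven groups contains it; directly, after linearizing on $H^0(X,-K_X)$ and twisting by a character, the coordinates carry characters $1,a,b,c$ with $a,b,c$ independent, and then every semi-invariant cubic form is divisible by a coordinate. Similarly, $A_5\le S_5$ passes all elementwise tests. Ruling it out as a full automorphism group requires showing that every smooth cubic with an $A_5$-action is the Clebsch surface. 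That is invariant theory of the four-dimensional representations of $A_5$, not group theory inside $W(E_6)$.

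\emph{What is missing.} You need a classification of the candidate groups directly as finite subgroups of $\PGL_4$ preserving a smooth cubic form. For instance:
first determine the possible Sylow $p$-subgroups ($p=2,3,5$) through their representations on $H^0(X,-K_X)\cong\C^4$ and their semi-invariant cubics; for non-abelian ones such as $H_3(3)$ this means using their irreducible representations rather than simultaneous diagonalization.
Then, in each resulting normal-form family, compute the full stabilizer of every member, generic and special.
This is the same kind of computation as in \Cref{sec:U33}, but it must be run over all candidates rather than only over the eleven known answers. Your ``impose a second generator'' does not say which generators to impose or why the search is exhaustive.

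\emph{A smaller slip.} Your list of orders omits $9$. In the Fermat group, let $\sigma$ cyclically permute $x,y,z$ and set $g=\mathrm{diag}(1,1,\zeta_3,1)\,\sigma$. Then $g^3=\mathrm{diag}(\zeta_3,\zeta_3,\zeta_3,1)\neq 1$ in $\PGL_4$, so $g$ has order $9$ (Dolgachev's class 9A). Your completeness test compares against the classes found in Step 2, so the cyclic analysis must cover every element order of $W(E_6)$, or be done uniformly in the order. That includes $9$, and also $10$, which has to be excluded.
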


\begin{theorem}\label{thm:irred-UG}
For each group $G$ which appears in \Cref{thm:possible-aut-gps}, we have that $U_{3,3}^G$ and $U_{3,3}^{(G)}$ are closed irreducible subvarieties of $U_{3,3}$. Their dimensions, together with the normalizer of $G$ in $\PGL_4$ is listed in the following table:
\begin{center}
    \footnotesize
    \begin{tabular}{r c | c c | c c | c }%
    type & $G$ & 
    $\dim U_{3,3}^{G}$ & $\dim U_{3,3}^{(G)}$ & 
    $N_{\PGL_4}(G)$ & $C_{\PGL_4}(G)$ & 
    ref \\
    \hline
    \begin{filecontents*}{symmcubics.csv}
grptype,grpname,Atlas,Carter,Eckardt,dimUG,dimUGG,Tu,NPGL,pizeroNPGL,centralizer,propref
I,$C_3^{\times 3} \rtimes S_4$,3C,$A_2$,18,0,15,$H_{10}^{(18)}$,$C_3^{\times 3}\rtimes S_4$,$C_3^{\times 3}\rtimes S_4$,0,\ref{prop:Utype1}
II,$S_5$,5A,$A_4$,10,0,15,$H_{10}^{(10)}$,$S_5$,$S_5$,0,\ref{prop:Utype2}
III,$H_3(3)\rtimes C_4$,12A,$E_6$,9,1,15,?,\ref{eqn:normalizer-type-3-cubics},$C_3^{\times 2}\rtimes Q_8$,$\mathbb{C}^\times$,\ref{prop:Utype3}
IV,$H_3(3)\rtimes C_2$,3A,$3A_2$,9,2,16,$H_4^{(9)}$,\ref{eqn:normalizer-type-4-cubics},$C_3^{\times 2}\rtimes \SL_2(\mathbb{F}_3)$,$\mathbb{C}^\times$,\ref{prop:Utype4}
V,$S_4$,4B,$A_1+A_3$,6,2,16,$H_4^{(6)}$,$S_4\times \C^\times$,$S_4$,$\mathbb{C}^\times$,\ref{prop:Utype5}
VI,$S_3\times C_2$,6E,$A_1A_5$,4,3,16,$H_4^{(4)}$,$S_3\times (\C^\times)^2$,$S_3$,$(\mathbb{C}^\times)^{2}$,\ref{prop:Utype6}
VII,$C_8$,8A,$D_5$,1,3,15,tuhere,$(\C^\times)^3$,0,$(\C^\times)^3$,\ref{prop:Utype7}
VIII,$S_3$,3D,$2A_2$,3,6,17,$H_3$,$S_3\times \GL_2(\C)$,$S_3$,$\GL_2(\C)$,\ref{prop:Utype8}
IX,$C_4$,4A,$D_4(a_1)$,1,6,16,,$\GL_2(\C)\times \C^\times$,0,$\GL_2(\C)\times \mathbb{C}^\times$,\ref{prop:Utype9}
X,$C_2^{\times 2}$,2B,$2A_1$,2,7,17,$H_2^{(2)}$,$(\GL_2(\C) \times \C^\times) \rtimes C_2$,$C_2$,$\GL_2(\C)\times \C^\times$,\ref{prop:Utype10}
XI,$C_2$,2A,$4A_1$,1,12,18,$H_1$,$\GL_3(\C)$,0,$\GL_3(\C)$,\ref{prop:Utype11}
\end{filecontents*}
\csvreader[head to column names]{symmcubics.csv}{}% 
    {\grptype & \grpname & \dimUG & \dimUGG & \NPGL & \centralizer & \propref \\}% 
    \end{tabular}
\end{center}
\end{theorem}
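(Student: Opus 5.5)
The proof is a case-by-case analysis over the eleven groups of \Cref{thm:possible-aut-gps}, each handled in its own proposition (the ones referenced in the last column of the table). For each type we fix an explicit normal form: a representative $G\le\PGL_4(\C)$, lifted to a linear subgroup $\widetilde G\le\GL_4(\C)$, together with the standard normal form of a cubic surface with that automorphism group (e.g.\ Sylvester/Clebsch forms for $S_5$ and $S_4$, the Fermat cubic for type I, $w^3+wx^2+\dots$ type forms for $C_8$ and $C_4$, and $w^2\ell(x,y,z)+c(x,y,z)$ for the involution in type XI). The cubic forms fixed projectively by $G$ are exactly the $\widetilde G$-semi-invariant cubics, so
\[
U_{3,3}^G=\bigsqcup_{\chi}\bigl(\P(\mathrm{Sym}^3(\C^4)^\vee_\chi)\setminus\Delta_{3,3}\bigr),
\]
the union over one-dimensional characters $\chi$ of $\widetilde G$. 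This union is disjoint and closed in $U_{3,3}$, since the pieces are closed linear subspaces, separated by their characters, minus the discriminant. Irreducibility of $U^G_{3,3}$ then amounts to showing that exactly one eigenspace $\mathrm{Sym}^3_\chi$ contains a smooth cubic. For that eigenspace we exhibit one smooth member, namely the normal form, and for every other character we show that every semi-invariant cubic is singular. Typically this holds because every monomial in that eigenspace is divisible by the square of an ideal of a fixed point or line, which forces a common singular point. The dimension of $U^G_{3,3}$ is then $\dim\mathrm{Sym}^3_\chi-1$, read off by counting monomials (in a diagonalized basis when $G$ is abelian, or via characters in general).

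Next we compute $N_{\PGL_4}(G)$ and $C_{\PGL_4}(G)$. The centralizer comes from Schur's lemma applied to the decomposition of $\C^4$ into $\widetilde G$-isotypic pieces, taken modulo scalars and modulo the twisting by characters that is allowed projectively. This yields, for example, $\GL_3$ for the type XI involution $\mathrm{diag}(1,1,1,-1)$, $\GL_2\times\C^\times$ for types IX and X, and the tori for types VI and VII. The normalizer is the preimage of the subgroup of $\Aut(G)$ realized by permuting isotypic components, so $N/C\hookrightarrow\Aut(G)$. This quotient is computed from which outer automorphisms preserve the representation up to a character twist. For the rigid types I and II, where $U^G$ is a point, $N$ is the stabilizer of that surface, hence $N=G$.

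Finally, since $\dim\PGL_4=15$ and $G$ is finite, $U^{(G)}_{3,3}$ is the image of $\PGL_4\times^{N_{\PGL_4}(G)}U^G_{3,3}$. It is therefore irreducible, because $\PGL_4$ is connected and $U^G$ is irreducible. Its dimension is
\[
\dim U^{(G)}=15+\dim U^G-\dim N_{\PGL_4}(G),
\]
using that a generic member of $U^G$ has stabilizer exactly $G$, so the map is generically finite. This recovers the listed values, e.g.\ $15+12-8=18$ for $C_2$ with $N=\GL_3$ modulo scalars. Closedness of $U^{(G)}$ in $U_{3,3}$ follows because it is the preimage of the closed locus $\cX^G$ under the quotient map, $\Phi$ being finite.

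The main obstacle is the singularity check for the non-dominant character eigenspaces in the larger nonabelian cases (types III, IV, V) and in the cases with many characters (VII, IX). For these I would first diagonalize a maximal abelian subgroup and apply the monomial divisibility argument there, then use the remaining generators to pin down the character. Where the divisibility argument is not immediate, I would fall back on a direct Jacobian computation, which can be verified in the accompanying code.
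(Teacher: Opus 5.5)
Your strategy is the paper's. You linearize $G$ (the paper does this via the action on $H^0(X,-K_X)$), split cubic forms into character eigenspaces, show exactly one eigenspace meets $U_{3,3}$, compute $N_{\PGL_4}(G)$ and $C_{\PGL_4}(G)$, and read off $\dim U_{3,3}^{(G)}=15+\dim U_{3,3}^G-\dim N_{\PGL_4}(G)$ from the orbit map. Your irreducibility and dimension arguments for $U_{3,3}^{(G)}$ are fine. The step that fails is the closedness of $U_{3,3}^{(G)}$.

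\textbf{The closedness step.} The locus $\cX^G$ is the image of $\Phi\colon\cX_G\to\cX$, and $\cX_G$ records \emph{abstract} embeddings $G\hookrightarrow\Aut(x)$. Its preimage in $U_{3,3}$ is therefore the union of the $U_{3,3}^{(G')}$ over all $\PGL_4$-conjugacy classes of subgroups $G'\cong G$ occurring in automorphism groups. This can be strictly larger than $U_{3,3}^{(G)}$.

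For example, take $G=C_4$ of type IX, generated by $\mathrm{diag}(1,1,-1,i)$. A type V surface carries $\langle(1234)\rangle\le S_4$, acting by a $4$-cycle permutation matrix with eigenvalues $1,i,-1,-i$. Every scalar multiple of $\{1,1,-1,\pm i\}$ has a repeated entry, and no scalar multiple of $\{1,i,-1,-i\}$ does, so the two groups are not conjugate in $\PGL_4$. Hence the $16$-dimensional $U_{3,3}^{(S_4)}$ lies in the preimage of $\cX^{C_4}$ but not in the (also $16$-dimensional) $U_{3,3}^{(C_4)}$. This is consistent with \Cref{thm:possible-aut-gps}, where IX is not below V.

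\textbf{The fix.} Remember the projective representation. Let $\P V$ be the given embedding $G\le\PGL_4$, and replace $\cX^G$ by $\Phi(\cX_G^{\P V})$, with $\cX_G^{\P V}$ as in \Cref{def:MGPV}. This substack is open and closed in $\cX_G$ by linear reductivity (\Cref{rmk:MGPV}). Since $\Phi$ is finite, its image is closed, and its preimage in $U_{3,3}$ is exactly $U_{3,3}^{(G)}$. Equivalently, you can argue directly with the finite stabilizer group scheme over $U_{3,3}$, together with the fact that $\PGL_4$-conjugacy classes of homomorphisms $G\to\PGL_4$ are open and closed. The paper itself only asserts closedness, so this is an ingredient your write-up must add.

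\textbf{Other remarks.} Correct your worked example: $N_{\PGL_4}(C_2)=(\GL_3\times\GL_1)/\GL_1\cong\GL_3$ is $9$-dimensional, not ``$\GL_3$ modulo scalars.'' The count is $15+12-9=18$; your $15+12-8$ equals $19$.

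The singularity checks you flag as the main obstacle are mostly bypassed in the paper by inducting up the subgroup poset. The character of $G$ must restrict, on each previously treated subgroup and its conjugates, to the character already shown to be the only one carrying smooth cubics. For $S_4$, $S_5$ and $H_3(3)\rtimes C_2$ this forces a single character outright. For $C_4$ and $C_8$ it leaves only two eigenspaces to inspect.
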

\begin{proof} Since cubic surfaces are anticanonically embedded, the action of $G = \Aut(X)$ on $\mathbb{P}^3$ is projectivized from its action on $H^0(X,-K_X)$, hence each $G$ lifts to an isomorphic copy of itself in $\GL_4$. We can use this to explicitly compute each of the varieties $U_{3,3}^G$, which we include in \Cref{sec:U33}. Since each $G$ is linearly reductive and $U_{3,3}$ is smooth, we have that $U_{3,3}^G$ is smooth as well.
\end{proof}

\subsection{The moduli spaces of symmetric planar quartics}

Recall that a planar quartic curve is cut out by a homogeneous degree four form in three variables -- the moduli space of such forms is $\mathbb{P}H^0(\mathbb{P}^2, \mathcal{O}(4)) \cong \mathbb{P}^{14}$. We again have a classification for the kinds of symmetries which can appear.

\begin{theorem} For $X\in U_{2,4}$, there are exactly 12 possible nontrivial values for $\Aut(X) = \PGL_3(\C)_X$, up to conjugacy in $\PGL_3(\C)$ (or $W^+(E_7)$). These groups laid out according to subconjugacy are:
\[ \begin{tikzcd}
    \gpdiagram{I}{\PSL_2(7)} &  & \gpdiagram{II}{C_4^{\times2}\rtimes S_3} &  & \gpdiagram{III}{C_4\circledcirc A_4} & \gpdiagram{VI}{C_9}\\
     & \gpdiagram{IV}{S_4}\ar[ul]\ar[ur] &  &\gpdiagram{V}{P}\ar[ul]\ar[ur]  & \gpdiagram{VIII}{C_6}\uar & \\
    \gpdiagram{IX}{S_3}\ar[ur] &  & \gpdiagram{VII}{D_8}\ar[ul]\ar[ur] &  &  & \\
     &  &  \gpdiagram{X}{K_4}\uar &  & \gpdiagram{XI}{C_3}\ar[uu]\ar[uuur]  & \\
     &  &  \gpdiagram{XII}{C_2}\uar\ar[uull]\ar[uuurr] &  & & 
\end{tikzcd} \]
\end{theorem}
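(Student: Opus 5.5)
This statement is the classical classification of automorphism groups of smooth plane quartics (Henn, Kuribayashi--Komiya, Bars, Dolgachev). Rather than reprove it from scratch, I will recall its standard proof and fill in the comparison with $W^+(E_7)$ and the subconjugacy poset. The first step is to use that a smooth plane quartic $X$ is canonically embedded. Hence every automorphism of $X$ is induced by a projective linear map, so $\Aut(X)=\PGL_3(\C)_X$, and it lifts to a linear action on $H^0(X,K_X)\cong\C^3$. Since $X$ has genus $3$, Hurwitz gives $|\Aut(X)|\le 168$.

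The key step is the classification of cyclic subgroups. For an element $\sigma$ of prime order $p$, I diagonalize it as $\mathrm{diag}(1,\zeta^a,\zeta^b)$ and list which eigenspaces of quartic forms contain smooth members. Smoothness forces each coordinate vertex to be avoided, or to be hit with a tangent condition, and this requires specific monomials $x^4$, $x^3y$, etc. This bounds the possible orders by $p\in\{2,3,7\}$. For composite orders the same monomial analysis shows that the possible orders are $1,2,3,4,6,7,8,9,12,14,\dots$, and that $8$ and $12$ are excluded except inside the Fermat-type families, so that exactly $C_2,C_3,C_4,C_6,C_7,C_8,C_9$ occur. This is the bulk of Henn's table, and it yields normal forms for each cyclic type.

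Next I assemble the larger groups. Starting from each normal form, I compute the full stabilizer $\PGL_3(\C)_X$ for a generic member of the family. The generic stabilizer is the group listed, and specializing the parameters enlarges it along the arrows of the diagram: the Klein quartic gives $\PSL_2(7)$, the Fermat quartic gives $C_4^{\times 2}\rtimes S_3$, $x^4+y^4+z^3x$-type forms give $C_4\circledcirc A_4$ and $C_9$, and so on. I also have to check that no other finite subgroup of $\PGL_3$ occurs. For this I use the Blichfeldt classification of finite primitive and imprimitive subgroups of $\PGL_3(\C)$: primitive groups other than $\PSL_2(7)$, $A_6$, the Hessian groups and $A_5$ are excluded by the element-order constraints just established.

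Finally, I establish the poset structure. I read off subconjugacy in $\PGL_3$ from the degenerations of the normal-form families. To see that this agrees with subconjugacy in $W^+(E_7)$, I compute the images in $W^+(E_7)$ via the action on the 28 bitangents, which is a GAP check. I expect the main obstacle to be the exhaustiveness step: ruling out all other finite subgroups of $\PGL_3$ and confirming that each listed group is the \emph{full} automorphism group of a smooth member. I would handle this by case analysis over Blichfeldt's list combined with the orders of elements allowed by the cyclic classification, and by exhibiting an explicit smooth quartic with exactly that stabilizer in each case.
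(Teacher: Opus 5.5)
\textbf{There are genuine gaps.} The paper gives no proof of this statement. It quotes the classical classification, pointing to Dolgachev's \emph{Classical Algebraic Geometry}, Theorem~6.5.2, and to Bethea--Brazelton, \S2.4. So deferring to the literature matches the paper. The argument you sketch in place of a citation, however, has steps that fail as written.

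First, your cyclic classification is incorrect. The nontrivial orders of automorphisms of smooth plane quartics are exactly $2,3,4,6,7,8,9,12$. Order $12$ does occur: $(x,y,z)\mapsto(ix,y,\zeta_3 z)$ preserves the smooth quartic $x^4+y^4+yz^3$, a model of the Type III curve. It has to occur, because $C_4\circledcirc A_4$ has a central $C_4$ and elements of order $3$. Order $14$ does not occur, since an automorphism of order $7$ forces the Klein quartic and $\PSL_2(7)$ has no element of order $14$. Your final list $C_2,C_3,C_4,C_6,C_7,C_8,C_9$ omits $C_{12}$, so the element-order filter you apply afterwards would discard Type III.

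Second, element orders do not exclude the Hessian groups $H_{36},H_{72},H_{216}$. All their elements have order in $\{1,2,3,4,6\}$, which is admissible; only $A_5$ and $A_6$ are removed this way, by their elements of order $5$. You need a separate argument, for instance the following. Every Hessian group contains the normal subgroup $C_3^{\times 2}=\langle (x,y,z)\mapsto(y,z,x),\ \mathrm{diag}(1,\zeta_3,\zeta_3^2)\rangle$. Any lifts of these two generators to $\GL_3$ have commutator a scalar $\zeta_3^{\pm1}I$. This scalar acts on quartic forms by $\zeta_3^{\pm4}\neq 1$, so no quartic form is a simultaneous eigenvector, and no quartic curve at all is preserved.

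Third, the exhaustiveness step cannot be a case analysis over Blichfeldt's list combined with element orders. The intransitive and imprimitive finite subgroups of $\PGL_3$, which include every group on the list except $\PSL_2(7)$, come in infinite families. Many of them have admissible element orders and do preserve smooth quartics, yet are never a full automorphism group: $C_4$, $C_8$, $C_{12}$, $C_4^{\times 2}$, $C_7\rtimes C_3$, and so on. Exhibiting one quartic with exactly a given stabilizer proves existence, not completeness.

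The classical proofs go the other way:
\begin{enumerate}
\item A nontrivial $\Aut(X)$ contains an element of order $2$, $3$ or $7$.
\item So $X$ lies in one of the normal-form families attached to such an element.
\item For each family, one determines the \emph{complete} stratification by stabilizer: every parameter value where the group jumps, and the resulting group.
\end{enumerate}
That stratification is what produces both the list of twelve groups and the arrows of the diagram.
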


For each of the 12 possible automorphism groups of smooth planar quartics, we compute the dimensions of their moduli, and we compute their normalizers in $\PGL_3$.

\begin{theorem}\label{thm:irred-VG}
Let $G$ be one of the 12 possible automorphism groups for smooth quartic curves over $\mathbb{C}$ (see \cite[6.5.2]{Dolgachev} or \cite[2.4]{betheabrazelton_bitangentssymmetricquartics}). Then $U_{2,4}^G$ and $U_{2,4}^{(G)}$ are smooth closed irreducible subvarieties of $V$, and their dimensions and normalizers are given as follows:
\begin{center}
    \footnotesize
    \begin{tabular}{r c | c c | c c | c}%
    type & $G$ & $\dim U_{2,4}^G$ & $\dim U_{2,4}^{(G)}$ & $N_{\PGL_3}(G)$ & $C_{\PGL_3}(G)$ & ref \\
    \hline
    \begin{filecontents*}{symmquartics.csv}
grptype,grpname,dimVG,dimVGG,NPGL,pizeroNPGL,centralizer,propref
I,$\PSL_2(7)$,0,8,$\PSL_2(7)$,$\PSL_2(7)$,0,\ref{prop:Vtype1}
II,$C_4^{\times 2}\rtimes S_3$,0,8,$C_4^{\times 2} \rtimes S_3$,$C_4^{\times 2} \rtimes S_3$,0,\ref{prop:Vtype2}
III,$C_4\circledcirc A_4$,1,8,Eqn. (\ref{eqn:normalizer-P}),$S_4$,$\mathbb{C}^\times$,\ref{prop:Vtype3}
IV,$S_4$,1,9,$S_4$,$S_4$,0,\ref{prop:Vtype4}
V,$P$,2,9,Eqn. (\ref{eqn:normalizer-P}),$S_4$,$\mathbb{C}^\times$,\ref{prop:Vtype5}
VI,$C_9$,2,8,$(\mathbb{C}^\times)^2$,0,$(\mathbb{C}^\times)^2$,\ref{prop:Vtype6}
VII,$D_8$,3,10,Eqn. (\ref{eqn:normalizer-quartictype7}),$D_8$,$\mathbb{C}^\times$,\ref{prop:Vtype7}
VIII,$C_6$,3,9,$(\mathbb{C}^\times)^2$,0,$(\mathbb{C}^\times)^2$,\ref{prop:Vtype8}
IX,$S_3$,3,10,$S_3\times \mathbb{C}^\times$,$S_3$,$\mathbb{C}^\times$,\ref{prop:Vtype9}
X,$K_4$,5,11,$(\mathbb{C}^{\times})^2\rtimes S_3$,$S_3$,$(\mathbb{C}^\times)^2$,\ref{prop:Vtype10}
XI,$C_3$,6,10,$\GL_2$,0,$\GL_2$,\ref{prop:Vtype11}
XII,$C_2$,8,12,$\GL_2$,0,$\GL_2$,\ref{prop:Vtype12}
\end{filecontents*}
\csvreader[head to column names]{symmquartics.csv}{}% 
    {\grptype & \grpname & \dimVG & \dimVGG & \NPGL & \centralizer & \propref \\}% 
    \end{tabular}
\end{center}
\end{theorem}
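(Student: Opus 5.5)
The plan mirrors the proof of \Cref{thm:irred-UG}: a case-by-case computation, one group at a time, with the details for each of the 12 types collected in an appendix (the propositions referenced in the last column of the table).

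\textbf{Step 1: lift $G$ to $\GL_3$.} Since a smooth plane quartic is canonically embedded, $\Aut(X)$ acts on $H^0(X,K_X)\cong H^0(\P^2,\cO(1))^\vee$. Hence each $G$ lifts to an isomorphic subgroup $\widetilde G\le \GL_3$. This avoids Schur multiplier issues, for instance for $\PSL_2(7)$, which lifts isomorphically via Klein's representation.

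\textbf{Step 2: locate the smooth quartics in $U_{2,4}^G$.} For each type I will fix the standard normal form of the group, following \cite[6.5.2]{Dolgachev} or \cite{betheabrazelton_bitangentssymmetricquartics}. The locus $\P(\mathrm{Sym}^4)^{G}$ is the disjoint union of the projectivized $\chi$-isotypic pieces $\P(\mathrm{Sym}^4(\C^3)^{\vee})_\chi$, one for each one-dimensional character $\chi$ of $\widetilde G$. I will show that exactly one of these linear spaces meets the complement of the discriminant. For each other character, either the eigenspace is empty, or every form in it is divisible by a coordinate or has a forced singular point, such as a coordinate vertex where all low-order terms vanish. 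For the surviving character, $U_{2,4}^G$ is a nonempty open subset of a projective space, so it is irreducible. Since $G$ is linearly reductive and $U_{2,4}$ is smooth, $U_{2,4}^G$ is also smooth, and its dimension is read off from the eigenspace.

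\textbf{Step 3: dimension of $U_{2,4}^{(G)}$.} This locus is the image of the irreducible variety $\PGL_3\times U_{2,4}^G$ under the action map, so it is irreducible. The fiber of $(g,X)\mapsto gX$ over a general point is a torsor under $N_{\PGL_3}(G)$ acting on the pair; it has finite extra contributions because a general member of $U_{2,4}^G$ has automorphism group exactly $G$. This gives
\[
\dim U_{2,4}^{(G)} = 8+\dim U_{2,4}^G-\dim N_{\PGL_3}(G),
\]
and I will check this against the table. For example, type XII gives $8+8-4=12$ with $N=\GL_2$.

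\textbf{Step 4: compute $C_{\PGL_3}(G)$ and $N_{\PGL_3}(G)$.} For the centralizer, I will decompose $\C^3$ into $\widetilde G$-isotypic pieces and apply Schur's lemma up to scalars. One must remember that elements of $\PGL_3$ centralize $G$ only projectively, so twisting by characters is allowed. For example, the centralizer of $C_2$ is $\GL_2\times\GL_1/\C^\times\cong\GL_2$, and an irreducible $G$ has finite centralizer. For the normalizer, $N/C$ embeds into the subgroup of $\Aut(G)$ preserving the projective representation, which I will compute group by group.

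\textbf{Main obstacle.} I expect Step 2 to be the hardest: ruling out smooth members in the other character eigenspaces, and confirming that the surviving one has generic automorphism group exactly $G$, which Step 3 requires. The cases with larger normalizers, such as $P$, $D_8$ and $C_4\circledcirc A_4$, need extra care. There the action of the normalizer on characters can permute eigenspaces, so I will first verify that non-conjugate eigenspaces are not being conflated.
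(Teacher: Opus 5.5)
Your plan is the paper's own proof from its appendix: lift $G$ to $\GL_3$, split quartic forms into character eigenspaces, show exactly one eigenspace meets $U_{2,4}$, compute centralizers and normalizers by hand, and use $\dim U_{2,4}^{(G)}=\dim U_{2,4}^G+8-\dim N_{\PGL_3}(G)$. For what the appendix actually establishes, your steps work: closedness, smoothness and irreducibility of $U_{2,4}^G$, both dimension columns, and the centralizers. Two small corrections. First, $H^0(X,K_X)\cong H^0(\P^2,\cO(1))$, with no dual. Second, Step~3 does not need the generic stabilizer to be exactly $G$, since the fiber $\{g : gGg^{-1}\le \Stab(Y)\}$ is always a finite union of $N_{\PGL_3}(G)$-cosets.

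As a proof of the statement as written, the plan leaves two things open, and neither can be closed because the statement overclaims there.

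First, Step~3 gives irreducibility of $U_{2,4}^{(G)}$ but says nothing about the asserted smoothness or closedness. Closedness does hold: properness of the action forces a limit of conjugates of $G$ inside stabilizers to be a conjugate of $G$. Smoothness is false already for $G=C_2$. Take a general Type~X quartic $F=x^4+y^4+z^4+ax^2y^2+bx^2z^2+cy^2z^2$. Near $F$, the locus $U_{2,4}^{(C_2)}$ contains the three $12$-dimensional branches $\PGL_3\cdot U_{2,4}^{\langle\sigma\rangle}$, $\sigma\in K_4$, and their tangent spaces differ. The monomial $x^3y$ lies in $\mathrm{Sym}^4_{z\text{-even}}+\mathfrak{gl}_3\cdot F$, the tangent space of the $\mathrm{diag}(1,1,-1)$-branch. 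It lies in the tangent space of the $\mathrm{diag}(1,-1,1)$-branch only when $2b=ac$. This agrees with the paper's own remark that $U_{n,d}^{(G)}$ is generally not normal.

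Second, carrying out Step~4 will not reproduce the Type~III entry. All smooth members $aq+bz^4$ of $U_{2,4}^G$ are $\mathrm{diag}(1,1,t)$-translates of one another, each with stabilizer exactly $G$. Hence $N_{\PGL_3}(G)=\C^\times\cdot G$, whose component group is $G/(G\cap\C^\times)\cong A_4$, not $S_4$. Concretely, $\mathrm{diag}(1,i,1)\in N_{\PGL_3}(P)$ swaps $x^4+y^4\pm2\sqrt{-3}\,x^2y^2$ while fixing $z^4$. So it moves $U_{2,4}^G$ and does not normalize $C_4\circledcirc A_4$.

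Likewise, for Type~VII your $N/C\hookrightarrow\Aut(G)$ computation gives $1\to\C^\times\to N_{\PGL_3}(D_8)\to D_8\to 1$, with $\mathrm{diag}(i,1,1)$ realizing the outer automorphism. This is not the displayed $0\to D_8\to N\to\C^\times\to 0$. The dimension columns are unaffected by either issue.
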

\begin{proof} This is directly analogous to \Cref{thm:irred-UG}. The explicit computations of the $U_{2,4}^G$ varieties can be found in \Cref{sec:U24}.
\end{proof}

\section{Galois groups of lines on symmetric cubic surfaces}\label{sec:galois-groups-of-lines}

Given $G$ some automorphism group of smooth cubic surfaces, we can ask what the parameter-level, stacky, and geometric monodromy groups are. For Type V, the geometric and parameter-level mondoromy groups were computed in \cite{brazeltonMonodromySpaceSymmetric2025}. The geometric and stacky monodromy groups for Types V, VI, VIII, and XI were computed in \cite{landiStacksMonodromySymmetric2025}. The parameter-level monodromy for Types II, V, VI, VIII, X, XI were computed in \cite{pichon-pharabodGaloisGroupsSymmetric2025}. Here we complete all the remaining cases.

\begin{notation} Consider the incidence variety of lines on cubic surfaces
\[
\Lines = \{(X,\ell) \in U_{3,3} \times \mathbb{G}(1,3) \mid \ell \subseteq X\}.
\]
This is finite and \'etale over $U_{3,3}$, and is $\PGL_4$-equivariant. We denote by 
$$
\LinesProblem = (\Lines \to U_{3,3}, \PGL_4)
$$
the problem of lines on cubic surfaces, and we remark that it satisfies all the conditions in \Cref{setup:all-four-monodromies}.
\end{notation}

\begin{theorem}\label{thm:main}
Let $G$ be an automorphism group of a smooth complex cubic surface. Then the parameter-level, marked, stacky, and geometric monodromy groups of solving for lines on $G$-symmetric cubic surfaces are as follows:
\input{TABLE-cubics}
\end{theorem}

We compute these groups row-by-row. The first few are rather boring, but we will see that the computations get more interesting and much more geometrically rich as we move down the subgroup poset.

\subsection{Type I} The Type I cubic surface is the Fermat, which is unique (up to projective change of coordinates). Hence there is no parameter-level monodromy to investigate. Similarly, the geometric monodromy is trivial, since the Fermat yields just a single point in the coarse moduli space. By Theorem~\ref{thm:stacky-basic-bound}\eqref{eq:inclusion-stacky}, the stacky monodromy is just equal to the group itself. The marked monodromy group is also trivial, for instance because the center of the automorphism group $C_3^{\times3}\rtimes S_4$ is trivial.

\subsection{Type II} The Type II cubic surface is also unique, given by the Clebsch. Thus the marked, parameter and geometric monodromy vanish as in Type I, and the stacky monodromy is the group itself.

\subsection{Type III}\label{subsec:type3-cubic-monodromy}
Unlike the Type I and II cubic surfaces, once we fix a projective representation for the Type III automorphism group, we do \emph{not} obtain a uniquely determined cubic surface, instead we get a 1-dimensional family. These are all projectively equivalent, hence in the coarse moduli space there is a unique point corresponding to the Type III cubic surface, and we see that the geometric monodromy vanishes and the stacky monodromy is equal to the Type III group, as in the previous two cases. By \Cref{cor:comparison-monodromy-UG-MG-specialcase}, the parameter-level monodromy is the center $Z(H_3(3)\rtimes C_4) \cong C_3$. This can be witnessed geometrically by a small loop around a singular element in the 1-dimensional family $U_{2,4}^{H_3(3)\rtimes C_4}$ and can be verified computationally if desired.

\subsection{Type IV}\label{subsec: Type IV cubic surfaces} Recall that Type IV cubic surfaces have automorphism group $H_3(3)\rtimes C_2$.

\begin{theorem}
\label{thm:Type-IV-monodromy}
Letting $W(L_3)$ denote the Shephard-Todd group of the Dynkin diagram of type $L_3$, we have
\begin{align*}
\Monstack_{H_3(3)\rtimes C_2}(\LinesProblem) &\cong W(L_3) \\
    \Mongeom_{H_3(3)\rtimes C_2}(\LinesProblem) &\cong A_4 \\
    \Monmarked_{H_3(3)\rtimes C_2}(\LinesProblem)\cong\Monpar_{H_3(3)\rtimes C_2}(\LinesProblem) &\cong C_{W(E_6)}(H_3(3)\rtimes C_2) \cong C_3.
\end{align*}
In particular the stacky and geometric monodromy are the unique index two subgroups of the expected monodromy.
\end{theorem}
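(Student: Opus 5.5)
We compute the marked and parameter-level groups first, since they follow formally from the earlier results. Here $G=H_3(3)\rtimes C_2$ and $\Gamma=W(E_6)$. By \Cref{thm:irred-UG}, $U_{3,3}^G$ is irreducible of dimension $2$ and $C_{\PGL_4}(G)\cong\C^\times$ is connected. \Cref{cor:comparison-monodromy-UG-MG-specialcase} then gives
\[
\Monpar_G(\LinesProblem)=\Monmarked_G(\LinesProblem)=C_{\Monstack_G(\LinesProblem)}(G).
\]
So everything reduces to computing $\Monstack_G$. \Cref{thm:stacky-basic-bound} gives $G\le\Monstack_G\le N_{W(E_6)}(G)$ and $\Mongeom_G\cong\Monstack_G/G$. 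A GAP computation in $W(E_6)$ gives $C_{W(E_6)}(G)\cong C_3$, $|N_{W(E_6)}(G)|=1296$ and $N_{W(E_6)}(G)/G$ of order $24$. Moreover $N_{W(E_6)}(G)$ has a unique index two subgroup, of order $648$, which GAP identifies with $W(L_3)$. The theorem therefore reduces to showing $\Mongeom_G\cong A_4$, i.e.\ $|\Monstack_G|=648$. Once that is known, the stacky group is the index two subgroup, and its centralizer of $G$ is the full $C_3=C_{W(E_6)}(G)$ (this last equality is a GAP check).

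To compute $\Mongeom_G$ we make the family explicit. In the coordinates of \Cref{prop:Utype4}, the $G$-fixed cubics should be, up to the $\C^\times$ scaling, the one-parameter family $w^3+x^3+y^3+z^3+6a\,xyz$. After removing the scaling, the plane section $w=0$ is a Hesse-type pencil in the parameter $a$. The group $\pi_0 N_{\PGL_4}(G)\cong C_3^{\times2}\rtimes\SL_2(\mathbb{F}_3)$ acts on the parameter line through its quotient $\mathrm{PSL}_2(\mathbb{F}_3)\cong A_4$, by the classical Hessian group action on the Hesse pencil. By \Cref{prop:stacky-MG-normalization} and the corollary following it, $X^{G}$ is then normalized by $\P^1_a/A_4$. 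The geometric monodromy is the monodromy of $(V\gitquot P)|_{U^G\gitquot N_P(G)}$, and after restricting to the dense open $X^{=G}$ we may compute it there.

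This gives the upper bound: the lines on each cubic in the family are defined over a finite extension of $\C(a)$. Indeed, they are obtained from the $27$ explicit lines of the form $w=-\zeta x,\ \dots$ together with the inflection data of the Hesse pencil, and these are rational in $a$ up to cube roots that are absorbed by $G$. Hence the cover $Y^{=G}\to X^{=G}$ is dominated by the $A_4$-cover $\P^1_a\to\P^1_a/A_4$, so $|\Mongeom_G|\le 12$.

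For the lower bound we use hidden loops, via \Cref{lem:injectivity-hidden-loops} applied on the normalization $[U^G/N_P(G)]$. Its monodromy agrees with that over $\cX^{=G}$ by \Cref{prop:invariance-properties-monodromy}\eqref{point:restriction-to-open}. The closure of the family contains the Fermat (Type I) and Type III points. Their stabilizers in $N_P(G)$ strictly contain $G$, and they inject into $\Monstack_G$. Modulo $G$ these stabilizers give elements of order $2$ and $3$ in $\Mongeom_G$: the $A_4$-orbifold points of $\P^1_a/A_4$ with orders $2$ and $3$. Since $\langle\text{order }2,\text{order }3\rangle$ stabilizers at distinct orbifold points of $\P^1/A_4$ generate $A_4$, we get $\Mongeom_G\cong A_4$. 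Hence $|\Monstack_G|=54\cdot 12=648$, which pins it down as the index two subgroup $W(L_3)$.

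The main obstacle is the upper bound $|\Mongeom_G|\le12$. One must exclude the possibility that the geometric cover is the full $S_4=N_{W(E_6)}(G)/G$. That requires verifying carefully that the moduli of Type IV surfaces is genuinely the Hesse-parameter line and that the cover of lines becomes trivial over $\P^1_a$ after quotienting by $G$. Concretely, no extra square root of a discriminant in $a$ appears. If the explicit-formula argument is too delicate, the fallback is a Riemann--Hurwitz count on $\P^1_a/A_4$: the cover is branched only at the two orbifold points and at the singular members of the pencil. One then shows that the local monodromies at these points all lie in the index two subgroup, which forces a cover with group $S_4$ to be impossible.
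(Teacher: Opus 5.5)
Your argument is correct, but it takes a different route from the paper's.

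\textbf{The paper's route.} The paper never uses points of higher symmetry. It intersects with the Eckardt plane to get a banded $C_3$-gerbe $\cM_{3,3}^{=G}\to\cE^\circ$ onto the stack of plane cubics. It then pulls back the Hesse pencil, a connected $\ASL_2(\F_3)$-Galois cover by Harris, to get a connected Galois cover of $\cM_{3,3}^{=G}$ of degree $216$. The Galois closure of lines factors through this cover, because ordering the lines orders the Eckardt points. The remaining factor $3$ comes from the lines over the Hesse line being defined over a cube-root extension, so $|\Monstack_G|=216\cdot 3=648$.

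\textbf{Your route.} You stay on the quotient side $\cX^{=G}\cong[U^{=G}/N_{\PGL_4}(G)]$. The coarse space is the Hesse line modulo $A_4\cong H_{216}/(G/Z(G))$. Your upper bound comes from the coarse cover of lines becoming trivial over the Hesse line, because the cube-root monodromy is the central element of $G$. Your lower bound comes from hidden loops (\Cref{lem:injectivity-hidden-loops}) on the normalization $[U^G/N_{\PGL_4}(G)]$, taken at the Fermat and Type III members of $U^G$. Their stabilizers in $N_{\PGL_4}(G)$ have orders $3|G|$ and $2|G|$.

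\textbf{Two remarks.} First, the upper bound you call the main obstacle is actually formal. \Cref{prop:SES-par-stack-monodromy} gives $|\Monstack_G|\le|\Monpar_G|\cdot|\pi_0 N_{\PGL_4}(G)|=3\cdot 216$, since $\Monpar_G\le C_{W(E_6)}(G)\cong C_3$. No explicit formulas or Riemann--Hurwitz count are needed. Second, the order bound $|\Mongeom_G|\le 12$ together with elements of order $2$ and $3$ does not finish by itself, because $S_3\le S_4$ is still possible. What rules it out is that $\Mongeom_G$ is a quotient of $A_4$. Your domination statement does provide this, and equivalently it follows from $H_{216}$ having no normal subgroup of order $2$.

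\textbf{Trade-offs.} Your route uses only the general monodromy formalism of \S3 and avoids the gerbe and the level-$3$ cover. In exchange, it depends on knowing which higher-symmetry surfaces lie in the closure $U^G$ and what their stabilizers in the normalizer are. The paper's route explains the answer conceptually through the level-$3$ structure of the Eckardt-plane cubic. It also yields the real version, with order $1296$ via $\mathrm{AGL}_2(\F_3)$.
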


We prove this by exploring the geometry of Type IV cubic surfaces -- they are intimately related to the geometry of elliptic curves. Recall by~\cite[Table 9.6]{Dolgachev}, Type IV complex cubic surfaces are all projectively isomorphic to one with equation
\begin{equation}\label{eq:TypeIVprojectiveequation}
\begin{aligned}
    S(a) = V(x^3+y^3+z^3+w^3-3a xyz)
\end{aligned}
\end{equation}
for some parameter $a\in\mathbb{C}$. In particular, they form a 1-dimensional family. The singular members correspond to a parameter $a$ such that $a^3=1$ or at infinity.

Every Type IV cubic surface has exactly 9 Eckardt points, which are all coplanar. In the equation \eqref{eq:TypeIVprojectiveequation}, the Eckardt points lie on the plane $w=0$, and are of the form $[1:0:-\zeta_3^j:0]$, $[1:-\zeta_3^j:0:0]$, and $[0:1:-\zeta_3^j:0]$, for $j\in\{0,1,2\}$.

Restricting our attention to the plane $w=0$, the cubic surface intersects at a cubic curve, which is immediately seen to be in Hesse form:
\begin{align*}
    E(a) = V(x^3+y^3+z^3 - 3axyz).
\end{align*}
The 9 Eckardt points become the 9 flexes of the cubic. We will see this defines a morphism of stacks.

\begin{definition}\label{def:stack-of-elliptic-curves} Let $\mathcal{E} = [(\P^9 \setminus \Delta_{\text{sing}}) / \PGL_3]$ be the moduli stack of embedded smooth plane cubic curves.
\end{definition}

\begin{remark}\label{rmk: facts on plane cubics}
Recall that the moduli space of $\cE$ is $\A^1$, via the $j$-invariant. Explicitly, every plane cubic can be put in Hesse form
\begin{equation}\label{eq: Hesse pencil}
    E(a):=\mathrm{V}(x^3+y^3+z^3-3axyz),
\end{equation}
which is smooth away from $a^3=1$, and has $j$-invariant
of the form
\begin{align*}
    j(E(a)) = \frac{27a^3(a^3+8)^3}{(a^3-1)^3}.
\end{align*}
The pencil parametrizes elliptic curves with full level-3 structure, that is $Y(3)=\A^1\setminus\mathrm{V}(a^3-1)$. In particular, $\cE\cong[(\A^1\setminus\mathrm{V}(a^3-1))/\mathrm{ASL}_2(\mathbb{F}_3)]$, see for instance~\cite[Proposition, page 694]{Harris-Galois}. Recall that the Hesse group $\mathrm{ASL}_2(\mathbb{F}_3)$ is of order $216=18\cdot12$, and it is a non-split extension
\[
\begin{tikzcd}
    1\arrow[r] & ((C_3\times C_3)\rtimes C_2)\arrow[r] & \mathrm{ASL}_2(\mathbb{F}_3)\arrow[r] & A_4\arrow[r] & 1
\end{tikzcd}
\]
It is non-split because $\ASL_2(\mathbb{F}_3)$ does not contain any subgroup isomorphic to $A_4$. The normal subgroup $(C_3\times C_3)\rtimes C_2$ is the automorphism group of a generic plane cubic, hence it acts trivially on $\A^1$, while the action of $A_4$ is generated by the two automorphisms
\[
    a\mapsto \zeta_3 a,\qquad a\mapsto \frac{a+2}{a-1}.
\]
See for instance~\cite[\S2]{planecubicsAM17} or other standard texts on elliptic curves.
\end{remark}

Intersecting with the hyperplane containing the 9 Eckardt points then gives a well-defined morphism of stacks
\begin{align*}
    \mathcal{M}_{3,3}^{H_3(3)\rtimes C_2} \to \mathcal{E}.
\end{align*}

We now want to compute the automorphism group of the cubic surfaces in this moduli space with the automorphism groups of the resulting cubic curves. Generically, the automorphism group $(C_3\times C_3)\rtimes C_2$ of a smooth planar cubic is the quotient of $H_3(3)\rtimes C_2$ by $C_3$, where $C_3$ acts via $w\mapsto \zeta_3 w$. Non-generically, the automorphism group of the planar curve grows if and only if $j=0$ or $j=1728$.
\begin{proposition} Let $S(a)$ be as above. Then
\begin{enumerate}
    \item $j(E_S(a)) = 0$ if and only if $S(a)$ is projectively equivalent to the Fermat cubic surface
    \item $j(E_S(a)) = 1728$ if and only if $S(a)$ is projectively equivalent to a Type III cubic surface.
\end{enumerate}
\end{proposition}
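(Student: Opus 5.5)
The plan is to use the explicit Hesse pencil on both sides, together with the fact that the family $S(a)$ has $j$-invariant $j(E(a)) = 27a^3(a^3+8)^3/(a^3-1)^3$. The two equivalences are handled separately. Each amounts to identifying the values of $a$ with $j=0$ (resp. $j=1728$) and showing that exactly these values give Fermat (resp. Type III) surfaces.

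For (1), $j=0$ exactly when $a=0$ or $a^3=-8$. At $a=0$ the surface $S(0)=V(x^3+y^3+z^3+w^3)$ is literally the Fermat surface. The values $a^3=-8$ are the $A_4$-orbit of $0$ under $a\mapsto \zeta_3 a$ and $a\mapsto (a+2)/(a-1)$: indeed $0\mapsto -2$, and $\zeta_3$-multiples give the rest. These M\"obius transformations are induced by elements of the Hesse group, which act linearly on $(x,y,z)$. Extending such an element by a suitable scalar on $w$ makes it preserve the form $x^3+y^3+z^3+w^3-3axyz$ up to scalar. The scalar is needed because the linear map rescales the cubic part in $x,y,z$ by some constant $\lambda$, and we take $w\mapsto \lambda^{1/3}w$. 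Hence $S(a)\cong S(a')$ whenever $a,a'$ lie in the same $A_4$-orbit, and all four values with $j=0$ give the Fermat surface.

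Conversely, if $S(a)$ is Fermat, its automorphism group has order $648$, whereas a generic Type IV surface has automorphism group of order $54$. The automorphisms of $S(a)$ that preserve the plane of Eckardt points $w=0$ map to $\Aut(E(a))$ with kernel $C_3$, acting by $w\mapsto\zeta_3 w$. So I will argue that if $j(E(a))\neq 0,1728$, then $\Aut(S(a))$ is at most $C_3$ times $(C_3^2\rtimes C_2)$, which has order $54$, provided every automorphism preserves that plane. The plane is canonical when the surface has exactly 9 Eckardt points. The Fermat surface has 18 Eckardt points, and then the 9 coplanar ones need not be preserved. This is the main obstacle. I would instead count Eckardt points directly: for $j\ne 0$, check that $S(a)$ has exactly 9 Eckardt points, all in $w=0$, by computing the tritangent planes through points of $w=0$ versus points off it. Then $S(a)$ is not Fermat.

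For (2), $j=1728$ occurs at a single $A_4$-orbit of $a$. By the same lifting argument, the extra automorphism of order 4 of $E(a)$, which fixes a flex, lifts to $S(a)$ by extending with a scalar on $w$. This enlarges $\Aut(S(a))$ beyond $H_3(3)\rtimes C_2$, since the new element is of order $4$ modulo the kernel. By the classification (\Cref{thm:possible-aut-gps}), the groups properly containing Type IV are Types I and III only. Part (1) rules out Type I, since $j\ne 0$, so the surface is Type III. Conversely, a Type III surface is Type IV, so it appears as some $S(a)$. Its order-4 automorphisms preserve the 9 coplanar Eckardt points, since it has exactly 9 Eckardt points by the table in \Cref{thm:irred-UG}. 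They therefore descend to an automorphism of $E(a)$ of order $4$ modulo $C_3$, forcing $j=1728$. The same Eckardt-count argument also justifies the plane-preservation step in (1): for non-Fermat surfaces there are exactly 9 Eckardt points, so the plane is preserved.
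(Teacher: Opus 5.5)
Your argument for $j=0\Rightarrow$ Fermat is the paper's argument in conceptual form. The paper's substitution $x=X+Y+Z$, $y=X+\zeta_3Y+\zeta_3^2Z$, $z=X+\zeta_3^2Y+\zeta_3Z$ is a Hesse-group element sending $x^3+y^3+z^3$ to $3(X^3+Y^3+Z^3+6XYZ)$, with $w\mapsto 3^{1/3}w$ left implicit.

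For (2) the paper only solves $j=1728$ (i.e.\ $a^6-20a^3-8=0$) and cites Dolgachev \S9.5.8, and it proves neither converse. Your route for (2) is more self-contained: lift the order-$4$ automorphism of $E(a)$ by a scalar on $w$, observe by orders that only Types I and III properly contain $H_3(3)\rtimes C_2$, and conversely push an order-$4$ automorphism of a Type III surface down to $E(a)$. Its group theory is sound. The subgroup of $\Aut(S(a))$ acting trivially on $w=0$ is exactly $w\mapsto\zeta_3^kw$, and an order-$4$ element of $E[3]\rtimes\Aut(E,O)$ maps to an order-$4$ element of $\Aut(E,O)$.

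The gap is the step you postpone. The converse of (1), and with it the exclusion of Type I in your proof of (2), rests on the claim that $S(a)$ has exactly $9$ Eckardt points when $j\neq0$, which you only propose to check. This cannot come from the classification, since which type $S(a)$ has is the very question. The direct check is also not a formality. All $27$ lines lie in the nine planes spanned by $[0:0:0:1]$ and a flex tangent of $E(a)$, so extra Eckardt points occur exactly when three flex tangents are concurrent, and you would have to exclude that for every triple.

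A cheaper fix is to count on the Fermat side.
\begin{itemize}
\item Its $18$ Eckardt points are the points with $x_i=1$, $x_j=-\zeta_3^k$ and the other two coordinates $0$.
\item A plane $\sum_l c_lx_l=0$ contains such a point iff $c_i=\zeta_3^kc_j$. If $r$ of the $c_l$ vanish, the plane therefore contains at most $3\binom{r}{2}+\binom{4-r}{2}$ of these points, which equals $9$ only for $r=3$.
\item So the only planes through $9$ Eckardt points of the Fermat surface are the coordinate planes.
\item The nine flexes of $E(a)$ are Eckardt points of $S(a)$ for every smooth $a$ (the cone plane over a flex tangent cuts $S(a)$ in $\lambda s^3+w^3=0$). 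Isomorphisms of cubic surfaces are linear.
\item Hence any isomorphism from $S(a)$ to the Fermat surface carries $w=0$ to a coordinate plane and $E(a)$ to $V(x^3+y^3+z^3)$, forcing $j=0$.
\end{itemize}
With this, (1) is complete and your argument for (2) goes through.

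A minor point: the Eckardt counts are not among the columns displayed in the table of \Cref{thm:irred-UG}. For the fact that Type III surfaces have $9$ Eckardt points, cite Dolgachev's Table 9.6 instead.
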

\begin{proof}
If $j(E(a)) = 0$, then either $a=0$ in which case we obtain the Fermat cubic surface, or $a^3 + 8=0$. In this latter case, we obtain a cubic surface defined by the equation
\[
x^3 + y^3 + z^3 + w^3 + 6\zeta_3^\ell xyz
\]
for $\ell=0,1,2$. Such a cubic surface is projectively equivalent to the Fermat as well,\footnote{There is a typo in \cite{Dolgachev} which says such surfaces are of Type II instead of Type I.} via the change of coordinates
\begin{align*}
    x &= X + Y + Z \\
    y &= X + \zeta_3 Y + \zeta_3^2 Z \\
    z &= X + \zeta_3^2 Y + \zeta_3 Z.
\end{align*}
If $j(E(a)) = 1728$, then $a$ is a solution to the equation
\begin{align*}
    27a^3(a^3 + 8)^3 - 1728(a^3-1)^3 = 27(a^6 - 20a^3 - 8)^2.
\end{align*}
Thus we obtain a degree six equation whose solutions yield Type III cubic surfaces, as remarked in \cite[9.5.8]{Dolgachev}.
\end{proof}
Let $\mathcal{E}^\circ = j^{-1}(\A^1\setminus\{0,1728\})$. Then we obtain the following result; we refer to~\cite[\S3]{EHKV01} and~\cite{Gir65} for the definition of properties of (banded) gerbes.
\begin{proposition}\label{prop:Phi-banded-C3-gerbe}
The natural map
\begin{align*}
    \Phi\colon\mathcal{M}_{3,3}^{=H_3(3)\rtimes C_2} \to \mathcal{E}^\circ
\end{align*}
is a banded $C_3$-gerbe.
\end{proposition}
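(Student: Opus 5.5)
We will write both stacks as quotient stacks and compare them through a single equivariant map. By \Cref{thm:irred-UG} and the normal form \eqref{eq:TypeIVprojectiveequation}, we have $\mathcal{M}_{3,3}^{=H_3(3)\rtimes C_2}\cong[U^{=G}/N_{\PGL_4}(G)]$ with $G=H_3(3)\rtimes C_2$. Here $U^{G}$ is the two-dimensional family $V(\lambda(x^3+y^3+z^3-3axyz)+\mu w^3)$, suitably normalized. Modulo the torus scaling $w$, it reduces to the Hesse line in $a$ with $a^3\neq 1$ and $j(E(a))\notin\{0,1728\}$.

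On the elliptic side, \Cref{rmk: facts on plane cubics} gives $\mathcal{E}\cong[(\A^1\setminus V(a^3-1))/\ASL_2(\mathbb{F}_3)]$. The map $\Phi$ sends $S$ to $S\cap\{w=0\}$. This hyperplane is intrinsic, since it is the plane spanned by the nine Eckardt points, so $\Phi$ is well defined on families. In coordinates it is induced by the group homomorphism $N_{\PGL_4}(G)\to\PGL_3$ given by restriction to the invariant plane $w=0$, together with the equivariant projection $U^{=G}\to\A^1\setminus V(a^3-1)$.

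The plan has three steps.

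\textbf{Step 1.} Show that $\Phi$ is essentially surjective on $\mathcal{E}^\circ$ and induces a bijection on isomorphism classes of geometric points. Every smooth plane cubic with $j\neq0,1728$ is of the form $E(a)$, and it extends to $S(a)$. Given $E(a)\cong E(a')$ through an element of $\ASL_2(\mathbb{F}_3)\subset\PGL_3$, we extend that element to $\PGL_4$ by acting trivially on $w$. After rescaling $w$ it carries $S(a)$ to $S(a')$. The restrictions to $j\notin\{0,1728\}$ are exactly the condition $\Aut=G$, by the preceding proposition.

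\textbf{Step 2.} Identify the kernel. On automorphism groups, $\Phi$ induces $G\to (C_3\times C_3)\rtimes C_2=\Aut(E(a))$. This map is surjective, and its kernel is the central $C_3$ acting by $w\mapsto\zeta_3 w$. These are exactly the projective transformations fixing the plane $w=0$ pointwise and preserving $S(a)$. Surjectivity holds because every automorphism of $E(a)$ extends by acting trivially on $w$.

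\textbf{Step 3.} Apply the gerbe criterion. A morphism of algebraic stacks that is fppf locally surjective on objects, and is surjective on automorphism groups with kernel a flat group scheme, is a gerbe for that kernel; see \cite[\S3]{EHKV01}. We verify that the kernel $C_3\times\mathcal{M}$ is constant and central, hence banded by $C_3$. We also need local surjectivity on objects in families, not only on geometric points. For this we use the smooth atlas $\A^1\setminus V(a^3-1)\to\mathcal{E}$ and lift the tautological Hesse family explicitly to $S(a)$, so the lifting exists already on the atlas.

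\textbf{Main obstacle.} Bandedness is the delicate part. The kernel $C_3=Z(G)$ must be identified canonically with $\mu_3$ or $C_3$ in a way compatible with conjugation by all of $N_{\PGL_4}(G)$. Since the kernel is central in $G$, conjugation by $G$ is trivial on it. The remaining worry is the outer part of the normalizer acting on $Z(G)$ by inversion. We will check that this does not happen in the stacky setting: the relevant automorphisms are automorphisms of the object, namely elements of $G$, and these act trivially on the center. That is exactly the condition for a $Z(G)$-banded gerbe.
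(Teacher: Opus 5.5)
Your Steps 1--3 are a more detailed version of what the paper itself relies on, and they are fine. The paper only records that $H_3(3)\rtimes C_2\to (C_3\times C_3)\rtimes C_2$ is surjective with kernel the central $C_3$ acting by $w\mapsto\zeta_3 w$, and that $j\notin\{0,1728\}$ is exactly the condition $\Aut=G$.

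\textbf{The gap is in your treatment of the band.} You correctly say that the kernel must be identified with $C_3$ compatibly with conjugation by \emph{all} of $N_{\PGL_4}(G)$. You then dismiss the outer part of the normalizer by saying that only automorphisms of the object matter, and that is not correct.

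Centrality of the kernel in each $\Aut(x)=G$ only shows that the gerbe is abelian. That is, the relative inertia descends to a sheaf of groups on $\mathcal{E}^\circ$ which is \'etale locally isomorphic to $C_3$. Whether this form of $C_3$ is the constant one depends on its monodromy.

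In the presentation $\mathcal{M}_{3,3}^{=G}\cong[U^{=G}/N_{\PGL_4}(G)]$, with $U^{=G}$ connected, the pullback of the band is $Z(G)\times U^{=G}$ with $N_{\PGL_4}(G)$ acting by conjugation on $Z(G)$. So its monodromy factors through the surjection $\pi_1(\mathcal{M}_{3,3}^{=G})\twoheadrightarrow\pi_0 N_{\PGL_4}(G)\cong H_{216}$. The extra quotient $A_4\cong H_{216}/((C_3\times C_3)\rtimes C_2)$ is genuine monodromy: it moves the Hesse parameter $a$. It does not come from automorphisms of any object.

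Here is a situation your argument cannot rule out. Let $Y'\to Y$ be a connected \'etale double cover, and let $S_3$ act on $Y'$ through its sign character. Then $[Y'/S_3]\to[Y'/C_2]=Y$ is a gerbe all of whose stabilizers are $C_3$, so the kernel is central in every $\Aut(x)$. Yet its band is the twist of $C_3$ by inversion along $Y'$, so it is not $C_3$-banded.

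\textbf{The fix is short.} What is missing is the check that $N_{\PGL_4}(G)$ centralizes $Z(G)=\langle\mathrm{diag}(1,1,1,\zeta_3)\rangle$.
\begin{itemize}
\item $Z(G)$ is characteristic in $G$, so every $n\in N_{\PGL_4}(G)$ preserves the fixed locus $\{w=0\}\sqcup\{[0:0:0:1]\}$ of $Z(G)$.
\item The two components have different dimensions, so $n$ preserves each of them. Hence $n$ is block diagonal, $n=\mathrm{diag}(A,c)$.
\item Such matrices commute with $\mathrm{diag}(1,1,1,\zeta_3)$.
\end{itemize}
Equivalently, $\mathrm{diag}(1,1,1,\zeta_3)$ is not conjugate to its inverse in $\PGL_4$, since no scalar multiple of $\{1,1,1,\zeta_3\}$ equals $\{1,1,1,\zeta_3^2\}$.

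In fact the torus $\C^\times$ scaling $w$ is central in $N_{\PGL_4}(G)$. So $Z(G)\times U^{=G}$ descends to the constant group $C_3$ on $\mathcal{E}^\circ$.

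There is also a coordinate-free version that works directly in families. The kernel acts on the normal line to the fixed plane (equivalently, on the tangent space at the isolated fixed point) through a character. This gives a canonical isomorphism of the relative inertia with $\mu_3\cong C_3$.

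Your description of $\Phi$ as induced by restriction to the invariant plane $w=0$ already uses that $N_{\PGL_4}(G)$ is block diagonal. So the ingredient was at hand; it just has to replace the incorrect appeal to automorphisms of objects.
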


We are now in a position to complete the computation of $\Monstack_{H_3(3)\rtimes C_2}(\LinesProblem)$ and its non-stacky companion.

\begin{proof}[Proof of Theorem~{\ref{thm:Type-IV-monodromy}}]
Letting $\A^1$ be the moduli space of $\mathcal{E}$, we can take $W\subset\A^1$ to be the complement of $j^{-1}(\{0,1728\})$ and $\mathrm{V}(a^3-1)$. Recall that this is the space that gives Dolgachev's pencil restricted to Type IV cubic surface, which can also be thought of as $Y(3)$. The above discussion yields a commutative diagram
\begin{equation}
\begin{tikzcd}[column sep=large]
    \widetilde{\cL}^{=H_3(3)\rtimes C_2}\arrow[rd,"\psi"] & W\arrow[d]\arrow[rd,"="]\\
    & \cM_{3,3}^{=H_3(3)\rtimes C_2}\times_{\cE^{\circ}}W\arrow[r]\arrow[d,"\varphi'"] & W\arrow[d,"\varphi"]\\
    & \cM_{3,3}^{=H_3(3)\rtimes C_2}\arrow[r,"\Phi"] & \cE^{\circ}
\end{tikzcd}
\end{equation}
where the square is cartesian by definition.
The morphism $\psi\colon\widetilde{\cL}^{=H_3(3)\rtimes C_2}\rightarrow \cM^{=H_3(3)\rtimes C_2}\times_{\cE^{\circ}}W$ is obtained by the fact that by having all lines ordered we can also determine an ordered tuple of Eckardt points.
By~\cite[Proposition, page 694]{Harris-Galois} or~\cite[Proposition 2.35, Remark 2.36]{landiStacksMonodromySymmetric2025}, the morphism $\varphi$ is a connected $\mathrm{ASL}_2(\mathbb{F}_3)$-Galois cover over $\mathbb{C}$, and a connected $\mathrm{AGL}_2(\mathbb{F}_3)$-cover over $\mathbb{R}$. Since $\Phi$ is a gerbe by~\Cref{prop:Phi-banded-C3-gerbe}, also $\varphi'$ is a connected Galois cover under the same group. Finally, the fact that over $W$ all lines are defined over $\mathbb{C}(\sqrt[3]{a^3+1})$ shows that $\psi$ is a finite étale morphisms of degree divisible by 3. Thus the stacky monodromy group of $\widetilde{\mathcal{L}}^{=H_3(3)\rtimes C_2} \to \cM_{3,3}^{=H_3(3)\rtimes C_2}$ has order at least $648$ over $\mathbb{C}$, and order $1296$ over $\mathbb{R}$. As we already know the bound $\Mon(\cL^{=H_3(3)\rtimes C_2}/\cM_{3,3}^{=H_3(3)\rtimes C_2})\leq N_{W(E_6)}(H_3(3)\rtimes C_2)=W(L_3)\rtimes 2$, which has order $1296$, this concludes. Geometric monodromy follows immediately, and also depends upon the field over which we are working.

For the marked and parameter-level monodromy computation, we note that $C_{\PGL_4}(H_3(3)\rtimes C_2)$ is connected, hence we conclude by Theorem~\ref{thm: monodromy X_G vs X^G}\eqref{point 2} and Corollary~\ref{cor:comparison-monodromy-UG-MG}\eqref{point:isom-parameter-marked}.
\end{proof}

\begin{remark} Over the real numbers, the proof shows that the stacky and geometric monodromy groups are the expected ones (the normalizer and the normalizer modulo the group, respectively). This is really the same phenomenon appearing in~\cite[\S II.2]{Harris-Galois}.
\end{remark}

\begin{remark}\label{rmk: brauer group}
    The $C_3$-banded gerbe $\Phi$ studied in Proposition~\ref{prop:Phi-banded-C3-gerbe} is non-trivial, and this holds true even after passing to Zariski-open subsets of $\cE^{\circ}$. Indeed, the Zariski-local triviality of the $C_3$-gerbe would imply that the exact sequence (for general parameter $a$)
    \[
        1\rightarrow C_3\rightarrow \Aut(S(a))\cong H_3(3)\rtimes C_2\rightarrow\Aut(E(a))\cong(C_3\times C_3)\rtimes C_2\rightarrow 1
    \]
    splits, which it does not. Since the $C_3$-gerbe is banded (or by direct inspection) the extension above is central, hence a splitting would induce an isomorphism $H_3(3)\rtimes C_2\cong C_3\times(C_3\times C_3)\rtimes C_2$, which is false. In particular, the class in the Brauer group of $\cE^{\circ}$ that is associated to the $C_3$-gerbe $\Phi$ is non-trivial (and of degree 3); see also~\cite[Remark 4.25]{landiStacksMonodromySymmetric2025}.
\end{remark}

\subsection{Type V} These computations are in \cite{brazeltonMonodromySpaceSymmetric2025,landiStacksMonodromySymmetric2025}. The parameter-level monodromy is in \cite{brazeltonMonodromySpaceSymmetric2025}, and has been certified numerically in \cite{duff2026certifyinggaloismonodromyactionshomotopy}. The marked monodromy follows as usual.

\subsection{Type VI} The automorphism group of Type VI surfaces is $S_3\times C_2$.

\begin{proposition}\label{prop:type6-stack-monodromy}
The stacky and geometric monodromy groups for lines on $S_3\times C_2$-equivariant cubic surfaces are
\begin{align*}
    \Monpar_{S_3\times C_2}(\LinesProblem) &\cong S_3\times C_2\\
    \Monstack_{S_3\times C_2}(\LinesProblem) &\cong S_3^2 \times C_2 \\
    \Mongeom_{S_3\times C_2}(\LinesProblem) &\cong S_3.
\end{align*}
\end{proposition}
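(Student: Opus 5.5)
The plan is to first pin down the stacky monodromy, then read off the parameter-level group from the centralizer and the geometric group from the quotient by $G=S_3\times C_2$.

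\textbf{Upper bound.} By Theorem~\ref{thm:stacky-basic-bound}\eqref{eq:inclusion-stacky} we have
\[
G\le \Monstack_G(\LinesProblem)\le N_{W(E_6)}(G).
\]
The first step is a GAP computation, in the repository, of $N_{W(E_6)}(G)$ for the Type VI copy of $S_3\times C_2$. I expect this normalizer to be $S_3^2\times C_2$, of order $72$, with centralizer $C_{W(E_6)}(G)\cong S_3\times C_2$. If the normalizer turns out to be strictly larger, the lower-bound argument below has to be sharpened.

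\textbf{Parameter-level monodromy.} From the table in \Cref{thm:irred-UG}, $C_{\PGL_4}(G)\cong(\C^\times)^2$ is connected. Corollary~\ref{cor:comparison-monodromy-UG-MG-specialcase} then gives
\[
\Monpar_G\cong\Monmarked_G\cong C_{\Monstack_G}(G).
\]
So the parameter-level claim follows once $\Monstack_G$ equals the full normalizer. I will also invoke the Type VI parameter-level computation of \cite{pichon-pharabodGaloisGroupsSymmetric2025}, and the stacky computation of \cite{landiStacksMonodromySymmetric2025}, as cross-checks.

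\textbf{Lower bound: the main obstacle.} We need to show $\Monstack_G$ attains order $72$. Equivalently, by \eqref{eq:isom-geometric-monodromy}, $\Mongeom_G\cong\Monstack_G/G$ must be all of $N/G\cong S_3$. The tool I would use is hidden loops at special points. The closure of $\cM_{3,3}^{=S_3\times C_2}$ contains the Type I (Fermat) and Type II (Clebsch) points, as the poset in \Cref{thm:possible-aut-gps} shows. Take a point $x$ there, with $\Aut(x)=C_3^{\times3}\rtimes S_4$ or $S_5$. Pulling back to the normalization $[U^G/N_{\PGL_4}(G)]$ from \Cref{prop:stacky-MG-normalization}, which is smooth, the elements of $N_{\Aut(x)}(G)$ act as stabilizers there. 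Lemma~\ref{lem:injectivity-hidden-loops} then gives an injection of these elements into the stacky monodromy of the cover over the normalization, and that cover shares its monodromy with $\cF^G$ on a dense open by Proposition~\ref{prop:invariance-properties-monodromy}\eqref{point:restriction-to-open}.

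I would compute $N_{S_5}(G)$ and $N_{C_3^{\times3}\rtimes S_4}(G)$ in GAP. For $S_5$ this normalizer is just $S_3\times C_2$ itself, so the extra elements have to come from the Fermat point. I expect that point to contribute elements of order $3$ normalizing $G$ but lying outside it, which give a $C_3$ in the image of $\Mongeom_G$. To get a transposition in $S_3$, I would look for a loop or stabilizer at a Type VIII or Type X boundary point, or use the Clebsch hidden loop together with the group generated inside $N$, or use the ramification of the $1$-dimensional coarse space $X^{=G}$ over a rational curve. Checking in GAP that these elements together with $G$ generate all of $N$ finishes the proof. If the candidates fail to generate, the fallback is a direct monodromy computation on the explicit $3$-dimensional family $U_{3,3}^{S_3\times C_2}$ from \Cref{sec:U33}, tracing a loop around a discriminant component.
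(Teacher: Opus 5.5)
\textbf{The parameter-level reduction matches the paper; the lower bound does not go through.} Your treatment of the parameter-level group is the paper's: $C_{\PGL_4}(S_3\times C_2)\cong(\C^\times)^2$ is connected, so \Cref{cor:comparison-monodromy-UG-MG-specialcase} gives $\Monpar_G\cong\Monmarked_G\cong C_{\Monstack_G}(G)$. For the stacky and geometric groups the paper proves no lower bound of its own; it quotes \cite[Theorem 4.19]{landiStacksMonodromySymmetric2025}. You demote that citation, and \cite{pichon-pharabodGaloisGroupsSymmetric2025}, to cross-checks, and you derive $\Monpar_G$ from $\Monstack_G$. So everything rests on your hidden-loop lower bound, and that bound cannot reach order $72$.

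\textbf{Why the hidden loops stop at order $36$.} Every $u\in U^G$ has $\Aut(u)\supseteq G$, so $\Aut(u)$ is of Type VI, I or II. Type VIII and Type X surfaces have automorphism groups properly contained in $G$, so they do not lie on the $G$-locus and contribute no stabilizers there. At the Clebsch point $N_{S_5}(G)=G$, as you note. Now take the Fermat point $F=x^3+y^3+z^3+w^3$, which lies in $U^G$ for the presentation of \Cref{prop:Utype6}. A monomial element $d\sigma\in\Aut(F)$ normalizing $G$ must centralize $Z(G)=\langle x\leftrightarrow y\rangle$. This forces $d_x=d_y$ and $\sigma\in\langle x\leftrightarrow y,\ z\leftrightarrow w\rangle$, and all such elements do normalize $G$. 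Hence $N_{\Aut(F)}(G)=G\times\langle\mathrm{diag}(1,1,\zeta_3,\zeta_3)\rangle$ has order $36$, and the extra $C_3$ centralizes $G$. This is the only Fermat configuration: every Type VI subgroup of $\Aut(F)$ is $\Aut(F)$-conjugate to this one, since its four Eckardt involutions are transposition-type monomial matrices and the centrality of one of them forces the shape above. Since $N_{W(E_6)}(G)/G$ has order $6$, it has a unique subgroup of order $3$. So all hidden loops, transported along arbitrary paths, generate at most a subgroup of order $36$, i.e.\ only $C_3\le\Mongeom_G$.

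\textbf{Where the missing involution has to come from.} In coarse terms, the local monodromy of $Y^{=G}\to X^{=G}$ at the Fermat and Clebsch points has order $3$ and $1$. Moreover $\Monstack_G=G\cdot\Monpar_G$, so the missing involution of $\Mongeom_G$ must be realized by an involution of $\Monpar_G\le C_{W(E_6)}(G)$ other than the central one. That is genuine parameter-level monodromy: a loop around a component where the surface becomes singular, or an explicit computation of the field of definition of the lines on the $3$-dimensional family. This is exactly the computation you list only as a fallback and do not carry out; the other candidates you name (Type VIII or X points, the Clebsch loop) give nothing.

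\textbf{How to close the gap.} You need one genuine input. Either quote \cite[Theorem 4.19]{landiStacksMonodromySymmetric2025}, as the paper does, or use $\Monpar_G\cong S_3\times C_2$ from \cite{pichon-pharabodGaloisGroupsSymmetric2025} as an input rather than a check. In the second case, $C_{\Monstack_G}(G)\cong\Monmarked_G\cong\Monpar_G$ has order $12$. Since $G\cap C_{\Monstack_G}(G)=Z(G)$, the group $G/Z(G)\cong S_3$ embeds in $\Monstack_G/C_{\Monstack_G}(G)$. Hence $|\Monstack_G|\ge 12\cdot 6=72=|N_{W(E_6)}(G)|$, and the remaining claims follow from \Cref{thm:stacky-basic-bound}.
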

\begin{proof} The stacky and geometric computations are \cite[Theorem 4.19]{landiStacksMonodromySymmetric2025}. Since $C_{\PGL_4}(S_3)$ is connected, the marked and parameter-level monodromy agree, and are equal to the centralizer of $S_3$ in $S_3^{2}\times C_2$, which is $S_3\times C_2$.
\end{proof}

\subsection{Type VII} In this setting, the automorphism group is $C_8$. There is a unique $C_8$-equivariant cubic surface in the geometric moduli space, so the geometric monodromy is trivial. This tells us that the stacky monodromy group is the group itself, $C_8$. Finally, since $C_{\PGL_4}(C_8)$ is connected by \Cref{prop:Utype7} and $C_8$ is abelian, we have that the stacky and parameter-level monodromy coincide.

\subsection{Type VIII} In this setting, the automorphism group is $S_3$. We have the following:

\begin{proposition}
The stacky and geometric monodromy for lines on $S_3$-cubic surfaces are
\begin{align*}
    \Monstack_{S_3}(\LinesProblem) &\cong S_3^3\\
    \Mongeom_{S_3}(\LinesProblem) &\cong S_3^2 \\
    \Monmarked_{S_3}(\LinesProblem)&\cong\Monpar_{S_3}(\mathcal{F}) \cong S_3^2.
\end{align*}
\end{proposition}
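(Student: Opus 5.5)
The stacky monodromy group for Type VIII is already computed in \cite{landiStacksMonodromySymmetric2025}, and the other three groups should follow formally from it via \Cref{sec:four-flavors}. So I would split the proof into three steps, handling one flavor at a time, and do no new geometry.

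\emph{Stacky monodromy.} I would cite \cite{landiStacksMonodromySymmetric2025}, which gives $\Monstack_{S_3}(\LinesProblem)\cong S_3^3$ as a subgroup of $N_{W(E_6)}(S_3)$. The upper bound there is \Cref{thm:stacky-basic-bound}\eqref{eq:inclusion-stacky}. The lower bound comes from hidden loops at points of $\cM_{3,3}^{S_3}$ with larger automorphism group, namely Types V, VI and II, injected into monodromy by \Cref{lem:injectivity-hidden-loops}. I also need Assumption~\ref{assumption:comparison-stacky-problem}. It holds because automorphisms of a smooth cubic surface act faithfully on its $27$ lines, so $\cL^{=S_3}$ is an algebraic space.

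\emph{Geometric monodromy.} By \eqref{eq:isom-geometric-monodromy}, $\Mongeom_{S_3}(\LinesProblem)\cong \Monstack_{S_3}(\LinesProblem)/S_3$. Here $S_3$ sits inside $S_3^3$ via Noohi's map $\omega_x$. In the identification of \cite{landiStacksMonodromySymmetric2025}, the automorphism group is one of the three factors; I would double-check this in GAP using the repository. Granting it, the quotient is $S_3^2$.

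\emph{Marked and parameter-level monodromy.} By \Cref{thm:irred-UG} and \Cref{prop:Utype8}, $C_{\PGL_4}(S_3)\cong\GL_2(\C)$, which is connected. So \Cref{cor:comparison-monodromy-UG-MG-specialcase} (equivalently, \Cref{cor:comparison-monodromy-UG-MG}\eqref{point:isom-parameter-marked} combined with \Cref{thm: monodromy X_G vs X^G}\eqref{point 2}) gives
\[
\Monpar_{S_3}(\LinesProblem)\cong\Monmarked_{S_3}(\LinesProblem)\cong C_{S_3^3}(S_3).
\]
If $S_3$ is one factor of $S_3^3$, this centralizer is $Z(S_3)\times S_3^2=S_3^2$. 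The irreducibility hypothesis in that corollary is supplied by \Cref{thm:irred-UG}.

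\emph{Main obstacle.} The only delicate point is pinning down exactly how $G=S_3$ embeds in $S_3^3\le W(E_6)$. If it were diagonal rather than a factor, both the centralizer and the quotient would change. I would settle this by tracking $\omega_x$ at a generic Type VIII surface: it acts on the fiber of $27$ lines through its honest automorphisms. I would then verify the resulting centralizer and quotient computationally in $W(E_6)$.
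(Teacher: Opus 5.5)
Your proposal is correct and follows the paper's proof: the stacky (and hence geometric) monodromy is quoted from \cite{landiStacksMonodromySymmetric2025}, and connectedness of $C_{\PGL_4}(S_3)\cong\GL_2(\C)$ identifies the marked and parameter-level monodromy with $C_{\Monstack_{S_3}(\LinesProblem)}(S_3)$. Your ``main obstacle'' needs no GAP check: because $S_3\le\Monstack_{S_3}(\LinesProblem)\le N_{W(E_6)}(S_3)$, this copy of $S_3$ is normal in $S_3^3$, and the only normal subgroups of $S_3^3$ isomorphic to $S_3$ are the three direct factors (a diagonal copy is never normal), so both the quotient and the centralizer are $S_3^2$.
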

\begin{proof}
The stacky and geometric monodromy are computed in \cite[Theorem 4.13]{landiStacksMonodromySymmetric2025}. The marked monodromy equals the parameter-level monodromy, which can be computed as the centralizer of $S_3$ in the stacky monodromy since $C_{\PGL_4}(S_3)$ is connected.
\end{proof}

\subsection{Type IX}

In this case the group is $C_4$, and every smooth cubic surface $S$ with $\Aut(S)=C_4$ is isomorphic to one in the 1-dimensional family of cubic surfaces given by
\begin{equation}\label{eqn:dolg-family-c4}
\begin{aligned}
    x^3 + xy^2 + ay^3 + yz^2 + zw^2,
\end{aligned}
\end{equation}

by~\cite[Table 9.6]{Dolgachev}. We denote by $S_a$ the corresponding cubic surface. There are two singularities in this pencil, when $a = \pm \frac{2i}{3\sqrt{3}}$. The resulting singular cubic surface has exactly one singular point
\[
    \left[ \mp \frac{i}{\sqrt{3}} : 1 : 0 : 0\right],
\]
where $\mp$ is respective to the sign on $a$. For $a=0$, the corresponding surface $S_0$ is smooth, but the automorphism group grows to $C_8$. Let
\[
W = \A^1\setminus \left\{ \pm \frac{2i}{3\sqrt{3}}\right\}
\]
be the variety parametrizing the family $S_a$. Observe that, while each point is fixed by the action of $C_4$, there is an action of the Type VII group $C_8$ via the diagonal matrix with entries $[1,-1,\zeta_8^2,\zeta_8^3]$ which sends $S_a$ to $S_{-a}$.

\begin{notation}[Local notation] We will denote by $\mathcal{X} = [W/C_8]$ the quotient stack of the family $W$ by the action of $C_8$.
\end{notation}

\begin{proposition} \,
\begin{enumerate}
    \item There is no special value of $a\in \A^1$ for which the cubic surface in \Cref{eqn:dolg-family-c4} is a Type III cubic surface.
    \item There is a natural open immersion
    \[
    \mathcal{X} \to \mathcal{M}_{3,3}^{C_4}
    \]
    exhibiting $\mathcal{X}$ as the open substack of cubic surfaces in $\mathcal{M}_{3,3}^{C_4}$ whose automorphism group is isomorphic to $C_4$ or $C_8$.
\end{enumerate}
\end{proposition}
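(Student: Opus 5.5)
For part (1), I would compare invariants along the pencil. A Type III surface has $9$ Eckardt points and automorphism group $H_3(3)\rtimes C_4$ of order $108$. By the subconjugacy poset in \Cref{thm:possible-aut-gps}, $C_4$ (Type IX) does sit inside the Type III group, so group theory alone does not rule this out and a direct check is needed.

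The plan is to compute, for each smooth $S_a$ in \eqref{eqn:dolg-family-c4}, the number of Eckardt points, or equivalently the full group $\PGL_{4,S_a}$.

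\begin{itemize}
\item Eckardt points of a cubic surface are the points where the polar/Hessian conditions force three concurrent lines. For this explicit family I expect to find only one Eckardt point, namely $[0:0:0:1]$, coming from the reflection $w\mapsto -w$ inside $C_4$.
\item This count should hold for every $a$, including $a=0$, where $\Aut$ grows to $C_8$ (Type VII, which has exactly one Eckardt point by the table).
\item Since Type III has $9$ Eckardt points, no member of the pencil is of Type III.
\end{itemize}

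Alternatively, one can argue with the automorphism group directly. Any automorphism of $S_a$ must normalize the $C_4$ acting by $w\mapsto iw$ on the pencil. Hence it preserves the eigenspace decomposition $\langle x,y\rangle\oplus\langle z\rangle\oplus\langle w\rangle$, or permutes eigenspaces with equal eigenvalues. One can then solve for such diagonal-block maps preserving the form. This should yield only $C_4$ for $a\ne 0$ and $C_8$ for $a=0$, never a group of order $108$.

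For part (2), I would first build the map. The family $W\subset U_{3,3}^{C_4}$ is stable under the given $C_8$, which normalizes $C_4$. So $W\to U_{3,3}^{C_4}\to\cM_{3,3}^{C_4}$ is $C_8$-invariant and descends to $\cX=[W/C_8]\to\cM_{3,3}$, landing in the image $\cM_{3,3}^{C_4}$ of $\Phi$.

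\begin{itemize}
\item \emph{Surjectivity onto the substack.} By Dolgachev's normal form, every surface with automorphism group $C_4$ or $C_8$ appears in the pencil. Type VII is the $a=0$ member. Using part (1) and the poset, these are exactly the points of $\cM_{3,3}^{C_4}$ not of Type III, so their locus is open.
\item \emph{Representability and monomorphism.} I would check that the map is representable and induces isomorphisms on automorphism groups, i.e.\ that $\Stab_{C_8}(a)\to\Aut(S_a)$ is an isomorphism. For $a\ne0$, the stabilizer in $C_8$ is the index-two subgroup $C_4$, which matches $\Aut(S_a)=C_4$. For $a=0$, the stabilizer is $C_8=\Aut(S_0)$.
\item \emph{Injectivity on isomorphism classes.} I need that $S_a\cong S_b$ if and only if $b=\pm a$. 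This follows from the normalizer computation above: any projective equivalence between two members of the pencil normalizes $C_4$, and so is forced to act on $a$ by $a\mapsto\pm a$.
\item \emph{Étaleness.} Both sides are smooth of dimension $1-1=0$ modulo the action, i.e.\ both stacks are one-dimensional. I would verify that the tangent map is an isomorphism, equivalently that the pencil is transverse to $\PGL_4$-orbits in $U_{3,3}^{C_4}$, by a dimension comparison with \Cref{thm:irred-UG}: $\dim U_{3,3}^{C_4}=6$, and $C_{\PGL_4}(C_4)=\GL_2\times\C^\times$ has dimension $5$.
\end{itemize}

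A representable étale monomorphism is an open immersion.

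The main obstacle is the normalizer/equivalence computation: showing that every projective isomorphism between members of the pencil preserves the eigenspace splitting and acts as $a\mapsto\pm a$. I would tackle this first, by explicitly solving for block-diagonal $g\in\GL_2\times\C^\times\times\C^\times$, together with the possible swaps of eigenspaces, with $g\cdot f_a=\lambda f_b$.
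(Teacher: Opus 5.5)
Part (1) is not yet proved, and the fallback argument you give for it is circular.

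The Eckardt-point count is only stated as an expectation. To certify that $[0:0:0:1]$ is the only Eckardt point of every $S_a$, you would need either the full configuration of the $27$ lines or control of all harmonic homologies preserving $S_a$. You supply neither.

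Your alternative starts from the claim that every automorphism of $S_a$ normalizes $C_4=\langle[1,1,-1,i]\rangle$. This is exactly what fails in the case you must exclude. If $S_a$ were of Type III, $C_4$ would be a Sylow $2$-subgroup of $H_3(3)\rtimes C_4$. That subgroup is not normal: its normalizer is $C_{12}$, so it has nine conjugates. Solving $g\cdot f_a=\lambda f_b$ over block-diagonal $g\in\GL_2\times\C^\times\times\C^\times$ only finds the automorphisms and equivalences that normalize $C_4$, so it cannot bound $\Aut(S_a)$.

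The same unjustified claim drives your injectivity step in (2), which you flag as the main obstacle. Your stabilizer check there also uses $\Aut(S_a)=C_4$ for $a\neq 0$, which is again (1).

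The missing idea, which is how the paper argues, is to use the block-diagonal computation only for what it legitimately computes. Since $N_{\PGL_4}(C_4)=C_{\PGL_4}(C_4)$, your computation gives $N_{\Aut(S_a)}(C_4)=\Aut(S_a)\cap N_{\PGL_4}(C_4)=\Stab_{C_8}(a)$, which is $C_4$ or $C_8$. If $S_a$ were of Type III, this group would be $N_{H_3(3)\rtimes C_4}(C_4)\cong C_{12}$, which contains an element of order $3$. The paper phrases this as follows: the Type III point has stabilizer $C_{12}$ in $[U_{3,3}^{C_4}/N_{\PGL_4}(C_4)]$, while every stabilizer of $[W/C_8]$ lies in $C_8$.

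Once (1) is known, the poset forces $\Aut(S_a)\in\{C_4,C_8\}$, and in both groups $C_4$ is characteristic. So every isomorphism $S_a\cong S_b$ really does normalize $C_4$, and your conclusions $b=\pm a$ and $\Stab_{C_8}(a)\cong\Aut(S_a)$ become valid.

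Two smaller repairs are needed for \'etaleness.
\begin{itemize}
\item $\cM_{3,3}^{C_4}$ is not smooth, as the paper notes. The bad point is the Type III surface, where the fixed loci of the different Sylow $2$-subgroups meet. So ``both sides are smooth'' fails as stated. Either work on the complement of that point and justify smoothness there, or, as the paper does, map into the normalization $[U_{3,3}^{C_4}/N_{\PGL_4}(C_4)]$ of \Cref{prop:stacky-MG-normalization}.
\item The count $6=1+5$ does not by itself give transversality; you must check it directly. For example, verify that $\partial_a f_a=y^3$ is not in the span of $x\partial_x f_a$, $y\partial_x f_a$, $x\partial_y f_a$, $y\partial_y f_a$, $z\partial_z f_a$, $w\partial_w f_a$. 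A short coefficient comparison shows it is not.
\end{itemize}
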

\begin{proof} Observe $\mathcal{M}_{3,3}^{C_4}$ is not smooth but it is irreducible. Since $W$ is smooth, the natural map $W \to \mathcal{M}_{3,3}^{C_4}$ factors through the normalization, inducing a map by Proposition~\ref{prop:stacky-MG-normalization}:
\begin{align*}
    W &\to [U_{3,3}^{C_4}/N_{\PGL_4}(C_4)] \\
    a &\mapsto S_a.
\end{align*}
This map is $C_4$-equivariant and we can see it factors through the quotient by $C_8$ since the generator for $C_8$ normalizes $C_4$ in $\PGL_4$. Modding out by this action, it is clear we get an open immersion
\[
\mathcal{X} \to [U^{C_4}/N_{\PGL_4}(C_4)].
\]
To prove the statements in the proposition, we want to see that the Type III cubic surface is not in the image of the morphism above. To see this, we regard the Type III cubic as a morphism of stacks
\[
BC_{12} = BN_{H_3(3)\rtimes C_4}(C_4) \to [U^{C_4}/N_{\PGL_4}(C_4)].
\]
In particular this cannot lie in the image of $[W/C_8]$ since $C_{12}$ cannot be a subgroup of $C_8$.
\end{proof}

Let $f\colon\widetilde{\cL}|_{\cX}=:\cY\rightarrow\cX$ be the Galois cover of lines pulled back to $\cX$. Note that $C_8=\Aut(S_0)$ injects into the monodromy group of $\Mon(f)$ by Lemma~\ref{lem:injectivity-hidden-loops}. Let $\cE\rightarrow\cX$ be the maximal finite étale cover which both $f:\widetilde{\cL}|_{\cX}=:\cY\rightarrow\cX$ and $W\rightarrow\cX$ factor through (Remark~\ref{rmk: maximal intermediate cover}). Let $\cZ:=\cY\times_{\cE}W\rightarrow W$ the Galois cover obtained by pullback, which by construction is a connected component of the cover of lines over Dolgachev's family. We get a commutative diagram
\begin{equation}\label{eq: diagram C4}
\begin{tikzcd}
    \cZ\arrow[d]\arrow[r] & W\arrow[d]\\
    \cY\arrow[r] & \cX
\end{tikzcd}
\end{equation}
To compute $\Mon(\cZ\rightarrow W)\leq\Mon(\cY\rightarrow\cX)$ we identify the field extension of $\C(a)$ obtained by equation of lines on $S_a$.

\begin{lemma}\label{lem: Eckardt lines C4 dield of definition}
    The field $\C(a)\subset K$ of definition of Eckardt  lines on Dolgachev's family $W$ is the splitting field of the polynomial $p(t)=t^3+t-a$. In particular, $\Mon(\cZ\rightarrow W)$ has $S_3$ as a quotient.
\end{lemma}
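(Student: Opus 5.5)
The plan is to identify the Eckardt point of $S_a$ explicitly, compute the three lines through it by hand, and read off the monodromy on those three lines from their field of definition. The quotient map to $S_3$ then comes from restricting monodromy on all $27$ lines to this $3$-element invariant subset.

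\textbf{The Eckardt point.} First I would pin down the $C_4$-action on the family \eqref{eqn:dolg-family-c4}. The diagonal transformation $g=\mathrm{diag}(1,1,-1,i)$ preserves every monomial:
\[
yz^2\mapsto yz^2, \qquad zw^2\mapsto (-z)(-w^2)=zw^2,
\]
and it fixes $x^3+xy^2+ay^3$. So $g$ generates the $C_4$ fixing each $S_a$. Its square $g^2=\mathrm{diag}(1,1,1,-1)$ is the unique involution in $C_4$. It is a harmonic homology with isolated fixed point $p=[0:0:0:1]$ and fixed plane $w=0$. Substituting shows $p\in S_a$ for every $a$, so $p$ is the Eckardt point attached to this involution. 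Since $C_4$ has a unique involution, $S_a$ has a unique Eckardt point for generic $a$, and the ``Eckardt lines'' are exactly the three lines through $p$.

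\textbf{The three lines.} Next I would compute these lines directly. The gradient of the cubic at $p$ is $(0,0,1,0)$, so the tangent plane at $p$ is $z=0$. Restricting the equation to $z=0$ leaves $x^3+xy^2+ay^3=0$, which is a union of three lines through $p$. Writing $x=-ty$, they are $\{z=0,\ x=-ty\}$, where $t$ runs over the roots of
\[
t^3+t-a=0 .
\]
Hence the field $K$ generated over $\C(a)$ by the equations of the Eckardt lines is the splitting field of $p(t)=t^3+t-a$. The polynomial $p$ is irreducible over $\C(a)$, since it is linear in $a$. Its discriminant is $-4-27a^2$, which is not a square in $\C(a)$. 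Therefore $\mathrm{Gal}(K/\C(a))\cong S_3$. Note also that $-4-27a^2$ vanishes exactly at the two removed points $a=\pm\tfrac{2i}{3\sqrt3}$, consistent with $W$ being where the family is smooth.

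\textbf{The $S_3$ quotient.} The monodromy of $\cZ\to W$ acts on the fiber of $27$ lines and preserves the three lines through $p$. This is because the Eckardt point is canonically determined by the surface: it is the unique Eckardt point fixed by the automorphism group. Restricting the action to this $3$-element subset gives a homomorphism from $\Mon(\cZ\to W)$ to $S_3$. By Galois~=~monodromy (Harris' theorem, applied over $W$), the image of this homomorphism is the monodromy of the $3$-sheeted subcover cut out by $p(t)$, which is $\mathrm{Gal}(K/\C(a))\cong S_3$. Hence $S_3$ is a quotient of $\Mon(\cZ\to W)$.

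The main point requiring care is this last step. One must ensure that $\cZ$, defined as a connected component of the pulled-back Galois cover, really dominates the subcover of Eckardt lines, i.e.\ that the function field of $\cZ$ contains $K$. This holds because $\cZ$ is a connected component of the Galois closure of the lines cover over $W$. Its function field is therefore the full splitting field of all $27$ lines, which contains $K$.
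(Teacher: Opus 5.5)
Your proposal is correct and follows the paper's route: you identify the Eckardt involution $\mathrm{diag}(1,1,1,-1)$, its Eckardt point $[0:0:0:1]$ and tangent plane $z=0$, reduce to the binary cubic $x^3+xy^2+ay^3$, and conclude from the non-square discriminant $-4-27a^2$ of $t^3+t-a$. You are slightly more careful than the paper, which leaves two points implicit: the irreducibility of $t^3+t-a$ over $\C(a)$ (needed, since a non-square discriminant alone also allows Galois group $C_2$), and why $\Gal(K/\C(a))$ is a quotient of $\Mon(\cZ\rightarrow W)$.
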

\begin{proof}
    We compute the equation of the three Eckardt lines, using the notation of equation~\eqref{eqn:dolg-family-c4}. First, note that the Eckardt involution is $[1:1:1:-1]$ and the Eckardt point is $p=[0:0:0:1]$. The Eckardt plane has equation $z=0$, as its intersection with the cubic surface has equation
    \[
        z=x^3+xy^2+ay^3=0,
    \]
    hence the three lines intersect at $p$. Write $x^3+xy^2+ay^3=(x+\alpha_{1}y)(x+\alpha_{2}y)(x+\alpha_{3}y)$. By equating the coefficients, we get that $\alpha_1+\alpha_2=-\alpha_3$, and $\alpha_1\alpha_2=1-\alpha_3(\alpha_1+\alpha_2)=1+\alpha_3^2$. Moreover, $a=\alpha_1\alpha_2\alpha_3=\alpha_3+\alpha_3^3$. It follows that the field of definition of the Eckardt lines is $K:=\C(a)(\alpha_1,\alpha_2,\alpha_3)$. Note that the discriminant of the polynomial $t^3+t-a=0$ is $-4-27a^2$, which is not a square in $\C(a)$, hence $\Gal(K/\C(a))\cong S_3$.
\end{proof}

Now, to compute the extension field of lines it is enough to compute the field of definition of the non-Eckardt lines intersecting one chosen Eckardt line $L$. Let $\alpha$ be one between one of the $-\alpha_i$ appearing in the proof of Lemma~\ref{lem: Eckardt lines C4 dield of definition}, thus $\alpha^3+\alpha+a=0$. A tritangent plane containing the line $z=x-\alpha y=0$ has equation $x=\alpha y+cz$, for a parameter $c$ we want to determine. Substitute this into our equation to get the product of $z$ (corresponding to the line $L$) with the conic
\[
    w^2+c(3\alpha^2+1)y^2+(3\alpha c^2+1)yz+c^3z^2=0.
\]
The condition for the conic to split into two lines is provided by the discriminant being zero:
\[
    (3\alpha^2+4)c^4-6\alpha c^2-1=0.
\]
Writing $u=c^2$ and solving the above equation, we get 4 distinct solutions, corresponding to the 4 tritangent planes. The solutions are

\[
    \pm\left(\sqrt{\frac{3\alpha_i\pm 2\sqrt{3\alpha_i^2+1}}{3\alpha_i^2+4}}\right).
\]

Now, let $c$ be such a solution. Then, $c(3\alpha^2+1)y^2+(3\alpha c^2+1)yz+c^3z^2$ can be completed to a square, which needs to be of the form
\[
    c(3\alpha^2+1)y^2+(3\alpha c^2+1)yz+c^3z^2=\frac{1}{c^3}\left(\frac{3\alpha c^2+1}{2}y+c^3 z\right)^2.
\]

The equation of the conic becomes the sum of $w^2$ with the above expression, which splits if we add $\sqrt{c}$.
We have proved the following.

\begin{lemma}\label{lem: computation monodromy C4 Dolgachev family}
    The field extension $\C(a)\subset F$ induced by the cover of lines is
    \[
        F=\C(a)\left(\sqrt[4]{\frac{3\alpha_i\pm 2\sqrt{3\alpha_i^2+1}}{3\alpha_i^2+4}}\right).
    \]
    In particular, the Galois group of the Galois cover $\cZ\rightarrow W$ is
    \[
        \Mon(\cZ\rightarrow W)\cong U_2(\mathbb{F}_3).
    \]
    Moreover, its image along the inclusions $\Mon(\cZ\rightarrow W)\hookrightarrow\Mon(\cY\rightarrow\cX)\hookrightarrow W(E_6)$ identifies $\Mon(\cZ\rightarrow W)$ with the centralizer $C_{W(E_6)}(C_4)$ of $C_4$ in $W(E_6)$.
\end{lemma}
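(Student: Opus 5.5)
The field $F$ is obtained by adjoining equations of all 27 lines. First I check that this is enough. Every line on $S_a$ either is one of the three Eckardt lines through $p=[0:0:0:1]$, or meets exactly one of them. Indeed, each Eckardt line $L_i$ meets ten other lines. These are the two other Eckardt lines together with the $4$ pairs of lines in the four remaining tritangent planes through $L_i$. The three Eckardt lines are pairwise coplanar, so each line off the plane $z=0$ meets exactly one of them. This gives $3+3\cdot 8=27$.

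It follows that $F$ is generated over the field $K=\C(a)(\alpha_1,\alpha_2,\alpha_3)$ of Lemma~\ref{lem: Eckardt lines C4 dield of definition} by the quantities $c$ and $\sqrt{c}$, for all solutions $c$ of $(3\alpha_i^2+4)c^4-6\alpha_ic^2-1=0$ and all $i$. The computation above shows the lines in each plane become defined after adjoining $\sqrt c$. Since $c=\pm\sqrt{u}$ with $u=(3\alpha_i\pm2\sqrt{3\alpha_i^2+1})/(3\alpha_i^2+4)$, all of these generators are fourth roots of the displayed elements. This gives the stated description of $F$.

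For the group, I will not compute $\Gal(F/\C(a))$ directly from the tower $\C(a)\subset K\subset K(\sqrt{3\alpha_i^2+1})\subset\cdots$, since degree bookkeeping there is error-prone. Instead I squeeze it between two bounds.

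\begin{itemize}
\item \emph{Upper bound.} The cover $\cZ\to W$ is a component of the restriction of the cover of lines to the family $W\subset U_{3,3}^{C_4}$. Every point of $W$ is fixed by $C_4$, so Remark~\ref{rmk:classical-proof} shows the monodromy commutes with $C_4$. Hence $\Mon(\cZ\to W)\le C_{W(E_6)}(C_4)$. A GAP check, using the conjugacy class 4A of $C_4$, gives $C_{W(E_6)}(C_4)\cong U_2(\mathbb{F}_3)$ of order $96$.
\item \emph{Lower bound.} I would show $[F:\C(a)]\ge 96$. The factor $6$ comes from $K$ by Lemma~\ref{lem: Eckardt lines C4 dield of definition}. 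The remaining factor $16$ I expect to get by a ramification argument. The branch points in $a$ of the successive radicands are the zeros of $3\alpha_i^2+1$, of $3\alpha_i^2+4$, and of the norms $u_+u_-=-1/(3\alpha_i^2+4)$; I would locate these and check that the elements $3\alpha_i^2+1$, the $u$'s, and the $c$'s are independent modulo squares at each stage. Geometrically, a small loop around one of the singular values $a=\pm 2i/(3\sqrt3)$ gives a reflection (an $A_1$ node) in the monodromy. Its $K$-conjugates, together with the $S_3$ quotient, should generate a group of order $96$. This could alternatively be certified numerically, in the manner of \cite{pichon-pharabodGaloisGroupsSymmetric2025}.
\end{itemize}

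Combining the two bounds gives $\Mon(\cZ\to W)=C_{W(E_6)}(C_4)$. The image is literally this centralizer, and not just an abstractly isomorphic subgroup, because the upper-bound inclusion is an equality of orders.

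The main obstacle is the lower bound, namely establishing degree $16$ over $K$ rather than something smaller. The nested radicals could collapse. For instance, $u_+u_-=-1/(3\alpha_i^2+4)$ relates the square roots, and the three values of $i$ are conjugate, which may produce dependencies. I would first try the local monodromy or reflection argument, since the reflection at a node is a genuine element outside any proper subgroup of the right shape. I would fall back on an explicit square-class computation in $K(\sqrt{3\alpha_1^2+1})$ if needed.
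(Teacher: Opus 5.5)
Your description of $F$ is correct: every non-Eckardt line lies in one of the four other tritangent planes through exactly one Eckardt line. Your upper bound is also correct. By Remark~\ref{rmk:classical-proof} the monodromy over the $C_4$-fixed family $W$ commutes with $C_4$, so $\Mon(\cZ\to W)\le C_{W(E_6)}(C_4)$, which has order $96$ (the class $D_4(a_1)$ has $540$ elements). This is the same observation the paper uses for the final assertion. The gap is the lower bound $|\Mon(\cZ\to W)|\ge 96$, equivalently $[F:K]=16$. That bound is the whole content of $\Mon(\cZ\to W)\cong U_2(\mathbb{F}_3)$, which the paper obtains by an explicit Galois computation of the tower; you only state that you expect it. As written, your argument proves only that $\Mon(\cZ\to W)$ is a subgroup of $C_{W(E_6)}(C_4)$ surjecting onto $S_3$.

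Moreover, the mechanism you sketch for the lower bound cannot work, for two reasons.
\begin{enumerate}
\item The singular members are not nodal. At $a=\pm 2i/(3\sqrt3)$ two roots of $t^3+t-a$ collide. In the chart $y=1$, with $x=x_0+X$ and $x_0=\mp i/\sqrt3$, the equation becomes $3x_0X^2+X^3+(z+\tfrac12 w^2)^2-\tfrac14 w^4$, which is an $A_3$ point. Since the $a$-derivative of the equation is the constant $y^3=1$, the local monodromy is a Coxeter element of an $A_3$ subsystem, of order $4$, not a reflection.
\item More fundamentally, $C_{W(E_6)}(C_4)$ contains no reflection at all. A reflection $s_\beta$ commutes with a generator $g$ only if $g\beta=\pm\beta$. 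But $g$, of type $D_4(a_1)$, has no eigenvalue $-1$, and its fixed space is the plane orthogonal to a $D_4$ subsystem, which contains no root of $E_6$. So a reflection in the monodromy would contradict your own upper bound.
\end{enumerate}
To close the gap you have two options. One is to actually carry out the degree computation over $K$, tracking relations such as $u_{i,+}u_{i,-}=-1/(3\alpha_i^2+4)$; for the lemma to hold, the tower must have degree exactly $16$ over $K$, far below the naive count. The other uses that $\pi_1(W)$ is free on the loops around $a=\pm 2i/(3\sqrt3)$. Then $\Mon(\cZ\to W)$ is generated by the two $A_3$-type local monodromies, each mapping to a different transposition of $S_3$, and you must determine the subgroup of $C_{W(E_6)}(C_4)$ they generate.
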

\begin{proof}
    The computation of the extension $\C(a)\subset F$ has been carried out above. The isomorphism with $U_2(\mathbb{F}_3)$ follows from an explicit Galois computation. The last part follows either by checking that $C_4$ is in the center of $\Mon(\cZ\rightarrow W)$ or by analyzing the subgroups of the normalizer $N_{W(E_6)}(C_4)$ of $C_4$ in $W(E_6)$, which we know to contain $\Mon(\cY\rightarrow\cX)$ (and hence $\Mon(\cZ\rightarrow W)$).
\end{proof}

We are ready to compute the $C_4$-equivariant monodromy of the cover of lines.

\begin{theorem}\label{thm: C4 monodromy}
    We have
    \begin{align*}
        \Monstack_{C_4}(\LinesProblem) &\cong C_{W(E_6)}(C_4)\cong U_2(\mathbb{F}_3)\\
        \Mongeom_{C_4}(\LinesProblem)& \cong S_4.
    \end{align*}
\end{theorem}
\begin{proof}
    Since $\cX$ is smooth and $\cM_{3,3}^{=C_4}$ is open and dense in $\cX$, by Proposition~\ref{prop:invariance-properties-monodromy}\eqref{point:restriction-to-open} we have that $\Monstack_{C_4}(\LinesProblem)\cong\Mon(\cY\rightarrow\cX)$, where $\cY=\widetilde{\cL}|_{\cX}$. By~\Cref{lem: computation monodromy C4 Dolgachev family} and Proposition~\ref{prop:invariance-properties-monodromy}\eqref{point:injectivity-pullback}, we know that $C_{W(E_6)}(C_4)\leq\Mon(\cY\rightarrow\cX)$ as subgroups of $W(E_6)$, hence it is enough to show that the image of $\pi_1(\cX)\rightarrow W(E_6)$ commutes with $C_4\leq W(E_6)$. Diagram~\eqref{eq: diagram C4} yields a short exact sequence
    \[
        1\rightarrow \pi_1(W)\rightarrow \pi_1(\cX)\rightarrow C_8\rightarrow 1,
    \]
    and the inclusion $C_8\cong\Aut(S_0)\hookrightarrow\pi_1(\cX)$ induces a section to the last map. In particular, $\pi_1(\cX)$ is generated by $\pi_1(W)$ and $C_8$, and the canonical image of the generic automorphism group $C_4$ in $\pi_1(\cX)$ is contained in $C_8$ by construction. Since $C_8$ is abelian, and by~\Cref{lem: computation monodromy C4 Dolgachev family}, the images in $W(E_6)$ of both $C_8$ and $\pi_1(W)$ commute with $C_4$. Therefore, the same holds for the image of $\pi_1(\cX)\rightarrow W(E_6)$, that is $\Mon(\cY\rightarrow\cX)\leq C_{W(E_6)}(C_4)$, thus completing the proof. 
\end{proof}

The marked and parameter-level monodromy follow as usual. 

\subsection{Type X}
In this setting, we are considering cubic surfaces fixed by the Klein four-group $G:=C_2\times C_2$. We prove the following:

\begin{theorem}\label{thm:Type-X-monodromy} The monodromy groups for lines on $C_2\times C_2$-symmetric cubic surfaces are
\begin{align*}
    \Monstack_{C_2\times C_2}(\LinesProblem) &\cong ((C_2 \times C_2) \rtimes C_2) \times S_4 \\
    \Mongeom_{C_2 \times C_2}(\LinesProblem) &\cong C_2\times S_4 \\
    \Monmarked_{C_2\times C_2}(\LinesProblem)\cong\Monpar_{C_2\times C_2}(\LinesProblem) &\cong C_2 \times C_2\times S_4.
\end{align*}
\end{theorem}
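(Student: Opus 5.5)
The strategy is the one used for Types IV and IX. First bound the stacky monodromy above by $N_{W(E_6)}(G)$ via Theorem~\ref{thm:stacky-basic-bound}. Then match this bound from below by an explicit field-of-definition computation on a normal form. Finally, deduce the other three groups from the general comparison results of \Cref{sec:four-flavors}.

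\textbf{Normal form and Galois computation.} A Type X cubic can be written with $G=\langle w\mapsto -w,\ z\mapsto -z\rangle$. By \Cref{prop:Utype10}, $U_{3,3}^{G}$ is a single character eigenspace of forms $w^2\ell_1(x,y)+z^2\ell_2(x,y)+c(x,y)$, up to the normalizer. The symmetric locus is irreducible of dimension $7$, and $N_{\PGL_4}(G)=(\GL_2\times\C^\times)\rtimes C_2$, where the $C_2$ swaps $z$ and $w$. The moduli is $2$-dimensional.

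Using the $\GL_2$ action, normalize $\ell_1=x$ and $\ell_2=y$. The surface then becomes
\[
    w^2x+z^2y+c(x,y)=0,
\]
with $c$ a binary cubic, modulo the residual torus and swap. In this form the lines can be described explicitly.
\begin{itemize}
    \item Lines through the two Eckardt points $[0:0:0:1]$ and $[0:0:1:0]$ lie in the planes $w=0$ and $z=0$. They split according to the roots of $z^2y+c(x,y)$ and of $w^2x+c$.
    \item The remaining lines lie in planes $x=ty$. Their discriminant condition reduces to the roots of $c(t,1)$, producing square roots of $t$ and of the cubic data.
\end{itemize}
I expect the field of definition to be the compositum of the splitting field of a quartic in the moduli parameters, which gives $S_4$, with a few independent square roots. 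Its Galois group over the parameter field should have order $2^2\cdot 24$ and commute with $G$. This gives $\Monpar\supseteq C_2\times C_2\times S_4$ on a suitable slice.

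\textbf{Upper bounds and the other flavors.} A GAP computation gives $C_{W(E_6)}(G)\cong C_2\times C_2\times S_4$ and $N_{W(E_6)}(G)\cong ((C_2\times C_2)\rtimes C_2)\times S_4$. Since $C_{\PGL_4}(G)=\GL_2\times\C^\times$ is connected, Corollary~\ref{cor:comparison-monodromy-UG-MG-specialcase} gives $\Monpar=\Monmarked=C_{\Monstack}(G)\le C_{W(E_6)}(G)$. Combined with the lower bound, this yields $\Monpar=\Monmarked\cong C_2\times C_2\times S_4$.

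For the stacky group, apply Proposition~\ref{prop:SES-par-stack-monodromy} with $\pi_0 N_P(G)=C_2$. This shows that $\Monstack$ is an extension of $\Monpar$ by a quotient of $C_2$. To prove the quotient is all of $C_2$, exhibit a cubic in the closure whose automorphism group contains an element swapping $z$ and $w$ (e.g. a Type V or VI surface, such as $\ell_1=x,\ell_2=y$ with symmetric $c$). Then Lemma~\ref{lem:injectivity-hidden-loops}, together with the open-restriction argument of Theorem~\ref{thm: C4 monodromy}, produces a monodromy element in $N_{W(E_6)}(G)\setminus C_{W(E_6)}(G)$. Hence $\Monstack=N_{W(E_6)}(G)$. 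Finally, \eqref{eq:isom-geometric-monodromy} gives $\Mongeom\cong\Monstack/G\cong C_2\times S_4$.

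\textbf{Main obstacle.} The hardest step will be the explicit determination of the field of definition of all 27 lines, and specifically showing that the $S_4$ part is genuinely a full quartic Galois group over the $2$-dimensional moduli. Two things must be checked: the square roots must be independent of the quartic, and the Galois group must be the claimed order-$96$ group rather than a proper subgroup. My first approach is to specialize to a one-parameter subfamily and compute discriminants. If that is inconclusive, I would fall back on the GAP subgroup lattice of $C_{W(E_6)}(G)$, ruling out proper subgroups by exhibiting elements of orders $3$ and $4$ and the transpositions coming from specialization.
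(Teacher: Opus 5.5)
\textbf{Gap in the lower bound.} Your reduction steps are correct:
\begin{itemize}
\item since $C_{\PGL_4}(G)\cong\GL_2\times\C^\times$ is connected, $\Monpar_G(\LinesProblem)=\Monmarked_G(\LinesProblem)\le C_{W(E_6)}(G)$, a group of order $96$;
\item $\Monstack_G(\LinesProblem)/\Monpar_G(\LinesProblem)$ is a quotient of $\pi_0N_{\PGL_4}(G)=C_2$;
\item $\Mongeom_G(\LinesProblem)\cong\Monstack_G(\LinesProblem)/G$.
\end{itemize}
The gap is the lower bound $|\Monpar_G(\LinesProblem)|\ge 96$. This carries the content of the theorem, and you only say you expect it. Moreover, the geometry you plan to compute it from is wrong.

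For $f=w^2x+z^2y+c(x,y)$, the Eckardt points $[0:0:0:1]$ and $[0:0:1:0]$ have tangent planes $x=0$ and $y=0$. So the Eckardt lines are $L=\{x=y=0\}$, $\{x=0,\ z=\pm\sqrt{-c(0,1)}\,y\}$ and $\{y=0,\ w=\pm\sqrt{-c(1,0)}\,x\}$. The planes $w=0$ and $z=0$ instead cut $S$ in the smooth plane cubics fixed by the two involutions. Of the $22$ non-Eckardt lines, only $6$ lie in planes $x=ty$; the other $16$ are skew to $L$. No quartic occurs.

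The computation that closes the gap is short. The plane $x=ty$ meets $S$ in $L$ and the diagonal conic $tw^2+z^2+c(t,1)y^2$. This conic degenerates exactly for $t\in\{0,\infty,t_1,t_2,t_3\}$, where the $t_i$ are the roots of $c(t,1)$. So $L$ and the $10$ lines meeting it are defined over $F(t_i,\sqrt{-t_i},\sqrt{-c(0,1)},\sqrt{-c(1,0)})$, with $F$ generated by the coefficients of $c$. Since $\prod_i(-t_i)=c(0,1)/c(1,0)$, this field equals $F(t_i,\sqrt{-t_i},\sqrt{-c(0,1)})$.

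You still need that $-t_1,-t_2,-t_3,-c(0,1)$ are independent modulo squares in the splitting field $E$ of $c(t,1)$. Valuations at places over $c(0,1)=0$ and $c(1,0)=0$, where $E/F$ is unramified, settle this. Then the Galois group is $(C_2\wr S_3)\times C_2\cong C_2\times C_2\times S_4$, of order $96$. Here the $S_4$ is $C_2^2\rtimes S_3\le C_2\wr S_3\cong C_2\times S_4$, not the Galois group of a quartic. This group is a quotient of the monodromy over your slice, hence $|\Monpar_G(\LinesProblem)|\ge 96$. The $16$ skew lines are never needed.

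\textbf{The stacky step.} Type VI cannot supply the swap. There the two Eckardt involutions generating $G$ are the central involution of $S_3\times C_2$ and a transposition, so no automorphism exchanges them. Type V does work, since $(13)(24)$ normalizes $\langle(12),(34)\rangle\le S_4$. Your example $c(x,y)=c(y,x)$, with automorphism $(x,y,z,w)\mapsto(y,x,w,z)$, is of this kind.

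That surface is not in $\cM_{3,3}^{=G}$, so you must also say where the hidden loop lives. Take the smooth stack $[U_{3,3}^G/N_{\PGL_4}(G)]$, which contains $\cM_{3,3}^{=G}$ as a dense open (\Cref{prop:stacky-MG-normalization}). Then combine Proposition~\ref{prop:invariance-properties-monodromy}\eqref{point:restriction-to-open} with \Cref{lem:injectivity-hidden-loops}, applied to the Galois closure of the cover of lines.

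\textbf{Comparison with the paper.} With these repairs, your argument is a valid alternative to the paper's proof. The paper computes no lines. It bounds $\Monmarked_G$ from below by degenerating to the Type VI and Type V strata, whose monodromy is already known. It obtains the index-two extension from the irreducibility of the Eckardt-marked component $\cM^E_{=G}$, which is an irreducible $C_2$-torsor over $\cM_{3,3}^{=G}$. Your explicit route is more elementary and self-contained.
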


As $N_{W(E_6)}(C_2\times C_2)\cong((C_2\times C_2)\rtimes C_2)\times S_4$, the inclusions from left to right always hold, so we have to argue equality.

Recall that a cubic surface $S$ of Type X has exactly two Eckardt points $p_1$ and $p_2$ that lie on a line $L\subset S$. We denote by $L_1$, $M_1$ the other two lines on $S$ passing through $p_1$, and  by $L_2$, $M_2$ the lines on $S$ passing through $p_2$. In particular, the cover $\widetilde{\cL}^{=G}\rightarrow\cM^{=G}$ splits as the union of three covers:
\begin{enumerate}
    \item one of degree 1, thus an isomorphism, corresponding to $L$;
    \item one of degree 4, corresponding to the other 4 Eckardt lines;
    \item one of degree 22, corresponding to the remaining non-Eckardt lines.
\end{enumerate}

Moreover, the action of $C_2\times C_2$ on $S$ is generated by the two Eckardt involutions $\sigma_1$, $\sigma_2$ centered at $p_1$ and $p_2$ respectively, see~\cite[\S9.1.4]{Dolgachev} and~\cite[\S6.2]{brundu2023construction} for a discussion. By~\cite[\S6.2]{brundu2023construction}, we also know that $\sigma_1:L_2\leftrightarrow M_2$, $\sigma_1$ fixes $L_1$ and $M_1$, and similarly for $\sigma_2$.

To prove Theorem~\ref{thm:Type-X-monodromy} we need to construct elements in the monodromy group. There are a few strategies to do so, for instance using the monodromy groups of deeper strata of $\cM_{3,3}$ that we have already computed. To do so, we need to resort to $\cM_{3,3,G}$, as we know it to be smooth thus allowing us to apply~\ref{prop:invariance-properties-monodromy}\eqref{point:restriction-to-open}.

Recall that $\cM_G:=\cM_{3,3,G}$ parametrizes pairs $(S,\rho)$ where $S$ is a smooth cubic surface and $\rho:G\hookrightarrow\Aut(S)$ is an injective homomorphism from $G$ to the automorphism group of $S$, and $\cM_{=G}\subset\cM_G$ is the open substack where $\rho$ is required to be an isomorphism (Definition~\ref{def:XG-stacks}). Note that for any surface $S$ with automorphism group isomorphic to $C_2\times C_2$, there are two possible types of identifications $\Aut(S)\cong C_2\times C_2$: one in which the two generators $(-1,1)$ and $(1,-1)$ correspond to Eckardt involutions, and those where this is not true. There are 2 of the first kind, and 4 of the second, compatibly with the fact that $\Aut(C_2\times C_2)\cong S_3$.

\begin{definition}\label{def:Eckardt-representation-C2C2}
    Let $G=C_2\times C_2$. We denote by $\cM_{=G}^E$ the full subcategory of $\cM_{=G}$ of pairs $(S,\rho)$ such that $\rho:C_2\times C_2\xrightarrow{\cong}\Aut(S)$ sends $(-1,1)$ and $(1,-1)$ to the two Eckardt involutions, in some order.
\end{definition}

\begin{remark}\label{rmk:Eckardt-representation-C2C2}
    Note that $\cM_{=G}^E$ is an open and closed substack of $\cM_{=G}$. Moreover, by Proposition~\ref{prop:X-lower-G-properties} the composite of
    \[
    \begin{tikzcd}
        \cM_{=G}^E\subset\cM_{=G}\arrow[r,"\Phi_{=G}"] & \cM^{=G}
    \end{tikzcd}
    \]
    is a $C_2$-torsor, where $C_2\leq\Aut(C_2\times C_2)\cong S_3$ is the generated by the involution swapping the two factors.
\end{remark}

In order to apply Proposition~\ref{prop:invariance-properties-monodromy}\eqref{point:restriction-to-open}, we need to understand the closure of $\cM_{=G}^E$ in $\cM_G$, for which we need the following definition (\cite[\S3.3]{landiStacksMonodromySymmetric2025}). We will give the definition in general for any $G$ as it will be used in the proof of Theorem~\ref{thm:Type-X-monodromy} also for groups different from $C_2\times C_2$.

\begin{definition}\label{def:MGPV}
    Let $G$ be any group that is the automorphism group of a smooth cubic surface, and let $V$ be a $G$-representation $V$, with associated projective representation $\P V$. We denote by $\cM_G^{\P V}$ the full subcategory of $\cM_G$ of pairs $(S,\rho)$ such that action of $G$ on $\P H^0(\omega_S^{\vee})\cong\P^3$ yields a projective representation isomorphic to $\P V$.
\end{definition}

\begin{remark}\label{rmk:MGPV}
    Since we work over a field of characteristic 0, $G$ is linearly reductive, hence $\cM_G^{\P V}$ is an open and closed substack of $\cM_G$ (\cite[Lemma 3.23]{landiStacksMonodromySymmetric2025}). See the discussion in~\cite[\S3.3]{landiStacksMonodromySymmetric2025} for more details.
\end{remark}

\begin{lemma}\label{lem: Type X projective representation}
    Let $G=C_2\times C_2$. For $i=1,2$, let $V_{1,i}$ be the 1-dimensional representation of $C_2\times C_2$ where the $i$-th factor acts non-trivially, while the other acts trivially. Let $V:=V_{1,1}\oplus V_{1,2}\oplus\mathds{1}^{\oplus2}$. Then, the closure of $\cM_{=G}^E$ in $\cM_G$ is contained in $\cM_G^{\P V}$.
\end{lemma}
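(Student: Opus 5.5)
Since $\cM_G^{\P V}$ is open and closed in $\cM_G$ (Remark~\ref{rmk:MGPV}), its complement is closed as well. It therefore suffices to prove the pointwise statement $\cM_{=G}^E\subseteq\cM_G^{\P V}$: the closure of $\cM_{=G}^E$ then lies in the closed set $\cM_G^{\P V}$.

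Moreover, the isomorphism class of the projective representation is locally constant on $\cM_G$. Hence it is enough to check the claim on one point of each connected component of $\cM_{=G}^E$. Alternatively, and more cleanly, one can check it directly at an arbitrary point $(S,\rho)\in\cM_{=G}^E$.

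The plan for the pointwise check is as follows.

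\textbf{Linearization.} Let $(S,\rho)$ be an object of $\cM_{=G}^E$. The action of $G$ on $\P^3$ lifts linearly to $H^0(\omega_S^\vee)\cong\C^4$, as in the proof of Theorem~\ref{thm:irred-UG}. Since the group is abelian of exponent two, the lifted action is diagonalizable. So the underlying representation is a sum of four characters of $C_2\times C_2$, well defined up to tensoring with a character, since that is the ambiguity of a projective representation.

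\textbf{The Eckardt involution.} An Eckardt involution $\sigma$ centered at $p$ is a harmonic homology of $\P^3$. Its fixed locus is the point $p$ together with the Eckardt plane. In suitable coordinates it is $\mathrm{diag}(1,1,1,-1)$, as in the computation $[1:1:1:-1]$ in the proof of Lemma~\ref{lem: Eckardt lines C4 dield of definition}. Consequently, any linear lift of $\rho(-1,1)$ has eigenvalues $\{1,1,1,-1\}$ up to a global sign, and likewise for $\rho(1,-1)$.

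\textbf{Pinning down the characters.} Write the four characters as $\chi_1,\dots,\chi_4$, normalized by twisting so that one of them is trivial. The constraint is that each generator $e_1=(-1,1)$ and $e_2=(1,-1)$ has exactly one character disagreeing with the other three.

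Suppose the two generators distinguished the same coordinate. Then, after twisting, $e_1$ and $e_2$ would act by the same matrix. Their product would then act trivially on $\P^3$, contradicting faithfulness of $\rho$.

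Hence the distinguished eigenlines are different. Twisting so that the common three-dimensional eigenspaces intersect in a trivial plane forces the decomposition $V_{1,1}\oplus V_{1,2}\oplus\mathds 1^{\oplus 2}$. Geometrically, the two distinguished eigenlines are the points $p_1,p_2$ spanning $L$. This matches the description $[x:y:z:w]\mapsto$ sign changes in the $C_2\times C_2$ action of Proposition~\ref{prop:Utype10}.

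\textbf{Expected obstacle.} The main point of care is the claim that an Eckardt involution is a homology with eigenvalue pattern $(1,1,1,-1)$ rather than $(1,1,-1,-1)$. This is classical (\cite[\S9.1.4]{Dolgachev}), and it also follows from the footnote after the $C_2$ example: involutions of the second type force the extra symmetry. Once this is granted, the rest is linear algebra on characters plus the open-closed property of $\cM_G^{\P V}$.
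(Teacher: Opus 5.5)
Your proposal is correct and follows essentially the paper's argument: the paper's one-line proof is that $G$ is generated by the two Eckardt involutions, each a homology. Its alternative coordinate proof, with $L=\{x_2=x_3=0\}$ and $H_1\cap H_2=\{x_0=x_1=0\}$, amounts to your character computation; you additionally make explicit the passage from the pointwise check to the closure via closedness of $\cM_G^{\P V}$, which the paper leaves implicit.

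One small slip: the plane fixed pointwise by the Eckardt involution is not the tangent (``Eckardt'') plane containing the three lines through $p$. In the paper's Type IX example the involution $[1:1:1:-1]$ fixes $w=0$, while the plane containing the lines is $z=0$. Only the eigenvalue pattern $(1,1,1,-1)$ matters, so your argument is unaffected.
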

\begin{proof}
    This follows from the fact that the action of $C_2\times C_2$ is generated by the Eckardt involutions, see the discussion above. Alternatively, we can choose $x_2=x_3=0$ to be the equation of the line $L$ passing through both Eckardt points, uniformly for all cubic surfaces of Type X. Let $H_1$ and $H_2$ be the fixed point hyperplanes of $\sigma_1$ and $\sigma_2$, respectively. Then, the line $H:=H_1\cap H_2$ can be set to be defined by $x_0=x_1=0$. Then, both $\sigma_1$ and $\sigma_2$ acts trivially on $H$, and restrict to different non-trivial involutions on $L$. The statement follows.
\end{proof}

\begin{lemma}\label{lem: Type X irreducibility}
    For $G=C_2\times C_2$, the stack $\cM_G^{\P V}$ is smooth and irreducible. In particular, $\cM_{=G}^E$ is irreducible and $\cM_G^{\P V}$ coincides with the closure of $\cM_{=G}^E$ in $\cM_G$.
\end{lemma}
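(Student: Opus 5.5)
Our plan is to present $\cM_G^{\P V}$ as a quotient stack of a linear fixed locus by a connected group, and read off smoothness and irreducibility from that presentation. Fix the projective representation $\P V$ of $G=C_2\times C_2$ acting on $\P^3$ by the diagonal matrices $\sigma_1=\mathrm{diag}(-1,1,1,1)$ and $\sigma_2=\mathrm{diag}(1,-1,1,1)$ in coordinates $x_0,x_1,x_2,x_3$. An object of $\cM_G^{\P V}$ over $S$ is, étale locally, a cubic surface with an embedding $\rho$ of $G$ whose induced projective representation on $\P H^0(\omega^\vee)$ is conjugate to this one. Since cubic surfaces are anticanonically embedded, we can choose coordinates in which $\rho(G)$ is exactly this diagonal group. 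The residual choices of coordinates preserving $\rho$ form the centralizer $C_{\PGL_4}(G)$. This gives
\[
\cM_G^{\P V}\cong \bigl[\, U_{\P V}/C_{\PGL_4}(G)\,\bigr],
\]
where $U_{\P V}\subset U_{3,3}$ denotes the smooth cubics invariant under the fixed diagonal $G$ whose defining form lies in the appropriate character eigenspace. Recording this choice of eigenspace is exactly what is meant by the representation $V$, rather than only $\P V$. To justify the isomorphism, we would check that the map $U_{\P V}\to\cM_G^{\P V}$, sending a cubic to itself together with the tautological $\rho$, is a $C_{\PGL_4}(G)$-torsor. This is the same argument as in Lemma~\ref{lem:factorization-UG-to-XG} and Theorem~\ref{thm:comparison-UG/centralizer-MG}, run with $\cX_G$ and $C_P(G)$ in place of $\cX_{=G}$; see also \cite[\S3.3]{landiStacksMonodromySymmetric2025}.

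Next we identify the invariant cubics. A monomial $x_0^ax_1^bx_2^cx_3^d$ is fixed by $\sigma_1$ if and only if $a$ is even, and by $\sigma_2$ if and only if $b$ is even. The invariant cubic forms are therefore
\[
x_0^2\ell_0(x_2,x_3)+x_1^2\ell_1(x_2,x_3)+q(x_2,x_3),
\]
with $\ell_0,\ell_1$ linear and $q$ a binary cubic. This is a linear space, whose projectivization is $\P^7$. The other character eigenspaces contain forms in which every monomial is divisible by $x_0$ or by $x_1$, for instance. Such forms are singular; as in Proposition~\ref{prop:Utype10}, the remaining eigenspaces contain no smooth cubics. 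The compatibility with $\cM_G^{\P V}$ comes from Lemma~\ref{lem: Type X projective representation}, which places the Eckardt-type loci inside the eigenspace above.

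So $U_{\P V}$ is a nonempty Zariski open subset of $\P^7$: it is the complement of the discriminant, and it is nonempty because it contains a Type X cubic. Hence it is smooth and irreducible, of dimension $7$, matching Theorem~\ref{thm:irred-UG}. The centralizer $C_{\PGL_4}(G)\cong \GL_2\times\C^\times$ is connected; it consists of the diagonal scalings of $x_0,x_1$ together with $\GL_2$ acting on $(x_2,x_3)$, taken modulo scalars. A quotient of a smooth irreducible scheme by a connected smooth group is smooth and irreducible, which gives the first claim.

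For the final claim, $\cM_{=G}^E$ is open in $\cM_G$ and, by Lemma~\ref{lem: Type X projective representation}, contained in $\cM_G^{\P V}$. It is nonempty, so it is dense in the irreducible $\cM_G^{\P V}$, and is therefore irreducible. By Remark~\ref{rmk:MGPV}, $\cM_G^{\P V}$ is closed in $\cM_G$. Combined with density, this shows that the closure of $\cM_{=G}^E$ equals $\cM_G^{\P V}$.

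We expect the main obstacle to be the first step. One must check carefully that the stack defined by the condition on the isomorphism class of $\P V$ really is the quotient of a single eigenspace locus by the centralizer. In particular, $\rho$ and the permuted embedding $\rho\circ\tau$, where $\tau$ swaps the two factors, both lie in $\cM_G^{\P V}$. We must verify that this swap is realized by an element of the normalizer, so that it does not produce a second component.
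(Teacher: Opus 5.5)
Your proposal is correct and is essentially the paper's argument. The paper likewise conjugates $\rho$ to a fixed diagonal $C_2\times C_2$ and observes that the invariant cubics form an irreducible linear family (an open subset of $\A^8$ surjecting onto $\cM_G^{\P V}$); it cites the general smoothness result, and it deduces the closure statement from the openness of $\cM_{=G}^E$ and the closedness of $\cM_G^{\P V}$, exactly as you do. Your torsor presentation $[U_{\P V}/C_{\PGL_4}(G)]$ proves slightly more than needed: irreducibility only requires a surjection from an irreducible source, so connectedness of the centralizer plays no role. Likewise, the ``obstacle'' about $\rho\circ\tau$ is moot, because membership in $\cM_G^{\P V}$ already means $\rho$ is conjugate to your fixed diagonal embedding.
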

\begin{proof}
    The smoothness is known to hold in general, see~\cite[Proposition 3.6]{landiStacksMonodromySymmetric2025} and Remark~\ref{rmk:MGPV}. For the irreducibility, we describe $\cM_G^{\P V}$ better. Let $(S,\rho)$ be an object over a scheme $T$. Eventually after passing to a Zariski-cover, we can assume that $S\subset\P^3_T$ and the $G$-action on $\P^3_T$ is generated by the matrices
    \[
        \sigma_1=\begin{bmatrix}
            1 & 0 & 0 & 0\\
            0 & 1 & 0 & 0\\
            0 & 0 & -1 & 0\\
            0 & 0 & 0 & 1
        \end{bmatrix}
        \qquad
        \sigma_2=\begin{bmatrix}
            1 & 0 & 0 & 0\\
            0 & 1 & 0 & 0\\
            0 & 0 & 1 & 0\\
            0 & 0 & 0 & -1
        \end{bmatrix}
    \]
    The polynomials that are invariant under this action are of the form
    \[
        f(x,y,z,w)=f_3(x,y)+z^2g_1(x,y)+w^2h_1(x,y)
    \]
    with $f_3$ is homogeneous of degree 3, while $g_1$ and $h_1$ are homogeneous of degree 1. In particular, there is a surjection from the open subset of $\A^8$ to $\cM_G^{\P V}$, corresponding to smooth polynomials $f$ as above. Since $\A^8$ is irreducible, also $\cM_G^{\P V}$ is.

    The last part of the statement follows from the fact that we have an open immersion $\cM_{=G}^E\subset\cM_G^{\P V}$ by Lemma~\ref{lem: Type X projective representation}, $\cM_G^{\P V}$ is (open and) closed in $\cM_G$ by~\cite[Lemma 3.23]{landiStacksMonodromySymmetric2025}, and $\cM_G^{\P V}$ is irreducible.
\end{proof}

We are ready to prove the monodromy groups of Type X cubic surfaces.

\begin{proof}[Proof of Theorem~\ref{thm:Type-X-monodromy}]
    Note that the geometric monodromy will follow from the stacky one by Theorem~\ref{thm:stacky-basic-bound}, and $\Monpar_{C_2\times C_2}(\LinesProblem)\cong\Monmarked_{C_2\times C_2}(\LinesProblem)$ by Corollary~\ref{cor:comparison-monodromy-UG-MG} since $C_{\PGL_4}(C_2\times C_2)$ is connected by~\ref{thm:irred-UG}. Moreover, by Remark~\ref{rmk:Eckardt-representation-C2C2} the morphism $\cM_{=C_2\times C_2}^E\rightarrow\cM_{3,3}^{=C_2\times C_2}$ is a $C_2$-torsor, and by Lemma~\ref{lem: Type X irreducibility} the total space is irreducible. It follows by Theorem~\ref{thm: monodromy X_G vs X^G}\eqref{point 2} that $\Monstack_{C_2\times C_2}(\LinesProblem)$ is a $C_2$-extension of $\Monmarked_{C_2\times C_2}(\LinesProblem)$, hence it is enough to prove Theorem~\ref{thm:Type-X-monodromy} for the marked monodromy. In turn, this was already proved in~\cite[Theorem 2, nK4]{pichon-pharabodGaloisGroupsSymmetric2025} with certified monodromy tracking. Nevertheless, we give an alternative more conceptual proof.

    By~\ref{prop:invariance-properties-monodromy}\eqref{point:restriction-to-open} and Lemma~\ref{lem: Type X irreducibility}, we know that
    \[
        \Monmarked_{C_2\times C_2}(\LinesProblem)\cong\Mon(\cL_{C_2\times C_2}^{\P V}\rightarrow\cM_{C_2\times C_2}^{\P V})\leq C_{W(E_6)}(G)\cong C_2^2\times S_4,
    \]
    which has order 96. Let $V_2$ be the standard 2-dimensional representation of $S_3$, and let $W_2=\mathds{1}\oplus-\mathds{1}\oplus V_2$, where $-\mathds{1}$ is the sign representation. By~\cite[Lemma 4.16, Theorem 4.17]{landiStacksMonodromySymmetric2025}, the closure of one of the two connected components $\cM_{=S_3\times C_2}$ in $\cM_{S_3\times C_2}$ is $\cM_{S_3\times C_2}^{\P W_2}$, which is irreducible. Moreover, $W_2$ restricted to a specific subgroup $C_2\times C_2\leq S_3\times C_2$ is isomorphic to the representation $V$ of \Cref{lem: Type X irreducibility}. It follows that there is a finite morphism $\cM_{S_3\times C_2}^{\P W_2}\rightarrow\cM_{C_2\times C_2}^{\P V}$. By~\cite[Theorem 4.19]{landiStacksMonodromySymmetric2025} or~\cite[Theorem 2]{pichon-pharabodGaloisGroupsSymmetric2025}, we have that $\Monmarked_{S_3\times C_2}(\LinesProblem)\cong S_3\times C_2$, which then injects into $\Monmarked_{C_2\times C_2}(\LinesProblem)$. By Theorem~\ref{thm:stacky-basic-bound}, we also have that the automorphism group itself $C_2\times C_2$ injects into the monodromy (note that it is abelian). Together, they generate a subgroup $C_2\times C_2\times S_3\leq C_2\times C_2\times S_4$, where the product of the two copies of $C_2$ on the left and right correspond to each other.
    
    Now, consider the cover $\cM^{=S_4}_{E}$ of $\cM_{3,3}^{=S_4}$ that parametrizes $S_4$-symmetric cubic surfaces together with a marked Eckardt point. As every Type V cubic surface has exactly 6 Eckardt points, the cover is finite étale of degree 6. It follows that the index of the monodromy group of the cover of lines over $\cM^{=S_4}_{E}$ in $\Monstack_{S_4}(\LinesProblem)$ divides $6$. Moreover, for each Eckardt point $e$ on an $S_4$-symmetric surface $S$, there is a unique distinct Eckardt point $e'\in S$, that we call sibling, that lies on a same line on the surface; see \cite[Figure 2]{brundu2023construction}. This yields a morphism $\cM_{E}^{=S_4}\rightarrow\cM_{C_2\times C_2}^{\P V}$ by sending a pair $(S,e)$, with $e\in S$ Eckardt point with sibling $e'$, to the surface $S$ with the $C_2\times C_2$ action induced by the two Eckardt involutions associated to $e$ and $e'$, in this order. This shows that the monodromy contains a subgroup of the $\Monstack_{S_4}(\LinesProblem)\cong C_2\times C_2\times S_4$ of index dividing 6. Putting everything together we get the statement.
\end{proof}

\subsection{Type XI} Finally, we consider the most general case, where the automorphism group is $C_2$.

\begin{proposition} The monodromy groups of lines on $C_2$-equivariant cubic surfaces are
\begin{align*}
    \Monstack_{C_2}(\LinesProblem)\cong\Monmarked_{C_2}(\LinesProblem)\cong\Monpar_{C_2}(\LinesProblem) &\cong\GO_4^+(3)\\
    \Mongeom_{C_2}(\LinesProblem) &\cong \PGO_4^+(3).
\end{align*}
\end{proposition}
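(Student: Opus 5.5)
The plan is to compute the stacky monodromy first and then read off the other three groups from the comparison results of \Cref{sec:four-flavors}.

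\textbf{Upper bound.} Let $G=C_2$ act by a single Eckardt involution $\sigma$. Every smooth cubic surface with such an involution can be written as $w^2\ell(x,y,z)+c(x,y,z)=0$, and the generic one has automorphism group exactly $C_2$. By \Cref{thm:stacky-basic-bound},
\[
C_2\leq\Monstack_{C_2}(\LinesProblem)\leq N_{W(E_6)}(C_2)=C_{W(E_6)}(C_2).
\]
The class of an Eckardt involution in $W(E_6)$ is the class $4A_1$ (Atlas $2A$). Its centralizer has order $51840/45=1152$ and is isomorphic to $\GO_4^+(3)$; this is checkable in GAP. Since normalizer and centralizer coincide for a group of order two, \Cref{thm: monodromy X_G vs X^G}\eqref{point 2} gives $H=1$, and hence $\Monmarked_{C_2}=\Monstack_{C_2}$.

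The centralizer $C_{\PGL_4}(C_2)\cong\GL_3(\C)$ is connected (see \Cref{thm:irred-UG}). Therefore \Cref{cor:comparison-monodromy-UG-MG-specialcase} gives $\Monpar_{C_2}=C_{\Monstack_{C_2}}(C_2)=\Monstack_{C_2}$. Finally, \eqref{eq:isom-geometric-monodromy} gives $\Mongeom_{C_2}\cong\Monstack_{C_2}/C_2$. Since $\sigma$ is central in $\GO_4^+(3)$, namely it is $-1$, this quotient is $\PGO_4^+(3)$. It therefore suffices to show the upper bound is attained, i.e.\ that the stacky monodromy has order $1152$.

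\textbf{Lower bound.} This is the main obstacle. I would use the parameter space directly: $\Monpar_{C_2}$ is the monodromy over the irreducible $12$-dimensional space $U_{3,3}^{C_2}$ of forms $w^2\ell+c$. A concrete geometric description reduces the problem to classical monodromy.
\begin{itemize}
\item The Eckardt point is $[0:0:0:1]$, and the three lines through it come from $\ell=c=0$. These lines are permuted by the full $S_3$ monodromy of three points on a line in $\P^2$.
\item The remaining $24$ lines are the lines not through the Eckardt point. They map in $\sigma$-pairs to the $12$ lines in the plane tangent to the curve $c=0$ along a divisor determined by $\ell$. Equivalently, they come from the double cover of $\P^2$ branched along the quartic $\ell c$, which consists of a cubic plus a line.
\end{itemize}
So the monodromy surjects onto the monodromy of these "bitangents of the reducible quartic $\ell\cup\{c=0\}$", with kernel a $2$-group recording the sign choices. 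I expect that classical group to be $W(D_4)$-like of order $576=1152/2$, with $\sigma$ giving the remaining factor of $2$.

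\textbf{Fallback.} If this identification proves messy, I would instead generate the group from the deeper strata already computed. By \Cref{prop:invariance-properties-monodromy}\eqref{point:restriction-to-open} on the smooth stack $\cM_{C_2}$, the marked monodromy groups of Types VIII, X, IX and V map injectively into $\Monmarked_{C_2}$, using the closures of components as in the proof of \Cref{thm:Type-X-monodromy}. In particular it contains
\begin{itemize}
\item $C_2\times C_2\times S_4$ of order $96$, from Type X;
\item $S_3^2$ of order $36$, from Type VIII, through an Eckardt involution contained in it;
\item $U_2(\mathbb{F}_3)$, from Type IX.
\end{itemize}
The subgroup these generate must have order divisible by $\mathrm{lcm}(96,36)=288$. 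A GAP check that their images in $C_{W(E_6)}(\sigma)$, suitably conjugated, generate the whole group of order $1152$ would finish the proof. The delicate point is arranging these embeddings compatibly inside one fixed centralizer.
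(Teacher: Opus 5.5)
Your reductions are correct and match the paper's own deductions. \Cref{thm:stacky-basic-bound} gives $C_2\le\Monstack_{C_2}(\LinesProblem)\le C_{W(E_6)}(C_2)\cong\GO_4^+(3)$. Connectedness of $C_{\PGL_4}(C_2)\cong\GL_3$ gives $\Monpar_{C_2}=\Monmarked_{C_2}=C_{\Monstack_{C_2}}(C_2)=\Monstack_{C_2}$. Finally $\Mongeom_{C_2}\cong\Monstack_{C_2}/C_2$. What remains is to show that $\Monstack_{C_2}(\LinesProblem)$ actually has order $1152$. That is the entire content of the proposition, and your proposal does not establish it. The paper does not reprove it either: it imports the stacky and geometric groups from \cite[Theorem 4.24]{landiStacksMonodromySymmetric2025} and then makes the deduction you also make. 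Without that citation or a complete substitute, you only have upper bounds.

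In your main route the decisive step is ``I expect that classical group to be \ldots{} of order $576$'', which is not an argument. Also, $W(D_4)$ has order $192$; the correct target is $C_{W(E_6)}(\sigma)/\langle\sigma\rangle$, the group induced on the three Eckardt lines and the twelve pairs $\{L,\sigma L\}$, where $\langle\sigma\rangle$ is exactly the kernel. To make this a proof you would first have to pin down the configuration:
\begin{itemize}
\item the twelve image lines are the lines through one of the three points $q_i\in\{\ell=c=0\}$ that are tangent to $E=\{c=0\}$ elsewhere;
\item the tangency points from $q_i$ form an $E[2]$-torsor $\{r: 2r\sim H-q_i\}$;
\item the class $r_1+r_2+r_3-H$ is a $2$-torsion invariant that vanishes exactly on collinear triples.
\end{itemize}
You would then have to exhibit monodromy of order $6\cdot 6\cdot 16=576$. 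For instance, moving $\ell$ with $E$ fixed should give $S_3$ on the $q_i$ and the translations $(t_1,t_2,t_3)$ with $t_1+t_2+t_3=0$, and moving $E$ should give all of $\GL_2(\F_2)$ on $E[2]$. None of this is carried out.

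The fallback is in the same state. The orders only give divisibility by $288$, which leaves an index of $4$ unresolved. Each stratum's image in $\Monmarked_{C_2}$ is known only up to conjugation by the unknown group itself, so you cannot pick ``suitably conjugated'' copies. You would need to rule out every subgroup of $C_{W(E_6)}(\sigma)$ of order $288$ or $576$ that contains subgroups of the relevant classes. That check is not done, and it is not guaranteed to succeed.
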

\begin{proof}
The computation of the stacky and geometric monodromy groups are in \cite[Theorem 4.24]{landiStacksMonodromySymmetric2025}. Since the centralizer of $C_2$ in $\PGL_4$ is connected, the marked and parameter-level monodromy agree, and are the centralizer of $C_2$ in the stacky monodromy.
\end{proof}

\section{The moduli stacks of quartics and bitangents}\label{sec:correspondence-thm}

In this subsection we discuss the moduli spaces and stacks parametrizing embedded plane quartics and non-hyperelliptic genus 3 curves, together with their bitangents. Recall following \Cref{nota:Und-Mnd} that $U_{2,4}$ parametrizes embedded plane quartics, and has a natural action of $\PGL_3$ with quotient stack $\mathcal{M}_{2,4}$. This is isomorphic to the substack of genus 3 curves which are not hyperelliptic.

We are interested in constructing a natural cover of the stack $\mathcal{M}_{2,4}$ which will parametrize bitangents to plane quartics. Recall that a family of bitangents to a family $q\colon Q\rightarrow T$ of plane quartics is simply a family of lines $B\subset\P(q_*\omega_{Q/T})$ such that for every $t\in T$ the restriction $B_t\subset\P(q_*\omega_{Q/T})_t\cong\P^2$ is a bitangent to the quartic $Q_t$.

Let $\P^{2\vee}$ denote the dual space of the projective plane, whose points parametrize lines on the plane.
\begin{definition}\label{def:bitan-space}
We denote by $\BPQ \subseteq U_{2,4} \times \P^{2\vee}$ the incidence variety
\begin{align*}
    \BPQ = \{(F,\ell) \in U_{2,4}\times \P^{2\vee} : \ell \text{ is a bitangent to }V(F) \}.
\end{align*}
\end{definition}

Since $\PGL_3$ acts diagonally on both $U_{2,4}$ and $\P^{2\vee}$, we can make the following definition:

\begin{definition}\label{def:bitan-stack} We denote by $\mathcal{B}$ the stack of bitangents to plane quartics, defined as the quotient stack
\begin{align*}
    \mathcal{B}:= [\BPQ / \PGL_3].
\end{align*}
\end{definition}

Since the map $\BPQ \to U_{2,4}$ is $\PGL_3$-equivariant, it descends to a finite \'etale morphism of quotient stacks $\mathcal{G} \colon \mathcal{B} \to \mathcal{M}_{2,4}$.

\begin{notation} We denote by
\[
\BitangentsProblem = (\BPQ \to U_{2,4}, \PGL_3)
\]
the problem of bitangents on smooth plane quartics. We observe that it satisfies \Cref{setup:all-four-monodromies}.
\end{notation}

\begin{theorem} We have that
\[
\Monpar_e(\BitangentsProblem) \cong \Monstack_e(\BitangentsProblem) \cong \Mongeom_e(\BitangentsProblem) \cong \Monmarked_e(\BitangentsProblem) \cong W^+(E_7),
\]
where $W^+(E_7)$ is the index two subgroup of $W(E_7)$ obtained by splitting off the Geiser involution.
\end{theorem}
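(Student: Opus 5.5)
The plan is to reduce the statement to the classical Galois group of the 28 bitangents and then apply \Cref{prop:G-is-e-all-four-agree}. That proposition gives the four-fold equality in \Cref{setup:all-four-monodromies} as soon as $G=e$ and the generic stabilizer of $\mathcal{M}_{2,4}$ is trivial. It then identifies all four groups with $\Gamma$, the Galois group of $\BPQ\to U_{2,4}$. So it suffices to check three things: that $\BitangentsProblem$ satisfies the hypotheses of the setup, that the generic smooth plane quartic has trivial projective stabilizer, and that $\Gamma\cong W^+(E_7)$.

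For the setup, $U_{2,4}$ is smooth because it is open in $\P^{14}$. The map $\BPQ\to U_{2,4}$ is finite étale of degree $28$, since every smooth quartic has exactly 28 distinct bitangents. It is also $\PGL_3$-equivariant. The action of $\PGL_3$ is proper with finite stabilizers because $\mathcal{M}_{2,4}$ is a separated Deligne--Mumford stack by \cite{matsumuraAutomorphismsHypersurfaces1963}. The transitivity condition on components of $U^{=e}$ is vacuous, since $U^{=e}$ is a dense open subset of the irreducible $U_{2,4}$. For trivial generic stabilizer, I would note that the locus with nontrivial automorphisms is the union of the $U_{2,4}^{(G)}$ over the 12 groups of the classification. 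By \Cref{thm:irred-VG} the largest of these has dimension $12<14$, namely for $G=C_2$. Hence a general quartic has $\Aut=e$.

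The main input is $\Gamma\cong W^+(E_7)$, which is the classical theorem of Jordan and Harris \cite{Harris-Galois}. I would recall the argument in outline. The monodromy acts on $H^2$ of the associated degree 2 del Pezzo surface, i.e.\ the double cover of $\P^2$ branched along the quartic. It preserves $K^\perp\cong E_7$ and so lands in $W(E_7)$. The 28 bitangents correspond to the 56 exceptional curves modulo the Geiser involution $-1$, so the action on bitangents factors through $W(E_7)/\{\pm1\}\cong W^+(E_7)$. For surjectivity, I would use that the monodromy is doubly transitive on bitangents and contains enough elements. Alternatively, I would use that the full $W(E_7)$ is attained by the cover of marked degree 2 del Pezzo surfaces over the moduli of seven points in $\P^2$, together with the fact that the central $-1$ acts trivially on bitangents.

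I expect this last surjectivity step to be the only nontrivial point, and I would cite it rather than reprove it. The remaining bookkeeping is straightforward: the stacky and marked groups coincide with $\Gamma$ because $\mathcal{M}_{2,4}^{=e}$ is an open substack of $\mathcal{M}_{2,4}$, using \Cref{prop:invariance-properties-monodromy}\eqref{point:restriction-to-open}. The geometric group coincides with $\Gamma$ because $G/e$ is trivial in \eqref{eq:isom-geometric-monodromy}. Finally, the parameter-level group is $\Gamma$ by definition, as $U^{e}=U_{2,4}$.
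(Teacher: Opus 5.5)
Your proposal is correct and follows the paper's argument. The paper also takes $\Gamma\cong W^+(E_7)$ from Jordan (upper bound) and Harris (equality), then invokes \Cref{prop:G-is-e-all-four-agree} to identify all four flavors. Your extra checks (the setup hypotheses, and the trivial generic stabilizer via the dimension count in \Cref{thm:irred-VG}) are left implicit in the paper. One small correction to your bookkeeping: identifying the monodromy over $\mathcal{M}_{2,4}$ with $\Gamma$ over $U_{2,4}$ uses the connectedness of $\PGL_3$ (\Cref{prop:gal-f-vs-quotient}), not just restriction to an open substack.
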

\begin{proof}
Exhibiting $W^+(E_7)$ as an upper bound on the parameter-level Galois group is classical, and dates back to \cite[III.VI]{jordanTraiteSubstitutions1870}. A rigorous proof that this is the parameter-level monodromy group can be found in \cite[\S4]{Harris-Galois}. The correspondence of the three monodromy groups follows by \Cref{prop:G-is-e-all-four-agree}.
\end{proof}

Similarly to the stack of lines on cubic surfaces, the action of $\PGL_3$ on the Galois closure of the bitangents cover $\widetilde{\BPQ}$ is free, and thus the quotient is a scheme, as shown in the next lemma.
First, recall that the automorphism group of any plane quartic $Q\subset\P^3$ is naturally a subgroup of $\PGL_3$, as it is canonically embedded. For an abstract non-hyperelliptic curve of genus 3, such subgroup is unique only up to conjugation. In both cases, there is a natural action of $\Aut(Q)$ on the set of bitangents.

\begin{lemma}\label{lem: automorphism act faithfully on bitangents}
    Let $Q$ be a smooth plane quartic. Then, $\Aut(Q)$ acts faithfully on the set of bitangents.
\end{lemma}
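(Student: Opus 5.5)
We want to show that a nontrivial $g\in\Aut(Q)$ cannot fix all 28 bitangents of $Q$. Since $Q$ is canonically embedded, $g$ is the restriction of a projective transformation of $\P^2$. So it suffices to show that the only element of $\PGL_3$ preserving $Q$ and fixing each bitangent line is the identity. The argument passes to the dual plane and uses a linear-algebra bound on the fixed locus of a nontrivial projective transformation.

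\textbf{Step 1 (fixed loci in $\P^{2\vee}$).} The transformation $g$ acts on $\P^{2\vee}$ by the dual (inverse-transpose) transformation $g^\vee$, and $g^\vee$ is nontrivial whenever $g$ is. Take a lift $A\in\GL_3$ of $g^\vee$. The fixed points of $g^\vee$ are the projectivized eigenspaces of $A$. If $A$ is not scalar, these eigenspaces are of one of two kinds: either finitely many points (at most three), or a line together with at most one further point. In every case the fixed locus in $\P^{2\vee}$ lies in the union of a line and a point.

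\textbf{Step 2 (what this means for bitangents).} Suppose $g\neq 1$ fixes all 28 bitangents. By Step 1, all 28 bitangents, viewed as points of $\P^{2\vee}$, lie on one line of $\P^{2\vee}$ except possibly one. A line in $\P^{2\vee}$ is the pencil of lines through a fixed point $p\in\P^2$. So at least $27$ bitangents of $Q$ would pass through a common point $p$.

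\textbf{Step 3 (bounding bitangents through a point).} This is the main obstacle: we must rule out many bitangents being concurrent. Consider projection from $p$. If $p\notin Q$, projection gives a degree $4$ map $Q\to\P^1$. Each bitangent through $p$ gives a fiber of the form $2a+2b$ with $a,b$ not necessarily distinct. Such a fiber contributes at least $2$ to the ramification divisor, or $3$ if $a=b$, which happens for a hyperflex line. Riemann–Hurwitz gives total ramification $2\cdot 3-2+4\cdot 2=12$. Hence at most $6$ bitangents pass through $p$. If $p\in Q$, projection gives a degree $3$ map with total ramification $4+6=10$. Each bitangent through $p$ not tangent at $p$ contributes at least $1$, and at most one bitangent is tangent at $p$, so at most about $11$ bitangents pass through $p$. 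Alternatively one can quote the classical fact that no three bitangents of a smooth quartic are concurrent in a way forcing more than a few. Either way the bound is far below $27$, which is a contradiction.

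Hence $g=1$, and the action of $\Aut(Q)$ on the set of bitangents is faithful.
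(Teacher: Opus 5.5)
Your proof is correct and follows the paper's argument. Both use the canonical embedding, then the fact that a nontrivial element of $\PGL_3$ acting on $\P^{2\vee}$ has fixed locus inside a line plus a point, which forces 27 concurrent bitangents; this is ruled out by the paper's appeal to the Plücker formula (the dual curve has degree 12), which your Riemann--Hurwitz count for projection from $p$ simply makes explicit. Two small cleanups: if $p\in Q$, every bitangent through $p$ is tangent at $p$ (since $p\in\ell\cap Q=2a+2b$), so at most one passes through $p$; and you should delete the garbled ``classical fact'' sentence, since it is unnecessary.
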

\begin{proof}
    Non-hyperelliptic curves $Q$ are canonically embedded, so automorphisms of $Q$ extend to projective linear automorphisms. We see that a projective linear automorphism fixing the $28$ bitangents must be the identity. To observe this, we pass to the dual plane $(\P^2)^\vee$ where the 28 bitangents become 28 points. The fixed locus on the projective plane of any element in $\PGL_3$ is at most a line with a point off the line. This implies that 27 of the 28 bitangents are coincident at a single point, which violates the Pl\"ucker formula for the dual curve of the quartic.
\end{proof}

\subsection{Correspondence between Cubic Surfaces and Quartics}

In this subsection we show that the stack parameterizing cubic surfaces equipped with a basepoint not lying on any of their $27$ lines is naturally isomorphic to the stack parametrizing quartics with a chosen bitangent. The construction over a field is classical, see for instance~\cite{Harris-Galois}.

Recall that we denote by $\cM_{3,3}$ the 4-dimensional DM-stack parametrizing smooth cubic surfaces.

\begin{definition}\label{def:punctured-universal-family-cubic-surfaces}
    We denote by $\pi\colon\cS\rightarrow\cM_{3,3}$ the universal family of cubic surfaces, whose $T$-points correspond to pairs $(S,p)$ with $S\rightarrow T$ a family of cubic surfaces and $p\colon T\rightarrow S$ a section.
    
    We denote by $\cSopen$ the open substack of $\cS$ given by the complement of the lines lying on the cubic surfaces, that is, for all $t\in T$ the marked point $p(t)$ does not intersect any line in $S_t$.
\end{definition}

The main result of this subsection is proving that there is a natural isomorphism $\Phi\colon\cSopen\rightarrow\cB$, such that the pullback of the Galois cover of bitangents over $\cB$ to $\cSopen$ is the pullback of the Galois cover of lines along $\cSopen\rightarrow\cM_{3,3}$.
We start by constructing the morphism $\Phi$.

\begin{construction}
    Let $T$ be a $\mathbb{C}$-scheme and $(S,p)\in\cSopen(T)$, that is, a flat family $f\colon S\rightarrow T$ of cubic surfaces and a section $p\colon T\rightarrow S$ not intersecting any line. As usual, the canonical surjection $f^*f_*\omega_{S/T}^{\vee}\rightarrow\omega_{S/T}^{\vee}$ defines a closed immersion $S\hookrightarrow\P(f_*\omega_{S/T}^{\vee})$, and the section $p$ then corresponds to the data of a line bundle $L$ on $T$ and a surjection $f_*\omega_{S/T}^{\vee}\twoheadrightarrow L$. Set $E:=f_*\omega_{S/T}^{\vee}\otimes L^{\vee}$, and let $F$ be the kernel of the induced surjection $E\rightarrow\mathcal{O}_T$, which is a rank-2 vector bundle. Projecting from $p$ defines a rational map
    \[
    \begin{tikzcd}
        S\subset\P(E)\arrow[r] & \P(F)
    \end{tikzcd}
    \]
    yielding a morphism $\phi\colon\widetilde{S}\rightarrow\P(F)$, where $\widetilde{S}:=\mathrm{Bl}_{p}S$ is the blowup of $S$ along $p$. Then, $\phi$ is a double cover ramified over a smooth family $Q\subset\P(F)$ of plane quartics over $T$. Moreover, the image of the exceptional divisor of $\widetilde{S}$ is a family of bitangents $B\subset\P(F)$ over $T$. The association $(S,p)\mapsto(Q,B)$ defines $\Phi\colon\cSopen\rightarrow\cB$ at the level of objects.

    Now we deal with morphisms; for simplicity, we only consider morphisms over the same scheme $T$. Suppose given two pointed families $(S_1,p_1)$, $(S_2,p_2)$ of cubic surfaces over $T$, and a morphism $\psi\colon S_1\rightarrow S_2$ over $T$ preserving the marking. For every $i$, we denote by $(Q_i,B_i)$ the pairs associated to $(S_i,p_i)$, and with $\phi_i\colon\widetilde{S}_i\rightarrow\P(F_i)$ the morphism from the blowup of $S_i$ along $p_i$ constructed above. Since $\psi$ preserves the marking, it lifts to a morphism $\widetilde{\psi}\colon\widetilde{S}_1\rightarrow\widetilde{S}_2$ preserving the exceptional divisors. In particular, $\psi$ commutes with the involutions defining the covers $\phi_i$'s, hence it descends to a morphism between the quotients $\overline{\psi}\colon\P(F_1)\rightarrow\P(F_2)$. By construction, $\overline{\psi}$ needs to respect the ramification locus of the $\phi_i$'s, hence it restricts to a morphism $Q_1\rightarrow Q_2$. Moreover, as $\widetilde{\psi}$ preserves the exceptional divisors and $B_i$ is the image of it along $\phi_i$, $\overline{\psi}$ sends the bitangent $B_1$ of $Q_1$ to the bitangent $B_2$ of $Q_2$. We have then defined the desired arrow $\overline{\psi}\colon(Q_1,B_1)\rightarrow(Q_2,B_2)$.
\end{construction}

\begin{theorem}\label{thm: isom stacks cubic surfaces and plane quartics}
    The morphism $\Phi\colon\cSopen\rightarrow\cB$ is an isomorphism.
\end{theorem}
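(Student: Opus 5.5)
We construct an explicit inverse $\Psi\colon\cB\rightarrow\cSopen$ via the classical double cover construction, carried out in families. Then we check that $\Phi\circ\Psi$ and $\Psi\circ\Phi$ are $2$-isomorphic to the identities. Since both stacks are smooth Deligne--Mumford stacks over $\C$, there is a more economical fallback. It suffices to show that $\Phi$ is representable, étale, and bijective on isomorphism classes of geometric points, and that it induces isomorphisms on automorphism groups of geometric points. A representable étale morphism that is fully faithful on geometric points and surjective is an isomorphism, because a representable étale monomorphism which is surjective is an isomorphism. I would keep both routes in mind and use the second one to avoid verifying the family-level inverse in full.

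\textbf{Step 1: inverse on objects.} Let $T$ be a scheme and $(Q,B)\in\cB(T)$, with $Q\subset\P(F)$ for a rank-$3$ bundle $F$. Let $\widetilde S\rightarrow\P(F)$ be the double cover branched along $Q$. It is built as $\mathrm{Spec}$ of $\cO\oplus\cO(-2)\otimes N$, where $N$ is a line bundle on $T$ chosen so that the equation of $Q$ is a section of $\cO(4)\otimes N^{-2}$. This may require passing to an étale cover of $T$, but the result is independent of choices up to unique isomorphism, so it descends. Over each geometric point, $\widetilde S$ is a degree $2$ del Pezzo surface. The preimage of $B$ splits into two $(-1)$-curves $E\cup E'$, exchanged by the Geiser involution. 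The key subtlety is that we need a \emph{canonical} choice of one of them. The splitting is defined after an étale double cover of $T$, namely the one trivializing the square root of the restriction of the quartic's equation to $B$. Contracting either curve gives a cubic surface with a point, and the Geiser involution identifies the two results. So $(S,p)$ is well defined up to unique isomorphism and descends to $T$. We then contract $E$ relatively (via $|-K_{\widetilde S}+E|$) to get $S\rightarrow T$ with section $p$. The point $p$ avoids the lines, because otherwise the strict transform of a line through $p$ would be a $(-2)$-curve, contradicting the smoothness of $Q$.

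\textbf{Step 2: morphisms and automorphisms.} The construction of $\Phi$ on morphisms shows that $\Aut(S,p)\rightarrow\Aut(Q,B)$ is injective, since it is compatible with lifting to the blowup. For surjectivity, an automorphism of $(Q,B)$ lifts to the double cover in exactly two ways, differing by the Geiser involution. Exactly one of these lifts preserves $E$ rather than swapping it with $E'$, and that lift descends to $\Aut(S,p)$. Conversely, starting from $(S,p)$, the blowup recovers $\widetilde S$ together with the labelled curve $E$. This shows that $\Phi$ and $\Psi$ are mutually inverse on objects and on isomorphisms, functorially in $T$.

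\textbf{Step 3: étaleness and representability.} Representability follows from the injectivity on automorphism groups in Step 2. For étaleness, both stacks are smooth of dimension $6$, since $\dim\cM_{3,3}+2=\dim\cM_{2,4}$ and $\cB\rightarrow\cM_{2,4}$ is étale. Given this, a representable morphism that is bijective on points and injective on tangent spaces is étale. Alternatively, étaleness follows directly from the existence of $\Psi$ in families.

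\textbf{Main obstacle.} I expect the main difficulty to be the descent in Step 1. The choice of the component $E$ of the preimage of $B$, and of the line bundle $N$, must be handled so that the inverse is well defined on $T$-points rather than only étale-locally. My plan is to make $\Psi$ canonical by showing that the two contractions are canonically isomorphic via the Geiser involution, and then apply descent for the stack $\cSopen$.
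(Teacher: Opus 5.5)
Your proposal is correct in outline and has the same geometric core as the paper's proof. You build the double cover $\widetilde S\to\P(F)$ branched along $Q$, observe that the preimage of $B$ splits into two $(-1)$-curves exchanged by the deck (Geiser) involution, and contract one of them. On morphisms you use that a lift of $\overline{\psi}$ to the double covers is unique up to that involution, so exactly one lift preserves the chosen exceptional curve.

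Where you differ is the global assembly. The paper never constructs a global inverse. It proves that $\Phi$ is fully faithful on $T$-points: the local lifts glue precisely because the $E$-preserving lift is unique. Essential surjectivity can then be checked locally on $T$, where $f_*\omega_{Q/T}$ and the ideal sheaf of $Q$ are trivialized and the double cover is just the relative $\mathrm{Spec}$ of $\cO\oplus\cO(-2)$. In that organization the descent you single out as the main obstacle is automatic. The descent datum you need for $\Psi$ is exactly what the full faithfulness of your Step 2 provides, and no representability or étaleness criterion is required.

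If you keep the explicit-inverse route, which does give a global description of $\Psi$, fix what is being descended. The double cover is \emph{not} independent of choices up to unique isomorphism: $\widetilde S\to\P(F)$ always has the Geiser involution as an automorphism over $\P(F)$. Moreover, over a general $T$, changing $N$ by a nontrivial $2$-torsion line bundle gives a non-isomorphic quadratic twist. What is étale-locally unique up to unique isomorphism over $\P(F)$ is the pair $(\widetilde S,E)$. This rigidity is what produces the cocycle, not uniqueness of $(S,p)$ as an object of $\cSopen$, which can have automorphisms.

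Finally, your fallback does not really save work. Injectivity on tangent spaces is asserted rather than checked, and checking it amounts to running the inverse construction over $\C[\epsilon]$. Deriving étaleness from the existence of $\Psi$ in families presupposes the main route.
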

\begin{proof}
    We start by showing how to locally recover the pair $(S,p)$ from a pair $(f\colon Q\rightarrow T,B)$ of a family over $T$ of bitangents to plane quartics. First, we can embed $Q\hookrightarrow\P(f_*\omega_{Q/T})$ via the surjection $f^*f_*\omega_{Q/T}\rightarrow\omega_{Q/T}$. After passing to a Zariski-cover of $T$, we can assume $f_*\omega_{Q/T}$ to be trivial, and the ideal sheaf of $Q$ in $\P(f_*\omega_{Q/S})\cong\P^2$ to be $\cO_{\P(F)}(-4)$. To construct the double cover $\phi\colon\widetilde{S}\rightarrow\P(F)$ of $\P(F)$ ramified over $Q$, we set $\widetilde{S}=\underline{\Spec}_{\cO_{\P^2}}(\cO_{\P^2}\oplus\cO_{\P^2}(-2))$, with the $\cO_{\P^2}$-algebra structure induced by the multiplication $\cO_{\P^2}(-2)\otimes\cO_{\P^2}(-2)\cong\cO_{\P^2}(-4)\hookrightarrow\cO_{\P^2}$, where the last inclusion is the one of the ideal sheaf of $Q$. The preimage of the bitangent $B$ splits as the union of two $(-1)$-curves $L_1$, $L_2$, and the two pairs $(\widetilde{S},L_1)$ and $(\widetilde{S},L_2)$ are isomorphic via the involution induced by the double cover $\phi$. Choose one of the two $(-1)$-curves, and contract it via Castelnuovo's criterion to a point $p$ on a cubic surface $S$; this is the desired pair $(S,p)\in\cSopen$.

    Now, we are ready to show that $\Phi$ is an isomorphism by showing that it is fully faithful and essentially surjective. First of all, it is clearly faithful, as a morphism $\psi\colon(S_1,p_1)\rightarrow(S_2,p_2)$ over $T$ is trivial if and only if $\widetilde{\psi}\colon\widetilde{S}_1\rightarrow\widetilde{S}_2$ is trivial, and in turn this happens if and only if the induced $\overline{\psi}\colon\P(F_1)\rightarrow\P(F_2)$ is trivial, as $\widetilde{\psi}$ preserves the exceptional divisor. Finally, since $Q_i$ is canonically embedded in $\P(F_i)$, to check the triviality of $\overline{\psi}$ it is enough to consider its restriction to the plane quartics.
    
    To show that $\Phi$ is full, suppose given a morphism $Q_1\rightarrow Q_2$ with induced linear morphism $\overline{\psi}\colon\P(F_1)\rightarrow\P(F_2)$ sending $B_1$ to $B_2$. Here, $(Q_i,p_i)$ is the $T$-object of $\cB$ associated to $(S_i,p_i)\in\cSopen(T)$. Locally, $\overline{\psi}$ induces a morphism $\widetilde{\psi}\colon\widetilde{S}_1\rightarrow\widetilde{S}_2$ via the root construction, which is unique up to the involution induced by the double covers $\phi_i$. However, there is only such a morphism if we ask it to preserve the exceptional divisors, hence they glue to give a uniquely defined global morphism $\widetilde{\psi}$. As it preserves the exceptional divisors by construction, this in turn induces a unique morphism $\psi\colon S_1\rightarrow S_2$ sending $p_1$ to $p_2$, as wanted.

    Since we have showed $\Phi$ to be fully faithful, to show that the essential image of $\Phi$ contains a family $(Q,B)\rightarrow T$ in $\cB$, we can work locally on the base $T$. Then, the result follows immediately from the construction at the beginning of the proof.
\end{proof}

Our second objective is to compare the cover of bitangents over $\cB$ to the cover of lines over $\cSopen$. The relation is actually classical, so we will limit ourself in stating the result and write only a sketch of the proof.

\begin{definition}
    We denote by $\widetilde{\cB}\rightarrow\cM_{2,4}$ the Galois closure of the (étale finite) cover of the 28 bitangents to plane quartics.
\end{definition}

By definition, $\widetilde{\cB}\rightarrow\cM_{2,4}$ factors through $\cB$.
Recall that we denote by $\widetilde{\cL}\rightarrow\cM_{3,3}$ the Galois closure of the cover of 27 lines on cubic surfaces.

\begin{theorem}\label{thm: lines bitangents correspondence}
    The isomorphism $\Phi:\cSopen\xrightarrow{\cong}\cB$ induces a cartesian diagram
    \[
    \begin{tikzcd}
        \widetilde{\cL}\times_{\cM_{3,3}}\cS^{\circ}\arrow[r]\arrow[d] & \widetilde{\cB}\arrow[d]\\
        \cSopen\arrow[r,"\Phi","\cong"'] & \cB
    \end{tikzcd}
    \]
\end{theorem}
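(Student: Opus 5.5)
We plan to deduce the cartesian square from the classical fibre-level bijection between bitangents and lines, transported through the isomorphism $\Phi$ of Theorem~\ref{thm: isom stacks cubic surfaces and plane quartics}. Both vertical maps are Galois covers of the connected stack $\cSopen\cong\cB$. The right-hand one is the pullback of $\widetilde{\cB}\to\cM_{2,4}$ along $\cB\to\cM_{2,4}$, restricted to the component dominating $\cB$. The left-hand one is the pullback of $\widetilde{\cL}\to\cM_{3,3}$ along $\cSopen\to\cM_{3,3}$.

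By the Galois correspondence, it suffices to show that the two pulled-back covers over $\cSopen$ are isomorphic as torsors. We will do this by exhibiting an isomorphism of the underlying finite étale covers that is functorial in families. Concretely, over a $T$-point $(S,p)$ with associated pair $(Q,B)$, we will build a natural bijection between two sets:
\begin{itemize}
\item the remaining $27$ bitangents of $Q$;
\item the $27$ lines of $S$.
\end{itemize}
The bijection comes from the blowup $\phi\colon\widetilde{S}\to\P(F)$. A line $\ell\subset S$ misses $p$, so its strict transform is a $(-1)$-curve in $\widetilde S$ disjoint from the exceptional curve $E$. Its image under $\phi$ is a bitangent $B_\ell\neq B$. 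The preimage of $B_\ell$ is $\ell\cup\ell'$, where $\ell'=\iota(\ell)$ is a $(-1)$-curve meeting $E$, namely the conic through $p$ residual to $\ell$ in the plane spanned by $p$ and $\ell$.

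We must check that this assignment is a bijection. Among the $56$ exceptional curves of the degree $2$ del Pezzo surface $\widetilde S$, we will show that exactly $27$ miss $E$ (the lines), $27$ meet $E$ once (the residual conics), and $E$ together with $\iota(E)$ lie over $B$. Since the Geiser involution $\iota$ swaps the two halves, $\ell\mapsto B_\ell$ hits each of the other $27$ bitangents exactly once. All of these constructions are made relative to $T$: strict transforms, $\phi$, and images of $(-1)$-curves all commute with base change. They therefore give an isomorphism of finite étale covers
\[
\cL\times_{\cM_{3,3}}\cSopen\;\xrightarrow{\;\cong\;}\;(\cB\times_{\cM_{2,4}}\cB)\setminus\Delta
\]
over $\cSopen\cong\cB$.

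Next we pass to Galois closures. On the $\cB$ side, the Galois closure of $(\cB\times_{\cM_{2,4}}\cB)\setminus\Delta\to\cB$ is the restriction of $\widetilde{\cB}$: any Galois cover refining the complement of the diagonal also refines the diagonal, hence refines the whole degree-$28$ fibre. On the $\cSopen$ side, we need the pullback of $\widetilde\cL$ to be connected with group $W(E_6)$. We plan to prove this with Proposition~\ref{prop:invariance-properties-monodromy}\eqref{point:pullback-of-monodromy-along-flat-map}. The map $\cS\to\cM_{3,3}$ is smooth with geometrically irreducible fibres (cubic surfaces), and $\cSopen$ is an open dense substack, so part~\eqref{point:restriction-to-open} handles the restriction. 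With connectedness in hand, the Galois closures of isomorphic covers are isomorphic, and the square is cartesian.

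The main obstacle is making the bijection of $(-1)$-curves work in families and compatibly with morphisms. In particular, we must make sure the chosen $(-1)$-curve $E$ versus $\iota(E)$ does not introduce a $C_2$ ambiguity. This ambiguity was already resolved in Theorem~\ref{thm: isom stacks cubic surfaces and plane quartics} by tracking the exceptional divisor, and we will reuse that bookkeeping.

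As a consistency check, $\Mon(\widetilde{\cB}|_{\cB})$ is a stabilizer in $W^+(E_7)$ of a bitangent, which is $W(E_6)$. This matches $\Mon(\widetilde{\cL})$.
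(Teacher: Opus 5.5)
Your proposal is correct and follows essentially the same route as the paper, whose proof just combines the isomorphism $\Phi$ of Theorem~\ref{thm: isom stacks cubic surfaces and plane quartics} with the classical fact that the $27$ lines on $S$ project to the $27$ bitangents other than $B$ (citing \cite{Harris-Galois}). The steps you add are exactly the details the paper leaves implicit: the count of the $56$ exceptional curves on the degree~$2$ del Pezzo surface $\mathrm{Bl}_pS$, the identification of the Galois closure of $(\cB\times_{\cM_{2,4}}\cB)\setminus\Delta$ with $\widetilde{\cB}\to\cB$, and the connectedness of $\widetilde{\cL}\times_{\cM_{3,3}}\cSopen$ via Proposition~\ref{prop:invariance-properties-monodromy}. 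One minor imprecision: every component of $\widetilde{\cB}\times_{\cM_{2,4}}\cB$ dominates $\cB$, and any one of them, as a cover of $\cB$, is $\widetilde{\cB}\to\cB$.
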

\begin{proof}
    This follows from \Cref{thm: isom stacks cubic surfaces and plane quartics} and the fact that the image of the 27 lines under the projection $S\rightarrow\P(F)$ are the remaining 27 bitangents distinct from $B$. See for instance~\cite{Harris-Galois} for more details.
\end{proof}

\subsection{Equivariant Correspondence}

In this subsection we try to refine the correspondence proved in Theorem~\ref{thm: lines bitangents correspondence}, by fixing a group $G$ and finding what cubic surfaces map to $G$-equivariant quartics via the isomorphism of Theorem~\ref{thm: isom stacks cubic surfaces and plane quartics}.

As done for $\cM_{3,3}^{=G}$ in~\cite{landiStacksMonodromySymmetric2025}, it is easy to show that $\cM_{2,4}^{=G}$ is a smooth DM-stack.
Recall that $\cM_{2,4}^{=G}$ is a smooth stack by Proposition~\ref{prop:X-lower-G-properties}. Moreover, it is irreducible by~\cite[Theorem 6.5.2]{Dolgachev} or~\cite[Theorem 16]{barsAutomorphismsGroupsGenus}.

The table in~\cite[Theorem 1.1]{betheabrazelton_bitangentssymmetricquartics} also summarizes the automorphism groups of smooth plane quartics. Moreover, for every such $G$, it computes the decomposition of the set of 28 bitangents in $G$-orbits, in particular computing the corresponding stabilizers. This decomposition does not depend on the $G$-symmetric cubic surface chosen, hence it induces a decomposition of $\cB^{=G}\rightarrow\cM_{2,4}^{=G}$ as the disjoint union of sub-covers, according to the stabilizer of the bitangent.

\begin{definition}\label{def: decomposition B =G}
    Given an inclusion $i:H\hookrightarrow G$ we denote by $\cB^{(G,H,i)}$ the open and closed substack of $\cB^{=G}$ of pairs $(Q,B)$ such that the inclusion $\Aut(Q,B)\hookrightarrow\Aut(Q)$ is isomorphic to $i:H\hookrightarrow G$.

    Similarly, we denote by $\cS^{(G,H,i)}$ the substack of $\cS^\circ$ of pairs $(S,p)$ such that the inclusion $\Aut(S,p)\hookrightarrow\Aut(S)$ is isomorphic to $i:H\hookrightarrow G$.
    
    When $i$ is clear or not relevant, we omit it in the above definition notations, and simply write $\cB^{(G,H)}$ and $\cS^{(G,H)}$, respectively.
\end{definition}

We want to refine the correspondence isomorphism in Theorem~\ref{thm: isom stacks cubic surfaces and plane quartics} to give a correspondence of strata above. To obtain a full correspondence is very complicated, but it is possible to understand the correspondence for a few strata. The first step is to study the covers $\cS^{(G,H)}\rightarrow\cM_{3,3}^{=G}$ for some interesting $G$ and $H$.

\subsection{Case $H=C_2$}\label{subsec: H=C2 case}

Since every type of cubic surfaces contain a subgroup isomorphic to $C_2$, completely describing all $\cS^{(G,C_2)}$ would be complicated. We will need much less; the following is the main result we will use.

\begin{lemma}\label{lem: C2 locus}
    Let $S$ be a smooth cubic surface, and let $C_2$ act faithfully on $S$ via automorphisms. Then, $S^{C_2}$ is either:
    \begin{enumerate}
        \item a union of a smooth plane cubic and an Eckardt point. In this case, the involution is generated by an Eckardt involution, and for such involutions $\cS^{(G,C_2)}\rightarrow\cM_{3,3}^{=G}$ is a fibration in punctured plane cubics.
        \item the union of a line and three points, each lying on some line. In this case, the involution is the composite of two commuting Eckardt involution, and $\cS^{(G,C_2)}$ is empty.
    \end{enumerate}
    Moreover, all symmetric surfaces admit an involution of the first kind, while the second kind of involution is present only for cubic surfaces that are specialization of $C_2\times C_2$-symmetric ones, that is, Type I, II, V, VI, and X.
\end{lemma}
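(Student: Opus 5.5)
The plan is to classify involutions of $\P^3$ preserving $S$ by their eigenspace decomposition, read off the fixed loci, and then relate each case to the strata $\cS^{(G,C_2)}$. Since $S$ is anticanonically embedded, an involution $\sigma\in\Aut(S)$ lifts to an involution of $\C^4=H^0(S,-K_S)^\vee$. Up to a scalar, its eigenvalue multiplicities are $(3,1)$ or $(2,2)$. So the fixed locus in $\P^3$ is either a plane $H$ together with a point $q\notin H$, or two skew lines $\ell_1,\ell_2$.

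In the first case, choose coordinates with $\sigma=\mathrm{diag}(1,1,1,-1)$. An invariant cubic has the form $f_3(x,y,z)+w^2\ell(x,y,z)$; the anti-invariant eigenspace $w\cdot(\text{quadrics})+w^3$ consists of singular cubics, since they contain the plane $w=0$. Then $S\cap H=V(f_3)$ is a plane cubic, smooth by the smoothness of $S$ (a check via the partial derivatives at $w=0$). The point $q=[0:0:0:1]$ lies on $S$, and the tangent plane at $q$ is $\ell=0$; it meets $S$ in $f_3=\ell=0$, which consists of three lines through $q$. Hence $q$ is an Eckardt point and $\sigma$ is its Eckardt involution. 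For the fibration claim, a point $p\in S$ has stabilizer containing $\sigma$ if and only if $p\in S^{\sigma}$. Requiring $p$ off the $27$ lines removes $q$ and the finitely many points where lines meet $H$; generic such points have stabilizer exactly $\langle\sigma\rangle$. So the fiber of $\cS^{(G,C_2)}\to\cM_{3,3}^{=G}$ over $S$ is an open subset of the plane cubic $S\cap H$, i.e. a punctured plane cubic. One must also remove points with strictly larger stabilizer and use the $N$-action to identify conjugate involutions.

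In the second case, take $\sigma=\mathrm{diag}(1,1,-1,-1)$. The invariant cubics are $f_3(x,y)+(\text{quadric in } z,w)\cdot(\text{linear in } x,y)$, together with the symmetric form with the roles of $\{x,y\}$ and $\{z,w\}$ exchanged. Smoothness forces exactly one of $\ell_1,\ell_2$ to lie on $S$, matching the Type X normal form $f_3(x,y)+z^2g_1+w^2h_1$ of \Cref{lem: Type X irreducibility}. In those coordinates $\sigma=\sigma_1\sigma_2$, and $\ell_2=\{x=y=0\}\subset S$. The other line $\ell_1=\{z=w=0\}$ meets $S$ in the three zeros of $f_3$. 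Each such point lies on a line of $S$: it is a point of the fixed line, and the $\sigma$-invariant tangent plane there contains lines through it. I expect this step to be the main obstacle. I would try to settle it by showing that the tangent plane contains $\ell_1$ direction-wise and cuts a conic splitting into two $\sigma$-swapped lines.

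Since $S^{\sigma}$ is contained in the union of lines, every $p\in\cS^\circ$ has $\sigma\notin\Aut(S,p)$, so $\cS^{(G,C_2)}$ is empty for this embedding of $C_2$. That an involution of this second kind forces a commuting pair of Eckardt involutions follows from the normal form: $\sigma_1=\mathrm{diag}(1,1,-1,1)$ and $\sigma_2=\mathrm{diag}(1,1,1,-1)$ preserve $f_3+z^2g_1+w^2h_1$, and $\sigma=\sigma_1\sigma_2$. Hence $\Aut(S)\supseteq C_2\times C_2$, i.e. $S$ is a specialization of a Type X surface. Reading the subgroup poset of \Cref{thm:possible-aut-gps} gives Types I, II, V, VI, X. Finally, to see that every symmetric surface has an involution of the first kind, I would check that each group in the poset contains $C_2$ (Type XI lies below all of them). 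Its generic involution is the Eckardt one, which one confirms from Dolgachev's normal forms, or from the fact that the Type XI locus is the Eckardt-involution locus.
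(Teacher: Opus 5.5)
\textbf{Verdict: there is a genuine gap, in the $(2,2)$ case, and the route you propose for it would fail.} The gap is the step you flag yourself: showing that each of the three isolated fixed points lies on a line. Write the invariant cubic as $F=f_3(x,y)+xQ_1(z,w)+yQ_2(z,w)$, and let $p=[x_0:y_0:0:0]$ be one of the three points of $\ell_1\cap S$.

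\emph{Why your route fails.} The term $xQ_1+yQ_2$ vanishes to order two along $\ell_1=\{z=w=0\}$. Hence $\nabla F(p)=(\partial_xf_3(p),\partial_yf_3(p),0,0)$, and $T_pS$ is the plane through $p$ and $\ell_2=\{x=y=0\}$. This plane meets $\ell_1$ only at $p$, as it must, since $\ell_1$ meets $S$ transversally in three distinct points. So it does not contain the direction of $\ell_1$. Moreover, $\sigma$ acts on this plane by $(s,z,w)\mapsto(s,-z,-w)$ in the coordinates $(sx_0,sy_0,z,w)$. This map fixes every line through $p$ in the plane, so the residual lines are not $\sigma$-swapped; in the normal form they are interchanged by $\sigma_1$ and $\sigma_2$.

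\emph{The fix.} Cut with exactly this plane. On it, $F=s\,(x_0Q_1+y_0Q_2)(z,w)$. So $T_pS\cap S$ is $\ell_2$ together with the binary quadric $x_0Q_1+y_0Q_2=0$, i.e.\ two lines through $p$, which are distinct because plane sections of a smooth cubic surface are reduced. This is precisely the paper's argument. The paper works in Dolgachev's normal form $xz(z+aw)+yw(w+bz)+x^3+y^3$ and intersects with the planes $y=-\zeta_3^ix$, which contain $x=y=0$.

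\textbf{A smaller omission: the normal form.} You assert that the invariant form ``matches'' $f_3+z^2g_1+w^2h_1$. Your claim that $\sigma$ is a product of two commuting Eckardt involutions rests entirely on this. However, \Cref{lem: Type X irreducibility} derives that normal form from a given $C_2\times C_2$-action, not from a single involution. The missing argument is short:
\begin{enumerate}
\item At a point $[0:0:z:w]\in\ell_2$ one has $\nabla F=(Q_1(z,w),Q_2(z,w),0,0)$. So smoothness forces $Q_1$ and $Q_2$ to have no common zero.
\item A base-point-free pencil of binary quadrics is not tangent to the conic of squares. Hence it contains exactly two squares $L_1^2,L_2^2$, with $L_1,L_2$ independent.
\item It follows that $xQ_1+yQ_2=g_1(x,y)L_1^2+h_1(x,y)L_2^2$ for linear forms $g_1,h_1$. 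The coordinate change $z'=L_1$, $w'=L_2$ gives the normal form.
\end{enumerate}

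With these two repairs your eigenspace argument proves the lemma. It is also more self-contained than the paper's proof, which cites Dolgachev for everything except the points-on-lines claim.
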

\begin{proof}
    This follows from~\cite[\S9.5]{Dolgachev}, and the fact that the fixed point locus of an action of a linearly reductive group on a smooth scheme is again smooth. Note that in~\cite[Table 9.5]{Dolgachev} it is inaccurately reported that the fixed locus of an involution of the second kind consists of a line and one isolated point, while there are 3 isolated points; this is actually proved in the same section~\cite[\S9.5]{Dolgachev}. We show that each of the 3 isolated points lies on some line. Recall that, up to projective transformation, we can assume the cubic surface to have equation
    \[
        xz(z+aw)+yw(w+bz)+x^3+y^3
    \]
    for some $a$ and $b$, and the involution to act as $[1:1:-1:-1]$. Then, the fixed locus consists of the line $x=y=0$ and the isolated points $[1:-\zeta_3^i:0:0]$ for $i=0,1,2$. Intersect the surface with the plane $y=-\zeta_3^ix$ to obtain the homogeneous equation
    \[
        x[z(z+aw)-\zeta_3^iw(w+bz)],
    \]
    which is the product of three linear factors.
\end{proof}

The above lemma shows that we can restrict our attention to Eckardt involution, and for such we have that the morphism $\cS^{(G,C_2)}\rightarrow\cM^{=G}$ is a fibration in punctured plane cubics.

\subsection{Case $H=C_3$}\label{subsec: H=C3 case}

Many types of cubic surfaces contain subgroups isomorphic to $C_3$, hence a full description of all $\cS^{(G,C_3)}$ might seem complicated to obtain. However, by Theorem~\ref{thm: isom stacks cubic surfaces and plane quartics}, we have a correspondence between strata $\cS^{(G,C_3)}$ as $\cB^{(G',C_3)}$. By~\cite[Theorem 1.1]{betheabrazelton_bitangentssymmetricquartics}, there is only one such irreducible stratum $\cB^{(C_3,C_3)}$, that has dimension 2, making the analysis simple.

\begin{lemma}\label{lem: C3 locus}
    Let $S$ be a smooth cubic surface, and let $C_3$ act faithfully on $S$ via automorphisms. Then, $(S^\circ)^{C_3}$ is either empty, finite, or a smooth cubic curve. Moreover, the last case is possible only for Type I, III, or IV. In particular, for all inclusions $i:C_3\hookrightarrow G$ such that $\cS^{(G,C_3,i)}\not=\emptyset$, we have $\dim\cS^{(G,C_3,i)}\leq\dim\cM_{3,3}^{=G}+1$, with equality possibly only for Type I, III, or IV.
\end{lemma}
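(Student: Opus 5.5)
The plan is to classify order-three automorphisms of a smooth cubic surface by their linearization, and to read off the fixed loci from the eigenspaces. Since $S\subset\P^3$ is anticanonically embedded, a generator $g$ of $C_3$ lifts to an element of $\GL_4$ of order $3$. Up to scaling and conjugation it is diagonal with eigenvalues among $\{1,\zeta_3,\zeta_3^2\}$. The fixed locus of $g$ on $\P^3$ is the disjoint union of the projectivized eigenspaces, and $S^{C_3}$ is the intersection of $S$ with these. Because $C_3$ is linearly reductive, $S^{C_3}$ is smooth, as in \Cref{lem: C2 locus}. The classical list in \cite[\S9.5]{Dolgachev} shows there are three conjugacy classes of order-three automorphisms, with eigenvalue patterns $(1,1,1,\zeta_3)$, $(1,1,\zeta_3,\zeta_3^2)$ and $(1,1,\zeta_3,\zeta_3)$ up to scaling; these are the classes 3A, 3C, 3D in the Atlas notation of \Cref{thm:irred-UG}. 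I would treat these three cases in turn.

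\begin{enumerate}
\item \emph{Pattern $(1,1,1,\zeta_3)$.} The fixed locus in $\P^3$ is a plane $H$ together with an isolated point. Invariance forces the equation to take the form $f_3(x,y,z)+w^3$, so $S$ is a cyclic cubic surface. In this case $S\cap H$ is a smooth plane cubic: if it were singular, $S$ would be singular there. The isolated point $[0:0:0:1]$ does not lie on $S$. So the only positive-dimensional possibility is a smooth cubic curve. The family $f_3+w^3$ with $f_3$ in Hesse form is exactly the family \eqref{eq:TypeIVprojectiveequation}, whose generic member is of Type IV. Its special members are of Types I and III by the proposition on $j=0$ and $j=1728$. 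Hence this case occurs only for Types I, III and IV.
\item \emph{Pattern $(1,1,\zeta_3,\zeta_3^2)$.} The fixed locus consists of a line and two points, so $S^{C_3}$ is finite unless the line lies on $S$.
\item \emph{Pattern $(1,1,\zeta_3,\zeta_3)$.} The fixed locus consists of two skew lines. Again $S^{C_3}$ is finite unless one of these lines lies on $S$.
\end{enumerate}

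In the last two cases, any fixed line contained in $S$ is one of the $27$ lines, so it is removed when we pass to $S^\circ$. Whatever remains of $(S^\circ)^{C_3}$ is then a finite set of points, possibly empty. This proves the trichotomy.

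For the dimension bound, consider the forgetful map $\cS^{(G,C_3,i)}\to\cM_{3,3}^{=G}$. Its fiber over $S$ lies in the union of $(S^\circ)^{C_3'}$, taken over the finitely many subgroups $C_3'\le G$ in the class of $i$. This is because a point whose stabilizer contains $C_3'$ is fixed by $C_3'$. So each fiber has dimension at most $1$, with dimension exactly $1$ only if some $C_3'$ falls in the cubic-curve case. By the trichotomy, that happens only for Types I, III and IV. Upper semicontinuity of fiber dimension then gives $\dim\cS^{(G,C_3,i)}\le\dim\cM_{3,3}^{=G}+1$, with the stated equality clause.

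The main obstacle is justifying that the classification of order-three elements is exhaustive. I would rely on \cite[\S9.5]{Dolgachev} for this, rather than ruling out the other eigenvalue patterns by hand: one would have to check that invariant cubics for every other pattern are singular. I would also need to verify that a fixed line in cases (2) and (3) that meets $S$ in only finitely many points contributes only finitely many points. This is immediate, since a line not contained in $S$ meets $S$ in at most three points.
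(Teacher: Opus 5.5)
Your proposal is correct and follows essentially the paper's route: the paper reads the three conjugacy classes of order-three automorphisms and their fixed loci off \cite[Table 9.5]{Dolgachev}, while you reconstruct those rows from the eigenspaces of the linearization and also write out the fiber-dimension bound that the paper leaves implicit. The exhaustiveness you flag as the main obstacle is automatic: an order-three element of $\PGL_4$ lifts to a matrix $A$ with $A^3=I$, and up to scaling by $\zeta_3$ and inversion the three eigenvalue patterns you list are the only non-trivial ones. The one check your write-up skips is in case (1): the non-trivial character eigenspaces $w\,q(x,y,z)$ and $w^2\ell(x,y,z)$ contain no smooth cubics, and this is what justifies your claim that the equation has the form $f_3(x,y,z)+w^3$.
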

\begin{proof}
    It follows immediately from~\cite[Table 9.5]{Dolgachev}.
\end{proof}

\begin{lemma}\label{lem: Type IV unique cubic surface with C3-points}
    The only group $G$ for which $\cS^{(G,C_3)}$ is non-empty is $G=H_3(3)\rtimes C_2$, corresponding to cubic surfaces of Type IV. Moreover, the only subgroup $i:C_3\hookrightarrow H_3(3)\rtimes C_2$ for which $\cS^{(H_3(3)\rtimes C_2,C_3,i)}\not=\emptyset$ is the inclusion of the center. For that inclusion, $\cS^{(H_3(3)\rtimes C_2,C_3)}\rightarrow\cM_{3,3}^{=H_3(3)\rtimes C_2}$ is a fibration in punctured irreducible smooth plane cubics.
\end{lemma}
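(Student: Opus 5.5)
The plan is to reduce to the classification of fixed loci of order-three automorphisms of smooth cubic surfaces in \cite[Table 9.5]{Dolgachev}, via \Cref{lem: C3 locus}. The argument then runs from the bitangent side through the isomorphism $\Phi$ of \Cref{thm: isom stacks cubic surfaces and plane quartics}.

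\textbf{Step 1: reduce to the bitangent side.} A point $(S,p)$ of $\cS^{(G,C_3)}$ is a point $p\in S^\circ$ whose stabilizer $\Aut(S,p)$ is exactly a $C_3\le G=\Aut(S)$. Since $\Phi$ is an isomorphism of stacks, it identifies automorphism groups: $\Aut(S,p)\cong\Aut(Q,B)$. The quartic $Q$ satisfies $\Aut(Q)\supseteq\Aut(Q,B)$, and $\Aut(S)$ corresponds to the subgroup of $\Aut(Q)$ preserving the set of lines. The nonempty strata $\cS^{(G,C_3)}$ therefore map isomorphically onto strata $\cB^{(G',C_3)}$ for suitable $G'$. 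By \cite[Theorem 1.1]{betheabrazelton_bitangentssymmetricquartics}, the only such stratum is $\cB^{(C_3,C_3)}$, which is irreducible of dimension $2$. Hence $\bigsqcup_{G,i}\cS^{(G,C_3,i)}$ is irreducible of dimension $2$, so exactly one pair $(G,i)$ contributes.

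\textbf{Step 2: identify that pair by dimension count.} By \Cref{lem: C3 locus}, $\dim\cS^{(G,C_3,i)}\le\dim\cM_{3,3}^{=G}+1$, with equality only for Types I, III, IV. For all other types the fixed locus in $S^\circ$ is finite, so $\dim\cS^{(G,C_3,i)}\le\dim\cM_{3,3}^{=G}$. Reading the dimensions from \Cref{thm:irred-UG} (moduli dimension $=\dim U^{(G)}-15$), we get:
\begin{itemize}
\item Type IV: $1+1=2$.
\item Types I and III: at most $0+1=1$.
\item Finite-fixed-locus types: only Type VIII ($\dim 2$) could reach $2$. Types VI and XI do not contain the relevant $C_3$ acting with isolated fixed points off lines, or give dimension at most $1$.
\end{itemize}
For the remaining cases (Type VIII, and Types V and II containing $C_3$) I would check directly from Dolgachev's table that the isolated $C_3$-fixed points all lie on lines, which forces those strata to be empty. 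This case check is the main obstacle, since it must use the explicit normal forms. A fallback is to argue that any nonempty stratum of dimension below $2$ would be a second stratum on the bitangent side, contradicting the uniqueness in \cite{betheabrazelton_bitangentssymmetricquartics}. This shows $G=H_3(3)\rtimes C_2$.

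\textbf{Step 3: the Type IV fiber.} In the normal form \eqref{eq:TypeIVprojectiveequation}, the order-three elements of $H_3(3)\rtimes C_2$ whose fixed locus contains a curve are the central ones $w\mapsto\zeta_3^{\pm1}w$. Their fixed locus is the plane cubic $E(a)=S(a)\cap\{w=0\}$ together with the point $[0:0:0:1]$, and this point lies on lines. The noncentral order-three elements have finite fixed loci; I would check that those points lie on lines, or note that they would give a stratum of dimension at most $1$, which is excluded by Step 1. Hence only the central inclusion contributes.

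For this inclusion, the fiber of $\cS^{(H_3(3)\rtimes C_2,C_3)}\to\cM_{3,3}^{=H_3(3)\rtimes C_2}$ over $S(a)$ is $E(a)$ minus two sets: its intersection with the $27$ lines, which includes the $9$ Eckardt points, and the finitely many points with larger stabilizer. The curve $E(a)$ is smooth and irreducible for $a^3\ne1$. The punctured fibers therefore form a fibration in punctured smooth irreducible plane cubics, as in Remark~\ref{rmk: facts on plane cubics}.
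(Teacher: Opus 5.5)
Your route is the paper's: the isomorphism $\Phi$, uniqueness of the bitangent stratum $\cB^{(C_3,C_3)}$, the dimension bound of \Cref{lem: C3 locus}, and then the central $C_3$ in the Type IV normal form. However, Step 1 contains a genuine gap.

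\textbf{The identification of strata.} The isomorphism of \Cref{thm: isom stacks cubic surfaces and plane quartics} identifies $\Aut(S,p)$ with $\Aut(Q,B)$ and nothing more. An automorphism of $S$ that moves $p$ does not act on $Q$ at all. So $\Aut(S)$ is not a subgroup of $\Aut(Q)$, and a single stratum $\cS^{(G,C_3,i)}$ need not land in a single $\cB^{(G',C_3,i')}$.

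What you legitimately get is that the union $\bigsqcup_{G,i}\cS^{(G,C_3,i)}$ is isomorphic to $\cB^{(C_3,C_3)}$, hence irreducible of dimension $2$. This implies that at most one stratum has dimension $2$, and that stratum is then open and dense. It does \emph{not} imply that only one stratum is nonempty. Your ``fallback'' in Steps 2 and 3 (a stratum of dimension at most $1$ would be a second stratum on the bitangent side) is therefore invalid, and with it your exclusion of Types I and III.

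These types cannot in fact be excluded. For the Fermat surface and for the Type III surfaces, the homology $w\mapsto\zeta_3 w$ lies in $\Aut(S)$ and fixes the smooth plane cubic $S\cap\{w=0\}$ pointwise. No other automorphism fixes that curve pointwise, and only finitely many of its points lie on lines. So a general point of it gives $\Aut(S,p)\cong C_3$, i.e.\ a nonempty $1$-dimensional $\cS^{(G,C_3)}$.

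What this route really proves is that the Type IV stratum with the central inclusion is the unique $2$-dimensional stratum, and that it is dense in the union. That is all that is used later, via \Cref{prop:invariance-properties-monodromy}\eqref{point:restriction-to-open}. The paper's own proof passes through the same identification, so the first sentence of the statement is only correct in this generic sense.

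\textbf{The direct checks.} The checks you offer as your primary argument would also fail. The relevant mechanism is a larger stabilizer, not incidence with lines.

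In Type VIII, the $C_3$-fixed points of $S$ are the three points $[-\zeta_3^i:1:0:0]$ on the line $z=w=0$. By \Cref{lem: Type VIII S3-pointed cubics} they lie on no line. But the line $z=w=0$ is fixed pointwise by $z\leftrightarrow w$, so these points are $S_3$-invariant and $\cS^{(S_3,C_3)}=\emptyset$; this is exactly the paper's check.

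Similarly, in Type IV the noncentral $C_3$ generated by $(x,y,z)\mapsto(y,z,x)$ fixes the three points $[1:1:1:t]$ with $t^3=3a-3$. For Type IV values of $a$ these lie on no line. Indeed, they project from $[0:0:0:1]$ to $[1:1:1]$, which lies on none of the nine inflectional tangents of $E(a)$, whereas every line of $S(a)$ projects onto such a tangent. What makes the stratum empty is that these points are also fixed by $x\leftrightarrow y$, so their stabilizer contains $S_3$. Since $N_{\PGL_4}(H_3(3)\rtimes C_2)$ preserves the family and permutes the noncentral order-$3$ subgroups transitively, this single check rules out all noncentral inclusions.

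Two minor points: $[0:0:0:1]\notin S(a)$, so the central $C_3$ fixes exactly $E(a)$ on $S(a)$; and Type XI contains no $C_3$ at all.
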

\begin{proof}
    By~\cite[Theorem 1.1]{betheabrazelton_bitangentssymmetricquartics}, there is only one non-empty stratum $\cB^{(G',C_3)}$, with $G'=C_3$; it is irreducible and isomorphic to $\cM_{2,4}^{=C_3}$, that has dimension 2. The only group $G$ containing $C_3$ for which $\cM_{3,3}^{=G}$ is 2-dimensional is $G=S_3$, corresponding to Type VIII cubic surfaces. However, one can verify using~\cite[Theorem 4.11]{landiStacksMonodromySymmetric2025} that all $C_3$-invariants points are also $S_3$-invariant; in particular, $\cS^{(S_3,C_3)}=\emptyset$. By Lemma~\ref{lem: C3 locus}, the only possibility left are type IV cubic surfaces, that is $G=H_3(3)\rtimes C_2$, whose $C_3$-fixed point locus consisting of a punctured smooth cubic curves. Explicitly, by~\S\ref{subsec: Type IV cubic surfaces}, every type IV cubic surface $S_a$ is isomorphic to one with equation $x^3+y^3+z^3+w^3-3ayzw$, and the center of the automorphism group acts by $x\mapsto\zeta_3 x$. Then, the $C_3$-fixed locus is the smooth plane cubic $E_a$ with equation $x=y^3+z^3+w^3-3ayzw=0$. This concludes, after deleting from $E_a$ the finite number of points that lie on some line.
\end{proof}

\subsubsection{Case $H=C_6$}\label{subsec: H=C6 case}

The case $H=C_6$ is very special.

\begin{lemma}\label{lem: C6 locus}
    Let $S$ be a smooth cubic surface, and let $C_6$ act faithfully on $S$ via automorphisms. Then, $(S^\circ)^{C_6}$ is either finite or empty. In particular, when $\cS^{(G,C_6)}\not=\emptyset$, we have $\dim\cS^{(G,C_6)}=\dim\cM_{3,3}^{=G}$.
\end{lemma}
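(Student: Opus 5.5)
The plan is to reduce to the subgroups $C_2$ and $C_3$ of $C_6$, using \Cref{lem: C2 locus} and \Cref{lem: C3 locus}. Write $C_6=C_2\times C_3$ with generators $\sigma$ of order $2$ and $\tau$ of order $3$. Then
\[
(S^\circ)^{C_6}=(S^\circ)^{\sigma}\cap(S^\circ)^{\tau},
\]
and each factor is a closed subvariety of $S^\circ$.

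By \Cref{lem: C3 locus}, $(S^\circ)^{\tau}$ is empty, finite, or a smooth (punctured) plane cubic curve. In the first two cases $(S^\circ)^{C_6}$ is already finite or empty, so only the curve case needs attention. There \Cref{lem: Type IV unique cubic surface with C3-points} gives more: $\Aut(S)$ must contain a Type IV group, and $\tau$ must be the central $C_3$, acting as $x\mapsto\zeta_3x$ on $S_a=V(x^3+y^3+z^3+w^3-3ayzw)$ (with a possibly larger automorphism group when $S$ specializes, as in Types I and III). Its fixed curve is $E_a=\{x=0\}\cap S_a$, minus points lying on lines.

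Next I would check that $\sigma$ does not fix $E_a$ pointwise. By \Cref{lem: C2 locus}, the fixed locus $S^{\sigma}$ is either a smooth plane cubic together with an isolated Eckardt point, or a line together with three points. A pointwise-fixed $E_a$ must therefore equal the cubic component of $S^{\sigma}$, so $\sigma$ would be the Eckardt involution whose fixed plane is $x=0$. The fixed plane of an Eckardt involution contains no Eckardt point of it, but the curve is a harmonic (polar) cubic cut by that plane. I would argue directly instead. An involution commuting with $\tau=\mathrm{diag}(\zeta_3,1,1,1)$ preserves the eigenspaces, so it has the form $\mathrm{diag}(\pm1,A)$ with $A\in\GL_3$. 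If it fixes the plane $x=0$ pointwise, then $A$ is scalar, and $\sigma=\mathrm{diag}(-1,1,1,1)$ up to scaling. This does not preserve the term $x^3$ in the equation of $S_a$, a contradiction. So $\sigma$ acts on $E_a$ as a nontrivial automorphism of a curve, and its fixed points on $E_a$ are finite. Hence $(S^\circ)^{C_6}$ is finite in every case.

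For the dimension claim, consider the forgetful map $\cS^{(G,C_6)}\to\cM_{3,3}^{=G}$. By the first part it has finite fibres. It is also quasi-finite and unramified onto its image, which is open and closed: the stratum is cut out by the discrete data of a fixed point of $C_6$, and such fixed points deform with the surface because the fixed locus of a linearly reductive group is smooth. Therefore the dimension of any nonempty stratum equals $\dim\cM_{3,3}^{=G}$ by irreducibility.

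The main obstacle is the step where the stratum dominates $\cM_{3,3}^{=G}$. I would argue this via smoothness of $\cS^{C_6}$ over the base: the relative fixed locus is étale over the base where it is zero-dimensional, so its image is open. The image is closed because the condition of avoiding the lines is open, and I would handle that by restricting to the components where it holds.
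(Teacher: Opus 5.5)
Your proof of finiteness follows the paper's: Lemma~\ref{lem: C3 locus} reduces everything to the case where $\tau$ fixes the plane cubic $E_a=\{x=0\}\cap V(x^3+y^3+z^3+w^3-3ayzw)$, and you then show that the involution in $C_6$ acts nontrivially on $E_a$; your eigenspace argument (a lift of $\sigma$ commutes with $\mathrm{diag}(\zeta_3,1,1,1)$, so fixing $E_a$ pointwise forces $\sigma=\mathrm{diag}(-1,1,1,1)$, which does not preserve the $x^3$ term) is exactly the justification the paper leaves implicit. For the dimension statement, which the paper treats as immediate, openness of the image (the relative fixed locus is étale where zero-dimensional) together with irreducibility of $\cM_{3,3}^{=G}$ already gives density and hence equality. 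Your extra claim that the image is closed is unjustified, since openness of avoiding lines does not imply it, and you should drop it.
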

\begin{proof}
    This follows from Lemma~\ref{lem: C3 locus}, except possibly for smooth cubic surfaces $S$ isomorphic to one with equation $x^3+y^3+z^3+w^3-3ayzw$, in which case the $C_3$-invariant locus is the cubic $E_S$ with equation $x=y^3+z^3+w^3-3ayzw=0$. Then, the statement follows from noting that all faithful $C_2$-actions on $S$ that preserve $E_S$ restrict to a non-trivial action on $E_S$.
\end{proof}

The above lemma shows in particular that $\cS^{(G,C_6)}$ can be positive dimensional only for Type IV or VI.

\begin{lemma}\label{lem: Type IV C6-pointed cubics}
    There is only one conjugacy class of subgroups isomorphic to $C_6$ in $H_3(3)\rtimes C_2$. Moreover, $\cS^{(H_3(3)\rtimes C_2,C_6)}\rightarrow\cM_{3,3}^{=H_3(3)\rtimes C_2}$ is an irreducible cover of degree 27, while the induced cover between coarse moduli spaces has degree 3 and Galois group isomorphic to $S_3$.
\end{lemma}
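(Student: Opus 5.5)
The plan is to work with the explicit Hesse model of \S\ref{subsec: Type IV cubic surfaces}, namely $S_a=V(x^3+y^3+z^3+w^3-3ayzw)$, with the coordinates permuted as in the proof of Lemma~\ref{lem: Type IV unique cubic surface with C3-points}, so that the center $Z\cong C_3$ acts by $x\mapsto\zeta_3x$. The proof has three parts: the group theory, a count of the $C_6$-fixed points, and a monodromy computation.

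\textbf{Conjugacy of $C_6$.} Since $Z$ is central of order 3, every element of order 6 has the form $\tau z$, where $\tau$ is an involution and $z\in Z$ is a generator. It follows that every subgroup $C_6\le H_3(3)\rtimes C_2$ equals $Z\times\langle\tau\rangle$. The involutions all lie in the nontrivial coset of $H_3(3)$. Each one acts on $H_3(3)$ by inverting its center-free quotient $C_3^2$, so its centralizer in $H_3(3)$ is exactly $Z$. Hence the class of $\tau$ has size $27/3=9$, and all 9 involutions are conjugate. This gives a single conjugacy class of subgroups $C_6$, containing 9 subgroups, with $N(C_6)=C_6$. I would also check this in GAP.

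\textbf{The fixed points.} By Lemma~\ref{lem: Type IV unique cubic surface with C3-points}, the $Z$-fixed locus in $S^\circ$ is the punctured cubic $E_a=\{x=0\}\cap S_a$. A point is $C_6$-fixed exactly when it lies in $E_a$ and is fixed by $\tau$. By Lemma~\ref{lem: C2 locus}, $\tau$ is an Eckardt involution, and it acts on $E_a$ as an involution $P\mapsto -P+T$ fixing a flex. Such an involution has 4 fixed points on $E_a$: the Eckardt point lying on $E_a$, together with 3 further points. For $\tau=(y\leftrightarrow z)$ these are $[0:1:1:w]$ with $w$ satisfying a cubic. The Eckardt point lies on lines, so it is removed in $S^\circ$. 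The remaining 3 points should avoid all lines for general $a$; I would verify this by intersecting with the 27 lines, which pass through $E_a$ only at the 9 flexes. This gives 3 points for each of the 9 subgroups, so $\cS^{(G,C_6)}\to\cM_{3,3}^{=G}$ has degree $27$. It is finite étale, being an open and closed piece of the fixed locus with stabilizer exactly $C_6$, where Lemma~\ref{lem: C6 locus} rules out any positive-dimensional part.

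\textbf{Coarse degree.} On coarse spaces, a point of the fiber is a $G$-orbit of such pairs. The orbit of a pair $(S,p)$ has size $|G|/|\Stab(p)|=54/6=9$, so the coarse degree is $27/9=3$. The three coarse points correspond to the 3 roots of the $w$-cubic for a fixed $\tau$; conjugation by $N(\langle\tau\rangle)=C_6$ fixes them.

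\textbf{Main obstacle: irreducibility and the Galois group.} I would compute the $w$-cubic explicitly. Setting $y=z=1$ gives the $\tau$-fixed points on $x=0$. The derivative condition of $w^3-3aw+2$ produces the fixed points in the plane $y=z$, while the other family $y=-z$ contains the Eckardt point. I would then show that the resulting cubic $h_a(w)$ over $\C(a)$, or over the $j$-line coordinate, is irreducible with non-square discriminant, giving Galois group $S_3$ over $\C(a)$. Finally I would descend to the coarse base $X^{=G}\subset\A^1_j$. Here there are two things to control. First, the $A_4$-action on $a$ must permute the roots compatibly, since $N_{W(E_6)}$ relates the different $\tau$'s. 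Second, the monodromy over the $j$-line must remain $S_3$ and not shrink to $C_3$. Irreducibility of the stacky cover of degree 27 then follows: the coarse cover is irreducible of degree 3, and the 9 points in each coarse fiber form a single $G$-orbit, whose hidden loops connect them by Lemma~\ref{lem:injectivity-hidden-loops}. I expect the descent step to $j$ to be the most delicate part.
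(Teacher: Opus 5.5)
Your route is the paper's: the same model $S_a$, the same subgroup $C_6=\langle x\mapsto\zeta_3x,\ y\leftrightarrow z\rangle$, the same fixed locus (the Eckardt point $[0:1:-1:0]$ and the three points $[0:1:1:w]$ with $w^3-3aw+2=0$), and the same counts $9\cdot 3=27$ and $27/9=3$. Your group theory is correct. So is your reason why the three points avoid all lines: every line meets the plane $x=0$ exactly once, at one of the 9 flexes, and $[0:1:1:w]$ is never a flex. Both fill in what the paper calls a straightforward verification.

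The step you leave open and call the most delicate is in fact immediate, and your worry about it runs in the wrong direction. The coarse degree-3 cover already exists over $M_{3,3}^{=H_3(3)\rtimes C_2}\subset\A^1_j$ as the coarse space of $\cS^{(H_3(3)\rtimes C_2,C_6)}$. So nothing has to be descended, and no compatibility with the $A_4$-action on $a$ needs checking. By your own orbit analysis, the pullback of this cover along $a\mapsto j(a)$ is the cubic cover $\{w^3-3aw+2=0\}\to W$. By \Cref{prop:invariance-properties-monodromy}\eqref{point:injectivity-pullback}, the monodromy of that pullback injects into the monodromy over the $j$-line. Hence monodromy can only grow from $\C(a)$ to $\C(j)$; it cannot shrink to $C_3$.

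Over $\C(a)$ the cubic is irreducible, because $a=(w^3+2)/(3w)$ has degree $3$ in $w$, so $\C(a)(w)=\C(w)$. Its discriminant $108(a^3-1)$ is not a square. So the group is $S_3$ over $\C(a)$, and therefore exactly $S_3$ over $\C(j)$, since the degree is $3$.

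Likewise, irreducibility of the degree-27 cover needs no hidden loops. A stack and its coarse space have the same underlying topological space, so irreducibility follows from that of the cubic. Lemma~\ref{lem:injectivity-hidden-loops} would not apply here anyway: $\cS^{(H_3(3)\rtimes C_2,C_6)}$ has $C_6$ stabilizers and is not an algebraic space.

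Finally, your ``derivative condition'' remark is spurious. Substituting $y=z=1$ into $y^3+z^3+w^3-3ayzw$ gives $w^3-3aw+2$ directly.
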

\begin{proof}
    The first part is a straightforward verification.
    For the second part, recall that every smooth cubic surface of Type IV is isomorphic to a surface $S_a$ with equation $x^3+y^3+w^3+z^2-3ayzw$; see~\S\ref{subsec: Type IV cubic surfaces}. We can choose the subgroup $C_6$ generated by the automorphism $x\mapsto\zeta_3x$, and $y\leftrightarrow z$. Then, the fixed locus consists of the Eckardt point $[0:1:-1:0]$ and three points of the form $[0:1:1:w]$ with $w^3-3aw+2=0$. These last points do not lie on any line. The computations of the degree and Galois group of the resulting covers follows immediately.
\end{proof}

\begin{lemma}\label{lem: Type VI C6-pointed cubics}
    There is only one subgroup isomorphic to $C_6$ in $S_3\times C_2$. The $C_6$-fixed point locus consists of the Eckardt point whose Eckardt involution generates the central $C_2$ in $S_3\times C_2$. In particular, $\cS^{(S_3\times C_2,C_6)}=\emptyset$.
\end{lemma}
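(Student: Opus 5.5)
The plan is to work with an explicit normal form for Type VI surfaces and compute everything by hand. By \cite[Table 9.6]{Dolgachev}, every Type VI cubic surface is projectively equivalent to one of the form
\[
x^3+y^3+z^3+w^2(x+y+z)+a\,xyz=0,
\]
or an equivalent member of this family. In this model $S_3$ permutes $x,y,z$ and the central $C_2$ acts by $w\mapsto -w$. The point $[0:0:0:1]$ is the Eckardt point whose Eckardt involution is this central $C_2$: its tangent plane $x+y+z=0$ cuts out three concurrent lines. I will use whichever normal form the paper's appendix (\Cref{prop:Utype6}) fixes, since the argument only needs the $S_3$ to permute three coordinates and the central involution to be a reflection.

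\textbf{Step 1: uniqueness of $C_6$.} This is pure group theory. A cyclic subgroup of order $6$ in $S_3\times C_2$ needs an element of order $6$. Such an element has the form $(\sigma,c)$ with $\sigma$ a $3$-cycle and $c$ the generator of $C_2$, because $S_3$ has no element of order $6$ and the $C_2$ factor contributes order at most $2$. The two $3$-cycles give mutually inverse elements, so there is exactly one $C_6=A_3\times C_2$.

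\textbf{Step 2: compute the fixed locus.} The fixed locus of $C_6$ is the intersection of the fixed loci of the central involution and of a $3$-cycle.
\begin{itemize}
\item The central involution fixes the plane $w=0$ pointwise, together with the isolated point $[0:0:0:1]$. This matches \Cref{lem: C2 locus}(1), where the fixed locus is a smooth plane cubic plus the Eckardt point.
\item The $3$-cycle $(x,y,z)\mapsto(y,z,x)$ has eigenspaces spanned by $(1,1,1,0)$, by $e_w=(0,0,0,1)$, and by $(1,\zeta_3,\zeta_3^2,0)$ and $(1,\zeta_3^2,\zeta_3,0)$.
\item The $1$-eigenspace is the line spanned by $(1,1,1,0)$ and $e_w$. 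Inside $w=0$, the only $C_3$-fixed points are the three eigenpoints $[1:1:1:0]$, $[1:\zeta_3:\zeta_3^2:0]$ and $[1:\zeta_3^2:\zeta_3:0]$.
\end{itemize}

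\textbf{Step 3: exclude the eigenpoints in $w=0$.} For each of these three points I will check that it does not lie on $S$, or, if it does, that it lies on a line of $S$.

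For $[1:\zeta_3:\zeta_3^2:0]$, substitute into the equation: $x^3+y^3+z^3=3$ and $xyz=1$, so the point lies on $S$ only if $a=-3$. That value should be a singular member of the family, which I will verify against the excluded parameters in \Cref{prop:Utype6}. The same computation handles $[1:\zeta_3^2:\zeta_3:0]$.

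For $[1:1:1:0]$, the equation gives $3+a$ again, so this point is likewise off $S$ for smooth members.

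The remaining $C_6$-fixed point is $[0:0:0:1]$, which is the Eckardt point. This proves the description of the fixed locus.

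\textbf{Step 4: conclude emptiness.} An Eckardt point lies on three lines of $S$, so it is not in $\cS^\circ$. Hence no point of $\cS^\circ$ has stabilizer containing $C_6$, and $\cS^{(S_3\times C_2,C_6)}=\emptyset$.

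The main obstacle is choosing the normal form correctly and checking the genericity conditions in Step 3. A point in an isotropic eigenspace could land on $S$ for special $a$. If the coordinate model differs from the one above, I will instead argue abstractly. The fixed locus of $C_6$ is finite by \Cref{lem: C6 locus} and sits inside the plane cubic fixed by the Eckardt involution. The $C_3$ acts on that plane cubic by an automorphism, and I would use \cite[Table 9.5]{Dolgachev} to show that all of its fixed points on the cubic lie on lines of $S$.
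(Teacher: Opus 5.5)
Your proposal is correct and takes essentially the paper's route: the paper simply calls this an immediate verification in the explicit Type VI normal form of \cite[Theorem 4.17]{landiStacksMonodromySymmetric2025}, and you carry that verification out (unique $C_6=A_3\times C_2$, its four fixed points in $\P^3$, and only the Eckardt point $[0:0:0:1]$ lying on $S$). Two small remarks. First, your family is not literally the model of \Cref{prop:Utype6}, but it realizes the same representation of $S_3\times C_2$ on $\C^4$ (namely $\mathds{1}\oplus V_2$ for $S_3$, with the central $C_2$ acting by $-1$ on a complementary trivial line), so it is conjugate to that model. Second, the check you deferred at $a=-3$ is immediate: $x^3+y^3+z^3-3xyz$ is divisible by $x+y+z$, so that member is reducible.
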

\begin{proof}
    This is an immediate verification using~\cite[Theorem 4.17]{landiStacksMonodromySymmetric2025}.
\end{proof}

\subsubsection{Case $H=S_3$}\label{subsec: H=S3 case}

We focus on the case where $H=S_3$. We start with a general lemma.

\begin{lemma}\label{lem: S3 locus}
    Let $S$ be a smooth cubic surface, and let $S_3$ act on $S$ via automorphisms. Then, $(S^\circ)^{S_3}$ is either finite or empty. In particular, when $\cS^{(G,S_3)}\not=\emptyset$, we have $\dim\cS^{(G,S_3)}=\dim\cM_{3,3}^{=G}$.
\end{lemma}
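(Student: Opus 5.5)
The plan is to reduce to the fixed loci of the two generating subgroups of $S_3$. Write $S_3=\langle\tau,\sigma\rangle$ with $\tau$ of order $2$ and $\sigma$ of order $3$. Then $(S^\circ)^{S_3}=(S^\circ)^{\langle\sigma\rangle}\cap(S^\circ)^{\langle\tau\rangle}$. By Lemma~\ref{lem: C3 locus}, $(S^\circ)^{C_3}$ is empty, finite, or a smooth cubic curve. In the first two cases we are done at once.

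So it suffices to treat the case where $(S^\circ)^{\langle\sigma\rangle}$ is a smooth plane cubic $E$. By Lemma~\ref{lem: C3 locus} this forces $S$ to be of Type I, III or IV. Here I would argue as in the proof of Lemma~\ref{lem: C6 locus}.

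\begin{enumerate}
\item Since $\langle\sigma\rangle$ is normal in $S_3$, the involution $\tau$ preserves $S^{\langle\sigma\rangle}$. It therefore preserves its one-dimensional component, the cubic $E$, which lies in the plane fixed by $\sigma$.
\item The involution $\tau$ acts nontrivially on $E$. Otherwise $\tau$ would fix the whole plane containing $E$ pointwise, so $\tau$ would be an Eckardt involution whose fixed curve is $E$. By Lemma~\ref{lem: C2 locus} its fixed locus is then that plane cubic together with an isolated point. But an Eckardt involution $\tau$ with fixed plane $\Pi$ and a $\sigma$ fixing the same plane pointwise generate a subgroup of the group of homologies with axis $\Pi$. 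That group is abelian, and $\tau\sigma\tau^{-1}=\sigma^{-1}$ would force $\sigma^2=1$, a contradiction.
\item A nontrivial automorphism of a smooth cubic curve has finitely many fixed points. Hence $E^{\tau}$ is finite, and so is $(S^\circ)^{S_3}$.
\end{enumerate}

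For the dimension statement, note that $\cS^{(G,S_3)}\subset\cS^\circ$ is locally closed. The map $\cS^{(G,S_3)}\rightarrow\cM_{3,3}^{=G}$ has finite fibers: each fiber over $S$ is contained in the union, over the finitely many subgroups $S_3\leq\Aut(S)$, of the finite sets $(S^\circ)^{S_3}$. The map is also unramified, being a locally closed stratum in a smooth family cut out by fixed-point conditions. It is therefore quasi-finite, so $\dim\cS^{(G,S_3)}\leq\dim\cM_{3,3}^{=G}$.

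For the reverse inequality, when the stratum is nonempty, I would use that the stratum is open and closed in $\cS^{=G}$-type loci. Over $\cM_{3,3}^{=G}$, the $S_3$-fixed points form a finite étale cover. Fixed loci of the linearly reductive group $S_3$ vary smoothly in the smooth family $\cS\rightarrow\cM_{3,3}^{=G}$, and the isolated fixed points deform along with the surface. So each nonempty component dominates $\cM_{3,3}^{=G}$, which gives equality.

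The main obstacle is step 2, ruling out that $\tau$ fixes $E$ pointwise. If the homology argument is inconvenient, I would instead check it directly in the three explicit normal forms of Types I, III and IV from \cite[Table 9.6]{Dolgachev}.
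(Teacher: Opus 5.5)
Step 2 is justified by a false claim. The elements of $\PGL_4$ fixing a plane $\Pi=\P(W)$ pointwise do not form an abelian group. Normalizing a lift to be the identity on $W$, they are the matrices $\left(\begin{smallmatrix} I_3 & v\\ 0 & \lambda\end{smallmatrix}\right)$. This is a copy of $\C^3\rtimes\C^\times$, containing the homologies with axis $\Pi$ and \emph{any} center, together with the elations. For instance, with $W=\langle e_1,e_2,e_3\rangle$, the maps fixing $e_1,e_2,e_3$ and sending $e_4$ to $-e_4$, respectively to $e_1-e_4$, are involutive homologies with the same axis whose product is an elation of infinite order.

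Two nontrivial homologies with a common axis commute only when they also share the center. That is not your situation. The center of an Eckardt involution is the Eckardt point, which lies on $S$. The center of $\sigma$ does not lie on $S$; it is $[1:0:0:0]$ when $S=V(x^3+f(y,z,w))$.

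The conclusion of step 2 is nevertheless true, because every \emph{finite} subgroup of this group is cyclic. The homomorphism $g\mapsto\lambda$ has torsion-free kernel, namely the elations. Equivalently, by complete reducibility a finite group fixing $W$ pointwise preserves a complementary line and acts diagonally. Applying $\lambda$ to $\tau\sigma\tau^{-1}=\sigma^{-1}$ puts $\sigma^2$ in the kernel, so $\sigma^2=1$, as you wanted.

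With this repair your argument goes through. But the repaired fact, applied directly to $S_3$, is the paper's whole proof and makes your reduction unnecessary. The fixed locus of a finite group on $\P^3$ is a union of linear subspaces: the projectivized isotypic pieces for one-dimensional characters of a lift to $\GL_4$. A non-abelian group acting faithfully fixes no plane pointwise, so $(\P^3)^{S_3}$ is a finite union of points and lines. Hence $S^{S_3}$ consists of finitely many points together with lines contained in $S$, and the latter are deleted in $S^\circ$. This route needs none of the following: Lemma~\ref{lem: C3 locus} (and, through it, Dolgachev's table), the restriction to Types I, III, IV, Lemma~\ref{lem: C2 locus}, or fixed points of automorphisms of elliptic curves.

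For the dimension statement the paper says only ``in particular''. Your quasi-finiteness argument gives $\leq$; the unramifiedness remark is not needed. For $\geq$, the input you invoke is the right one. A linearly reductive group acting fiberwise on a smooth family has fixed locus smooth over the base. So the isolated fixed points form an \'etale cover, in which $\cS^{(G,S_3)}$ is open. This cover need not be \emph{finite} \'etale, since fixed points can move onto lines or acquire larger stabilizers. But \'etaleness already gives an open, hence dense, image in the irreducible $\cM_{3,3}^{=G}$.
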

\begin{proof}
    The fixed locus of any faithful $S_3$-action on $\P^3$ is at most 1-dimensional, and each component is linear. Therefore, $S^{S_3}$ is the union of a finite number of points and lines. Alternatively, see~\cite[Table 9.5]{Dolgachev}.
\end{proof}

By the above lemma, the space $\cS^{(G,S_3)}$ is either empty or 0-dimensional for surfaces of Type I, II, or III, while it is at most 1-dimensional for Type IV, V, and VI.
We only study the 2-dimensional case of Type VIII cubics, as this is the only one we will need.

\begin{lemma}\label{lem: Type VIII S3-pointed cubics}
    The cover $\cS^{(S_3,S_3)}\rightarrow\cM_{3,3}^{=S_3}$ is of degree 3 and Galois group $S_3$; the same holds for the induced cover between moduli spaces. In particular, $\dim\cS^{(S_3,S_3)}=2$.
\end{lemma}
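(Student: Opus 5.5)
We will work with an explicit normal form for Type VIII cubic surfaces. Following~\cite[Theorem 4.11]{landiStacksMonodromySymmetric2025} (or~\cite[Table 9.6]{Dolgachev}), a Type VIII surface can be put in a form on which $S_3$ acts by permuting coordinates. We take the equation $x^3+y^3+z^3+w^3+a\,w\,\sigma_2(x,y,z)+b\,\sigma_3(x,y,z)$ in suitable coordinates, with $S_3$ permuting $x,y,z$ and fixing $w$, and with the symmetric functions $\sigma_2,\sigma_3$ in $x,y,z$. The first step is to compute $(\P^3)^{S_3}$. The transpositions fix the hyperplanes $x=y$, $y=z$, $x=z$ pointwise, up to the scalar ambiguity coming from the other eigenspace. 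So the $S_3$-fixed locus in $\P^3$ is the line $\{x=y=z\}$, together with at most finitely many isolated points from the other character eigenspaces. This is consistent with Lemma~\ref{lem: S3 locus}.

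Next we intersect the line $\{[t:t:t:w]\}$ with $S$. Restricting the equation gives a binary cubic in $(t,w)$ whose coefficients are polynomial in the moduli $(a,b)$. Its three roots give three $S_3$-fixed points on $S$. We then check three things:
\begin{enumerate}
\item These points generically do not lie on any of the 27 lines. We would check this at one explicit point of the family and use openness.
\item Their stabilizer is exactly $S_3$, since generically $\Aut(S)=S_3$.
\item The isolated fixed points off the line either fail to lie on $S$ or lie on lines, so they contribute nothing to $\cS^{(S_3,S_3)}$.
\end{enumerate}
Together these show that $\cS^{(S_3,S_3)}\to\cM_{3,3}^{=S_3}$ is finite étale of degree $3$ over a dense open. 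Since $S_3$ acts trivially on the fixed line, the $S_3$-gerbe structure does not affect the degree count, and the same degree $3$ holds on coarse moduli spaces. The dimension statement then follows from Lemma~\ref{lem: S3 locus} and $\dim\cM_{3,3}^{=S_3}=6-4=2$.

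For the Galois group, we note that a degree $3$ cover has monodromy contained in $S_3$. It therefore suffices to show that the binary cubic from the second step has generic discriminant which is a non-square in the function field $\C(a,b)$, equivalently that the cover is irreducible with non-cyclic monodromy. We plan to do this by specializing to a one-parameter subfamily, for example fixing $b$, and checking that the discriminant vanishes to odd order somewhere. This produces a transposition, and irreducibility gives a $3$-cycle. Finally, Proposition~\ref{prop:gal-f-vs-quotient} and the gerbe structure of $\cM_{3,3}^{=S_3}\to M_{3,3}^{=S_3}$ with band $S_3$ acting trivially on the fiber show that the stacky and coarse covers have the same Galois group.

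The main obstacle is the first step: pinning down the correct normal form and verifying that no isolated fixed point outside the fixed line survives on $S^\circ$. If the normal form is awkward, we would instead use the $S_3$-representation $\mathds{1}^{\oplus 2}\oplus V_2$ on $H^0(\omega_S^\vee)$ to identify the fixed locus abstractly as the line $\P(\mathds{1}^{\oplus 2})$.
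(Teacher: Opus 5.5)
Your proposal is correct, but it gets the $S_3$ from a different place than the paper does.

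\emph{The paper's route.} The paper uses the normal form of \cite[Theorem 4.11]{landiStacksMonodromySymmetric2025}, $x^3+y^3+z^3+w^3+(ax+by)zw$, with $S_3$ acting on $(z,w)$. There the fixed line is $\{z=w=0\}$ and the three fixed points $[-\zeta_3^i:1:0:0]$ are \emph{constant}, so the cover is split over the parameter plane. All the monodromy comes from the residual finite group $H\cong S_3\times C_3$, the subgroup of the $\GL_2$-factor of $N_{\PGL_4}(S_3)$ preserving $x^3+y^3$, which permutes the roots of $x^3+y^3$ transitively. The coarse statement then follows because the cover preserves stabilizers (Luna).

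\emph{Your route.} In your coordinates the restriction to $\{x=y=z\}$ is $(3+b)t^3+3at^2w+w^3$, i.e.\ $s^3+3as+(3+b)$ with $s=w/t$. This cubic is irreducible over $\C(a,b)$ (specialize $a=0$), and its discriminant $-27\bigl(4a^3+(3+b)^2\bigr)$ is not a square. So the points genuinely move, and the full $S_3$ is already visible over the parameter space. Proposition~\ref{prop:invariance-properties-monodromy}\eqref{point:injectivity-pullback} plus the degree bound then finish. Your route is more elementary and avoids needing the quotient presentation of $\cM_{3,3}^{=S_3}$. The paper's route needs no discriminant computation, and it illustrates the paper's theme that monodromy invisible at parameter level can come entirely from the finite part of the normalizer.

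Three points need tightening.

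\begin{enumerate}
\item Your equation is not the normal form in the sources you cite. The pullback argument needs its generic member to have $\Aut(S)=S_3$ exactly, and this is not automatic: the slice $a=0$ is the Type IV family $x^3+y^3+z^3+w^3+bxyz$. Since the strata with a larger group containing $S_3$ are at most $1$-dimensional, it suffices that your $2$-parameter family is generically finite onto $M_{3,3}$. This does hold, but it must be checked.

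\item Checking one point and using openness only gives degree $3$ over a dense open. That is enough for the Galois group, but the lemma asserts a genuine finite \'etale cover. For that you need that no fixed point lies on a line when $\Aut(S)=S_3$. In coordinates adapted to $\mathds{1}^{\oplus 2}\oplus V_2$, the tangent-plane section at a fixed point $p$ is $\alpha\,xzw+c(z^3+w^3)$, with $c\neq 0$ by smoothness. So $p$ lies on a line only if $\alpha=0$, i.e.\ $p$ is Eckardt. Its Eckardt involution then commutes with $S_3$ without belonging to it, which contradicts $\Aut(S)=S_3$.

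\item Proposition~\ref{prop:gal-f-vs-quotient} is not the right tool for passing to coarse spaces. What matters is that $\Aut(S,p)=\Aut(S)$ for every object of $\cS^{(S_3,S_3)}$, so the cover is pulled back from the coarse moduli spaces. Alternatively, your family argument runs verbatim over $M_{3,3}^{=S_3}$, where the fiber over $[S]$ is still the three fixed points.
\end{enumerate}
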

\begin{proof}
    Using the notation of~\cite[Theorem 4.11]{landiStacksMonodromySymmetric2025}, the only $S_3$-invariants points are $[-\zeta_3^i:1:0:0]$, which do not lie on any line. The action of $H\cong S_3\times C_3$ transitively permutes the ordering of those points, and the conclusion for stacks follows. Since the étale cover induces an isomorphism between the stabilizers, it descends to the coarse moduli space by Luna's fundamental lemma.
\end{proof}

\subsubsection{Case $H=C_4$}\label{subsec: H=C4 case}

We focus on the case where $H=C_4$.

\begin{lemma}\label{lem: C4 locus}
    Let $S$ be a smooth cubic surface, and let $C_4$ act faithfully on $S$ via automorphisms. Then, $S^{C_4}$ is either finite or empty. In particular, when $\cS^{(G,C_4)}\not=\emptyset$, we have $\dim\cS^{(G,C_4)}=\dim\cM_{3,3}^{=G}$.
\end{lemma}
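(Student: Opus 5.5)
The plan is to mirror the proof of Lemma~\ref{lem: S3 locus}: bound the fixed locus in $\P^3$ by linear algebra, then check which positive-dimensional pieces can actually lie on $S$. Since $S$ is anticanonically embedded, the $C_4$-action lifts to a linear action of $C_4$ on $H^0(S,\omega_S^\vee)\cong\C^4$. Let $g$ be a generator. Then $S^{C_4}=S^{g}=S\cap\bigcup_\lambda\P(E_\lambda)$, where the $E_\lambda$ are the eigenspaces of a lift of $g$. Each component of the fixed locus in $\P^3$ is therefore linear, so $S^{C_4}$ is a finite union of points, lines contained in $S$, and plane curves $S\cap\P(E_\lambda)$ with $\dim E_\lambda=3$.

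The first step is to exclude the case $\dim E_\lambda=3$. In that case $g$ is a homology: up to scaling, $g=\mathrm{diag}(1,1,1,\zeta)$ with $\zeta$ of order $4$. I will show directly that no smooth cubic is invariant under such a $g$. After adjusting by a character, an invariant cubic form has each monomial $x^ay^bz^cw^d$ with $\zeta^d$ constant. Since $d\le 3$ and $\zeta$ has order $4$, the form involves only one power of $w$. If that power is $d\ge 2$, the form is divisible by $w^2$. If $d\in\{0,1\}$, the form is either a cone over a plane cubic or $w\cdot q(x,y,z)$. In every case the surface is singular, a contradiction. (Alternatively, consult the list of homologies in~\cite[Table 9.5]{Dolgachev}: only involutions and order-$3$ elements occur.)

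The remaining case is a fixed line $\ell=\P(E_\lambda)$ with $\dim E_\lambda=2$, when $\ell\cap S$ is not finite. Then $\ell\subset S$, so $\ell$ is one of the $27$ lines and does not meet $S^\circ$. Consequently $(S^\circ)^{C_4}$ is finite in all cases. This finiteness is what the second sentence of the statement requires: since $\cS^{(G,C_4)}\subset\cS^\circ$, the fibers of $\cS^{(G,C_4)}\to\cM_{3,3}^{=G}$ are contained in the finite sets $(S^\circ)^{H}$ for the finitely many embeddings $H=C_4\hookrightarrow G$. Hence the projection is quasi-finite, and whenever the stratum is nonempty its dimension equals $\dim\cM_{3,3}^{=G}$, using that $\cM_{3,3}^{=G}$ is irreducible by Theorem~\ref{thm:irred-UG}.

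The main obstacle is the literal statement that $S^{C_4}$ (rather than $(S^\circ)^{C_4}$) is finite, because a $g$-fixed line could a priori lie on $S$. To handle this I would use the explicit normal forms: Type IX via equation~\eqref{eqn:dolg-family-c4} with $g=\mathrm{diag}(1,1,-1,\pm i)$ up to scaling, and the $C_4$ subgroups of Types I, II, III, V, VII via~\cite[Table 9.5]{Dolgachev}. For each, I would check whether the $2$-dimensional eigenspace line lies on $S$. In the Type IX form, the eigenspaces are $\{z=w=0\}$, meeting $S$ in $x^3+xy^2+ay^3=0$ (three points), and two points. If some other type produces a fixed line on $S$, I would fall back to the weaker statement for $S^\circ$, which is all that is used for the dimension claim.
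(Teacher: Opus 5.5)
Your route differs from the paper's. The paper's proof just reads the fixed loci of order-$4$ automorphisms off~\cite[Table 9.5]{Dolgachev}, whereas you argue directly with eigenspaces, which is a reasonable self-contained alternative. Your exclusion of order-$4$ homologies is correct, and so is your conclusion that $(S^\circ)^{C_4}$ is finite.

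\textbf{First gap: finiteness of $S^{C_4}$.} The first assertion of the lemma is about $S^{C_4}$, and you do not prove it. You defer it to an unperformed check over Types I, II, III, V, VII, and allow yourself to fall back on a weaker statement. The missing step follows from your own method, with no case list.
\begin{itemize}
\item Lift a generator to $g$ with $g^4=1$. If $g$ has a $2$-dimensional eigenspace, then after rescaling, reordering coordinates and possibly replacing $g$ by $g^{-1}$, it is $\mathrm{diag}(1,1,i,i)$, $\mathrm{diag}(1,1,i,-i)$ or $\mathrm{diag}(1,1,-1,i)$.
\item For $\mathrm{diag}(1,1,i,i)$, every semi-invariant cubic form is bihomogeneous in $(x,y)$ and $(z,w)$. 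Hence it lies in $(x,y)^2$ or $(z,w)^2$ and is singular along a line.
\item For $\mathrm{diag}(1,1,i,-i)$, the semi-invariants are of three kinds. The form $f_3(x,y)+zw\,\ell(x,y)$ is singular at $[0:0:1:0]$. Elements of $(z,w)^2$ are singular along $z=w=0$. The forms $z\,q(x,y)+\alpha w^3+\beta z^2w$ and their mirror images are singular at a zero of $q$ on $z=w=0$.
\item For $\mathrm{diag}(1,1,-1,i)$, the non-invariant semi-invariants are either divisible by $w$, or of the form $z\,q(x,y)+\alpha z^3+w^2\ell(x,y)$, which is again singular at a zero of $q$ on $z=w=0$. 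So $F=f_3(x,y)+z^2\ell(x,y)+c\,zw^2$ with $f_3\neq0$, since otherwise $z\mid F$. Then the fixed line $z=w=0$ meets $S$ in three points.
\end{itemize}
Hence no fixed line lies on $S$, and $S^{C_4}$ itself is finite.

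\textbf{Second gap: the dimension claim.} Quasi-finiteness of $\cS^{(G,C_4)}\to\cM_{3,3}^{=G}$ together with irreducibility of the base gives only $\dim\cS^{(G,C_4)}\le\dim\cM_{3,3}^{=G}$. A stratum lying over a proper closed substack would be quasi-finite too. What you need is that every point of the stratum deforms along with all nearby $G$-symmetric surfaces.
\begin{itemize}
\item Pull back to $U_{3,3}^{=G}$, where $G$ acts on the universal family by fiberwise automorphisms.
\item The fixed locus of the linearly reductive group $C_4$ on this smooth family is smooth over $U_{3,3}^{=G}$. So near an isolated fixed point it is \'etale over the base.
\item The conditions ``not on any line'' and ``stabilizer in $G$ exactly $C_4$'' are open.
\end{itemize}
Hence $\cS^{(G,C_4)}\to\cM_{3,3}^{=G}$ is \'etale, which gives equality of dimensions. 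The paper's ``In particular'' is equally terse here, but the justification you give (quasi-finiteness plus irreducibility) does not establish it.
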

\begin{proof}
    It follows immediately from~\cite[Table 9.5]{Dolgachev}.
\end{proof}

By the above lemma, the space $\cS^{(G,C_4)}$ is either empty or $0$-dimensional for surfaces of Type I, II, III, and VII. Therefore, the only interesting cases are Type V and IX.

\begin{lemma}\label{lem: Type V C4-pointed cubics}
    There is only one conjugacy class of subgroups isomorphic to $C_4$ in $S_4$. Moreover, for every smooth cubic surface of type V and any choice of $C_4\leq S_4$, there are exactly three points fixed by $C_4$, and all lie on some line. In particular, $\cS^{(S_4,C_4)}=\emptyset$.
\end{lemma}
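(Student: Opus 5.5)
The plan is to work in explicit coordinates on a normal form for Type V cubic surfaces, in parallel with the proofs of \Cref{lem: Type VIII S3-pointed cubics} and \Cref{lem: Type IV C6-pointed cubics}. The conjugacy statement is pure group theory: the elements of order $4$ in $S_4$ are exactly the $4$-cycles. These form a single conjugacy class, and each cyclic subgroup of order $4$ contains two of them. So the three subgroups $\langle(1234)\rangle$, $\langle(1243)\rangle$, $\langle(1324)\rangle$ are permuted transitively by conjugation. It therefore suffices to treat one fixed $C_4$.

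For the geometry, we take the Type V normal form as in \cite[Table 9.6]{Dolgachev}, or the representation used in \Cref{prop:Utype5}. One convenient form is
\[
x_0^3+x_1^3+x_2^3+x_3^3+x_4^3 + a\,(\text{second elementary symmetric terms}) = 0
\]
inside $\sum x_i=0$, with $S_4$ permuting $x_1,\dots,x_4$. The chosen $C_4$ acts by the $4$-cycle $(x_1x_2x_3x_4)$, with $x_0$ fixed.

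We then compute the fixed locus of a generator $g$ of $C_4$ on $\P^3$. The matrix of $g$ on the $4$-dimensional space has eigenvalues $1,1,-1$ together with $i,-i$ in a suitable lift, or a similar pattern. Hence the fixed locus in $\P^3$ is a line $\ell$ (the $1$-eigenspace, here spanned by $x_0$ and $x_1+x_2+x_3+x_4$, suitably intersected) together with isolated eigenpoints. By \Cref{lem: C4 locus}, $S^{C_4}$ is finite. So $\ell\not\subset S$, and $S\cap\ell$ consists of $3$ points, possibly with the isolated eigenpoints also lying on $S$. We will check directly whether the isolated eigenpoints lie on $S$ and compute the count; the claim is that exactly three points occur. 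The cleanest route is to observe that $g^2$ is a product of two disjoint transpositions. Under the identification of \Cref{lem: C2 locus}, $g^2$ is an involution of the second kind, being the composite of two commuting Eckardt involutions. So $S^{g^2}$ is a line $L\subset S$ together with three isolated points, each lying on a line. Since $g$ commutes with $g^2$, $g$ preserves $L$ and the three isolated points. Then $S^{C_4}\subset S^{g^2}$ consists of points of $L$ fixed by $g$ (which lie on a line automatically) and fixed isolated points (which lie on a line by \Cref{lem: C2 locus}). This already gives the emptiness $\cS^{(S_4,C_4)}=\emptyset$ without computing the exact count.

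The main obstacle is verifying that $g^2$ really is an involution of the second kind, rather than an Eckardt involution. We will check this via the classification: the Eckardt involutions of a Type V surface correspond to the transpositions of $S_4$, the six Eckardt points, while double transpositions are products of two commuting transpositions, hence of the second kind. To get exactly three fixed points we then count. The action of $g$ on $L\cong\P^1$ is a nontrivial involution, since $g^2$ acts trivially on $L$ but $g$ does not (otherwise $L\subset S^{C_4}$, contradicting finiteness); this gives $2$ fixed points on $L$. Of the three isolated points, $g$ swaps the two points that $g^2$ fixes but that are exchanged by the Eckardt involutions and fixes one. This yields $2+1=3$. We will confirm this with a short explicit check in the normal form.
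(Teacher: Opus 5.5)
Your handling of the conjugacy statement and of $\cS^{(S_4,C_4)}=\emptyset$ is correct, and it takes a genuinely different route from the paper. The paper simply writes down the three $C_4$-fixed points $[0:1:1:2]$, $[0:\pm i:1:0]$ in the normal form of \cite[Theorem 4.5]{landiStacksMonodromySymmetric2025} and checks that each lies on a line. Your inclusion $S^{C_4}\subset S^{g^2}$, with $g^2$ a second-kind involution, explains \emph{why} every fixed point lies on a line, without computation. That emptiness is also the only part used later, in Lemma~\ref{lem:type-of-cubics-mapping-to-P-with-C4-fixed}. For the second-kind claim, the cleanest justification is that a double transposition acts by permutation matrices with eigenvalues $(1,1,-1,-1)$. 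If you argue via Eckardt involutions instead, note that Lemma~\ref{lem: C2 locus} only says second-kind involutions are such products; the converse needs a simultaneous-diagonalization check.

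The gap is in the count ``exactly three.'' First, your eigenvalue description is wrong. A $4$-cycle acting by permutation matrices has eigenvalues $1,i,-1,-i$, pairwise distinct. So $\mathrm{Fix}_{\P^3}(g)$ consists of the four isolated points $[1:i^k:(-1)^k:(-i)^k]$, and there is no fixed line $\ell$. This matters for the count. Suppose $g$ had a repeated eigenvalue while still squaring to a second-kind involution, say the pattern $(1,1,i,-i)$. Then $g$ would fix pointwise the second component $\ell'$ of $\mathrm{Fix}_{\P^3}(g^2)$, and hence all three points of $\ell'\cap S$, giving $2+3=5$ fixed points.

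So everything rests on your claim that $g$ fixes exactly one of the three isolated points, and your stated reason fails. The Eckardt involutions $\tau_1,\tau_2$ with $\tau_1\tau_2=g^2$ (for $g=(1234)$ these are $(13)$ and $(24)$) have fixed planes containing $\ell'$. They therefore fix all three points rather than exchanging two of them, and they are the only transpositions commuting with $g^2$.

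The correct argument is as follows. Because the eigenvalues of $g$ are distinct, $g$ acts on $\ell'\cong\P^1$ as a nontrivial involution, so it fixes at most two of the three points of $\ell'\cap S$. It permutes that $3$-element set, and $g^2$ acts trivially on it, so $g$ fixes an odd number of them, hence exactly one.

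Alternatively, compute directly with $F=\alpha e_1^3+\beta e_1e_2+\gamma e_3$ (Proposition~\ref{prop:Utype5}). The eigenpoints with $k=1,2,3$ satisfy $e_1=e_3=0$, so they lie on $S$. In fact they lie on the lines of the tritangent plane $e_1=0$, where $e_3=-(x_1+x_2)(x_1+x_3)(x_2+x_3)$. The point $[1:1:1:1]$ does not lie on $S$. Indeed, $S_4$-invariance forces $\nabla F(1,1,1,1)$ to be proportional to $(1,1,1,1)$, and Euler's formula would then make that point singular if $F$ vanished there.
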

\begin{proof}
    The first part is clear. Using the notation of~\cite[Theorem 4.5]{landiStacksMonodromySymmetric2025}, if we take $C_4=\langle(1,2,3,4)\rangle$, then the points fixed by $C_4$ are $[0:1:1:2]$, $[0:i:1:0]$, and $[0:-i:1:0]$. They do lie on lines, hence $\cS^{(S_4,C_4)}=\emptyset$.
\end{proof}

\begin{lemma}\label{lem: Type IX C4-pointed cubics}
    Every smooth cubic surface of Type IX has exactly five $C_4$-invariant points: one is the Ecakrdt point, three lie on some Eckardt line, and only one does not lie on any line. In particular, $\cS^{(C_4,C_4)}\cong\cM_{3,3}^{=C_4}$.
\end{lemma}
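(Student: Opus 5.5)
The plan is to work directly with Dolgachev's normal form \eqref{eqn:dolg-family-c4}, $F_a=x^3+xy^2+ay^3+yz^2+zw^2$, and to pin down an explicit generator of $C_4$. Following the proof of Lemma~\ref{lem: Eckardt lines C4 dield of definition}, the square of the generator is the Eckardt involution $[1:1:1:-1]$, with Eckardt point $p=[0:0:0:1]$. For the generator itself I expect to take $\sigma=\mathrm{diag}(1,1,-1,i)$ up to scaling. This preserves $F_a$: the term $yz^2$ is fixed, and $zw^2\mapsto(-z)(-w^2)=zw^2$.

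The eigenspaces of $\sigma$ on $\C^4$ are then $\langle x,y\rangle$ (eigenvalue $1$), $\langle z\rangle$ (eigenvalue $-1$) and $\langle w\rangle$ (eigenvalue $i$). The fixed points of $C_4$ in $\P^3$ are therefore the line $z=w=0$ together with the two points $[0:0:1:0]$ and $[0:0:0:1]$.

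\begin{itemize}
\item The point $[0:0:1:0]$ does not lie on $S_a$, since $F_a(0,0,1,0)=0$ would be needed and every monomial of $F_a$ vanishes there, so in fact it does lie on $S_a$. I need to check this carefully; it is the candidate for the point off all lines.
\item The point $[0:0:0:1]=p$ lies on $S_a$ and is the Eckardt point.
\item On the line $z=w=0$, the equation restricts to $x^3+xy^2+ay^3=0$. This gives exactly three points, $[-\alpha_i:1:0:0]$, one on each of the three Eckardt lines $z=x+\alpha_iy=0$ found in Lemma~\ref{lem: Eckardt lines C4 dield of definition}.
\end{itemize}

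This accounts for $1+3+1=5$ fixed points. It remains to show that $q=[0:0:1:0]$ lies on no line of $S_a$; the other four points visibly lie on lines.

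For this I would use the classification of lines through a point. The lines on $S_a$ through $q$ are contained in the tangent plane $T_qS_a$. Computing the gradient of $F_a$ at $q$, only the derivative of $yz^2$ in $y$ survives, so $T_qS_a=\{y=0\}$. The curve $S_a\cap\{y=0\}$ is cut out by $x^3+zw^2=0$, which is an irreducible cuspidal cubic. Hence it contains no line, and so no line passes through $q$. Since the fixed locus is independent of $a$, the same conclusion holds uniformly across the family $W$.

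For the final identification, the unique $C_4$-fixed point off the lines gives a section $\cM_{3,3}^{=C_4}\to\cS^{(C_4,C_4)}$, sending $S$ to the pair $(S,q_S)$. I would conclude that this section is an isomorphism, because the forgetful map is representable and étale, and its geometric fibers are single points by the count above. The main obstacle is making sure the eigenspace decomposition is correct, namely that $\sigma$ has no further eigenvalue coincidences. This depends on choosing the lift of $\sigma$ correctly so that $F_a$ is invariant rather than merely semi-invariant. I would check this first, adjusting the scalar on $w$ if needed.
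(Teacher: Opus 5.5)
Your proposal is correct and follows essentially the same route as the paper: the same normal form and generator $\mathrm{diag}(1,1,-1,i)$, and the same five fixed points (the Eckardt point, $[0:0:1:0]$, and the three points of $z=w=0$ lying on the Eckardt lines in the plane $z=0$). Your tangent-plane argument ($T_qS_a=\{y=0\}$, with $S_a\cap\{y=0\}=\{x^3+zw^2=0\}$ irreducible) supplies the step the paper leaves as ``one can verify''; note, however, that your first bullet contradicts itself as written, and that your closing worry about the scalar lift of $\sigma$ is moot, since fixed points in $\P^3$ do not depend on that lift.
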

\begin{proof}
    Recall that every smooth cubic surface $S$ with $\Aut(S)=C_4$ is isomorphic to one in the 1-dimensional family of cubic surfaces given by $x^3+xy^2+ay^3+yz^2+zw^2$, and the automorphism group is generated by the diagonal matrix with elements $[1:1:-1:i]$; see~\cite[\S9.5.3]{Dolgachev}. Then, the $C_4$-invariant points are the Eckardt point $[0:0:0:1]$, the point $[0:0:1:0]$, and the three points $[x:1:0:0]$ with $x$ satisfying $x^3+x+a=0$. In Lemma~\ref{lem: Eckardt lines C4 dield of definition}, we showed that the Eckardt plane has equation $z=0$, and its intersection with the cubic surface has equation $x^3+xy^2+ay^3$. Therefore, the three $C_4$-invariant points of the form $[x:1:0:0]$ lie in the union of the three Eckardt lines. One can verify that $[0:0:1:0]$ does not lie on any line.
\end{proof}

\section{Galois groups of bitangents to symmetric quartics}\label{sec:galois-groups-of-bitangents}

In this section we compute the stacky, parameter-level, and geometric monodromy groups for the problem of 28 bitangents to a smooth planar quartic curve.

\begin{theorem}\label{thm:main2} Let $G$ be an automorphism group of a smooth planar quartic curve over the complex numbers. Then the parameter-level, stacky, and geometric monodromy groups of solving for the 28 bitangents to the $G$-symmetric planar quartics are as follows:
\input{TABLE-quartics}
\end{theorem}

\begin{remark}\label{rmk:table-notation}
When the group is too large to admit a clean structure description, we've opted to simply include the small group id of the group. When the group is not in the SmallGroups library, we describe it instead via $W^+(E_7)$. Other notation: $W(-)$ is the Weyl group of a Lie group, $\circledcirc$ is a classical notation for central products, $H_3(3)$ is the Heisenberg group, $P$ denotes the Pauli group of order 16.
\end{remark}

We prove \Cref{thm:main2} throughout this section.

\subsection{The Klein quartic (Type I)}

Since the Klein quartic is a unique point in the moduli space, the stacky monodromy is equal to the group itself, the marked-stacky monodromy coincides with the center, while the parameter-level and geometric monodromy are trivial.

\subsection{The Dyck quartic (Type II)}

Similarly to the Fermat, uniqueness of this quartic governs the various monodromy groups.

\subsection{The Type III quartic}

The Type III quartic defines a unique point in the moduli space, hence the stacky monodromy is the entire group, the marked monodromy is its center, and the geometric monodromy is trivial. Unlike the previous two types, we have seen in \Cref{prop:Vtype3} that $U_{2,4}^{C_4\circledcirc A_4}$ is 1-dimensional, which affects the parameter-level monodromy. Indeed, the centralizer $C_{\PGL_3}(C_4\circledcirc A_4)$ is connected by Theorem~\ref{thm:irred-VG}, hence 
\[\Monpar_{C_4\circledcirc A_4}(\BitangentsProblem)\cong\Monmarked_{C_4\circledcirc A_4}(\BitangentsProblem)\cong Z(C_4\circledcirc A_4) \cong C_4.\]

\subsection{Type IV quartics} The automorphism group for Type IV quartics is $S_4$.
\begin{proposition}\label{prop:type4quartics-param} We have that the parameter-level monodromy for bitangents to Type IV quartics is
\begin{align*}
    \Monpar_{S_4}(\BitangentsProblem) \cong C_2 \times C_2.
\end{align*}
\end{proposition}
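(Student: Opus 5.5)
Since $C_{\PGL_3}(S_4)$ is trivial (so finite, not connected), \Cref{cor:comparison-monodromy-UG-MG-specialcase} only gives the containment $\Monpar_{S_4}(\BitangentsProblem)\le C_{\Monstack_{S_4}(\BitangentsProblem)}(S_4)$, and it does not compute the group. I would therefore compute the parameter-level monodromy directly on the one-dimensional family $U_{2,4}^{S_4}$ of \Cref{prop:Vtype4}. With $S_4$ acting by signed permutations, the invariant quartics form the pencil
\[
Q_a = x^4+y^4+z^4 + a(x^2y^2+y^2z^2+z^2x^2).
\]
Since $N_{\PGL_3}(S_4)=S_4$, the space $U^{S_4}_{2,4}$ maps birationally onto its image in moduli. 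Its complement $W\subset\A^1$ removes the values of $a$ where $Q_a$ is singular, namely $a=-1$, $a=2$, $a=-2$ and $a=\infty$. One should also keep track of the special values where the automorphism group jumps to $\PSL_2(7)$ (Klein) or to $C_4^{\times 2}\rtimes S_3$ (Fermat, $a=0$). At those points the fiber stays étale, so they contribute no ramification.

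The plan is to write all 28 bitangents explicitly over $\C(a)$, organized by the $S_4$-orbit decomposition of \cite[Theorem 1.1]{betheabrazelton_bitangentssymmetricquartics}. I expect orbits of sizes $4$, $12$ and $12$, or a similar split, with point stabilizers $S_3$, $C_2$ and $C_2$.

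\begin{itemize}
\item \textbf{Orbit of size 4.} These are the lines $x\pm y\pm z=0$. Substituting $z=x+y$ gives a binary quartic in $x,y$ which should be a perfect square only when a discriminant-type condition holds in $a$. So either these lines are bitangent for all $a$ (and defined over $\C(a)$), or the orbit is of a different shape. I will test this first.
\item \textbf{Remaining orbits.} Take bitangents of the form $z=\lambda x+\mu y$ with a $C_2$-symmetry, such as $y=\pm x$ reflections. Then reduce to lines like $z=c(x+y)$ or $x=c$-type families.
\end{itemize}

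Imposing that the restricted binary quartic is a square should give equations of the form
\[
c^2 = r_1(a) \quad\text{or}\quad c^2 = r_2(a)
\]
for explicit rational functions $r_i$. Everything in an orbit is obtained from one bitangent by applying $S_4$, whose matrices are defined over $\Q$. Hence the splitting field is $\C(a)(\sqrt{r_1},\sqrt{r_2})$, which is a biquadratic or quadratic extension.

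It then remains to check that $r_1$, $r_2$ and $r_1r_2$ are non-squares in $\C(a)$. This amounts to checking that each has an odd-order zero or pole at some point of $\{-1,2,-2,\infty\}$, with different supports. That gives Galois group $C_2\times C_2$. By Harris's theorem this equals $\Mon(\BPQ|_{U^{S_4}}\to U^{S_4})=\Monpar_{S_4}(\BitangentsProblem)$, since $U^{S_4}_{2,4}$ is irreducible by \Cref{thm:irred-VG}.

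As a cross-check, this group must sit inside $C_{W^+(E_7)}(S_4)$ by \Cref{rmk:classical-proof}. I can verify in GAP that this centralizer contains $C_2\times C_2$ acting compatibly with the orbit structure. Alternatively, one can compute local monodromy around each puncture of $W$: a small loop around a nodal degeneration acts on the bitangents as a product of transpositions.

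The main obstacle is the explicit bitangent computation for the size-12 orbits, namely finding the correct ansatz for the lines and verifying that the two square classes are independent. If the direct ansatz gets messy, I would instead use \Cref{thm: lines bitangents correspondence}. This would realize these bitangents via an $S_3$-fixed point on a Type VIII cubic (\Cref{lem: Type VIII S3-pointed cubics}) and transport the known field of definition.
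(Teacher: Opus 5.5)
Your outline matches the paper's: work on the pencil $x^4+y^4+z^4+a(x^2y^2+y^2z^2+z^2x^2)$, sort the $28$ bitangents into $S_4$-orbits of sizes $4,12,12$, and read off the field they generate. The first two orbits behave as you predict. The line $x+y+z=0$ restricts to $(a+2)(x^2+xy+y^2)^2$, so the four lines $x\pm y\pm z=0$ are bitangent for every $a$. The orbit of $ux+y+z=0$ (your $z=c(x+y)$) is cut out by $u^2=-a-1$, equivalently $c^2=-1/(a+1)$.

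However, your prediction that the remaining orbit is cut out by an equation $c^2=r_2(a)$ with $r_2\in\C(a)$ is false. Those twelve lines pass through the coordinate points. For $x+vy=0$ the restriction is $(v^4+av^2+1)y^4+a(1+v^2)y^2z^2+z^4$, and the square condition is
\[
(a+2)v^4+2av^2+(a+2)=0,\qquad\text{i.e.}\qquad v^2=\frac{-a\pm 2\sqrt{-a-1}}{a+2}\notin\C(a).
\]
Each such line generates a degree-$4$ extension of $\C(a)$. So your planned test ($r_1$, $r_2$, $r_1r_2$ non-squares) does not apply as stated. Moreover, a tower of two quadratic extensions could a priori have Galois closure $C_4$ or $D_8$ rather than $C_2\times C_2$, and your plan does not yet rule this out.

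Either of two observations closes the gap.
\begin{itemize}
\item Note that $-a\pm 2\sqrt{-a-1}=(1\pm\sqrt{-a-1})^2$. Then $v=\pm(1\pm\sqrt{-a-1})/\sqrt{a+2}$, and the full splitting field is $\C(a)(\sqrt{-(a+1)},\sqrt{a+2})$. Your non-square test with $r_1=-(a+1)$ and $r_2=a+2$ now finishes the argument.
\item Alternatively, do what the paper does. Use the containment $\Monpar_{S_4}(\BitangentsProblem)\le C_{W^+(E_7)}(S_4)\cong C_2\times C_2$ from \Cref{rmk:classical-proof} as the upper bound, not as a cross-check. Then it suffices to see that $\C(a)\subset\C(a)(\sqrt{-a-1})\subset\C(a)(\sqrt{-a-1},v)$ has degree $4$, and the $C_4$ versus $C_2\times C_2$ question never arises.
\end{itemize}

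Two minor remarks.
\begin{itemize}
\item The trivial group is connected. So \Cref{cor:comparison-monodromy-UG-MG-specialcase} actually gives equality with $C_{\Monstack_{S_4}(\BitangentsProblem)}(S_4)$. This still does not compute the group, since $\Monstack_{S_4}$ is only determined afterwards, from $\Monpar_{S_4}$.
\item Your fallback via \Cref{lem: Type VIII S3-pointed cubics} points to the wrong stratum. Type VIII cubics with an $S_3$-fixed point correspond to Type IX quartics (\Cref{lem: type IX quartics correspondence}), not Type IV ones.
\end{itemize}
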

\begin{proof}
Since $C_2 \times C_2$ is the centralizer of $S_4$ in $W^+(E_7)$, we know we have containment, so it suffices to show this is equality. We can take a normal form for an $S_4$-symmetric quartic as:
\begin{align*}
    x^4 + y^4 + z^4 + a(x^2 y^2 + x^2 z^2 + y^2 z^2).
\end{align*}
Up to $S_4$-symmetry, any line has equation $ux+y+z=0$ or $x+vy=0$. For lines of the first kind, we can plug them into the general form for the quartic, and see when we get a square.

Up to scalars, this gives us
\begin{align*}
(u-1)^{2}(u+1)^{2}(a-2)^{2}(a+2)(u^{2}+a+1)^{2}(au^{4}+2u^{4}+8au^{2}+16).
\end{align*}
The factors with multiplicity 1 are ordinary tangencies, and a few multiplicity two factors we already know: bitangents at $u=\pm 1$, as well as a singularity at $a=2$. The only remaining factor we are left with is $u^2 + a+1$. So these bitangents are defined over $\mathbb{C}(a)(\sqrt{-a-1})$.

For lines of the other type, we get
\begin{align*}
(a-2)^{2}(v^{4}+av^{2}+1)(av^{4}+2v^{4}+2av^{2}+a+2)^{2}.
\end{align*}
Letting $t=v^2$, we get
\begin{align*}
    t = \frac{-2a\pm \sqrt{4a^2 - 4(a+2)^2}}{2(a+2)} = \frac{-a\pm 2u}{a+2}.
\end{align*}
Hence these bitangents are defined over
\begin{align*}
    \mathbb{C}(a,u)\left( \sqrt{\frac{-a\pm 2u}{a+2}}\right).
\end{align*}
Altogether this implies that the parameter-level monodromy contains a Klein-four group, so we must have equality.
\end{proof}

\begin{corollary}\label{cor:type4quartics-stacky-geom} We have that
\begin{align*}
    \Monpar_{S_4}(\BitangentsProblem) &\cong C_2\times C_2\\
    \Monstack_{S_4}(\BitangentsProblem) &\cong S_4 \times C_2 \times C_2 \\
    \Mongeom_{S_4}(\BitangentsProblem) &\cong C_2 \times C_2.
\end{align*}
\end{corollary}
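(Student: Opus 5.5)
The parameter-level group is already known from \Cref{prop:type4quartics-param}, namely $\Monpar_{S_4}(\BitangentsProblem)\cong C_2\times C_2$. The plan is to deduce the stacky group from it, and then read off the geometric group from \Cref{thm:stacky-basic-bound}.

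\textbf{Step 1: lower bound via \Cref{prop:SES-par-stack-monodromy}.} By \Cref{thm:irred-VG}, $N_{\PGL_3}(S_4)=S_4$ and $C_{\PGL_3}(S_4)$ is trivial. In particular $N_P(G)$ is finite and $\pi_0 N_P(G)=S_4$, while $N_P^\circ(G)$ is trivial. Thus $\cM_{2,4}^{=S_4}\cong[U_{2,4}^{=S_4}/S_4]$, where $U_{2,4}^{S_4}$ is the one-parameter pencil $x^4+y^4+z^4+a(x^2y^2+x^2z^2+y^2z^2)$. The group $S_4$ fixes every point of this pencil, so it stabilizes each connected component and $H=S_4$. \Cref{prop:SES-par-stack-monodromy} then gives
\[
1\to C_2\times C_2\to \Monstack_{S_4}(\BitangentsProblem)\to Q\to 1
\]
with $Q$ a quotient of $S_4$. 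Concretely, $\pi_1$ of the quotient stack is generated by $\pi_1(U^{=S_4})$ together with the hidden loops $S_4=\Aut(x)$. By \Cref{lem: automorphism act faithfully on bitangents}, $\cB^{=S_4}$ is an algebraic space. \Cref{lem:injectivity-hidden-loops} then shows that $S_4$ injects into $\Monstack$, so $Q\cong S_4$.

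\textbf{Step 2: the group is a direct product.} The stacky group is generated by the image of $\pi_1(U^{=S_4})$, which is the parameter group $C_2\times C_2$, and by the image of $S_4$. By \Cref{rmk:classical-proof}, or equivalently the containment $\Monpar_G\le C_{\Monstack_G}(G)$ of \Cref{cor:comparison-monodromy-UG-MG-specialcase}, these two subgroups commute elementwise.

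We next need $S_4\cap(C_2\times C_2)=1$ inside $W^+(E_7)$. Here $C_2\times C_2=C_{W^+(E_7)}(S_4)$, and $S_4$ has trivial center, so any element of the intersection would be central in $S_4$ and hence trivial. Therefore $\Monstack_{S_4}(\BitangentsProblem)\cong S_4\times C_2\times C_2$, of order $96$.

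As a consistency check, this group should equal $N_{W^+(E_7)}(S_4)$, or at least lie inside it by \Cref{thm:stacky-basic-bound}. Since $\Aut(S_4)=S_4$, the normalizer is $S_4\cdot C_{W^+(E_7)}(S_4)$, which has order $96$. So the upper bound is attained, and this is verifiable in GAP.

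\textbf{Step 3: geometric monodromy.} By \eqref{eq:isom-geometric-monodromy}, $\Mongeom_{S_4}(\BitangentsProblem)\cong (S_4\times C_2\times C_2)/S_4\cong C_2\times C_2$.

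The main obstacle is Step 1: justifying that the cokernel is all of $S_4$ rather than a proper quotient. This means checking that Assumption~\ref{assumption:comparison-stacky-problem} holds via \Cref{lem: automorphism act faithfully on bitangents}, so that the hidden loops inject, and that these loops together with $\pi_1(U^{=S_4})$ exhaust $\pi_1$ of the quotient stack. The direct-product structure in Step 2 is then a formal consequence of centralization and the trivial center of $S_4$.
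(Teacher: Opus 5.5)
Your argument is correct and is essentially the paper's proof. The decisive step is the same: $\Monpar_{S_4}(\BitangentsProblem)\cong C_2\times C_2$ centralizes the image of $S_4$, and $Z(S_4)=1$ forces trivial intersection, so together they generate $S_4\times C_2\times C_2$ inside $\Monstack_{S_4}(\BitangentsProblem)$.

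The only variation is where the upper bound comes from. The paper closes with $\Monstack_{S_4}(\BitangentsProblem)\le N_{W^+(E_7)}(S_4)$, which has order $96$, via \Cref{thm:stacky-basic-bound}. Your primary bound instead comes from \Cref{prop:SES-par-stack-monodromy}: the cokernel is a quotient of $N_{\PGL_3}(S_4)=S_4$. For that route you need the trivial-intersection fact from Step 2 already in Step 1, since injectivity of $S_4\to\Monstack_{S_4}(\BitangentsProblem)$ alone does not give $Q\cong S_4$.
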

\begin{proof}
The parameter-level monodromy lives in the stacky monodromy but centralizes the automorphism group $S_4$. Since the center of $S_4$ is trivial, these must meet disjointly, therefore $\Monstack_{S_4}(\mathcal{G})$ contains $C_2\times C_2 \times S_4$, and this must be equality since this is the normalizer in $W^+(E_7)$. The geometric and marked monodromy computations follow from Theorem~\ref{thm:stacky-basic-bound} \eqref{eq:isom-geometric-monodromy} and Theorem~\ref{thm: monodromy X_G vs X^G}\eqref{point 2}.
\end{proof}

\subsection{Type V quartics} The automorphism group for Type V quartics is the Pauli group.

To compute the stacky monodromy, we consider the 1-dimensional closed substack $\cB^{(P,C_4)}$ of pairs $(Q,B)$ where $Q$ is a quartic of Type V, with the order 16 automorphism group denoted by $P$, and $\Aut(Q,B)\cong C_4$; see~\cite[Theorem 1.1]{betheabrazelton_bitangentssymmetricquartics} for details. By loc.~cit., the cover $\cB^{(P,C_4)}\rightarrow\cM_{2,4}^{=P}$ has order 4, since $P$ is the only automorphism group of smooth planar quartics admitting a bitangent with isotropy group $C_4$.

We want to find $G$ for which there exists an injection $i:C_4\hookrightarrow G$ such that $\cS^{(G,C_4)}\cong\cB^{(P,C_4)}$.

\begin{lemma}\label{lem:type-of-cubics-mapping-to-P-with-C4-fixed}
    The type of pointed cubic surfaces mapping to $\cB^{(P,C_4)}$ is type IX, with a $C_4$-fixed point, corresponding to the stratum $\cS^{(C_4,C_4)}$.
\end{lemma}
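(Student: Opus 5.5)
The plan is to transport the question through the isomorphism $\Phi\colon\cSopen\xrightarrow{\cong}\cB$ of Theorem~\ref{thm: isom stacks cubic surfaces and plane quartics}. Since $\Phi$ is an isomorphism of stacks, it identifies automorphism groups of objects: $\Aut(S,p)\cong\Aut(Q,B)$. Moreover, $\Aut(S)$ and $\Aut(Q)$ only enter through the pointed objects, so the strata of Definition~\ref{def: decomposition B =G} correspond. Concretely, $\Phi^{-1}(\cB^{(P,C_4)})$ is a union of strata $\cS^{(G,C_4,i)}$, one for each $G$ and inclusion $i\colon C_4\hookrightarrow G$ that arises. The target $\cB^{(P,C_4)}$ has dimension $1$, so every such stratum must also be $1$-dimensional; by irreducibility of $\cB^{(P,C_4)}$ it will in fact be a single stratum.

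\textbf{Step 1: only $G$ containing $C_4$ can occur.} By Lemma~\ref{lem: C4 locus}, $\dim\cS^{(G,C_4)}=\dim\cM_{3,3}^{=G}$ whenever the stratum is non-empty. By Theorem~\ref{thm:irred-UG}, $\dim\cM_{3,3}^{=G}=\dim U_{3,3}^{(G)}-15$, so $\dim\cM_{3,3}^{=G}=1$ exactly for Types IV, V and IX. Of these, only Types V and IX have automorphism groups containing $C_4$; Type IV has order $54$, which is not divisible by $4$.

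\textbf{Step 2: eliminate Type V.} Lemma~\ref{lem: Type V C4-pointed cubics} shows $\cS^{(S_4,C_4)}=\emptyset$, since every $C_4$-fixed point on a Type V surface lies on a line.

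\textbf{Step 3: Type IX realizes the stratum.} Lemma~\ref{lem: Type IX C4-pointed cubics} shows $\cS^{(C_4,C_4)}\cong\cM_{3,3}^{=C_4}$, which is non-empty and $1$-dimensional. We must still check that its image under $\Phi$ lands in $\cB^{(P,C_4)}$, rather than in some other $\cB^{(G',C_4)}$. For this we invoke the classification of bitangent stabilizers in \cite[Theorem 1.1]{betheabrazelton_bitangentssymmetricquartics}, recalled in the text before the lemma: $P$ is the only automorphism group of a smooth plane quartic admitting a bitangent with stabilizer $C_4$. Hence $\Phi(\cS^{(C_4,C_4)})\subseteq\bigcup_{G'}\cB^{(G',C_4)}=\cB^{(P,C_4)}$. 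Conversely, Steps 1 and 2 show that $\Phi^{-1}(\cB^{(P,C_4)})$ can only be $\cS^{(C_4,C_4)}$, so $\Phi$ restricts to an isomorphism $\cS^{(C_4,C_4)}\cong\cB^{(P,C_4)}$.

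\textbf{Main obstacle.} The hardest part is the bookkeeping in Step 1. Zero-dimensional strata of Types I, II, III and VII cannot fill a $1$-dimensional stratum, but this argument requires $\cB^{(P,C_4)}$ to be irreducible, or at least equidimensional of dimension $1$. I would justify this via the degree-$4$ cover $\cB^{(P,C_4)}\to\cM_{2,4}^{=P}$ over the irreducible $2$-dimensional $U^{P}_{2,4}$ quotient. If needed, I would also double-check that the Type IX fixed point has stabilizer exactly $C_4$, which holds since $\Aut(S)=C_4$.
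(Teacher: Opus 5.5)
Your argument is essentially the paper's: Lemma~\ref{lem: C4 locus} cuts down to the types whose automorphism group contains $C_4$ and whose moduli are one-dimensional, namely V and IX, and Lemma~\ref{lem: Type V C4-pointed cubics} discards Type V. Your Step~3, via Lemma~\ref{lem: Type IX C4-pointed cubics} and the bitangent classification, spells out the converse inclusion that the paper leaves implicit.

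There is one slip in Step~1. By Theorem~\ref{thm:irred-UG}, Type VI also has $\dim U_{3,3}^{(G)}=16$, so it too has one-dimensional moduli, and your list should read IV, V, VI, IX. It drops out because $S_3\times C_2$ has no element of order $4$.

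Equidimensionality of $\cB^{(P,C_4)}$ only rules out \emph{isolated} points coming from the zero-dimensional strata (Types I, II, III, VII). Your stronger claim is that $\Phi$ restricts to an isomorphism $\cS^{(C_4,C_4)}\cong\cB^{(P,C_4)}$. For that you would also need that no such point lies in the closure of $\cS^{(C_4,C_4)}$. For example, on the Type VII member $a=0$ the points with stabilizer exactly $C_4$ lie on Eckardt lines, so they do not contribute. This caveat only affects your stronger claim; the generic identification, which is all the paper proves and uses, is unaffected.
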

\begin{proof}
Since the family is 1-dimensional, by \S~\ref{subsec: H=C4 case} the only possibilities are Type V or Type IX. Then, Lemma~\ref{lem: Type V C4-pointed cubics} excludes type V.
\end{proof}

\begin{proposition}\label{prop: Type V quartics stacky and geometric monodromy}
The stacky $P$-symmetric monodromy of the cover of bitangents is the hyperoctahedral group $W(B_4)$ (code $[384,18139]$), while the geometric monodromy is isomorphic to $S_4$.
\end{proposition}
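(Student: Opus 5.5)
We first bound the stacky monodromy from above and then realize enough of it from below. Assumption~\ref{assumption:comparison-stacky-problem} holds by Lemma~\ref{lem: automorphism act faithfully on bitangents}. Hence Theorem~\ref{thm:stacky-basic-bound} gives
\[
P\le \Monstack_P(\BitangentsProblem)\le N_{W^+(E_7)}(P).
\]
A GAP computation should show that $N_{W^+(E_7)}(P)$ has order $384$ and is isomorphic to $W(B_4)$. The geometric statement then follows from \eqref{eq:isom-geometric-monodromy}: we get $\Mongeom_P\cong W(B_4)/P$, which has order $24$, and we would identify this quotient with $S_4$ in GAP. So everything reduces to the lower bound $|\Monstack_P|\ge 384$, that is, $[\Monstack_P:P]\ge 24$.

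For the lower bound we use the degree-$4$ subcover $\cB^{(P,C_4)}\to\cM_{2,4}^{=P}$. Its monodromy is a quotient of $\Monstack_P$, and by Proposition~\ref{prop:invariance-properties-monodromy}\eqref{point:injectivity-pullback} the monodromy of the bitangent cover pulled back to $\cB^{(P,C_4)}$ injects into $\Monstack_P$. By Theorem~\ref{thm: isom stacks cubic surfaces and plane quartics} and Lemma~\ref{lem:type-of-cubics-mapping-to-P-with-C4-fixed}, $\Phi$ identifies $\cB^{(P,C_4)}$ with $\cS^{(C_4,C_4)}$. By Lemma~\ref{lem: Type IX C4-pointed cubics}, $\cS^{(C_4,C_4)}\cong\cM_{3,3}^{=C_4}$. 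Theorem~\ref{thm: lines bitangents correspondence} then says that the pulled-back Galois cover of bitangents over $\cB^{(P,C_4)}$ is the pullback of $\widetilde{\cL}$ to $\cM_{3,3}^{=C_4}$. Its monodromy is $\Monstack_{C_4}(\LinesProblem)\cong U_2(\mathbb{F}_3)$ by Theorem~\ref{thm: C4 monodromy}, a group of order $96$. We therefore obtain an embedding $U_2(\mathbb{F}_3)\hookrightarrow\Monstack_P(\BitangentsProblem)$ that lands inside the stabilizer of the marked bitangent $B$.

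It remains to show that $\Monstack_P$ acts transitively on the $4$ bitangents with stabilizer $C_4$. Then $|\Monstack_P|\ge 4\cdot 96=384$, which forces equality with the normalizer. We would prove transitivity by showing that $\cB^{(P,C_4)}$ is connected, i.e.\ irreducible. This follows because it is isomorphic to $\cM_{3,3}^{=C_4}$, which is irreducible: it is an open substack of $[W/C_8]$ with $W$ irreducible. An orbit-stabilizer argument then completes the count.

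The main obstacle is that $P$ is sent to $C_4$-orbits rather than to the stabilizer itself, so some care is needed. Concretely, we must check that the $U_2(\mathbb{F}_3)$ sitting in the stabilizer of $B$ is really the full stabilizer of $B$ in $\Monstack_P$, and that it is not already forced into a proper subgroup of $N_{W^+(E_7)}(P)$ that fails to generate together with $P$. If transitivity plus the order count leaves any ambiguity, we would instead verify in GAP that every subgroup of $N_{W^+(E_7)}(P)$ which contains $P$ and a subgroup of order $96$ in the stabilizer of a point of the $4$-orbit is the whole normalizer.
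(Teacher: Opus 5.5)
\textbf{Gap: the upper bound.} Your reduction rests on the claim that $N_{W^+(E_7)}(P)$ has order $384$, and that claim is false. The normalizer has order $768$, and the paper's proof treats $\Monstack_P(\BitangentsProblem)$ as a proper index-two subgroup of it.

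You can see the missing factor of two by hand:
\begin{itemize}
\item By Theorem~\ref{thm: monodromy X_G vs X^G} together with Theorem~\ref{thm:comparison-UG/centralizer-MG} (using that $U_{2,4}^{=P}$ is connected), every element of $\Monstack_P$ acts on $P$ through $N_{\PGL_3}(P)/C_{\PGL_3}(P)\cong S_4$.
\item Conjugation in $\PGL_3$ cannot send the central generator $c=\mathrm{diag}(1,1,i)$ of $P$ to $c^{-1}$: the eigenvalue patterns $(1,1,i)$ and $(1,1,-i)$ are not proportional. Hence $\Monstack_P$ centralizes $Z(P)$.
\item For real $a$ the quartic $x^4+y^4+z^4+ax^2y^2$ is defined over $\mathbb{R}$. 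Complex conjugation preserves the mod-$2$ Weil pairing on $J[2]$, so it permutes the $28$ bitangents through an element $s\in\mathrm{Sp}(J[2])\cong W^+(E_7)$.
\item This $s$ normalizes $P$ (the generators of $P$ are sent to elements of $P$) and satisfies $scs^{-1}=\bar c=c^{-1}$.
\end{itemize}
Thus $s\in N_{W^+(E_7)}(P)\setminus\Monstack_P$. Consequently your lower bound $|\Monstack_P|\ge 384$ forces nothing. Your GAP fallback would also come back negative: $\Monstack_P$ is itself a proper subgroup of the normalizer that contains $P$ and an order-$96$ subgroup fixing a point of the $4$-orbit.

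\textbf{What is needed instead.} Your lower-bound mechanism is exactly the paper's. You pass to the degree-$4$ subcover $\cB^{(P,C_4)}\cong\cS^{(C_4,C_4)}\cong\cM_{3,3}^{=C_4}$, transfer $U_2(\mathbb{F}_3)$ through the correspondence theorems, and apply orbit--stabilizer. You should run it as an \emph{equality}, which also settles your worry about the stabilizer:
\begin{itemize}
\item The stabilizer of $B$ in $\Monstack_P$ is the image of $\pi_1(\cB^{(P,C_4)})$.
\item It acts faithfully on the other $27$ bitangents.
\item By Theorems~\ref{thm: isom stacks cubic surfaces and plane quartics}, \ref{thm: lines bitangents correspondence} and Lemma~\ref{lem: Type IX C4-pointed cubics}, that action is precisely the line monodromy over $\cM_{3,3}^{=C_4}$.
\end{itemize}
So the stabilizer is exactly $U_2(\mathbb{F}_3)$, and $|\Monstack_P|=4\cdot 96=384$.

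You must then decide \emph{which} index-two subgroup of $N_{W^+(E_7)}(P)$ this is. The paper checks computationally that only one index-two subgroup of the normalizer contains $U_2(\mathbb{F}_3)$. Alternatively, the centralizing property above places $\Monstack_P$ inside $N_{W^+(E_7)}(P)\cap C_{W^+(E_7)}(Z(P))$. That subgroup has index exactly two because of $s$, so the orders match. The geometric statement then follows from \eqref{eq:isom-geometric-monodromy}, as you say.
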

\begin{proof}

Let $\widetilde{\cL}^{(C_4,C_4)}\rightarrow\cS^{(C_4,C_4)}$ be the Galois closure of the pullback of the cover of (all) lines to $\cS^{(C_4,C_4)}$. By \Cref{lem:type-of-cubics-mapping-to-P-with-C4-fixed} and the correspondence Theorems~\ref{thm: isom stacks cubic surfaces and plane quartics}, \ref{thm: lines bitangents correspondence}, the order of the stacky monodromy group is
    \begin{align*}
        |\Mon(\widetilde{\cB}^{=P}\rightarrow\cM_{2,4}^{=P})|=&\ |\Mon(\widetilde{\cB}^{=P}\rightarrow\cB^{=P})|\cdot\deg(\cB^{=P}\rightarrow\cM_{2,4}^{=P})\\
        =&\ |\Mon(\widetilde{\cL}^{(C_4,C_4)})\rightarrow\cS^{(C_4,C_4)}|\cdot4\\
        =&\ |\Mon(\cL^{=C_4}\rightarrow\cM^{=C_4})|\cdot4\\
        =&\ 384.
    \end{align*}
    In the third line we used Lemma~\ref{lem: Type IX C4-pointed cubics}, and in the last line we used Theorem~\ref{thm: C4 monodromy}. By Theorem~\ref{thm:stacky-basic-bound}, this implies that the stacky monodromy group is an index two subgroup of $N_{W^+(E_7)}(C_4)$, containing $\Mon(\cL^{=C_4}\rightarrow\cM^{=C_4})\cong U_2(\mathbb{F}_3)$ (Theorem~\ref{thm: C4 monodromy}). There is only one such group, as one can check computationally. The geometric monodromy follows immediately from the stacky computation.
\end{proof}

The parameter-level and marked monodromy follow from Proposition~\ref{prop: Type V quartics stacky and geometric monodromy}, Theorem~\ref{thm: monodromy X_G vs X^G}\eqref{point 2} and Corollary~\ref{cor:comparison-monodromy-UG-MG}\eqref{point:isom-parameter-marked}, as $C_{\PGL_3}(P)$ is connected.

\subsection{Type VI Quartics}

In this case, the moduli stack is 0-dimensional. Therefore, the stacky monodromy group is isomorphic to $C_9$ and the geometric monodromy is trivial, by Theorem~\ref{thm:stacky-basic-bound}. Since $C_9$ is abelian and $C_{\PGL_3}(C_9)$ is connected, the parameter and marked monodromy are also isomorphic to $C_9$.

\subsection{Type VII Quartics}

We first compute the parameter-level monodromy.

\begin{proposition}\label{prop:monpar-quartic-type7}
The parameter-level monodomy group for bitangents to Type VII quartics is $D_8 \times C_2 \times C_2$.
\end{proposition}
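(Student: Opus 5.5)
We follow the template of \Cref{prop:type4quartics-param}: first an upper bound from the general theory, then a lower bound from explicit fields of definition of the bitangents. For the upper bound, \Cref{cor:comparison-monodromy-UG-MG-specialcase} (or \Cref{rmk:classical-proof}) gives $\Monpar_{D_8}(\BitangentsProblem)\le C_{W^+(E_7)}(D_8)$. We would compute this centralizer in GAP, realizing $D_8$ inside $W^+(E_7)$ through its faithful action on the 28 bitangents (\Cref{lem: automorphism act faithfully on bitangents}). We expect the answer to be $C_2\times C_2\times C_2\times C_2$ extended by the center of $D_8$, i.e.\ a group of order $32$ isomorphic to $D_8\times C_2\times C_2$. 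If the centralizer turns out larger, the lower bound below still pins down the group, but matching orders is the cleanest route.

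For the lower bound we use the normal form of \Cref{prop:Vtype7}, namely a 3-dimensional family
\[
x^4+y^4+z^4+a x^2y^2+b(x^2+y^2)z^2,
\]
up to the precise shape of the normal form. Here $D_8$ is generated by $x\leftrightarrow y$ and the diagonal maps $x\mapsto -x$ and $z\mapsto -z$. Using the orbit decomposition of the 28 bitangents from \cite[Theorem 1.1]{betheabrazelton_bitangentssymmetricquartics}, we choose one representative line per $D_8$-orbit type, for instance $z=ux+vy$, $x=uy$, and $x+y=uz$. We substitute each into the quartic and impose that the restriction is a square, discarding the ordinary tangency factors and the discriminant factors as in \Cref{prop:type4quartics-param}. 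This should express the bitangent coordinates as iterated square roots over $\C(a,b)$, say $\sqrt{a^2-4}$, $\sqrt{b^2-4(a+2)}$, $\sqrt{b^2-4(2-a)}$, and further square roots over these.

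Since $\Monpar$ is the Galois group of the compositum of these fields over $\C(a,b)$ by \Cref{def:parameter-level-monodromy} and Harris's theorem, it remains to find the degree of the compositum. We would do this by verifying multiplicative independence of the radicands modulo squares in each successive field, aiming for degree $32$. We then identify the group with $D_8\times C_2\times C_2$ as a subgroup of the order-$32$ centralizer, either by equality of orders or by checking in GAP that the generated permutation group on the 28 bitangents has the stated structure.

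The main obstacle is this last field-degree computation. The nested radicals give elementary abelian building blocks, and the nonabelian $D_8$ factor must come from a square root whose radicand is conjugated, not fixed, by an earlier quadratic extension. Proving that extension is genuinely non-abelian over $\C(a,b)$, rather than collapsing, requires careful control of the radicands. If the hand computation becomes unwieldy, we would first determine the stacky group via the lines--bitangents correspondence of \Cref{thm: lines bitangents correspondence}, as done for Type V, and then recover $\Monpar$ as a centralizer using \Cref{cor:comparison-monodromy-UG-MG-specialcase}, since $C_{\PGL_3}(D_8)\cong\C^\times$ is connected.
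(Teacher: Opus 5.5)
\textbf{Gap: the lower bound is computed over the wrong base.} You work over $\C(a,b)$ with the normal form $x^4+y^4+z^4+ax^2y^2+b(x^2+y^2)z^2$ and identify the resulting Galois group with $\Monpar_{D_8}(\BitangentsProblem)$. You call this family three-dimensional, but it has only two parameters. In your coordinates $U^{D_8}_{2,4}$ is the family $x^4+y^4+\beta x^2y^2+\gamma(x^2+y^2)z^2+\delta z^4$ (cf.\ \Cref{prop:Vtype7}). You have set $\delta=1$ using the centralizer torus $C_{\PGL_3}(D_8)\cong\C^\times$, $z\mapsto\lambda z$. By \Cref{prop:invariance-properties-monodromy}\eqref{point:injectivity-pullback}, the Galois group over such a slice is only a subgroup of $\Monpar_{D_8}(\BitangentsProblem)$, and here it is a proper one.

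To see this, work over $\C(\beta,\gamma,\delta)$.
\begin{itemize}
\item The four bitangents $y=\mu x$ through $[0:0:1]$ are cut out by $\gamma^2(1+\mu^2)^2=4\delta(1+\beta\mu^2+\mu^4)$. So their field contains $\mu^2-\mu^{-2}$, whose square is $16\delta(\beta-2)(\delta(\beta+2)-\gamma^2)/(\gamma^2-4\delta)^2$.
\item The bitangents $z=\mu y$ through $[1:0:0]$ satisfy $(\gamma^2-4\delta)\mu^4+2\gamma(\beta-2)\mu^2+\beta^2-4=0$. Its discriminant in $\mu^2$ is $16(\beta-2)(\delta(\beta+2)-\gamma^2)$.
\end{itemize}
Hence $\sqrt{\delta}$ lies in the bitangent field. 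Every loop in your slice keeps $\delta\equiv 1$, so its monodromy fixes $\sqrt\delta$. The group over $\C(a,b)$ therefore has order at most $|C_{W^+(E_7)}(D_8)|/2=16$.

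In fact the twelve bitangents through the coordinate vertices already generate a degree-$16$ extension of $\C(a,b)$. So $16$ is exactly what your computation would return, and the target ``degree $32$ over $\C(a,b)$'' cannot be reached however carefully you control radicands.

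Geometrically, the slice discards the loops along torus orbits. At $\gamma=0$, the loop $\delta\mapsto e^{2\pi i\theta}\delta$ has monodromy an order-four element of the center $\langle\mathrm{diag}(1,1,i)\rangle$ of the Pauli group containing $D_8$, and this element negates $\sqrt\delta$. The template of \Cref{prop:type4quartics-param} worked only because $C_{\PGL_3}(S_4)$ is trivial, so there the normal form really is all of $U^{S_4}_{2,4}$.

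\textbf{Repair.} You must keep $\delta$ as a third parameter. Done naively, you would then still face a full degree-$32$ field computation; the paper avoids this.
\begin{itemize}
\item In your coordinates $\{\gamma=0\}=U^{P}_{2,4}$ for a Pauli group $P\supset D_8$. So $\Monpar_P(\BitangentsProblem)\cong C_4\times K_4$, which has order $16$ and is abelian, embeds in $\Monpar_{D_8}(\BitangentsProblem)$ by \Cref{prop:invariance-properties-monodromy}\eqref{point:injectivity-pullback}.
\item With the centralizer bound, the order of $\Monpar_{D_8}(\BitangentsProblem)$ is then $16$ or $32$.
\item A single nonabelian Galois quotient over the three-parameter base excludes $16$.
\end{itemize}
The paper gets that quotient from the biquadratic for the bitangents $x+y+uz$ with isotropy conjugate to the swap, which has group $D_8$. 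In your coordinates, the biquadratic above for the slopes of $z=\mu y$ also has group $D_8$ over $\C(\beta,\gamma,\delta)$ and serves equally well.

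Your fallback through the lines--bitangents correspondence is not yet an argument. You never identify a cubic-surface stratum $\cS^{(G,H)}$ that matches a bitangent stratum of Type VII quartics.
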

\begin{proof} Since $D_8 \le P$, we have inclusions $U^{P}\subset U^{D_8}$, hence $\Monpar_P(\BitangentsProblem)\le \Monpar_{D_8}(\BitangentsProblem)$ by Proposition~\ref{prop:invariance-properties-monodromy}\eqref{point:injectivity-pullback}. Therefore, $\Monpar_{D_8}(\BitangentsProblem)$ has order divisible by $|\Monpar_P(\BitangentsProblem)|=16$, and dividing $|C_{W^+(E_7)}(D_8)|=32$. We will show that we also have a subgroup $D_8 \le \Monpar_{D_8}(\BitangentsProblem)$, and since $D_8 \not\le \Monpar_P(\BitangentsProblem)$, this will show that the subgroup inclusion $\Monpar_P(\BitangentsProblem) \le \Monpar_{D_8}(\BitangentsProblem)$ is strict, and therefore $\Monpar_{D_8}(\BitangentsProblem) = C_{W^+(E_7)}(D_8) = D_8 \times C_2\times C_2$.

To find this $D_8$ in the monodromy group, we consider the subcover of the bitangent cover over $U_{2,4}^{D_8}$ consisting of those 8 bitangents whose isotropy group is conjugate to the cyclic group of order two generated by swapping $x$ and $y$ (see \cite[\S4.7]{betheabrazelton_bitangentssymmetricquartics} for a presentation of $D_8$ in $\PGL_3$). Generically no bitangents are of the form $z=0$, so we can assume the bitangent is given by the formula $x+y + uz$ for some $u$. Restricting to the $z=1$ patch and restricting the equation of our quartic to this line, we obtain
\begin{align*}
    ax^4 + a(-x-u)^4 + d + bx^2(-x-u)^2 + cx(-x-u).
\end{align*}
This will be a square if and only if $u$ is a root of the equation:
\begin{align*}
    4a(2a-b)u^4 - 8acu^2 + c^2 - 4d(2a+b)=0.
\end{align*}
For $u$ satisfying the equation above, the Galois group of $\C(a,b,c,d)(u)$ over $\C(a,b,c,d)$ is the same as the Galois group of $\Q(a,b,c,d)(u)$ over $\Q(a,b,c,d)$, which we can verify is $D_8$. Therefore this exhibits $D_8$ as a subgroup of $\Monpar_{D_8}$ and we are done.
\end{proof}

From this we can derive the stacky monodromy:

\begin{proposition}\label{prop:stacky-monodromy-D8-quartics}
We have that
\begin{align*}
    \Monmarked_{D_8}(\BitangentsProblem) &\cong D_8\times C_2\times C_2\\
    \Monstack_{D_8}(\BitangentsProblem) &\cong N_{W^+(E_7)}(D_8) \\
    \Mongeom_{D_8}(\BitangentsProblem) &\cong C_2^{\times 4}\rtimes C_2.
\end{align*}
\end{proposition}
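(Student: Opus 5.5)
The plan is to deduce all three groups from the parameter-level computation in \Cref{prop:monpar-quartic-type7}, using the comparison results of \Cref{sec:four-flavors}. First, the centralizer $C_{\PGL_3}(D_8)\cong\C^\times$ is connected by \Cref{thm:irred-VG}. Hence \Cref{cor:comparison-monodromy-UG-MG}\eqref{point:isom-parameter-marked} gives
\[
\Monmarked_{D_8}(\BitangentsProblem)\cong\Monpar_{D_8}(\BitangentsProblem)\cong D_8\times C_2\times C_2 = C_{W^+(E_7)}(D_8).
\]
\Cref{lem: automorphism act faithfully on bitangents} shows that Assumption~\ref{assumption:comparison-stacky-problem} holds. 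So by \Cref{thm: monodromy X_G vs X^G}\eqref{point 2} the marked monodromy is $C_{\Monstack}(D_8)$, and there is an exact sequence
\[
1\to C_{W^+(E_7)}(D_8)\to\Monstack_{D_8}(\BitangentsProblem)\to H\to 1,
\]
where $H\le N_{W^+(E_7)}(D_8)/C_{W^+(E_7)}(D_8)\hookrightarrow\Aut(D_8)\cong D_8$.

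It therefore suffices to show that $H$ equals the full image of $N_{W^+(E_7)}(D_8)$ in $\Aut(D_8)$. Equivalently, the stacky monodromy must hit every coset of the centralizer in the normalizer. For this I would use \Cref{prop:SES-par-stack-monodromy}: $\Monstack$ is an extension of $\Monpar$ by a quotient of the stabilizer $H'\le\pi_0 N_{\PGL_3}(D_8)\cong D_8$ of a component of $U_{2,4}^{=D_8}$. Since $U_{2,4}^{D_8}$ is irreducible (\Cref{thm:irred-VG}), $H'$ is all of $\pi_0 N_{\PGL_3}(D_8)$.

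Next I would exhibit explicit elements. Take representatives $n\in N_{\PGL_3}(D_8)$ of the outer automorphisms; these are explicit from Eqn.~\eqref{eqn:normalizer-quartictype7}. Each acts on $U_{2,4}^{D_8}$, and combined with a path back to the base point it gives a loop in $[U^{=D_8}/N_{\PGL_3}(D_8)]=\cM_{2,4}^{=D_8}$. Its image in $W^+(E_7)$ conjugates $D_8$ by the automorphism induced by $n$. Inner automorphisms are already realized by $D_8$ itself. So the key point is that the outer automorphism group of $D_8$, which is $C_2$, is realized.

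The cleanest way to realize it is a hidden loop. Pick a point of $\overline{\cM^{=D_8}}$ with larger automorphism group, namely the Type II quartic with $D_8<C_4^{\times2}\rtimes S_3$, or the Type IV quartic with $D_8<S_4$. There $N_{S_4}(D_8)=D_8$, so Type IV is useless, and I would use Type II instead. Its normalizer of $D_8$ contains elements inducing the outer automorphism. These give elements of $\pi_1$ of the normalization $[U^{D_8}/N_{\PGL_3}(D_8)]$ by Lemma~\ref{lem:injectivity-hidden-loops} and \Cref{prop:stacky-MG-normalization}. Restriction to the open dense $\cM^{=D_8}$ preserves monodromy by Proposition~\ref{prop:invariance-properties-monodromy}\eqref{point:restriction-to-open}, since the normalization is smooth.

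This yields $\Monstack_{D_8}(\BitangentsProblem)=N_{W^+(E_7)}(D_8)$, of order $32\cdot 8/2\cdot$(correct index), which I would check in GAP. The geometric monodromy is then $N_{W^+(E_7)}(D_8)/D_8$ by \eqref{eq:isom-geometric-monodromy}, and I would identify it with $C_2^{\times4}\rtimes C_2$ computationally.

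The main obstacle is confirming that the outer automorphism is realized inside $W^+(E_7)$ and not merely in $\PGL_3$. Concretely, one has to check that the normalizer quotient in $W^+(E_7)$ is no larger than what $\PGL_3$ provides, and that the point chosen for the hidden loop actually lies in the closure of the relevant component.
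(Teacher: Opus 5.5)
Your proposal is correct. Its first two steps match the paper exactly: marked equals parameter-level monodromy because $C_{\PGL_3}(D_8)$ is connected, followed by the exact sequence of \Cref{thm: monodromy X_G vs X^G}.

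The difference lies in how you show that the quotient $H$ is all of $\Aut(D_8)$.

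\textbf{The paper's route.} It does this in one line. It asserts that $U^{=D_8}\to\cX_{=D_8}$ is dominant, so $\cX_{=D_8}$ is connected and the stabilizer of a component is all of $\Aut(D_8)$. This gives $|\Monstack_{D_8}(\BitangentsProblem)|=32\cdot 8=256=|N_{W^+(E_7)}(D_8)|$. By \Cref{thm:comparison-UG/centralizer-MG}, that dominance is equivalent to surjectivity of $N_{\PGL_3}(D_8)/C_{\PGL_3}(D_8)\to\Aut(D_8)$. The paper leaves this implicit in its normalizer computation.

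\textbf{Your route.} Your paragraph on a normalizer element composed with a path in $U^{=D_8}$ is an explicit, monodromy-level verification of that same fact.
\begin{itemize}
\item The element $d=\mathrm{diag}(i,1,1)\in N_{\PGL_3}(D_8)$ fixes $r$ and sends $s\mapsto rs$. This is an outer automorphism, since it swaps the two classes of reflections.
\item Transport along $U^{=D_8}$ commutes with the $D_8$ deck action (\Cref{rmk:classical-proof}). So the resulting monodromy element conjugates the image of $D_8$ by $c_d$.
\item Together with $\mathrm{Inn}(D_8)$, which $D_8$ itself realizes, this gives $H=\Aut(D_8)$.
\end{itemize}
That argument alone finishes the proof. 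The hidden-loop detour through the Fermat quartic is a valid alternative, since $x^4+y^4+z^4$ lies in $\mathbb{P}(W_{\chi_1})$ and $d$ is one of its automorphisms, but it is not needed.

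\textbf{What each buys.} The paper's argument is shorter and purely formal. Yours isolates the concrete geometric input: the extra normalizer element $d$, equivalently the extra symmetry of the Type II point, that produces the outer automorphism.

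\textbf{Loose ends in your write-up.} None of these is a real obstacle.
\begin{itemize}
\item The appeal to \Cref{prop:SES-par-stack-monodromy} only gives an upper bound and can be dropped.
\item Since $N_{W^+(E_7)}(D_8)/C_{W^+(E_7)}(D_8)$ injects into $\Aut(D_8)$, once $H=\Aut(D_8)$ you get $\Monstack_{D_8}(\BitangentsProblem)=N_{W^+(E_7)}(D_8)$ formally, of order $32\cdot 8=256$. Your expression ``$32\cdot 8/2\cdot$(correct index)'' should simply read $32\cdot 8$.
\item For the same reason, there is nothing to check about the $W^+(E_7)$ quotient being ``no larger than what $\PGL_3$ provides.''
\end{itemize}
Only the identification of $N_{W^+(E_7)}(D_8)/D_8$ with $C_2^{\times 4}\rtimes C_2$ genuinely requires computation, as in the paper.
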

\begin{proof}
Since $U^{=D_8}$ is connected, and the map $U^{=D_8}$ is dominant, we have that $\mathcal{X}_{=D_8}$ is connected as well. Thus by \Cref{thm: monodromy X_G vs X^G} (\Cref{eq: exact sequence M_G vs M^G}), we have a short exact sequence
\[
    1 \to \Monmarked_{D_8}(\BitangentsProblem) \to \Monstack_{D_8}(\BitangentsProblem) \to \Aut(D_8) \to 1.
\]
Moreover since $C_{\PGL_4}(D_8)$ is connected via \Cref{eq: exact sequence M_G vs M^G}, we can apply Corollary~\ref{cor:comparison-monodromy-UG-MG}\eqref{point:isom-parameter-marked} to conclude that parameter-level and marked stacky monodromy agree. This tells us that the size of the stacky monodromy is $|\Monpar_{D_8}(\BitangentsProblem)|\cdot|\Aut(D_8)| = 32\cdot8 = 256$. Thus the stacky monodromy attains the upper bound given by the normalizer, since $N_{W^+(E_7)}(D_8)$ also has order 256.
\end{proof}

\subsection{Type VIII Quartics}

Type VIII quartics have automorphism group $C_6$ and form a 1-dimensional family. By~\cite[Theorem 1.1]{betheabrazelton_bitangentssymmetricquartics}, there is a unique bitangent with stabilizer the full $C_6$; equivalently, $\cB^{(C_6,C_6)}\cong\cM_{2,4}^{=C_6}$.

\begin{lemma}\label{lem: type VIII quartics correspondence}
    The type of pointed cubic surfaces mapping to $\cB^{(C_6,C_6)}$ is type IV, with a $C_6$-fixed point, corresponding to the stratum $\cS^{(H_3(3)\rtimes C_2,C_6)}$.
\end{lemma}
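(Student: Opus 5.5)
We argue exactly as in the proof of \Cref{lem:type-of-cubics-mapping-to-P-with-C4-fixed}, using the isomorphism $\Phi\colon\cSopen\xrightarrow{\cong}\cB$ of \Cref{thm: isom stacks cubic surfaces and plane quartics}. Since $\Phi$ is an isomorphism of stacks, it identifies $\Aut(S,p)$ with $\Aut(Q,B)$. The stratum $\cB^{(C_6,C_6)}$ consists of pairs $(Q,B)$ with $\Aut(Q,B)\cong C_6$. It is irreducible and isomorphic to $\cM_{2,4}^{=C_6}$, so it has dimension $1$. Hence $\Phi^{-1}(\cB^{(C_6,C_6)})$ is an irreducible one-dimensional substack of $\cSopen$. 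It lies in a union of strata $\cS^{(G,C_6,i)}$ over groups $G$ from \Cref{thm:possible-aut-gps} that contain a copy of $C_6$. By irreducibility, it lies in exactly one such stratum of dimension $\ge 1$.

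\textbf{Step 1: bound the dimension of the candidate strata.} By \Cref{lem: C6 locus}, whenever $\cS^{(G,C_6)}$ is non-empty we have $\dim\cS^{(G,C_6)}=\dim\cM_{3,3}^{=G}$. We therefore need $\dim\cM_{3,3}^{=G}\ge 1$ and $C_6\le G$. The dimension of $\cM_{3,3}^{=G}$ is $\dim U_{3,3}^{(G)}-15$ from the table in \Cref{thm:irred-UG}. The groups containing an element of order $6$ are Types I, III, IV and VI; Type II is ruled out since $S_5$ has an element of order $6$, but its moduli are $0$-dimensional anyway. Types I, II and III give $0$-dimensional moduli, which leaves Types IV and VI, each with one-dimensional moduli. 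This matches the remark after \Cref{lem: C6 locus} that only Types IV and VI can give positive-dimensional $\cS^{(G,C_6)}$.

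\textbf{Step 2: exclude Type VI.} \Cref{lem: Type VI C6-pointed cubics} shows that the unique $C_6\le S_3\times C_2$ fixes only an Eckardt point. An Eckardt point lies on three lines, so $\cS^{(S_3\times C_2,C_6)}=\emptyset$.

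\textbf{Step 3: confirm Type IV.} By \Cref{lem: Type IV C6-pointed cubics}, $\cS^{(H_3(3)\rtimes C_2,C_6)}$ is non-empty. It is an irreducible cover of $\cM_{3,3}^{=H_3(3)\rtimes C_2}$, with points $[0:1:1:w]$, $w^3-3aw+2=0$, not on any line. Its image under $\Phi$ is therefore an irreducible one-dimensional stratum of $\cB$ with bitangent stabilizer $C_6$. We still need the quartic automorphism group to be $C_6$, that is, the image must be $\cB^{(C_6,C_6)}$ rather than some $\cB^{(G',C_6)}$ with $G'\supsetneq C_6$. For this, use the classification in \cite[Theorem 1.1]{betheabrazelton_bitangentssymmetricquartics}: the quartic groups containing $C_6$ with one-dimensional moduli are only $C_6$ itself, since Types III and VI have $0$-dimensional moduli. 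Alternatively, note that by Steps 1--2, $\Phi^{-1}(\cB^{(C_6,C_6)})$, which is non-empty, can only be this stratum.

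The main obstacle is Step 1. One must be sure that the list of groups containing $C_6$ is complete and that the dimension count from \Cref{lem: C6 locus} applies uniformly. In particular, the Type IV case needs the special argument inside the proof of \Cref{lem: C6 locus}, because there the $C_3$-fixed locus is a curve.
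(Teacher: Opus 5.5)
Your proposal is correct and follows essentially the paper's route. The paper's one-line proof invokes exactly the $C_6$ analysis of \S\ref{subsec: H=C6 case} (the dimension bound, the emptiness of the Type VI stratum, the Type IV computation) together with the fact from \cite[Theorem 1.1]{betheabrazelton_bitangentssymmetricquartics} that Type VIII is the only quartic type with a bitangent of isotropy $C_6$; you have spelled out the dimension bookkeeping behind these same ingredients, with two harmless slips: Type II simply belongs among the cubic groups containing $C_6$ (it is excluded only by its $0$-dimensional moduli), and the quartic groups containing $C_6$ are Types III and VIII, not VI.
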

\begin{proof}
    This follows from~\S\ref{subsec: H=C6 case}, and the fact that Type VIII is the only type of plane quartic admitting a bitangent with isotropy group $C_6$.
\end{proof}

\begin{proposition}\label{prop: Type VI quartics stacky and geometric monodromy}
    The marked, stacky and parameter-level $C_6$-symmetric monodromy of the cover of bitangents is isomorphic to $C_3\times\SL_2(\mathbb{F}_3)$, of index 2 in $N_{W^+(E_7)}(C_6)$, while the geometric monodromy is isomorphic to $A_4$.
\end{proposition}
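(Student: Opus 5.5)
The plan mirrors the Type V quartic case (\Cref{prop: Type V quartics stacky and geometric monodromy}), now using the correspondence with Type IV cubic surfaces from \Cref{lem: type VIII quartics correspondence}. Since $\cB^{(C_6,C_6)}\cong\cM_{2,4}^{=C_6}$, the stacky monodromy $\Monstack_{C_6}(\BitangentsProblem)$ equals the monodromy of the Galois closure of the pulled-back bitangent cover over $\cB^{(C_6,C_6)}$. The unique $C_6$-fixed bitangent is a degree-one subcover, so its stabilizer is all of $C_6$.

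\textbf{Transfer to the cubic side.} By \Cref{thm: isom stacks cubic surfaces and plane quartics} and \Cref{thm: lines bitangents correspondence}, this monodromy equals that of the Galois closure of the cover of lines pulled back to $\cS^{(H_3(3)\rtimes C_2,C_6)}$. By \Cref{lem: Type IV C6-pointed cubics}, the map $\cS^{(H_3(3)\rtimes C_2,C_6)}\to\cM_{3,3}^{=H_3(3)\rtimes C_2}$ is a connected cover of degree $27$. Applying \Cref{prop:invariance-properties-monodromy}\eqref{point:injectivity-pullback}, the monodromy over $\cS^{(H_3(3)\rtimes C_2,C_6)}$ is a subgroup of $\Monstack_{H_3(3)\rtimes C_2}(\LinesProblem)\cong W(L_3)$, which has order $648$. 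Its index divides $27$.

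\textbf{Computing the index.} I would make this precise using \Cref{rmk: maximal intermediate cover}. The index equals the degree of the maximal common quotient of the cover $\cS^{(H_3(3)\rtimes C_2,C_6)}$ and the Galois closure $\widetilde{\cL}^{=H_3(3)\rtimes C_2}$. Knowing the exact order of $C_3\times\SL_2(\mathbb{F}_3)$, I expect this index to be $27/3=9$. The cover is pulled back from the degree-$3$ coarse cover with Galois group $S_3$ (the roots of $w^3-3aw+2$), which is disjoint from the ASL-structure. The residual degree-$9$ gerbe-type part, by contrast, should be absorbed by the Heisenberg $C_3^{2}$ inside $\Monstack$. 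Concretely:
\begin{enumerate}
\item Identify the $27$ marked points as a torsor under $H_3(3)\rtimes C_2/C_6$-translates: there are $9$ conjugates of $C_6$, each with $3$ fixed points.
\item Check that over the Hesse level cover $W$ the $9$ conjugates become distinguishable. This uses the fact that the flexes, i.e.\ the Eckardt points, are labelled there. It means the degree-$9$ part factors through $\widetilde{\cL}$.
\item Check that the degree-$3$ part given by $w^3-3aw+2$ does \emph{not} factor through $\widetilde{\cL}$, by comparing it with the field $\C(a)(\sqrt[3]{a^3+1})$ computed in \Cref{thm:Type-IV-monodromy}.
\end{enumerate}
This gives order $648/9=72=|C_3\times\SL_2(\mathbb{F}_3)|$. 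That is index $2$ in $N_{W^+(E_7)}(C_6)$, whose order $144$ I would check in GAP. A computer search then singles out the unique index-two subgroup containing the hidden loops of $C_6$ (via \Cref{lem:injectivity-hidden-loops}) with the correct image, and identifies it as $C_3\times\SL_2(\mathbb{F}_3)$.

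\textbf{Remaining groups.} The geometric monodromy is $\Monstack/C_6\cong A_4$ by \eqref{eq:isom-geometric-monodromy}. Since $C_6$ is abelian, $\Aut(C_6)=C_2$. The normalizer $N_{\PGL_3}(C_6)=C_{\PGL_3}(C_6)=(\C^\times)^2$ is connected, so $H$ in \Cref{thm: monodromy X_G vs X^G} is trivial. Hence marked monodromy equals stacky monodromy, and by \Cref{cor:comparison-monodromy-UG-MG}\eqref{point:isom-parameter-marked} it also equals the parameter-level monodromy. This is consistent with $C_6$ being central in $C_3\times\SL_2(\mathbb{F}_3)$.

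\textbf{Main obstacle.} The hard step is the exact index computation (steps 2 and 3). If the conceptual argument resists, I would fall back on a direct field computation along Dolgachev's Type IV pencil. The idea is to adjoin a root $w$ of $w^3-3aw+2$ and check which lines of $S_a$ become rational over $\C(a,w)$ versus $\C(a)$.
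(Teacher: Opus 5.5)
Your proposal is correct and follows essentially the same route as the paper. You transfer via $\cB^{(C_6,C_6)}\cong\cS^{(H_3(3)\rtimes C_2,C_6)}$ to a subgroup of $W(L_3)$ of index $9$ or $27$, rule out $27$ because the degree-$3$ piece (roots of $w^3-3aw+2$) is a non-Galois $S_3$-cover, and then use that $N_{W^+(E_7)}(C_6)$ has a unique index-$2$ subgroup. The only difference is where index $27$ is excluded: the paper works on coarse spaces, where a degree-$3$ subcover of the $A_4$-closure would correspond to the normal Klein subgroup and hence be Galois. You instead compare $\C(a)(w)$ with the cyclic Kummer field $\C(a)(\sqrt[3]{a^3+1})$ along Dolgachev's pencil, which works equally well because a non-Galois cubic extension cannot lie inside a cyclic cubic one.
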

\begin{proof}
By Theorem~\ref{thm:stacky-basic-bound}, the stacky monodromy group is contained in $N_{W^+(E_7)}(C_6)\cong C_3\rtimes\GL_2(\mathbb{F}_3)$, and the geometric one in $N_{W^+(E_7)}(C_6)/C_6\cong S_4$. Since $C_3\rtimes\GL_2(\mathbb{F}_3)$ has only $C_3\times\SL_2(\mathbb{F}_3)$ as an index 2 subgroup, it is enough to compute the order of the stacky monodromy.

Lemma~\ref{lem: type VIII quartics correspondence} and Theorem~\ref{thm: lines bitangents correspondence} yield the usual diagram
\begin{equation*}
\begin{tikzcd}
    & & \widetilde{\cB}^{=C_6}\arrow[d,"f"]\\
    \widetilde{\cL}^{(H_3(3)\rtimes C_2,C_6)}\arrow[r,"\phi_1"]\arrow[d]\arrow[urr,bend left=20,"\cong"] & \cS^{(H_3(3)\rtimes C_2,C_6)}\arrow[r,"\cong"]\arrow[d,"\pi"] & \cB^{(C_6,C_6)}\arrow[d,"\cong"]\\
    \widetilde{\cL}^{=H_3(3)\rtimes C_2}\arrow[r,"\phi"] & \cM_{3,3}^{=H_3(3)\rtimes C_2} & \cM_{2,4}^{=C_6}
\end{tikzcd}
\end{equation*}
where $\widetilde{\cL}^{(H_3(3)\rtimes C_2,C_6)}\rightarrow\cS^{(H_3(3)\rtimes C_2,C_6)}$ is the Galois closure of the pullback of the cover of (all) lines to $\cS^{(H_3(3)\rtimes C_2,C_6)}$.
By the above diagram and Theorem~\ref{thm:Type-IV-monodromy}, we have that
\[
\Mon(f)\cong\Mon(\phi_1)\leq\Mon(\phi)\cong W(L_3).
\]
The inclusion is a subgroup of index dividing $\deg\pi=27$, by Lemma~\ref{lem: Type IV C6-pointed cubics}, and divisible by 9, as the order of the stabilizers decreases by a factor of 9. Lemma~\ref{lem: Type IV C6-pointed cubics} also shows that $\pi$ is not a Galois cover. If $\Mon(\phi_1)$ was an index 27 subgroup of $\Mon(\phi)$, it would follow that $F$ factors through $\pi$. The same would hold for the covers between coarse moduli spaces. The resulting cover $\widetilde{L}^{=H_3(3)\rtimes C_2}\rightarrow(\cS^{(H_3(3)\rtimes C_2,C_6)})_{\mathrm{mod}}$ between coarse moduli spaces would then correspond to a subgroup of order 4 in $\Mon(L^{=H_3(3)\rtimes C_2}\rightarrow M_{3,3}^{=H_3(3)\rtimes C_2})\cong A_4$. By uniqueness, that subgroup would be the Klein group $C_2\times C_2$, hence normal, implying the normality of $\pi$, a contradiction by Lemma~\ref{lem: Type IV C6-pointed cubics}. Geometric monodromy follows immediately, while marked and parameter-level monodromy are equal to the stacky monodromy by Corollary~\ref{cor:comparison-monodromy-UG-MG}, since $C_{\PGL_3}(C_6)$ is connected and $C_3\times\SL_3(\F_3)$ centralizes $C_2$.
\end{proof}

\subsection{Type IX Quartics}
Type IX plane quartics have automorphism group isomorphic to $S_3$, and form a 2-dimensional family. Moreover, by~\cite[Theorem 1.1]{betheabrazelton_bitangentssymmetricquartics}, every such plane quartics admits a unique bitangent with isotropy group $S_3$, therefore $\cB^{(S_3,S_3)}\cong\cM_{2,4}^{=S_3}$ by consideration of dimension.

\begin{lemma}\label{lem: type IX quartics correspondence}
    The type of pointed cubic surfaces mapping to $\cB^{(S_3,S_3)}$ is Type VIII, with an $S_3$-invariant point, corresponding to the stratum $\cS^{(S_3,S_3)}$.
\end{lemma}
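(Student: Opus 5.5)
The plan follows the template already used for \Cref{lem:type-of-cubics-mapping-to-P-with-C4-fixed} and \Cref{lem: type VIII quartics correspondence}. We transport the stratum $\cB^{(S_3,S_3)}$ across the isomorphism $\Phi\colon\cSopen\to\cB$ of \Cref{thm: isom stacks cubic surfaces and plane quartics}. We then identify which stratum $\cS^{(G,S_3,i)}$ it corresponds to by comparing dimensions and symmetry data.

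First, we check that $\Phi$ respects the stratifications of \Cref{def: decomposition B =G}. Since $\Phi$ is an isomorphism of stacks, it induces isomorphisms of automorphism groups $\Aut(S,p)\cong\Aut(Q,B)$ at every geometric point. Hence $\Phi^{-1}(\cB^{(S_3,S_3)})$ is a union of strata $\cS^{(G,S_3,i)}$ with pointed automorphism group exactly $S_3$. Moreover, $\Phi^{-1}(\cB^{(S_3,S_3)})\cong\cB^{(S_3,S_3)}\cong\cM_{2,4}^{=S_3}$ is irreducible of dimension $2$. This uses \Cref{thm:irred-VG} together with $\dim\cM_{2,4}^{=S_3}=\dim U_{2,4}^{S_3}-\dim N_{\PGL_3}(S_3)=3-1$. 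So we are looking for a single stratum $\cS^{(G,S_3)}$ of dimension $2$.

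Next, we constrain $G$. By \Cref{lem: S3 locus}, whenever $\cS^{(G,S_3)}$ is nonempty we have $\dim\cS^{(G,S_3)}=\dim\cM_{3,3}^{=G}$. So we need a cubic automorphism group $G\supseteq S_3$ with $\dim\cM_{3,3}^{=G}=2$. From the table in \Cref{thm:irred-UG}, the dimension $\dim U_{3,3}^G-\dim N_{\PGL_4}(G)$ equals $6-4=2$ only for Type VIII ($G=S_3$). Type X also has dimension $7-5=2$, but it does not contain $S_3$. The types containing $S_3$ are I, II, IV, V, VI and VIII; among these, Types IV, V and VI have $\dim\cM^{=G}_{3,3}\le 1$ and Types I and II are $0$-dimensional. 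This leaves only $G=S_3$ with $i=\mathrm{id}$, so the stratum is $\cS^{(S_3,S_3)}$. By \Cref{lem: Type VIII S3-pointed cubics}, this stratum is nonempty and $2$-dimensional. Since $\Phi$ is an isomorphism and both sides are irreducible of dimension $2$, the image of $\cS^{(S_3,S_3)}$ is exactly $\cB^{(S_3,S_3)}$.

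The main obstacle is justifying that no lower-dimensional stratum contributes. Because $\cB^{(S_3,S_3)}$ is irreducible, no other stratum can meet it in a dense subset. A lower-dimensional stratum could still land inside it, however. This is ruled out because the strata are open-and-closed in $\cS^\circ\cap\{\Aut(S,p)\cong S_3\}$ by their definition through the isomorphism type of the inclusion $\Aut(S,p)\hookrightarrow\Aut(S)$. One point needs care here: the $\cB$-side stratum records $\Aut(Q,B)\hookrightarrow\Aut(Q)=S_3$, whereas the $\cS$-side records $\Aut(S,p)\hookrightarrow\Aut(S)$. Under $\Phi$ the pointed automorphism groups agree, but the ambient groups need not. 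I would first handle this by working inside the open substack of $\cSopen$ where $\Aut(S,p)\cong S_3$. The preimage of $\cB^{(S_3,S_3)}$ is an open and closed piece of that substack, so it is a connected component, and we identify it with $\cS^{(S_3,S_3)}$ by the irreducibility and dimension count above.
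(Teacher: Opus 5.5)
Your dimension count is exactly the paper's proof. By \Cref{lem: S3 locus}, $\dim\cS^{(G,S_3)}=\dim\cM_{3,3}^{=G}$, and among the cubic types whose group contains $S_3$ only Type VIII has a $2$-dimensional moduli stack. Hence the irreducible $2$-dimensional stack $\Phi^{-1}(\cB^{(S_3,S_3)})$ has its generic point in $\cS^{(S_3,S_3)}$. Your list of such types omits Type III, since $H_3(3)\rtimes C_4\supset H_3(3)\rtimes C_2\supset S_3$; but $\dim\cM_{3,3}^{=H_3(3)\rtimes C_4}=0$, so nothing changes.

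Your additional exactness claims are not justified, and the argument in your last paragraph fails.

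\begin{itemize}
\item The locus $\{\Aut(S,p)\cong S_3\}$ is not open in $\cSopen$, because generic points have trivial stabilizer; it is only locally closed.
\item The preimage of $\cB^{(S_3,S_3)}$ is open but \emph{not} closed in that locus. Let $S_3\subset S_4\subset\PGL_3$ be the permutation matrices inside the signed-permutation group of the Type IV quartics. The line $x+y+z=0$ is the only $S_3$-invariant line, hence the unique $S_3$-invariant bitangent of the generic member of $U_{2,4}^{S_3}$. Since $\BPQ$ is closed, it is a bitangent of every member of $U_{2,4}^{S_3}\supseteq U_{2,4}^{S_4}$. For a Type IV quartic its stabilizer is exactly $S_3$, since it lies in the orbit $\{x\pm y\pm z=0\}$ of size $4$.
\item So points of $\cB^{(S_4,S_3)}$ lie in the closure of $\cB^{(S_3,S_3)}$ within the locus of pointed automorphism group $S_3$, and the connected-component argument collapses.
\item For the same reason, ``both sides are irreducible of dimension $2$, hence the image of $\cS^{(S_3,S_3)}$ is exactly $\cB^{(S_3,S_3)}$'' is not a valid inference.
\item Likewise, $\Phi^{-1}(\cB^{(S_3,S_3)})$ need not be a union of whole $\cS$-strata, since $\Phi$ does not relate $\Aut(S)$ and $\Aut(Q)$.
\end{itemize}

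What your argument, and the paper's one-line proof, actually establishes is that $\Phi$ identifies dense open substacks of $\cS^{(S_3,S_3)}$ and $\cB^{(S_3,S_3)}$. That is all \Cref{prop: Type IX quartics stacky and geometric monodromy} needs, by Proposition~\ref{prop:invariance-properties-monodromy}\eqref{point:restriction-to-open}. Drop the last paragraph and state the correspondence generically.
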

\begin{proof}
    From~\S\ref{subsec: H=S3 case}, we know that the only group $G$ for which $\cS^{(G,S_3)}$ has dimension 2 is $G=S_3$. The result follows.
\end{proof}

\begin{proposition}\label{prop: Type IX quartics stacky and geometric monodromy}
    The stacky $S_3$-symmetric monodromy of the cover of bitangents is isomorphic to $S_3^3$, while the geometric monodromy is isomorphic to $S_3^2$.
\end{proposition}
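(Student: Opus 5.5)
The plan mirrors the Type V and Type VIII quartic computations: I transfer the problem to cubic surfaces through the correspondence $\Phi$ and then count.

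\textbf{Step 1: compute the order.} By Lemma~\ref{lem: type IX quartics correspondence}, together with Theorems~\ref{thm: isom stacks cubic surfaces and plane quartics} and~\ref{thm: lines bitangents correspondence}, we have $\cB^{(S_3,S_3)}\cong\cS^{(S_3,S_3)}$. Under this identification, the pullback of the Galois closure $\widetilde{\cB}^{=S_3}$ to $\cB^{(S_3,S_3)}$ is identified with the Galois closure of the lines cover pulled back to $\cS^{(S_3,S_3)}$. Since $\cB^{(S_3,S_3)}\to\cM_{2,4}^{=S_3}$ is an isomorphism, we get
\[
\Monstack_{S_3}(\BitangentsProblem)\cong \Mon\bigl(\widetilde{\cL}\times_{\cM_{3,3}}\cS^{(S_3,S_3)}\to\cS^{(S_3,S_3)}\bigr).
\]
By Proposition~\ref{prop:invariance-properties-monodromy}\eqref{point:injectivity-pullback}, this is a subgroup of $\Monstack_{S_3}(\LinesProblem)\cong S_3^3$.

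\textbf{Step 2: identify the index.} By Lemma~\ref{lem: Type VIII S3-pointed cubics}, the cover $\pi\colon\cS^{(S_3,S_3)}\to\cM_{3,3}^{=S_3}$ is connected of degree 3 with Galois group $S_3$. Applying Remark~\ref{rmk: maximal intermediate cover}, the index of the monodromy over $\cS^{(S_3,S_3)}$ in $S_3^3$ equals the order of the monodromy of the maximal common quotient cover $\cE$ of $\pi$ and the Galois closure of the lines cover. Since $\pi$ is dominated by the lines cover, the index is the degree of $\pi$ up to its Galois closure, that is, $3$ or $6$. I still need to decide between these.

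Here the structure of $S_3$-invariant points enters. The three $S_3$-fixed points $[-\zeta_3^i:1:0:0]$ are determined rationally by the data of lines, for instance as intersections of the $C_3$-fixed line with suitable planes spanned by lines. This suggests that $\pi$ is genuinely dominated by the lines cover. The price is that the stabilizer drops from $\Aut(S)=S_3$ to $\Aut(S,p)=S_3$. No drop occurs there, so the hidden loops $S_3$ survive, while the base monodromy is cut by the index of the point stabilizer. I would therefore aim to show that the index is exactly $1$. The intended mechanism is that the $S_3$ permuting the three points in Lemma~\ref{lem: Type VIII S3-pointed cubics} comes from the factor $N_{\PGL_4}(S_3)/S_3$, which acts freely on orderings, rather than from the lines-monodromy. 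If that fails, the fallback is a direct count: compute $\Mon$ over $\cS^{(S_3,S_3)}$ as $C_{\cdot}(S_3)$ via the connected centralizer $C_{\PGL_3}(S_3)$, using Corollary~\ref{cor:comparison-monodromy-UG-MG-specialcase}.

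\textbf{Step 3: pin down the group.} Whatever order is obtained, compare it with the bound $S_3\le \Monstack_{S_3}(\BitangentsProblem)\le N_{W^+(E_7)}(S_3)$ from Theorem~\ref{thm:stacky-basic-bound}. Then identify the group among the subgroups of this normalizer containing $S_3$ and the image of $\Monmarked$, checking uniqueness in GAP as done for Type V. I expect the answer $S_3^3$. The geometric monodromy $S_3^3/S_3\cong S_3^2$ then follows from \eqref{eq:isom-geometric-monodromy}.

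The main obstacle is Step 2, namely showing that passing to the pointed stratum $\cS^{(S_3,S_3)}$ does not shrink the monodromy of $S_3^3$. This amounts to ruling out that the cover $\pi$ is dominated by the lines cover in a way that drops the order. I would first try to show that $\cE$ is trivial, by checking that $\pi$ is not a quotient of the Galois closure of lines over $\cM_{3,3}^{=S_3}$.
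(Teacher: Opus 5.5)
Your Step 1 is sound and is the same reduction the paper uses. Through $\Phi$ and Theorem~\ref{thm: lines bitangents correspondence}, the bitangent monodromy over $\cM_{2,4}^{=S_3}\cong\cB^{(S_3,S_3)}$ is the monodromy of the lines cover pulled back along $\pi\colon\cS^{(S_3,S_3)}\to\cM_{3,3}^{=S_3}$, so it is a subgroup of $S_3^3$. The gap is that you never determine its order, and that is the whole content of the statement.

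Your Step 2 also has two errors. First, the index analysis is wrong. $\pi$ is connected of degree $3$, so $\pi_1(\cS^{(S_3,S_3)})$ has index $3$ in $\pi_1(\cM_{3,3}^{=S_3})$. Its image in $S_3^3$ therefore has index $1$ or $3$ (order $216$ or $72$), never $6$; the value $6$ only arises when you pull back along the Galois closure of $\pi$. Second, your heuristic points the wrong way. If the individual $S_3$-fixed points $[-\zeta_3^i:1:0:0]$ were rationally determined by a labelling of the $27$ lines, then $\pi$ would be dominated by the Galois closure of the lines cover. The monodromy would then drop to index $3$, i.e. order $72$, not $216$. To get $S_3^3$ you must prove the opposite, and nothing you wrote does so.

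The remark that the permutation of the points ``comes from $N_{\PGL_4}(S_3)/S_3$'' is not an argument. Both permutation actions are quotients of the same $\pi_1$, and the real question is whether the kernel of the lines monodromy lies in $\pi_1(\cS^{(S_3,S_3)})$. Your fallback is circular: Corollary~\ref{cor:comparison-monodromy-UG-MG-specialcase} only identifies the parameter-level group with $C_{\Monstack_{S_3}(\BitangentsProblem)}(S_3)$, which relates two unknowns. Step 3 needs the order as input, so it cannot help either.

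The missing ingredient is an explicit lower bound over the pointed locus. The paper first reduces to geometric monodromy: the extension $1\to S_3\to\Monstack_{S_3}(\BitangentsProblem)\to\Mongeom_{S_3}(\BitangentsProblem)\to 1$ splits because $S_3$ is complete. It then uses the normalized family $x^3+y^3+z^3+w^3+(ax+by)zw=0$ over an open $U\subset\A^2$. On this family the three $S_3$-fixed points are individually rational, so $U\to M_{3,3}^{=S_3}$ factors through $\pi$. Solving for lines $x=\alpha z+\beta w$, $y=\gamma z+\delta w$ gives the irreducible equation $27\alpha^6+(27-a^3+b^3)\alpha^3-a^3=0$ together with $\gamma=-\sqrt[3]{1+\alpha^3}$. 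This tower over $\C(a,b)$ forces $9$ to divide the order of the geometric monodromy over the pointed locus.

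That geometric group has index dividing $3$ in $S_3^2$, so its order is $12$ or $36$. Divisibility by $9$ forces it to be all of $S_3^2$, and hence the stacky group is $S_3\times S_3^2$. In your stacky framework the equivalent task is to show that $27$ divides the order. The hidden loops supply only the factor $3$ from $S_3$; the remaining $9$ must come from an explicit computation like the one above.
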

\begin{proof}
    It is enough to compute the geometric monodromy, as the exact sequence
    \[
        1\rightarrow S_3\rightarrow \Monstack_{S_3}(\BitangentsProblem)\rightarrow \Mongeom_{S_3}(\BitangentsProblem)\rightarrow 1
    \]
    splits by~\cite[Corollary 3.16]{landiStacksMonodromySymmetric2025}, being $S_3$ a complete group; see also~\cite[Remark 3.17]{landiStacksMonodromySymmetric2025}.
    By \Cref{lem: type IX quartics correspondence}, we have $\cS^{(S_3,S_3)}\cong\cB^{(S_3,S_3)}$. This, the fact that $\cS^{(S_3,S_3)}\rightarrow\cM^{=S_3}$ induces an isomorphism between stabilizers, and Theorem~\ref{thm: lines bitangents correspondence} yield a commutative diagram of covers of moduli spaces
    \begin{equation*}
    \begin{tikzcd}
        & & \widetilde{B}^{=S_3}\arrow[d,"f"]\\
        \widetilde{L}^{(S_3,S_3)}\arrow[r,"\phi_1"]\arrow[d]\arrow[urr,bend left=20,"\cong"] & (\cS^{(S_3,S_3)})_{\mathrm{mod}}\arrow[r,"\cong"]\arrow[d,"\pi"] & B^{(S_3,S_3)}\arrow[d,"\cong"]\\
        \widetilde{L}^{=S_3}\arrow[r,"\phi"] & M_{3,3}^{=S_3} & M_{2,4}^{=S_3}
    \end{tikzcd}
    \end{equation*}
    where $\widetilde{\cL}^{(S_3,S_3)}\rightarrow\cS^{(S_3,S_3)}$ is the Galois closure of the pullback of the cover of (all) lines to $\cS^{(S_3,S_3)}$.
    We know that $\Mon(\phi)\cong S_3^2$. By Lemma~\ref{lem: Type VIII S3-pointed cubics}, we have that $\pi$ has degree 3, hence we know that $\Mongeom_{S_3}(\BitangentsProblem)\cong\Mon(\phi_1)$ contains a copy of $S_3\times C_2$. We are left with showing that $\Mon(\phi_1)$ is divisible by 9. To show this, note that there is a morphism $U\rightarrow M_{3,3}^{=S_3}$ from an open $U\subset\A^2$, induced by the family of cubic surfaces with equations
    \begin{equation}\label{eq: S3 cubic surfaces}
        x^3+y^3+z^3+w^3+(ax+by)zw=0.
    \end{equation}
    This factors through $\pi$, see~\cite[Theorem 4.11]{landiStacksMonodromySymmetric2025} and Lemma~\ref{lem: Type VIII S3-pointed cubics}.
    Let us consider the family of lines in $\P^3$ with equations
    \begin{equation}\label{eq: general eq lines}
        x=\alpha z+\beta w, \qquad y=\gamma z+\delta w,
    \end{equation}
    and assume the determinant $\alpha\gamma-\beta\delta$ to be non-zero. Substituting~\eqref{eq: general eq lines} into~\eqref{eq: S3 cubic surfaces} and imposing the equation to be identically 0, we get the following identities:
    \begin{align*}
        \alpha^3+\gamma^3+1=0, & 3\alpha\beta+a=0,\\
        \beta^3+\delta^3+1, & 3\gamma\delta+b=0.
    \end{align*}
    This yields an irreducible polynomial equation
    \[
        27\alpha^6+(-a^3+b^3+27)\alpha^3-a^3=0,
    \]
    together with $\gamma=-\sqrt[3]{1+\alpha^3}$. The two equations yield a tower over extensions over $\C(a,b)$, each of which has order divisible by 3. By Proposition~\ref{prop:invariance-properties-monodromy}\eqref{point:injectivity-pullback}, it follows that $\Mon(\phi_1)$ is divisible by $9$. It follows that $\Mongeom_{S_3}(\BitangentsProblem)\cong\Mon(\phi_1)\cong S_3\times S_3$.
\end{proof}

\begin{proposition}\label{prop:monpar-s3-quartics} We have that $\Monmarked_{S_3}(\BitangentsProblem)\cong\Monpar_{S_3}(\BitangentsProblem)\cong S_3^2$.
\end{proposition}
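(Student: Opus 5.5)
The plan is to deduce both groups from the stacky computation in Proposition~\ref{prop: Type IX quartics stacky and geometric monodromy}, in the same way the Type VIII cubic case was handled. Those two sections have the same group $S_3$, and in both the stacky monodromy is $S_3^3$.

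First, I would show that the marked and parameter-level monodromy agree. By Theorem~\ref{thm:irred-VG}, the centralizer $C_{\PGL_3}(S_3)\cong\mathbb{C}^\times$ is connected, and $U_{2,4}^{S_3}$ is irreducible. So Corollary~\ref{cor:comparison-monodromy-UG-MG}\eqref{point:isom-parameter-marked} gives $\Monpar_{S_3}(\BitangentsProblem)\cong\Monmarked_{S_3}(\BitangentsProblem)$. Next, Assumption~\ref{assumption:comparison-stacky-problem} holds by Lemma~\ref{lem: automorphism act faithfully on bitangents}. Hence Theorem~\ref{thm: monodromy X_G vs X^G}\eqref{point 2} identifies the marked monodromy with the centralizer $C_{\Monstack_{S_3}(\BitangentsProblem)}(S_3)$, where $S_3$ is the canonically embedded copy of the automorphism group.

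It remains to compute this centralizer inside $\Monstack_{S_3}(\BitangentsProblem)\cong S_3^3$. By \cite[Corollary 3.16]{landiStacksMonodromySymmetric2025}, as used in the proof of Proposition~\ref{prop: Type IX quartics stacky and geometric monodromy}, the extension
\[1\to S_3\to\Monstack_{S_3}(\BitangentsProblem)\to\Mongeom_{S_3}(\BitangentsProblem)\to 1\]
splits as a direct product, because $S_3$ is complete: it has trivial center and every automorphism is inner. Therefore $\Monstack_{S_3}(\BitangentsProblem)\cong S_3\times C$, with $S_3$ the first factor and $C\cong\Mongeom_{S_3}(\BitangentsProblem)\cong S_3^2$. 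The centralizer of the first factor is $Z(S_3)\times C\cong C\cong S_3^2$. This gives the claim. Consistently with this, the exact sequence \eqref{eq: exact sequence M_G vs M^G} has quotient $H\cong\Inn(S_3)\cong S_3$, and the orders match: $216/36=6$.

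The one step that needs care is the identification of the $S_3$ in the direct product decomposition with the canonical image of $\Aut(Q)$. A complete normal subgroup is always a direct factor, namely $N\times C_{\Gamma}(N)$, so this causes no real trouble. As a sanity check, I would confirm in GAP that the centralizer of this $S_3$ inside $N_{W^+(E_7)}(S_3)$, restricted to the computed stacky group, has order $36$.
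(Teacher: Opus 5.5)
Your proposal is correct and is essentially the paper's argument. Connectedness of $C_{\PGL_3}(S_3)\cong\mathbb{C}^\times$ gives $\Monpar_{S_3}(\BitangentsProblem)\cong\Monmarked_{S_3}(\BitangentsProblem)$, Theorem~\ref{thm: monodromy X_G vs X^G}\eqref{point 2} identifies this with $C_{\Monstack_{S_3}(\BitangentsProblem)}(S_3)$, and Proposition~\ref{prop: Type IX quartics stacky and geometric monodromy} then yields $S_3^2$; you simply make explicit the final centralizer step (the splitting $\Gamma\cong N\times C_\Gamma(N)$ for a complete normal subgroup $N$), which the paper leaves implicit.
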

\begin{proof}
Since $C_{\PGL_3}(S_3)$ is connected, we can apply Theorem~\ref{thm: monodromy X_G vs X^G}, Corollary~\ref{cor:comparison-monodromy-UG-MG-specialcase}, and Proposition~\ref{prop: Type IX quartics stacky and geometric monodromy} to conclude.
\end{proof}

\subsection{Type X Quartics}

As in \Cref{prop:Vtype10}, we let the Klein 4-group $K_4\le \PGL_3$ act via $y\mapsto -y$ and $z\mapsto -z$. With this action, we have that $U_{2,4}^{K_4}$ is given by
\[
a_0x^4 + a_1 y^4 + a_2 z^4 + b_0 x^2 y^2 + b_1 x^2 z^2 + b_2 y^2 z^2.
\]
We can compute the parameter-level monodromy by identifying two codimension two families in here whose monodromy we already understand.
\begin{proposition}
Let $W_1,W_2 \subseteq U_{2,4}^{K_4}$ be the subspaces given by
\begin{align*}
    W_1 &= U_{2,4}^{K_4} \cap \{a_0 = a_1\} \cap \{ b_1 = b_2\} \\
    W_2 &= U_{2,4}^{K_4} \cap \{a_1=a_2\}\cap \{b_0=b_1\}
\end{align*}
Then each of $W_1$ and $W_2$ are $\PGL_3$-equivalent to $U_{2,4}^{D_8}$, albeit for different $D_8$'s.
\end{proposition}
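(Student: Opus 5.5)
The plan is to compute $W_1$ and $W_2$ explicitly and exhibit, for each, a projective change of coordinates carrying it onto a standard $D_8$-fixed locus. The model for $U_{2,4}^{D_8}$ is the one used in the proof of \Cref{prop:monpar-quartic-type7}: the $D_8$ generated by $K_4$ together with the swap $x\leftrightarrow y$. Restricting the $K_4$-invariant form to $W_1$ gives
\[
a_0(x^4+y^4)+a_2z^4+b_0x^2y^2+b_1(x^2+y^2)z^2 .
\]
This is exactly the general form invariant under the group generated by $y\mapsto -y$, $z\mapsto -z$ and $x\leftrightarrow y$. That group is dihedral of order $8$: the swap and $y\mapsto -y$ generate a rotation of order $4$ in the $(x,y)$-plane, up to the scalar $z\mapsto -z$.

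First I will check directly that the invariant ring in degree $4$ of this $D_8$ is spanned by $x^4+y^4$, $x^2y^2$, $(x^2+y^2)z^2$, $z^4$. Monomials with odd exponents are killed by $K_4$, and the swap then forces the stated symmetrizations. Hence $W_1=U_{2,4}^{D_8'}$ on the nose, where $D_8'=\langle K_4,(x\leftrightarrow y)\rangle$. Smoothness is automatic, since both sides are defined inside $U_{2,4}$.

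Second, I will treat $W_2$ the same way. Its form is
\[
a_0x^4+a_1(y^4+z^4)+b_0x^2(y^2+z^2)+b_2y^2z^2 ,
\]
so $W_2=U_{2,4}^{D_8''}$ with $D_8''=\langle K_4,(y\leftrightarrow z)\rangle$.

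Third, I will relate the two by the permutation matrix $\sigma$ exchanging $x$ and $z$. Under the convention $M\cdot f=f\circ M^{-1}$, the matrix $\sigma$ conjugates $K_4$ to itself, because in $\PGL_3$ the element $y\mapsto -y$ equals $xz\mapsto -xz$. It also conjugates $(x\leftrightarrow y)$ to $(z\leftrightarrow y)$. Therefore $\sigma D_8'\sigma^{-1}=D_8''$ and $\sigma W_1=W_2$, so both are $\PGL_3$-translates of the fixed locus of a single $D_8$, matching \Cref{prop:Vtype7}.

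Finally, I will observe that $D_8'\neq D_8''$ as subgroups of $\PGL_3$. The rotation element of $D_8'$ moves $z$ only by sign, while that of $D_8''$ fixes the $x$-axis. This gives the phrase ``different $D_8$'s.''

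The main potential obstacle is a sign or scalar subtlety in $\PGL_3$. I need to make sure that the lifts to $\GL_3$ define genuine $D_8$'s containing the given $K_4$, and that the invariant computation takes place in the correct character eigenspace. A projective group may fix forms that are only semi-invariant under a lift. To handle this, I will note that in $U_{2,4}^{D_8}$ the forms must be eigenvectors for some character. Then I will check that the eigenspace containing smooth quartics is precisely the trivial-character one computed above. For instance, the antisymmetric combinations $x^4-y^4$ and $(x^2-y^2)z^2$, together with the absence of $z^4$, give singular quartics at $[0:0:1]$.
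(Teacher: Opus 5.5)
Your computation of $W_1$ and $W_2$ is right, and so is the identification $W_1=U_{2,4}^{D_8'}$ and $W_2=\sigma W_1=U_{2,4}^{D_8''}$, with $D_8'=\langle K_4,(x\leftrightarrow y)\rangle$ and $D_8''=\langle K_4,(y\leftrightarrow z)\rangle$. This is also how the paper describes them right after the proposition.

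The gap is your final step, ``matching \Cref{prop:Vtype7}'', which is the actual content of the statement. Your premise that the Type VII model is $\langle K_4,(x\leftrightarrow y)\rangle$ with the diagonal $K_4$ is not correct. The group of \Cref{prop:Vtype7} is $\langle \mathrm{diag}(i,-i,1),(x\leftrightarrow y)\rangle$. It does not contain $\mathrm{diag}(1,-1,1)$; the Klein subgroup used there is $\langle \mathrm{diag}(1,1,-1),(x\leftrightarrow y)\rangle$. Its smooth invariant quartics lie in $\mathrm{span}\{x^4+y^4,x^2y^2,xyz^2,z^4\}$. This is also the family used in \Cref{prop:monpar-quartic-type7}, where the term $cx(-x-u)$ is $cxyz^2$ restricted to the line. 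Your $W_1=\mathrm{span}\{x^4+y^4,x^2y^2,(x^2+y^2)z^2,z^4\}$ is a different subspace, so an element of $\PGL_3$ is genuinely needed to relate them. Since \Cref{prop:type10quartics-monpar} imports the order $32$ computed in exactly those coordinates, this step cannot be skipped.

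To close the gap, conjugate $D_8'$ onto the Type VII group, i.e.\ diagonalize its order-$4$ element $(x,y)\mapsto(-y,x)$. This is what the paper does. The substitution $x=X+iY$, $y=iX+Y$, $z=Z$ sends $a_0(x^4+y^4)+a_2z^4+b_0x^2y^2+b_1(x^2+y^2)z^2$ to $(2a_0-b_0)(X^4+Y^4)-(12a_0+2b_0)X^2Y^2+4ib_1XYZ^2+a_2Z^4$. The coefficient map is invertible, so $W_1$ is carried onto $U_{2,4}^{D_8}$. Your $\sigma$ then handles $W_2$, which is the paper's ``by symmetry''.

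Alternatively, you could prove that all subgroups of $\PGL_3(\C)$ isomorphic to $D_8$ are conjugate, and then use $g\,U_{2,4}^{H}=U_{2,4}^{gHg^{-1}}$. The argument runs as follows. Such a subgroup lifts isomorphically to $\mathrm{SL}_3(\C)$ because $\gcd(8,3)=1$. The resulting representation must be $V_2\oplus\chi$, since the image is nonabelian. Different choices of $\chi$ are twists of one another because $V_2\otimes\psi\cong V_2$, so their projective images are conjugate.
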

\begin{proof} We show this computation for $W_1$ and observe that the computation for $W_2$ is almost identical by symmetry. Applying the change of basis $x= X + i Y$, $y = iX+Y$, $z=Z$ to a quartic with $a_0 = a_1$ and $b_1 = b_2$, we get
\begin{align*}
    (2a_0 - b_0)X^4 + (2a_0 - b_0 )Y^4  + a_2 Z^4 + (-12a_0 - 2b_0) X^2Y^2  + 4ib_1 XYZ^2,
\end{align*}
which we can see is the same family as $U_{2,4}^{D_8}$ (see \Cref{prop:Vtype7}).
\end{proof}

We can in fact take this a step further, and write $W_1 = U_{2,4}^{D_8^1}$ and $W_2 = U_{2,4}^{D_8^2}$, where
\begin{align*}
    D_8^1 &= \left\langle K_4, \begin{pmatrix}
        0 & 1 & 0 \\
        1 & 0 & 0 \\
        0 & 0 & 1
    \end{pmatrix}\right \rangle \\
    D_8^2 &= \left\langle K_4, \begin{pmatrix}
        1 & 0 & 0 \\
        0 & 0 & 1 \\
        0 & 1 & 0
    \end{pmatrix} \right\rangle.
\end{align*}
With this in mind, we can now prove:
\begin{proposition}\label{prop:type10quartics-monpar}
We have that the marked and parameter-level monodromy for bitangents to Type X quartics is the centralizer of order 256.
\end{proposition}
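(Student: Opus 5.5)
The plan is to squeeze $\Monpar_{K_4}(\BitangentsProblem)$ between a lower bound and the upper bound $C_{W^+(E_7)}(K_4)$, which has order $256$. The upper bound comes from Corollary~\ref{cor:comparison-monodromy-UG-MG-specialcase} (or Remark~\ref{rmk:classical-proof}). Since $C_{\PGL_3}(K_4)\cong(\C^\times)^2$ is connected by Theorem~\ref{thm:irred-VG}, Corollary~\ref{cor:comparison-monodromy-UG-MG}\eqref{point:isom-parameter-marked} identifies parameter-level and marked monodromy. So it suffices to show that $\Monpar_{K_4}(\BitangentsProblem)$ has order $256$.

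For the lower bound I would use the two subfamilies $W_1=U_{2,4}^{D_8^1}$ and $W_2=U_{2,4}^{D_8^2}$ from the preceding proposition. Each is an irreducible closed subvariety of the irreducible smooth space $U_{2,4}^{K_4}$, which is $U_{2,4}^{K_4\circ}$. By Proposition~\ref{prop:invariance-properties-monodromy}\eqref{point:injectivity-pullback}, applied to the Galois closure of the bitangent cover over $U_{2,4}^{K_4}$ and pulled back to $W_i$, we get inclusions of subgroups of $W^+(E_7)$ (after fixing a common base fiber and transporting along a path):
\[
\Monpar_{D_8^i}(\BitangentsProblem)\hookrightarrow \Monpar_{K_4}(\BitangentsProblem),\qquad i=1,2.
\]
By Proposition~\ref{prop:monpar-quartic-type7}, each source is isomorphic to $D_8\times C_2\times C_2$, of order $32$. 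Each image lies in $C_{W^+(E_7)}(D_8^i)\subseteq C_{W^+(E_7)}(K_4)$, and in fact equals $C_{W^+(E_7)}(D_8^i)$.

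It then remains to show that the two subgroups $C_{W^+(E_7)}(D_8^1)$ and $C_{W^+(E_7)}(D_8^2)$ together generate all of $C_{W^+(E_7)}(K_4)$. I would do this as a finite group computation in GAP, inside the fixed copy of $W^+(E_7)$ acting on $28$ points. First realize $K_4$, $D_8^1$ and $D_8^2$ as permutation groups on bitangents, using the explicit actions from \cite{betheabrazelton_bitangentssymmetricquartics}. Then compute both centralizers and check that the subgroup they generate has order $256$. Because each subgroup is a $2$-group of order $32$, any proper subgroup of the order-$256$ group containing both would be forced to be small, so strict containment should be easy to rule out.

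The main obstacle is the identification step. The inclusion of a pullback monodromy group is only canonical up to conjugation, which depends on the chosen base point and path. I need the two embedded copies, coming from $W_1$ and $W_2$, to be positioned inside the fixed $C_{W^+(E_7)}(K_4)$ exactly as the centralizers of the specific subgroups $D_8^1$ and $D_8^2$, rather than as arbitrary conjugates. I would handle this by choosing a base point in $W_1\cap W_2$, where the quartic has at least $S_4$-symmetry and $D_8^1$, $D_8^2$ act simultaneously by deck transformations on one fiber. Monodromy along loops in $W_i$ based there commutes with $D_8^i$, by the argument of Remark~\ref{rmk:classical-proof}. By orders, this gives $\Mon(W_i)=C(D_8^i)$ in a single labeling of the fiber. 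If that intersection is inconvenient because it is singular or too small, the fallback is to compare orders instead: show the generated group is strictly larger than a maximal subgroup by exhibiting, via the explicit bitangent field computations as in Proposition~\ref{prop:monpar-quartic-type7}, elements from $W_1$ and $W_2$ that do not lie in a common order-$128$ subgroup.
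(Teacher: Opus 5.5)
Your proposal is correct and follows the paper's outline. You use the centralizer upper bound, and connectedness of $C_{\PGL_3}(K_4)$ to identify marked with parameter-level monodromy. You then use the subfamilies $W_1=U_{2,4}^{D_8^1}$ and $W_2=U_{2,4}^{D_8^2}$ together with the Type VII result to place $H:=C_{W^+(E_7)}(D_8^1)$ and $K:=C_{W^+(E_7)}(D_8^2)$, each of order $32$, inside $\Monpar_{K_4}(\BitangentsProblem)$. The two routes differ only in how these are combined.

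You propose a GAP check that $\langle H,K\rangle$ has order $256$. The paper instead observes that $D_8^1$ and $D_8^2$ generate the signed-permutation $S_4$ whose invariant locus is $W_1\cap W_2=U_{2,4}^{S_4}$. Hence $H\cap K=C_{W^+(E_7)}(S_4)$, which has order $4$ by the Type IV computation. It follows that $|\langle H,K\rangle|\ge |HK|=32\cdot 32/4=256$.

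This counting is the rigorous form of your remark that a proper subgroup containing both would be ``forced to be small.'' As written, that remark is not an argument: an order-$128$ subgroup could a priori contain two order-$32$ subgroups. What controls the bound is $|H\cap K|$. So the paper needs nothing beyond centralizer orders it has already established. Your route instead requires an explicit permutation model of $W^+(E_7)$ on the $28$ bitangents, compatible with the actions of $K_4$, $D_8^1$, $D_8^2$. The fallback plan is unnecessary once you have either the GAP check or the intersection count.

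Your choice of base point in $W_1\cap W_2$ is a genuine improvement in rigor. There both dihedral groups act on the same fiber, and Remark~\ref{rmk:classical-proof} pins down each $\Monpar_{D_8^i}(\BitangentsProblem)$ as $C_{W^+(E_7)}(D_8^i)$ in a single labeling. The paper uses this compatibility implicitly when it intersects the two centralizers.
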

\begin{proof}
Since we have $U_{2,4}^{D_8^1}, U_{2,4}^{D_8^2} \le U_{2,4}^{K_4}$, we know by Proposition~\ref{prop:monpar-quartic-type7} that $C_{W^+(E_7)}(D_8^1) \le \Monpar_{K_4}(\BitangentsProblem)$ and $C_{W^+(E_7)}(D_8^2) \le \Monpar_{K_4}(\BitangentsProblem)$. We can compute that an object in the intersection of the centralizers must lie in the centralizer of $S_4$, where $S_4$ is generated by the $K_4$-action together with the $S_3$-action obtained by permuting $x$, $y$, and $z$. Indeed, this follows from the fact that $W_1 \cap W_2$ is the family
\begin{align*}
    a_0(x^4 + y^4 + z^4) + b_0(x^2y^2 + x^2z^2 + y^2z^2),
\end{align*}
which is precisely $U_{2,4}^{S_4}$. Hence the monodromy only intersects at $C_{W^+(E_7)}(S_4)$, and we therefore have that
\begin{align*}
    |\Monpar_{K_4}(\BitangentsProblem)| \ge \frac{|\Monpar_{D_8^1}(\BitangentsProblem)|\cdot |\Monpar_{D_8^2}(\BitangentsProblem)|}{|\Monpar_{S_4}(\BitangentsProblem)|} = \frac{32\cdot 32}{4} = 256.
\end{align*}
Thus the upper bound given by the centralizer of $K_4$ in $W^+(E_7)$ is attained. The computation of the marked monodromy follows from this and Corollary~\ref{cor:comparison-monodromy-UG-MG}, as the centralizer $C_{\PGL_3}(K_4)$ is connected.
\end{proof}

\begin{proposition}\label{prop:monstack-k4-quartics}
We have that the stacky monodromy for bitangents to Type $\mathrm{X}$ quartics is the normalizer $N_{W^+(E_7)}(K_4)$ of order 1536. 
\end{proposition}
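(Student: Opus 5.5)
We deduce the stacky monodromy from the marked (equivalently parameter-level) monodromy computed in \Cref{prop:type10quartics-monpar}, using the exact sequence of \Cref{thm: monodromy X_G vs X^G}\eqref{point 2}. By \Cref{thm:stacky-basic-bound}\eqref{eq:inclusion-stacky}, $\Monstack_{K_4}(\BitangentsProblem)\le N_{W^+(E_7)}(K_4)$. It therefore suffices to show that $|\Monstack_{K_4}(\BitangentsProblem)|\ge 1536$.

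\textbf{Step 1: the marked part.} Fix a connected component $\cZ$ of $\cX_{=K_4}$ and let $H\le\Aut(K_4)\cong S_3$ be its stabilizer. The sequence \eqref{eq: exact sequence M_G vs M^G} gives
\[
|\Monstack_{K_4}(\BitangentsProblem)| = |\Monmarked_{K_4}(\BitangentsProblem)|\cdot|H| = 256\cdot|H|.
\]
Here we use \Cref{prop:type10quartics-monpar} together with \Cref{cor:comparison-monodromy-UG-MG}\eqref{point:isom-parameter-marked}, which applies because $C_{\PGL_3}(K_4)$ is connected. Since $1536=256\cdot 6$, it remains to show $H=\Aut(K_4)$, i.e.\ that $\cX_{=K_4}$ is connected.

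\textbf{Step 2: connectedness via the parameter space.} By \Cref{thm:comparison-UG/centralizer-MG}, the component $[U^{=K_4}/C_P(K_4)]\to\cX_{=K_4}$ is an open immersion onto a union of components. Moreover, the residual action of $\Aut(K_4)$ on components is realized through $N_P(K_4)/C_P(K_4)$. By \Cref{thm:irred-VG}, $\pi_0 N_{\PGL_3}(K_4)\cong S_3$ and $C_{\PGL_3}(K_4)=(\C^\times)^2$ is connected, so $N_P(K_4)/C_P(K_4)\cong S_3$. Concretely, the $S_3$ factor is given by permutation matrices of $x,y,z$, which normalize the diagonal $K_4$ and induce all of $\Aut(K_4)$.

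Since $U_{2,4}^{K_4}$ is irreducible by \Cref{thm:irred-VG}, the open subset $U^{=K_4}$ is irreducible, hence connected. Thus $[U^{=K_4}/C_P(K_4)]$ is connected and is a single component $\cZ$. Its stabilizer $H$ contains the image of $N_P(K_4)/C_P(K_4)$, because $N_P(K_4)$ preserves $U^{=K_4}$. This image is all of $\Aut(K_4)$. Equivalently, in the language of \Cref{prop:SES-par-stack-monodromy}, $H=\pi_0N_P(K_4)\cong S_3$. Either route gives $|\Monstack_{K_4}(\BitangentsProblem)|=1536$.

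\textbf{Main obstacle.} The delicate point is that \Cref{prop:SES-par-stack-monodromy} alone only says the cokernel is a \emph{quotient} of $H$. Surjectivity onto all of $S_3$ must instead come from the marked sequence \eqref{eq: exact sequence M_G vs M^G}, where the cokernel is exactly $H$. For this I would check two things carefully: that the permutation matrices really act on $K_4=\{\mathrm{diag}(1,\pm1,\pm1)\}$ through all of $\Aut(K_4)$, and that $U^{=K_4}$ is nonempty and irreducible (it is open in the irreducible $U^{K_4}_{2,4}$). Finally, a GAP check that $|N_{W^+(E_7)}(K_4)|=1536$ confirms that the bound of \Cref{thm:stacky-basic-bound} is attained.
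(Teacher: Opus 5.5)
Your proposal is correct and follows the same route as the paper. You identify the marked monodromy with the parameter-level monodromy of order $256$ (using that $C_{\PGL_3}(K_4)$ is connected), then apply the exact sequence of \Cref{thm: monodromy X_G vs X^G} with cokernel $\Aut(K_4)$ to get order $256\cdot 6=1536=|N_{W^+(E_7)}(K_4)|$. Your explicit check that $\cX_{=K_4}$ is connected, via $N_{\PGL_3}(K_4)/C_{\PGL_3}(K_4)\cong S_3\cong\Aut(K_4)$ and the irreducibility of $U_{2,4}^{=K_4}$, supplies a justification the paper leaves implicit: that the cokernel is all of $\Aut(K_4)$ rather than a proper stabilizer.
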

\begin{proof}
This is directly analogous to \Cref{prop:stacky-monodromy-D8-quartics}. Since $C_{\PGL_3}(K_4)$ is connected, we have by \Cref{cor:comparison-monodromy-UG-MG} the parameter-level monodromy agrees with the marked stacky monodromy. Thus by \Cref{thm: monodromy X_G vs X^G} \eqref{eq: exact sequence M_G vs M^G}, we obtain the short exact sequence
\[
    1 \to \Monpar_{K_4}(\BitangentsProblem) \to \Monstack_{K_4}(\BitangentsProblem) \to \Aut(K_4) \to 1.
\]
Thus $|\Monstack_{K_4}| = 1536$, and therefore must be equal to $N_{W^+(E_7)}(K_4)$.
\end{proof}

The geometric monodromy follows immediately by Theorem~\ref{thm:stacky-basic-bound} \eqref{eq:isom-geometric-monodromy}.

\subsection{Type XI Quartics}

Type XI quartics have automorphism group isomorphic to $C_3$, and form a 2-dimensional family. By~\cite[Theorem 1.1]{betheabrazelton_bitangentssymmetricquartics}, for every $C_3$-quartic there exists exactly one $C_3$-invariant bitangent; in particular, $\cB^{(C_3,C_3)}\cong\cM_{2,4}^{=C_3}$.

\begin{lemma}\label{lem: type XI quartics correspondence}
    The type of pointed cubic surfaces mapping to $\cB^{(C_3,C_3)}$ is Type IV, with a $C_3$-fixed point, corresponding to the stratum $\cS^{(H_3(3)\rtimes C_2,C_3)}$.
\end{lemma}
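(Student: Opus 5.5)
Under the isomorphism $\Phi\colon\cSopen\xrightarrow{\cong}\cB$ of Theorem~\ref{thm: isom stacks cubic surfaces and plane quartics}, automorphism groups correspond: $\Aut(S,p)\cong\Aut(Q,B)$. So the plan is to pull back $\cB^{(C_3,C_3)}$ and show that it equals a stratum $\cS^{(G,C_3,i)}$ for some group $G$ and inclusion $i$. We then pin down $G$ and $i$ using the classification of \S\ref{subsec: H=C3 case}.

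First, $\Phi$ is an isomorphism and $\cB^{(C_3,C_3)}$ is nonempty. Hence $\Phi^{-1}(\cB^{(C_3,C_3)})$ is a nonempty open and closed substack of the locus of pairs $(S,p)$ with $\Aut(S,p)\cong C_3$. It therefore lies in the union of the strata $\cS^{(G,C_3,i)}$, taken over automorphism groups $G$ of cubic surfaces and inclusions $i\colon C_3\hookrightarrow G$. Conversely, each nonempty $\cS^{(G,C_3,i)}$ maps into some $\cB^{(G',C_3)}$. By \cite[Theorem 1.1]{betheabrazelton_bitangentssymmetricquartics}, the only stratum of bitangents with isotropy $C_3$ is $\cB^{(C_3,C_3)}$, so every nonempty $\cS^{(G,C_3,i)}$ maps to it.

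Lemma~\ref{lem: Type IV unique cubic surface with C3-points} now does the main work. It says that the only nonempty such stratum is $G=H_3(3)\rtimes C_2$ with $i$ the inclusion of the center. It follows that $\Phi$ identifies $\cS^{(H_3(3)\rtimes C_2,C_3)}$ with $\cB^{(C_3,C_3)}$. As a consistency check on dimensions, the stratum $\cS^{(H_3(3)\rtimes C_2,C_3)}$ fibers over the one-dimensional $\cM_{3,3}^{=H_3(3)\rtimes C_2}$ in punctured plane cubics, so it has dimension $2$. This matches $\dim\cM_{2,4}^{=C_3}=2$, and also agrees with Lemma~\ref{lem: C3 locus}.

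The step that needs care is making sure no other $C_3$-pointed cubic contributes. The Type VIII case, where $G=S_3$, must be excluded, and Lemma~\ref{lem: Type IV unique cubic surface with C3-points} already does this by showing that $C_3$-fixed points are $S_3$-fixed. One also needs the stabilizer of $(S,p)$ to be exactly $C_3$ rather than something larger. This holds because any larger stabilizer containing the center would fix a point of the cubic curve $E_a$, and such points lie in the finite set removed from $E_a$ or have a different isotropy type, as in Lemma~\ref{lem: C6 locus}. With both points settled, the lemma follows.
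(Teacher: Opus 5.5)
Your route is the paper's. You transport along $\Phi$, which preserves $\Aut(S,p)\cong\Aut(Q,B)$, use \cite[Theorem 1.1]{betheabrazelton_bitangentssymmetricquartics} to see that $\cB^{(C_3,C_3)}$ is the whole locus of pairs with isotropy exactly $C_3$, and then invoke Lemma~\ref{lem: Type IV unique cubic surface with C3-points}. As a deduction from that lemma it is valid. However, the step you single out as needing care, that no other $C_3$-pointed cubic contributes, does not hold. The ``only non-empty stratum'' clause you quote from Lemma~\ref{lem: Type IV unique cubic surface with C3-points} is overstated.

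Lemma~\ref{lem: C3 locus} allows a fixed cubic curve for Types I and III as well as Type IV, and you only exclude Type VIII. Take the Fermat cubic and $\sigma=\mathrm{diag}(1,1,1,\zeta_3)$. The fixed locus on $S$ is $E=\{w=0,\ x^3+y^3+z^3=0\}$, and the $27$ lines meet $E$ only in its $9$ flexes. An element of $C_3^{\times 3}\rtimes S_4$ fixing a general point of $E$ must act trivially on the plane $w=0$, so it lies in $\langle\sigma\rangle$. Hence a general point of $E$ gives an object of $\cS^{(C_3^{\times 3}\rtimes S_4,\,C_3)}$. The same argument with the center of $H_3(3)\rtimes C_4$ shows $\cS^{(H_3(3)\rtimes C_4,\,C_3)}\neq\emptyset$. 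These pairs have isotropy exactly $C_3$, so $\Phi$ also sends them into $\cB^{(C_3,C_3)}$. The proof of Lemma~\ref{lem: Type IV unique cubic surface with C3-points} really only shows that Type IV gives the unique \emph{two-dimensional} stratum. The paper's one-line proof of the present lemma has the same imprecision.

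What is true, and what your dimension count essentially proves, is a generic statement:
\begin{itemize}
\item $\Phi^{-1}(\cB^{(C_3,C_3)})\cong\cM_{2,4}^{=C_3}$ is irreducible of dimension $2$.
\item Its underlying surfaces lie in the closed locus of surfaces whose automorphism group contains a conjugate of $H_3(3)\rtimes C_2$, namely Types IV, III and I.
\item $\cS^{(H_3(3)\rtimes C_2,C_3)}$ is its dense open part.
\item The complement is the union of the one-dimensional pointed Type I and Type III strata, which are the $j=0,1728$ members of the Hesse family.
\end{itemize}
So $\Phi$ identifies $\cS^{(H_3(3)\rtimes C_2,C_3)}$ with a dense open substack of $\cB^{(C_3,C_3)}$, not with all of it. This suffices for Proposition~\ref{prop: Type XI quartics stacky and geometric monodromy}, because monodromy is unchanged by restricting to a dense open substack (Proposition~\ref{prop:invariance-properties-monodromy}\eqref{point:restriction-to-open}). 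You should state and prove the correspondence in that generic form rather than as an equality of strata.
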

\begin{proof}
    This follows immediately from \S\ref{subsec: H=C3 case} and the fact that Type XI is the only type of quartic admitting a bitangent with isotropy group $C_3$.
\end{proof}

\begin{proposition}\label{prop: Type XI quartics stacky and geometric monodromy}
    The stacky $C_3$-symmetric monodromy of the cover of bitangents is isomorphic to $W(L_3)$, the unique index two subgroup in $N_{W^+(E_7)}(C_3)$. The geometric monodromy is isomorphic to $W(L_3)/C_3\cong \ASL_2(\F_3)$.
\end{proposition}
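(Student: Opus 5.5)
The plan mirrors the Type VIII quartic argument (Proposition~\ref{prop: Type VI quartics stacky and geometric monodromy}), now driven by the $C_3$-stratum instead of the $C_6$-stratum.

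First, bound the group from above. By Theorem~\ref{thm:stacky-basic-bound}, $\Monstack_{C_3}(\BitangentsProblem)\leq N_{W^+(E_7)}(C_3)$, and we check computationally that $W(L_3)$ is its unique index two subgroup. So it suffices to prove two things: that the stacky monodromy has order $|W(L_3)|=648$, and that it lies in (equivalently, equals) this index two subgroup. The geometric statement then follows from \eqref{eq:isom-geometric-monodromy}, via $W(L_3)/C_3\cong\ASL_2(\F_3)$.

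Next, transfer the problem to cubic surfaces. Since $\cB^{(C_3,C_3)}\cong\cM_{2,4}^{=C_3}$, the cover $\widetilde{\cB}^{=C_3}\to\cM_{2,4}^{=C_3}$ has the same monodromy as its pullback to $\cB^{(C_3,C_3)}$. By Lemma~\ref{lem: type XI quartics correspondence} together with Theorems~\ref{thm: isom stacks cubic surfaces and plane quartics} and~\ref{thm: lines bitangents correspondence}, this pullback is identified with the Galois closure of the cover of lines pulled back to $\cS^{(H_3(3)\rtimes C_2,C_3)}$. This gives the same diagram as in the Type VIII quartic case, with
\[
\pi\colon \cS^{(H_3(3)\rtimes C_2,C_3)}\to\cM_{3,3}^{=H_3(3)\rtimes C_2}.
\]
By Lemma~\ref{lem: Type IV unique cubic surface with C3-points}, $\pi$ is a fibration whose fibres are the punctured smooth plane cubics $E_a$ cut out by $x=0$. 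Its fibres are therefore geometrically irreducible, and it is flat (smooth) onto the base.

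Then apply Proposition~\ref{prop:invariance-properties-monodromy}\eqref{point:pullback-of-monodromy-along-flat-map}. It shows that the monodromy of the cover of lines over $\cS^{(H_3(3)\rtimes C_2,C_3)}$ equals the monodromy over $\cM_{3,3}^{=H_3(3)\rtimes C_2}$, which is $W(L_3)$ by Theorem~\ref{thm:Type-IV-monodromy}. Combined with the identification above, this gives $\Monstack_{C_3}(\BitangentsProblem)\cong W(L_3)$. Here $W(L_3)$ is identified as a subgroup of $W^+(E_7)$ through the embedding $W(E_6)\hookrightarrow W^+(E_7)$ given by stabilizing a bitangent. Its order $648$ is half of $|N_{W^+(E_7)}(C_3)|$, which matches the first step.

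The main obstacle is applying the flat-pullback result. Proposition~\ref{prop:invariance-properties-monodromy}\eqref{point:pullback-of-monodromy-along-flat-map} needs faithful flatness with geometrically irreducible fibres. Two issues arise:
\begin{itemize}
\item The punctured cubic $E_a$ must be nonempty and irreducible after removing the finitely many points on lines. This needs a uniform argument over the family, for example by removing a closed substack of the total space first and then using Proposition~\ref{prop:invariance-properties-monodromy}\eqref{point:restriction-to-open}.
\item The stabilizers drop from $H_3(3)\rtimes C_2$ to $C_3$ along $\pi$, so $\pi$ is not representable in a naive sense. If the cited proposition is awkward here, I would instead factor $\pi$ through the $C_3$-gerbe structure and compare orders directly. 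Using Lemma~\ref{lem:injectivity-hidden-loops}, the central $C_3$ injects into the monodromy. I would then bound the index from above by an argument like the Type VIII quartic one.
\end{itemize}
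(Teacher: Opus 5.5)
Your proposal is correct and follows the paper's proof essentially step for step. You transfer the problem along $\cM_{2,4}^{=C_3}\cong\cB^{(C_3,C_3)}\cong\cS^{(H_3(3)\rtimes C_2,C_3)}$, apply Proposition~\ref{prop:invariance-properties-monodromy}\eqref{point:pullback-of-monodromy-along-flat-map} to $\pi$ via Lemma~\ref{lem: Type IV unique cubic surface with C3-points}, and conclude because $W(L_3)$, of order $648$, is the unique index two subgroup of $N_{W^+(E_7)}(C_3)$. One small remark: your second worry is unfounded, since $\pi$ is the restriction of the representable universal family $\cS\to\cM_{3,3}$, and an injective drop in stabilizers $C_3\hookrightarrow H_3(3)\rtimes C_2$ is exactly what representability allows, so you do not need the fallback.
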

\begin{proof}
As usual, it is enough to compute the stacky monodromy. Lemma~\ref{lem: type XI quartics correspondence} together with Theorem~\ref{thm: lines bitangents correspondence} yield a commutative diagram of moduli stacks
\begin{equation*}
\begin{tikzcd}
    & & \widetilde{\cB}^{=C_3}\arrow[d,"f"]\\
    \widetilde{\cL}^{(H_3(3)\rtimes C_2,C_3)}\arrow[r,"\phi_1"]\arrow[d]\arrow[urr,bend left=20,"\cong"] & \cS^{(H_3(3)\rtimes C_2,C_3)}\arrow[r,"\cong"]\arrow[d,"\pi"] & \cB^{(C_3,C_3)}\arrow[d,"\cong"]\\
    \widetilde{\cL}^{=H_3(3)\rtimes C_2}\arrow[r,"\phi"] & \cM_{3,3}^{=H_3(3)\rtimes C_2} & \cM_{2,4}^{=C_3}
\end{tikzcd}
\end{equation*}
where $\widetilde{\cL}^{(H_3(3)\rtimes C_2,C_3)}\rightarrow\cS^{(H_3(3)\rtimes C_2,C_3)}$ is the Galois closure of the pullback of the cover of (all) lines to $\cS^{(H_3(3)\rtimes C_2,C_3)}$.
By Lemma~\ref{lem: Type IV unique cubic surface with C3-points}, the morphism $\pi$ is faithfully flat with irreducible fibers. Then, Proposition~\ref{prop:invariance-properties-monodromy}\eqref{point:pullback-of-monodromy-along-flat-map} implies that $\Monstack_{C_3}(\BitangentsProblem)\cong\Mon(f)\cong\Mon(\phi_1)\cong\Mon(\phi)$. By~Theorem~\ref{thm:Type-IV-monodromy}, $\Mon(\phi)$ has order $648$. It follows that $\Mon(f)$ is a subgroup of index 2 in $N_{W^+(E_7)}(C_3)\cong N_{W(E_6)}(H_3(3)\rtimes C_2)$, hence again $\Mon(f)\cong W(L_3)$. The parameter-level monodromy is equal to the marked and stacky monodromy by Theorem~\ref{thm: monodromy X_G vs X^G} and Corollary~\ref{cor:comparison-monodromy-UG-MG}, as $C_{\PGL_3}(C_3)$ is connected. The geometric monodromy follows by Theorem~\ref{thm:stacky-basic-bound}\eqref{eq:isom-geometric-monodromy}.
\end{proof}

\subsection{Type XII Quartics}

Plane quartics of Type XII have automorphism group $C_2$, and form a family of dimension 4. Any such plane quartic is isomorphic to one with equation
\[
    x^4+y^4+z^4+x^2(ay^2+byz+cz^2)+dy^2z^2,
\]
for some parameters $a$, $b$, $c$, $d$.

By~\cite[Theorem 1.1]{betheabrazelton_bitangentssymmetricquartics}, the cover $\cB^{(C_2,C_2)}\rightarrow\cM^{=C_2}$ has degree 4.

\begin{lemma}\label{lem: type XII quartics correspondence}
    The stack $\cB^{(C_2,C_2)}$ is generically isomorphic to $\cS^{(C_2,C_2)}$, and in particular irreducible.
\end{lemma}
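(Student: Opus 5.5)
The approach mirrors the Type VIII, IX and XI correspondence lemmas (Lemmas~\ref{lem: type VIII quartics correspondence}, \ref{lem: type IX quartics correspondence}, \ref{lem: type XI quartics correspondence}). We use the isomorphism $\Phi\colon\cSopen\xrightarrow{\cong}\cB$ of Theorem~\ref{thm: isom stacks cubic surfaces and plane quartics}. Since $\Phi$ preserves automorphism groups of objects, it identifies $\Aut(S,p)$ with $\Aut(Q,B)$. The plan is to show that the pairs $(Q,B)$ with $\Aut(Q)=C_2$ and $\Aut(Q,B)=C_2$ correspond, over a dense open, exactly to pairs $(S,p)$ with $\Aut(S)=C_2$ and $\Aut(S,p)=C_2$. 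Here the involution is an Eckardt involution and $p$ is a point of the plane cubic component of its fixed locus.

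The first step is a dimension count. By \cite[Theorem 1.1]{betheabrazelton_bitangentssymmetricquartics}, the stack $\cB^{(C_2,C_2)}$ is finite étale of degree $4$ over $\cM_{2,4}^{=C_2}$, so each of its components has dimension $4$.

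On the cubic side, Lemma~\ref{lem: C2 locus} shows that a pair $(S,p)$ with $\Aut(S,p)\supseteq C_2$ and $p\in S^\circ$ must have the involution of Eckardt type. Moreover $p$ must lie on the punctured plane cubic in its fixed locus, since the isolated Eckardt point lies on lines. Hence $\cS^{(G,C_2)}$ fibers over $\cM_{3,3}^{=G}$ in curves, and has dimension $\dim\cM_{3,3}^{=G}+1$. This equals $4$ only when $\dim\cM_{3,3}^{=G}=3$. From the table in Theorem~\ref{thm:irred-UG}, the candidates with $\dim U^G_{3,3}-\dim N_{\PGL_4}(G)$ equal to $3$ are Type XI ($C_2$), since $12-9=3$. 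Type X gives $7-5=2$, and all other types give at most $2$.

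So the $4$-dimensional components of $\cS^{(\ast,C_2)}$ all lie over $\cM_{3,3}^{=C_2}$. Any stratum $\cS^{(G,C_2)}$ with $G\neq C_2$ has dimension at most $3$ and therefore contributes only to a proper closed substack. This gives the generic isomorphism $\cB^{(C_2,C_2)}\supseteq\Phi(\cS^{(C_2,C_2)})$, with both sides $4$-dimensional and complementary pieces of lower dimension. We also need the reverse containment: a pair in $\cS^{(C_2,C_2)}$ must map to a quartic whose automorphism group is generically exactly $C_2$. This holds because $\Aut(Q)\supsetneq C_2$ forces $Q$ into a stratum of dimension at most $3$ by Theorem~\ref{thm:irred-VG}, hence only over a proper closed subset.

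For irreducibility, we show that $\cS^{(C_2,C_2)}\to\cM_{3,3}^{=C_2}$ has irreducible base and irreducible fibers. The base is irreducible because $U^{C_2}_{3,3}$ is irreducible (Theorem~\ref{thm:irred-UG}). The fiber over $S$ is the punctured smooth plane cubic in $S^{C_2}$; the Eckardt involution of a Type XI surface is unique, so there is exactly one such curve. Thus the map is flat with geometrically irreducible fibers, say by realizing it over $U^{C_2}_{3,3}$ as an open subset of the family of fixed plane cubics $\{z=0\}\cap S$ in the normal form $z^2\ell+f_3$. It follows that $\cS^{(C_2,C_2)}$ is irreducible.

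The main obstacle is the bookkeeping in the dimension argument. We must make sure that no lower-dimensional stratum $\cS^{(G,C_2,i)}$ with $G\neq C_2$ secretly contributes a $4$-dimensional component, and that all four $C_2$-fixed bitangents of a general Type XII quartic really arise this way. This last point should follow from the irreducibility just established, together with the degree $4$ count over $\cM^{=C_2}_{2,4}$.
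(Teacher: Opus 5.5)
Your proposal is correct and takes the same route as the paper: Lemma~\ref{lem: C2 locus} and the dimension table show that $\cS^{(C_2,C_2)}$ is the only $4$-dimensional stratum $\cS^{(G,C_2)}$, and irreducibility follows because $\cS^{(C_2,C_2)}\to\cM_{3,3}^{=C_2}$ is faithfully flat with irreducible base and irreducible fibers (punctured plane cubics). Your extra check that quartics with $\Aut(Q)\supsetneq C_2$ only occupy strata of dimension at most $3$ is a step the paper leaves implicit, but you have the two containments labeled the wrong way round: the cubic-side dimension count gives $\cB^{(C_2,C_2)}\subseteq\Phi(\cS^{(C_2,C_2)})$ up to lower-dimensional pieces, and the quartic-side count gives the opposite inclusion (this also already settles your closing worry about all four bitangents).
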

\begin{proof}
    The isomorphism follows from the fact that the only $G$ for which $\cS^{(G,C_2)}$ has dimension 4 is $G=C_2$, in which case the only inclusion is the identity by \S\ref{subsec: H=C2 case}. Since $\cS^{(C_2,C_2)}\rightarrow\cM^{=C_2}$ is faithfully flat with irreducible fibers and base, $\cS^{(C_2,C_2)}$ is itself irreducible.
\end{proof}

\begin{proposition}\label{prop: Type XII quartics stacky and geometric monodromy}
    The marked, parameter-level, and stacky $C_2$-symmetric monodromy of the cover of bitangents are isomorphic to $N_{W^+(E_7)}(C_2)$ of order 4608, while the geometric monodromy is isomorphic to $N_{W^+(E_7)}(C_2)/C_2$, of order 2304.
\end{proposition}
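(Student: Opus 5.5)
We follow the pattern of the Type VIII, IX and XI quartic computations and transfer the problem to cubic surfaces through the stratum $\cB^{(C_2,C_2)}$.

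By \Cref{lem: type XII quartics correspondence} and \Cref{thm: lines bitangents correspondence}, the pullback of the Galois closure $\widetilde{\cB}^{=C_2}$ to $\cB^{(C_2,C_2)}$ agrees, over a dense open, with the pullback of $\widetilde{\cL}^{=C_2}$ along $\pi\colon\cS^{(C_2,C_2)}\to\cM_{3,3}^{=C_2}$. By \Cref{lem: C2 locus}, $\pi$ is a fibration in punctured smooth plane cubics, namely the fixed curve of the Eckardt involution minus its intersections with lines. It is therefore faithfully flat with geometrically irreducible fibers. Combining \Cref{prop:invariance-properties-monodromy}\eqref{point:pullback-of-monodromy-along-flat-map} with \eqref{point:restriction-to-open} (both stacks are smooth and irreducible), we get
\[
\Mon(\widetilde{\cB}^{=C_2}|_{\cB^{(C_2,C_2)}}\to\cB^{(C_2,C_2)})\cong\Monstack_{C_2}(\LinesProblem)\cong\GO_4^+(3),
\]
a group of order $1152$.

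Next we use multiplicativity along the tower $\widetilde{\cB}^{=C_2}\to\cB^{(C_2,C_2)}\to\cM_{2,4}^{=C_2}$. Since $\cB^{(C_2,C_2)}$ is irreducible, it is a connected component of $\cB^{=C_2}$, and its degree over $\cM_{2,4}^{=C_2}$ is $4$. Because the Galois closure over $\cM_{2,4}^{=C_2}$ dominates $\cB^{(C_2,C_2)}$, we have
\[
|\Monstack_{C_2}(\BitangentsProblem)| = 4\cdot|\Mon(\widetilde{\cB}^{=C_2}\to\cB^{(C_2,C_2)})| = 4\cdot 1152 = 4608.
\]
Here one must check that the monodromy of the Galois closure restricted to the component equals the stabilizer subgroup. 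This is the standard fact that for a connected Galois $\Gamma$-cover, the restriction to a connected subcover $\cB_1$ of degree $d$ has monodromy of index $d$. Since $4608=|N_{W^+(E_7)}(C_2)|$, which can be verified in GAP, the upper bound of \Cref{thm:stacky-basic-bound} is attained.

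The remaining groups follow formally. The geometric monodromy is $N_{W^+(E_7)}(C_2)/C_2$ by \eqref{eq:isom-geometric-monodromy}. Since $C_2$ is central in its normalizer and $\Aut(C_2)$ is trivial, \Cref{thm: monodromy X_G vs X^G}\eqref{point 2} gives $\Monmarked=\Monstack$. Because $C_{\PGL_3}(C_2)\cong\GL_2$ is connected, \Cref{cor:comparison-monodromy-UG-MG}\eqref{point:isom-parameter-marked} gives $\Monpar=\Monmarked$.

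The main obstacle is the first step. It requires that the dense open where $\Phi$ identifies $\cS^{(C_2,C_2)}$ with $\cB^{(C_2,C_2)}$ is compatible with the covers, and that the fibers of $\pi$ stay irreducible after removing the points on lines. Removing finitely many points from an irreducible curve keeps it irreducible, so the remaining work is checking that \Cref{lem: type XII quartics correspondence} gives an actual open immersion rather than merely a birational correspondence. If that fails, we would instead apply \Cref{prop:invariance-properties-monodromy}\eqref{point:injectivity-pullback} to obtain only the lower bound $|\Mon|\ge 4\cdot1152$, which still suffices against the normalizer bound.
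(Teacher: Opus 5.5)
Your proposal is correct and is essentially the paper's own proof. Both arguments transfer to $\cS^{(C_2,C_2)}$ via the lines--bitangents correspondence, then use that $\pi$ is faithfully flat with irreducible fibers to get monodromy $W(F_4)=\GO_4^+(3)$ of order $1152$ over $\cB^{(C_2,C_2)}$, multiply by the degree $4$ of $\cB^{(C_2,C_2)}\to\cM_{2,4}^{=C_2}$ to reach $4608=|N_{W^+(E_7)}(C_2)|$, and read off the remaining groups formally. You also justify two steps the paper only asserts: marked equals stacky because $\Aut(C_2)$ is trivial, and parameter-level equals marked because $C_{\PGL_3}(C_2)\cong\GL_2$ is connected.
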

\begin{proof}
    Note first that the marked, parameter-level, and stacky monodromy all coincide, hence it is enough to compute the latter. Let $\widetilde{\cL}^{(C_2,C_2)}\rightarrow\cS^{(C_2,C_2)}$ be the Galois closure of the pullback of the cover of (all) lines to $\cS^{(C_2,C_2)}$. By Lemma~\ref{lem: type XII quartics correspondence}, we have a commutative diagram of moduli stacks
    \begin{equation*}
    \begin{tikzcd}
        & & \widetilde{\cB}^{=C_2}\arrow[d,"f"]\arrow[dd,bend left=50,"g"]\\
        \widetilde{\cL}^{(C_2,C_2)}\arrow[r,"\phi_1"]\arrow[d]\arrow[urr,bend left=20,"\psi"] & \cS^{(C_2,C_2)}\arrow[r,"\cong"]\arrow[d,"\pi"] & \cB^{(C_2,C_2)}\arrow[d,"h"]\\
        \widetilde{\cL}^{=C_2}\arrow[r,"\phi"] & \cM_{3,3}^{=C_2} & \cM_{2,4}^{=C_2}
    \end{tikzcd}
    \end{equation*}
    where $\psi$ is a generic isomorphism, which we can treat as an actual isomorphism by Proposition~\ref{prop:invariance-properties-monodromy}\eqref{point:restriction-to-open}. Since $\pi$ is faithfully flat with irreducible fibers (Lemma~\ref{lem: C2 locus}), by Proposition~\ref{prop:invariance-properties-monodromy}\eqref{point:pullback-of-monodromy-along-flat-map},~\cite[Theorem 4.24]{landiStacksMonodromySymmetric2025} and the diagram above, we have $\Mon(f)\cong\Mon(\phi_1)\cong\Mon(\phi)\cong W(F_4)$, of order $1152$. It follows from this and Lemma~\ref{lem: type XII quartics correspondence} that
    \[
        |\Mon(g)|=|\Mon(f)|\cdot\deg h=4608=|N_{W^+(E_7)}(C_2)|.
    \]
    Since $\Mon(g)\leq N_{W^+(E_7)}(C_2)$, and the geometric monodromy is isomorphic to $\Mon(g)/C_2$, the result follows.
\end{proof}

\section{From monodromy to formulas in radicals}\label{sec:radicals}

Recall that solvability of the Galois group of a cover implies the existence of formulas in radicals for the solutions in terms of the defining equations. In \Cref{thm:main}, for instance, it can be easily verified that all the parameter-level monodromy groups are solvable, and therefore there are formulas in radicals over $\C$ for lines on a $G$-symmetric smooth cubic surface, defined in terms of the coordinates of $U_{3,3}^{=G}$.

While this is already valuable, it says nothing about the complexity of the coefficients which will appear in the formulas in radicals - a priori they could be arbitrary complex numbers. When possible, we would like to get a formula in radicals over the rationals. To that end, we introduce the following notation:

\begin{notation} For each $G\le \PGL_4(\C)$ in \Cref{thm:main}, presented as in \Cref{sec:U33}, we let $K_G\supseteq \Q$ denote the smallest number field for which $G \le \PGL_4(K_G)$. Therefore $U_{3,3}^{=G}$ is a well-defined $K_G$-variety which is geometrically irreducible by \Cref{thm:irred-UG}. Letting $\widetilde{V_{|U_{3,3}^{=G}}}$ denote the normal core cover taken over $K_G$, we define
\[
\Monpar_{G,K_G}(\LinesProblem) := \Gal(K_G(\widetilde{V_{|U_{3,3}^{=G}}}) / K_G(U_{3,3}^{=G})).
\]
\end{notation}
Arguing that these groups are solvable will guarantee the existence of formulas in radicals defined over $K_G$ for the lines on $G$-symmetric cubic surfaces in terms of the local coordinates of $U_{3,3}^{=G}$. An explicit example is the formula in radicals for lines on Type V cubic surfaces, which is defined over the rationals and worked out explicitly in \cite[\S7]{brazeltonMonodromySpaceSymmetric2025}.

We have the following:
\begin{corollary}\label{cor:solvability} For each $G$ in \Cref{thm:main}, we have that $\Monpar_{G,K_G}(\LinesProblem)$ is solvable.
\end{corollary}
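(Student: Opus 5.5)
The plan is to compare the Galois group over $K_G$ with the geometric (complex) one through the standard exact sequence for arithmetic versus geometric monodromy. Write $U := U_{3,3}^{=G}$, a geometrically irreducible $K_G$-variety, and $\widetilde V$ for the normal core cover of $V|_U$ over $K_G$. Let $L\supseteq K_G$ be the algebraic closure of $K_G$ inside the function field $K_G(\widetilde V)$; this is a finite Galois extension of $K_G$. Standard Galois theory of function fields then gives a short exact sequence
\[
1 \to \Gal\bigl(\overline{K_G}(\widetilde V)/\overline{K_G}(U)\bigr) \to \Monpar_{G,K_G}(\LinesProblem) \to \Gal(L/K_G) \to 1.
\]
The kernel is the complex parameter-level group. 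Indeed, base change from $\overline{K_G}$ to $\C$ does not change the Galois group of a cover of a geometrically irreducible variety. So the kernel is $\Monpar_G(\LinesProblem)$ as listed in \Cref{thm:main}.

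Solvability is closed under extensions, so two things must be checked.

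\textbf{(a) The kernel is solvable.} This is a direct inspection of the table in \Cref{thm:main}. The groups there are $e$, $C_3$, $C_8$, $S_3\times C_2$, $S_3^2$, $C_{W(E_6)}(C_4)\cong U_2(\mathbb{F}_3)$, $C_2^2\times S_4$, $\GO_4^+(3)$, and the Type V group. Most of these are visibly solvable. For $\GO_4^+(3)$ the order is $2^7\cdot 3^2$ (it is $W(F_4)$, of order $1152$), so Burnside's $p^aq^b$ theorem applies. The same argument covers any group of order $2^a3^b$, which includes $U_2(\mathbb{F}_3)$ and the Type V group. Alternatively, all of these can be verified in GAP with the repository code.

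\textbf{(b) The constant field part $\Gal(L/K_G)$ is solvable.} This is the main obstacle, because a priori $L$ could be a non-solvable number field. I would bound it as follows. $\Monpar_{G,K_G}$ embeds in $W(E_6)$, since it still acts on the $27$ lines preserving intersections. It also commutes with the action of $G$: the argument of \Cref{rmk:classical-proof} works for the Galois action, because $G$ is defined over $K_G$ and fixes $U$ pointwise. Hence
\[
\Monpar_{G,K_G}(\LinesProblem) \le C_{W(E_6)}(G).
\]
It therefore suffices that $C_{W(E_6)}(G)$ is solvable for every $G$ in the list, and this is a finite check in GAP. By the $2^a3^b$ principle it is often automatic; for $G=C_2$, for example, $C_{W(E_6)}(C_2)=W(F_4)$ of order $1152$.

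With this bound, (a) becomes unnecessary except as a consistency check, and the whole proof reduces to solvability of the centralizers $C_{W(E_6)}(G)$. The one subtle point is justifying the centralizer bound over $K_G$ rather than over $\C$. For that I would argue directly at the generic point: each $g\in G$ induces a $K_G(U)$-automorphism of the étale algebra of lines, and such automorphisms commute with the absolute Galois action.
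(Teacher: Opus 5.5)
Your proof is correct and, once you discard the arithmetic-versus-geometric exact sequence (which, as you note, becomes unnecessary), it is the paper's argument. The elements of $G$ act by $K_G$-rational deck transformations, so they commute with the Galois action on the generic fiber, giving $\Monpar_{G,K_G}(\LinesProblem)\le C_{W(E_6)}(G)$, and these centralizers are solvable. You can also settle the finite check uniformly without GAP: every $G$ in the list contains an Eckardt involution $\sigma$, so $C_{W(E_6)}(G)\le C_{W(E_6)}(\sigma)\cong W(F_4)$, which has order $2^7\cdot 3^2$ and is therefore solvable by Burnside's theorem.
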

\begin{proof}
Since each element of $g$ acts via deck transformations defined over $K_G$, it necessarily commutes with teh action of $\Monpar_{G,K_G}(\LinesProblem)$ on the geometric fiber. Therefore we have that $\Monpar_{G,K_G}(\LinesProblem) \le C_{W(E_6)}(G)$.
Since $C_{W(E_6)}(G)$ is solvable for each $G$ in \Cref{thm:main}, we conclude that $\Monpar_{G,K_G}(\LinesProblem)$ is solvable as well.
\end{proof}
Similarly for quartics, for any $G\le \PGL_3$ in \Cref{thm:main2}, we can let $K_G\supseteq \Q$ be the smallest number field for which $G\le \PGL_3(K_G)$, and define $\Monpar_{G,K_G}(\BitangentsProblem)$ similarly. An analogous proof shows it is a subgroup of $C_{W^+(E_7)}(G)$, which we may verify is solvable as well.

\begin{remark}
    Recall that, under the assumptions of Setup~\ref{setup:stacky-problem}, in~\cite[Corollary 3.27]{landiStacksMonodromySymmetric2025} the author proved that there exists a short exact sequence
    \[
    \begin{tikzcd}
        1\arrow[r] & G\arrow[r,"\omega_x"] & \Mon(\cF^G)\arrow[r] & \Mon(F^G)\arrow[r] & 1,
    \end{tikzcd}
    \]
    showing in particular that $\Mon(\cF^G)\leq N_{\Mon(\cF)}(G)$. In particular, it is assumed that the base field is algebraically closed. However, note that the proof of~\cite[Corollary 3.27]{landiStacksMonodromySymmetric2025} (as most of the results in~\cite{landiStacksMonodromySymmetric2025}), holds over an arbitrary field $k$. Indeed, the corollary is an immediate consequence of~\cite[Lemma 2.28]{landiStacksMonodromySymmetric2025} and~\cite[Proposition 3.15]{landiStacksMonodromySymmetric2025}, neither of which make use of that assumption. Indeed, \cite[Lemma 2.28]{landiStacksMonodromySymmetric2025} is based on \cite[Theorem 7.11]{noohiFundamentalGroupsAlgebraic2004}, which works in much larger generality.
\end{remark}

\appendix

\section{Proof of~\Cref{thm:irred-UG}}\label{sec:U33}

For a given $G\le \GL_4$, what would it mean for a cubic form in $H^0(\P^3, \mathcal{O}(3))$ to be $G$-invariant? It does not mean that it is literally fixed by $G$, instead it means that the form $F\in H^0(\P^3,\mathcal{O}(3))$ has the property that, for every $g\in G$ there is some nonzero scalar $\lambda_g \in \C^\times$ for which $g\cdot F = \lambda_g F$. This assignment $g\mapsto \lambda_g$ necessarily defines a 1-dimensional character of $G$. Thus we need to understand for which characters there exist smooth $G$-equivariant cubic surfaces in its character eigenspace. Up to conjugating the group in $\PGL_4$, we will see that this character can always be taken to be the identity.

\begin{proposition}\label{prop:Utype11} We have that $U_{3,3}^{C_2}$ and $U_{3,3}^{(C_2)}$ are closed irreducible subvarieties of $U_{3,3}$ of dimensions 12 and 18, respectively.
\end{proposition}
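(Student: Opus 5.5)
We fix an explicit representative of the Type XI involution and compute its fixed locus directly. By the classification of involutions on smooth cubic surfaces (the Eckardt involutions of Lemma~\ref{lem: C2 locus}, first kind), the generic Type XI automorphism is an Eckardt involution. Lifted to $\GL_4$, it is conjugate to $\sigma=\mathrm{diag}(1,1,1,-1)$, acting by $w\mapsto -w$. A cubic form $F$ is $\sigma$-semi-invariant precisely when it lies in one of the two character eigenspaces of $H^0(\P^3,\mathcal{O}(3))$:
\begin{itemize}
\item the invariant space, consisting of forms $f_3(x,y,z)+w^2 f_1(x,y,z)$, of dimension $10+3=13$;
\item the anti-invariant space, consisting of forms $w\,g_2(x,y,z)+c\,w^3$, of dimension $6+1=7$.
\end{itemize}
Every anti-invariant form is divisible by $w$, hence reducible. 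So that eigenspace contains no smooth cubic, and $U_{3,3}^{C_2}=U_{3,3}\cap\P(\text{invariant space})$. The invariant space is nonempty and open: for example $x^3+y^3+z^3+w^2x$ is smooth. This open subset of $\P^{12}$ is irreducible of dimension $12$. It is closed in $U_{3,3}$ because it is the intersection of $U_{3,3}$ with a linear subspace, namely the union of eigenspaces after discarding the empty one.

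For $U_{3,3}^{(C_2)}=\bigcup_{g}gU_{3,3}^{C_2}$, we write it as the image of the action map
\[
\PGL_4\times U_{3,3}^{C_2}\to U_{3,3}.
\]
The source is irreducible, so the image is irreducible.

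To compute its dimension, we factor the map through $\PGL_4\times^{N}U_{3,3}^{C_2}$ with $N=N_{\PGL_4}(C_2)$. The fixed locus of $\sigma$ on $\P^3$ is a plane together with a point, so its centralizer is $\GL_3$ (using Theorem~\ref{thm:irred-UG}), and $N=\GL_3$, of dimension $9$. The resulting map is generically finite: a generic $C_2$-cubic has automorphism group exactly $C_2$, so only finitely many conjugates of $C_2$ fix it. The dimension is therefore
\[
15+12-9=18.
\]
As a consistency check, $\dim \cM_{3,3}^{C_2}=12-9=3$, which is one less than $\dim\cM_{3,3}=4$. This matches the fact that Eckardt points impose one condition.

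The main obstacle is closedness of $U_{3,3}^{(C_2)}$ in $U_{3,3}$. The plan is to use properness of the relevant incidence. Consider
\[
I=\{(X,\tau): \tau\in\PGL_4 \text{ an involution conjugate to }\sigma,\ \tau X=X\}.
\]
The conjugacy class of $\sigma$ is the closed orbit $\PGL_4/\GL_3\cong\P^3\times\P^{3\vee}$ minus the incidence divisor, so it is not closed in $\PGL_4$.

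Closedness then comes from the following fact: a limit of Eckardt involutions of smooth cubics $X_t\to X_0$ with $X_0$ smooth stays an involution of $X_0$. This holds because $\Aut$ is finite and the automorphism group scheme of $\cM_{3,3}$ is finite over the base, $\cM_{3,3}$ being separated DM. In that limit the center point and the fixed plane cannot collide, since such a collision would not yield an automorphism of $X_0$. Hence $I\to U_{3,3}$ is finite, and its image is closed.
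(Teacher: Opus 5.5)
Your proposal is correct and follows the paper's proof. Both take $\sigma=\mathrm{diag}(1,1,1,-1)$, discard the odd-in-$w$ eigenspace because its forms are divisible by $w$, get the 13-dimensional invariant space, and count $12+15-9=18$ using $N_{\PGL_4}(C_2)=C_{\PGL_4}(C_2)\cong\GL_3$. Your quasi-finiteness justification of that count and your closedness argument for $U_{3,3}^{(C_2)}$ via the finite stabilizer group scheme fill in points the paper leaves implicit.

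One correction: the conjugacy class of $\sigma$ \emph{is} closed in $\PGL_4$. It is a semisimple class, and an incident degeneration of the (center, plane) pair sends the matrix to a rank-one limit outside $\PGL_4$. So $I$ is simply a closed subscheme of that finite stabilizer scheme, and your argument goes through more directly. Also, justify the centralizer by the plane-plus-point fixed locus alone rather than citing \Cref{thm:irred-UG}, whose table is proved by this very proposition.
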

\begin{proof} The cyclic group of order 2 acts via the diagonal matrix with entries $[1,1,1,-1]$. The cubic forms in $x,y,z,w$ decompose into two character eigenspaces, depending on how many $w$'s there are. The nontrivial character eigenspace has an odd number of $w$'s in every monomial, hence every cubic surface given by such a cubic form is singular, as it contains the hyperplane $w=0$. Therefore the smooth cubic surfaces lie in the trivial character eigenspace, and the closure of $U_{3,3}^{C_2}$ is the projectivization of
\[
    W_{\chi_1} = \text{span}\{x^3, x^2y, x^2z, xy^2, xyz, xz^2, xw^2, y^3, y^2z, yz^2, yw^2, z^3, zw^2\}
\]
Hence $U_{3,3}^{C_2}$ is 12-dimensional. We can compute that the normalizer of $C_2$ in $\PGL_4$ is equal to its centralizer, and consists of block matrices of the following form:
\begin{align*}
    N_{\PGL_4}(C_2) &= \left\{ 
    \begin{pmatrix}
        \ast & \ast & \ast & 0 \\
        \ast & \ast & \ast & 0 \\
        \ast & \ast & \ast & 0 \\
        0 & 0 & 0 & \ast
    \end{pmatrix}
    \right\} \le \PGL_4.
\end{align*}
This is isomorphic to $\GL_3$. We conclude that
\[
\dim U_{3,3}^{(C_2)} = \dim U_{3,3}^{C_2} + \dim \PGL_4 - \dim N_{\PGL_4}(C_2) = 12 + 15 - 9 = 18.
\]
\end{proof}

\begin{proposition}\label{prop:Utype10} We have that $U_{3,3}^{C_2\times C_2}$ and $U_{3,3}^{(C_2\times C_2)}$ are closed irreducible subvarieties of $U_{3,3}$ of dimensions 7 and 17, respectively.
\end{proposition}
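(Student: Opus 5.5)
We follow the template of \Cref{prop:Utype11}: fix a diagonal representative of $C_2\times C_2$, decompose cubic forms into character eigenspaces, discard the eigenspaces consisting only of singular surfaces, and compute the normalizer to get the dimension of the orbit. Lemma~\ref{lem: Type X irreducibility} suggests the representative generated by $\sigma_1=\mathrm{diag}(1,1,-1,1)$ and $\sigma_2=\mathrm{diag}(1,1,1,-1)$. The monomials $x^ay^bz^cw^d$ with $a+b+c+d=3$ are graded by $(c \bmod 2, d\bmod 2)$, which gives four eigenspaces.

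\emph{Eigenspace $(0,0)$.} This is the span of $x^3,x^2y,xy^2,y^3,xz^2,yz^2,xw^2,yw^2$, which is $8$-dimensional. Its forms are exactly $f_3(x,y)+z^2g_1(x,y)+w^2h_1(x,y)$. A generic member is smooth; for instance $x^3+y^3+xz^2+yw^2$ can be checked directly, or one can exhibit any Type X surface in this form. Hence its projectivization meets $U_{3,3}$ in a nonempty open subset of $\P^7$, which is irreducible of dimension $7$.

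\emph{Eigenspaces $(1,0)$ and $(1,1)$.} In every monomial of these spaces $z$ appears to an odd power, so $z$ divides every form. The surface then contains the plane $z=0$ and is singular.

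\emph{Eigenspace $(0,1)$.} Here every monomial contains an odd power of $w$, so $w$ divides every form and the surface is reducible, hence singular.

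Consequently $U_{3,3}^{C_2\times C_2}$ is the smooth locus of a single $\P^7$. It is irreducible of dimension $7$, and it is closed in $U_{3,3}$ because it is the intersection of $U_{3,3}$ with a linear subspace.

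\emph{Normalizer and the orbit.} The centralizer of $C_2\times C_2$ in $\PGL_4$ preserves the joint eigenspace decomposition $\langle x,y\rangle\oplus\langle z\rangle\oplus\langle w\rangle$. It therefore consists of block matrices $\GL_2\times\C^\times\times\C^\times$ modulo scalars, that is $\GL_2\times\C^\times$, of dimension $5$. The normalizer also contains the swap $z\leftrightarrow w$, and one checks that nothing else occurs: an element must permute the three nontrivial elements of $C_2\times C_2$, but $\sigma_1\sigma_2=\mathrm{diag}(1,1,-1,-1)$ has a $2$-dimensional $(-1)$-eigenspace while $\sigma_1,\sigma_2$ have $1$-dimensional ones, so it must be fixed. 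Thus $\dim N_{\PGL_4}(C_2\times C_2)=5$. The saturation $U_{3,3}^{(C_2\times C_2)}=\PGL_4\cdot U_{3,3}^{C_2\times C_2}$ is the image of $\PGL_4\times^{N}U_{3,3}^{C_2\times C_2}$, so it is irreducible. Its dimension is
\[
7+15-5=17,
\]
because the generic fiber of this map is finite. Indeed, a generic member has automorphism group exactly $C_2\times C_2$, with normalizer equal to $N$, so $g$ and $gn$ give the same point only up to a finite ambiguity.

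\emph{Closedness of the orbit.} The orbit $U_{3,3}^{(C_2\times C_2)}$ is the locus of surfaces admitting a subgroup conjugate to $C_2\times C_2$. It is the image of the closed subset $\{(g,X): gGg^{-1}\le\PGL_{4,X}\}$, which is proper over $U_{3,3}$ because $G$ acts properly with finite stabilizers. Alternatively, it is closed as the image under the finite morphism $\cM_G\to\cM_{3,3}$ of \Cref{prop:X-lower-G-properties}, pulled back to $U_{3,3}$.

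\emph{Main obstacles.} The most delicate step is the generic finiteness of the map used in the dimension count, together with this closedness argument. The other point needing care is the explicit smoothness check of one member of the $(0,0)$ eigenspace.
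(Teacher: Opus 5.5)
Your core computation is exactly the paper's proof. You use the same diagonal representative and find the same $8$-dimensional trivial eigenspace. The paper removes the other three eigenspaces by applying \Cref{prop:Utype11} to each generator, which is your observation that $z$ or $w$ divides every form there. You also arrive at the same normalizer $(\GL_2\times\C^\times)\rtimes C_2$ and the same count $7+15-5=17$. Your explicit smooth member $x^3+y^3+xz^2+yw^2$ and your argument that the normalizer is just the centralizer extended by $z\leftrightarrow w$ fill in details the paper leaves implicit. In $\PGL_4$, phrase that argument via eigenvalue multiplicity patterns up to scaling, $(2,2)$ versus $(3,1)$, not via the dimension of the $(-1)$-eigenspace.

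The paper never argues that $U_{3,3}^{(C_2\times C_2)}$ is closed, and your argument for it fails as written. The set $Z=\{(g,X): gGg^{-1}\le \PGL_{4,X}\}$ is not proper over $U_{3,3}$. Its fiber over $X$ is $\{g : gGg^{-1}\le \Aut(X)\}$, a finite union of left cosets of the $5$-dimensional affine group $N=N_{\PGL_4}(G)$, and a proper morphism cannot have positive-dimensional affine fibers. What you need is that $\PGL_4\times^{N}U_{3,3}^{G}\to U_{3,3}$ is proper (hence finite). One route is the valuative criterion. Properness of the $\PGL_4$-action makes $g_tGg_t^{-1}\le\Aut(X_t)$ specialize to a subgroup of $\Aut(X_0)$ isomorphic to $G$. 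Rigidity of representations of finite groups then makes the limit conjugate to $G$.

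Your alternative via $\cM_{3,3,G}\to\cM_{3,3}$ has a different gap. Its image is the locus of surfaces with an \emph{abstract} Klein four-group of automorphisms, and such a group need not be $\PGL_4$-conjugate to $G$. For example, the normal $V_4\le S_4$ acts on Type V surfaces by double transpositions, so all three of its involutions have type $(2,2)$. The two loci do coincide, but only because every surface with a Klein four-group (Types I, II, V, VI, X) also has two commuting Eckardt involutions, and you must invoke this.

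Finally, the finiteness step you flag as delicate is immediate from the same fiber description. Every nonempty fiber of $\PGL_4\times U_{3,3}^G\to U_{3,3}$ is a finite union of cosets of $N$, so it has dimension exactly $5$. You do not need to know that a generic member has automorphism group exactly $C_2\times C_2$.
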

\begin{proof} The action of the Klein 4-group is via the diagonal matrices $[1,1,-1,1]$ and $[1,1,1,-1]$. Since this contains the action of $C_2$ in \Cref{prop:Utype11}, any smooth cubic surfaces must lie in the trivial character eigenspace for $C_2$ (and by symmetry, for the other order two generator). Therefore $U_{3,3}^{C_2 \times C_2}$ is given in the projectivization of the vector space
\[
W_{\chi_1} = \text{span}\{x^3, x^2y, xy^2, xz^2, xw^2, y^3, yz^2, yw^2\}.
\]
Thus $U^{C_2\times C_2}$ is 7-dimensional. The normalizer is given by all matrices of the form
\[
\begin{pmatrix}
    \ast & \ast & 0 & 0 \\
    \ast & \ast & 0 & 0 \\
    0 & 0 & \ast & 0 \\
    0 & 0 & 0 & \ast
\end{pmatrix}
\text{ or }
\begin{pmatrix}
    \ast & \ast & 0 & 0 \\
    \ast & \ast & 0 & 0 \\
    0 & 0 & 0 & \ast \\
    0 & 0 & \ast & 0
\end{pmatrix}.
\]
We see then that
\[
N_{\PGL_4}(C_2\times C_2) \cong \frac{\GL_2 \times \C^\times \times \C^\times}{\C^\times} \rtimes C_2 \cong (\GL_2 \times \C^\times) \rtimes C_2.
\]
This is 5-dimensional, so $U_{3,3}^{(C_2\times C_2)}$ is 17-dimensional. Only the matrices of the left shape centralize $C_2\times C_2$, so the centralizer is connected.
\end{proof}

\begin{proposition}\label{prop:Utype9} We have that $U_{3,3}^{C_4}$ and $U_{3,3}^{(C_4)}$ are irreducible closed subvarieties of $U_{3,3}$ of dimensions 6 and 16, respectively.
\end{proposition}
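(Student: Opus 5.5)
We follow the template of \Cref{prop:Utype11} and \Cref{prop:Utype10}. We fix the action of $C_4$ through Dolgachev's normal form for Type IX surfaces, namely the family \eqref{eqn:dolg-family-c4}. There the generator is the diagonal matrix $[1,1,-1,i]$, which we take as our lift of $C_4$ to $\GL_4$. A cubic form with $g\cdot F=\lambda_g F$ lies in one of four character eigenspaces, indexed by $\lambda\in\{\pm1,\pm i\}$. We treat $g^2=[1,1,1,-1]$ first. It is the $C_2$ of \Cref{prop:Utype11}, so any smooth $C_4$-invariant cubic must lie in the trivial $g^2$-eigenspace, which means every monomial has even degree in $w$. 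This leaves only the characters $\lambda=\pm1$ for $g$. We then list the monomials in $x,y,z,w$ with $w$-degree $0$ or $2$ and sort them by the eigenvalue of $g$, which is $(-1)^{\deg_z}\cdot(-1)^{\deg_w/2}$:
\begin{itemize}
\item for $\lambda=1$: $x^3,x^2y,xy^2,y^3,xz^2,yz^2,zw^2$;
\item for $\lambda=-1$: $x^2z,xyz,y^2z,z^3,xw^2,yw^2$.
\end{itemize}
Every form in the $\lambda=-1$ space is divisible by neither variable. However, all its monomials lie in the ideal $(z,w^2)$, so the surface contains the point $[0:0:0:1]$. I expect the cleaner argument is that at $p=[0:0:0:1]$ all partial derivatives vanish, because each monomial has $w$-degree at most $2$ and also involves $x$, $y$ or $z$. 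A more careful check is needed, though. The partial $\partial_x$ of $xw^2$ is $w^2\neq0$ at $p$, so the argument should instead use the line $x=y=0$. There the form restricts to $z^3$, so every point of that line other than $p$ misses the surface. I would therefore look for a different singular point. On the line $z=w=0$ the form vanishes identically, and the gradient there is $(0,0,x^2+xy+y^2\text{-type},0)$, which has a zero on this line. This produces a singular point, so the $\lambda=-1$ eigenspace contains no smooth cubics. Confirming this singularity argument is the main fiddly step.

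It follows that $U_{3,3}^{C_4}$ is the open smooth locus in $\P$ of the $7$-dimensional space for $\lambda=1$, so it is irreducible of dimension $6$. The locus is nonempty, since the family \eqref{eqn:dolg-family-c4} lies in it. It is closed in $U_{3,3}$ as a fixed locus.

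For the normalizer, an element normalizing $C_4$ must send $g$ to $g^{\pm1}$ up to scalar. The eigenvalues of $g$ are $\{1,1,-1,i\}$ and those of $g^{-1}$ are $\{1,1,-1,-i\}$. These multisets are not related by any scalar multiple, so the normalizer equals the centralizer. This centralizer preserves the eigenspaces $\langle x,y\rangle$, $\langle z\rangle$ and $\langle w\rangle$, and hence it is $(\GL_2\times\C^\times\times\C^\times)/\C^\times\cong\GL_2\times\C^\times$. It is connected of dimension $5$, which matches the table in \Cref{thm:irred-UG}. Finally, exactly as in \Cref{prop:Utype11},
\[
\dim U_{3,3}^{(C_4)}=\dim U_{3,3}^{C_4}+\dim\PGL_4-\dim N_{\PGL_4}(C_4)=6+15-5=16 .
\]
Irreducibility of $U_{3,3}^{(C_4)}$ then follows because it is the image of the irreducible variety $\PGL_4\times U_{3,3}^{C_4}$ under the action map. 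Closedness in $U_{3,3}$ follows since it is the image of the closed locus of $\cM_{3,3}^{C_4}$ under the finite map $\cM_{3,3,C_4}\to\cM_{3,3}$.
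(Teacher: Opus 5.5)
Your proposal is correct and follows the paper's proof. You reduce to the trivial eigenspace of $g^2=[1,1,1,-1]$ via \Cref{prop:Utype11}, split by the $\pm1$ characters of $g=[1,1,-1,i]$ (your monomial lists agree with the paper's $W_{\chi_1}$ and $W_{\chi_{-1}}$), and discard $W_{\chi_{-1}}$ as singular. You then use that the spectra of $g$ and $g^{-1}$ are not scalar multiples of one another to get $N_{\PGL_4}(C_4)=C_{\PGL_4}(C_4)\cong\GL_2\times\C^\times$ and $6+15-5=16$.

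Your final singularity argument supplies what the paper only asserts, so delete the false starts about $[0:0:0:1]$. Forms in $W_{\chi_{-1}}$ lie in $(z)+(w^2)$, so the line $z=w=0$ lies on the surface, and the gradient along it is $(0,0,q(x,y),0)$ with $q$ a binary quadratic form, which has a zero.

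The closedness sentence for $U_{3,3}^{(C_4)}$ is garbled and needs repair. Take the image of the open-and-closed component $\cM_{C_4}^{\P V}$, with $V$ given by $g\mapsto\mathrm{diag}(1,1,-1,i)$, not the whole image of $\cM_{3,3,C_4}\to\cM_{3,3}$. The whole image also contains cubics (e.g.\ of Type V) whose order-four automorphisms have spectrum $\{1,i,-1,-i\}$, and these are not in $U_{3,3}^{(C_4)}$.
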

\begin{proof} The cyclic group of order 4 is generated by the diagonal matrix $[1,1,-1,i]$ in $\PGL_4$. As this group contains $C_2$, we can restrict our attention to the trivial character eigenspace for $C_2$. This leaves us with two character eigenspaces, depending on whether the generator for $C_4$ rescales the monomial by $+1$ or $-1$. These give
\begin{align*}
    W_{\chi_1} &= \text{span}\{x^3, x^2y, xy^2,  xz^2,y^3, yz^2, zw^2\} \\
    W_{\chi_{-1}} &= \text{span}\{x^2z, xyz,xw^2,y^2z,yw^2,z^3\}.
\end{align*}
We can observe that any cubic surface in $W_{\chi_{-1}}$ is singular, hence the nonsingular cubic surfaces lie in the trivial character eigenspace. Thus $U^{C_4}$ is 6-dimensional.

To compute the normalizer, we first note that the generator of $C_4$ and its inverse have different eigenvalues, hence are not conjugate in $\PGL_4$. Therefore the normalizer of $C_4$ in $\PGL_4$ is equal to its centralizer, and this can easily be computed to be those block matrices of the form
\[
N_{\PGL_4}(C_4) = \left\{
\begin{pmatrix}
    \ast & \ast & 0 & 0 \\
    \ast & \ast & 0 & 0 \\
    0 & 0 & \ast & 0 \\
    0 & 0 & 0 & \ast
\end{pmatrix}
\right\} \le \PGL_4.
\]
This is isomorphic to $\GL_2\times \mathbb{C}^\times$, which is 5-dimensional, hence $U_{3,3}^{(C_4)}$ is 16-dimensional.
\end{proof}

\begin{proposition}\label{prop:Utype8} We have that $U_{3,3}^{S_3}$ and $U_{3,3}^{(S_3)}$ are closed irreducible subvarieties of $U_{3,3}$ of dimensions 6 and 17, respectively.
\end{proposition}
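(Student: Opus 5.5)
I will follow the same template as \Cref{prop:Utype9} and \Cref{prop:Utype10}. First I fix a concrete linear model of $S_3$ in $\GL_4$ compatible with the table in \Cref{thm:irred-UG}. Its centralizer is $\GL_2(\C)$, so the representation must be $V_2\oplus \mathds{1}^{\oplus 2}$ (or a twist by the sign character), where $V_2$ is the standard $2$-dimensional representation. Concretely, I take the $3$-cycle to act by the diagonal matrix $[\zeta_3,\zeta_3^2,1,1]$ and the transposition to act by $x\leftrightarrow y$ with $z,w$ fixed. This matches the Type VIII normal form $x^3+y^3+z^3+w^3+(ax+by)zw$ used in \Cref{prop: Type IX quartics stacky and geometric monodromy}, after a linear change of coordinates on $(z,w)$.

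Next I determine which character eigenspace contains smooth cubics. The $3$-cycle acts on monomials $x^iy^jz^kw^l$ by $\zeta_3^{i-j}$. Invariance forces $i\equiv j \pmod 3$, so the surviving monomials of degree $3$ are:
\begin{itemize}
\item $x^3$ and $y^3$;
\item $xy\cdot\{z,w\}$;
\item all cubics in $z,w$ alone.
\end{itemize}
For the nontrivial eigenvalues $\zeta_3^{\pm1}$, I will check that every member is singular. This should follow by noting that each monomial is divisible by $x$ or $y$ in a way that forces the point $[0:0:1:0]$, or a point on the line $x=y=0$, to be singular.

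The transposition then acts on the trivial $C_3$-eigenspace by $\pm1$. The $+1$ part is spanned by
\[
x^3+y^3,\ xyz,\ xyw,\ z^3,\ z^2w,\ zw^2,\ w^3,
\]
a $7$-dimensional space. The $-1$ part, spanned by $x^3-y^3$, should consist only of singular surfaces. So, after possibly conjugating (twisting by the sign character), the smooth locus lies in the projectivization of a single $7$-dimensional vector space. Hence $U_{3,3}^{S_3}$ is a nonempty open subset of $\P^6$, irreducible of dimension $6$; the Fermat cubic shows it is nonempty. Closedness in $U_{3,3}$ is automatic, since it is a fixed locus.

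For the normalizer, an element normalizing $S_3$ induces an automorphism of $S_3$, and all automorphisms of $S_3$ are inner. So $N_{\PGL_4}(S_3)=S_3\cdot C_{\PGL_4}(S_3)$. By Schur's lemma the centralizer of $V_2\oplus\mathds{1}^2$ in $\GL_4$ is $\C^\times\times\GL_2$, which becomes $\GL_2$ in $\PGL_4$. This gives $N\cong S_3\times\GL_2(\C)$, of dimension $4$. Then
\[
\dim U_{3,3}^{(S_3)} = 6 + 15 - 4 = 17.
\]
Irreducibility of $U^{(S_3)}_{3,3}$ follows because it is the image of the irreducible variety $\PGL_4\times U_{3,3}^{S_3}$.

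The main obstacle is the case analysis of the nontrivial character eigenspaces. I need to show they contain no smooth cubics, and also rule out the alternative lifts of $S_3$ (such as sign-twisted representations) that could yield a different component. I would handle this by exhibiting, for each eigenspace, an explicit common singular point or a linear component, as in \Cref{prop:Utype9}.
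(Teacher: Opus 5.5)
This is correct and is essentially the paper's proof. The paper uses the same representation $\mathds{1}^{\oplus 2}\oplus V_2$ (with $S_3$ acting on $z,w$ rather than $x,y$), finds the same $7$-dimensional trivial eigenspace and a singular sign eigenspace spanned by $z^3-w^3$, and gets $N_{\PGL_4}(S_3)\cong S_3\times\GL_2$ (by a direct block-matrix argument rather than your $\mathrm{Out}(S_3)=1$ plus Schur route) for the count $6+15-4=17$. The step you flag as the main obstacle is unnecessary: since $S_3^{\mathrm{ab}}=C_2$, every semi-invariant cubic is already fixed by the $3$-cycle, and twisting the lift by the sign only swaps the labels of the two eigenspaces, so only the trivial and sign eigenspaces ever need checking.
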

\begin{proof} The action of $S_3$ is via the tranposition interchanging $z$ and $w$ and the diagonal matrix with entries $[1,1,\zeta_3,\zeta_3^2]$. It is easy to see that any matrix normalizing this $S_3$ must be block diagonal, with the upper $2\times 2$ block arbitrary, and the lower $2\times 2$ block agreeing (up to a scalar) with the lower $2\times 2$ block of some element in $S_3$. From this we clearly have that
\begin{align*}
    N_{\PGL_4}(S_3) \cong \GL_2 \times S_3.
\end{align*}
Since $S_3^\ab = C_2$, there are two character eigenspaces to consider. There is a one-dimensional space of cubic forms which are invariant under the sign character, and this is spanned by $z^3-w^3$, hence singular. Therefore the space of smooth invariant cubics is given by
\[
W_{\chi_1} = \text{span} \{x^3, x^2y, xy^2, y^3, xzw, yzw, z^3+w^3\}.
\]
This is 6-dimensional, hence $U_{3,3}^{(S_3)}$ is 17-dimensional.
\end{proof}

\begin{proposition}\label{prop:Utype7}
We have that $U_{3,3}^{C_8}$ and $U_{3,3}^{(C_8)}$ are closed irreducible subvarieties of $U_{3,3}$ of dimensions 3 and 15, respectively.
\end{proposition}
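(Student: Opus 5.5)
We follow the same template as the previous appendix propositions: fix an explicit diagonal representative of $C_8$, decompose cubic forms into character eigenspaces, discard the eigenspaces consisting entirely of singular cubics, and then compute the normalizer to obtain the dimension of the orbit $U_{3,3}^{(C_8)}$.

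\textbf{Representative and eigenspace decomposition.} Take the generator of $C_8$ to be the diagonal matrix $[\zeta_8^4,1,\zeta_8^2,\zeta_8]$, up to the choice of ordering. The only requirements are that its square is (projectively) the generator $[1,1,-1,i]$ of the Type IX $C_4$ from \Cref{prop:Utype9}, and that it is compatible with the Type VII normal form $w^2z+z^2y+y^3+x^3$ up to relabeling. Every cubic invariant under $C_8$ is invariant under this $C_4$. By \Cref{prop:Utype9}, the smooth members must therefore lie in the $7$-dimensional space
\[
\mathrm{span}\{x^3,x^2y,xy^2,xz^2,y^3,yz^2,zw^2\}.
\]
The monomials there split according to the eigenvalue $\pm1$ of the $C_8$-generator.

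\textbf{Identifying the smooth eigenspace.} Using $x\mapsto -x$ and $y,z\mapsto$ the indicated scalars, the eigenspace containing $x^3,y^3,yz^2,zw^2$ has dimension $4$. It contains smooth members, for example $x^3+y^3+yz^2+zw^2$, so its projectivization has dimension $3$. The complementary eigenspace is spanned by $x^2y,xy^2,xz^2$. Every form in it vanishes to high order along $x=0$ (or at $[0:0:0:1]$), so it is singular. This gives $\dim U^{C_8}_{3,3}=3$, and irreducibility follows because $U^{C_8}_{3,3}$ is an open subset of a projective space.

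\textbf{Normalizer and orbit dimension.} The generator has four distinct eigenvalues $\{\zeta_8^4,1,\zeta_8^2,\zeta_8\}$. Any power $g^k$ with $k$ odd and $k\ne1$ has a different eigenvalue multiset, even after rescaling, so no nontrivial automorphism of $C_8$ is realized by conjugation in $\PGL_4$. Hence the normalizer equals the centralizer, which is the diagonal torus $(\C^\times)^3$ of $\PGL_4$, of dimension $3$. The orbit dimension formula used before then gives
\[
\dim U_{3,3}^{(C_8)}=3+15-3=15.
\]

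\textbf{Main obstacle.} The delicate step is checking the eigenvalue bookkeeping. Specifically, one must confirm that the chosen representative really acts on the Type VII normal form with a single character, and that the complementary eigenspace contains no smooth cubic. I would verify this directly by checking the partial derivatives at a coordinate point.
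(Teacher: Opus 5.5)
Your route is the paper's. You restrict to the seven-dimensional space of $C_4$-semi-invariants from \Cref{prop:Utype9}, split it into the $\pm1$ eigenspaces of a $C_8$-generator, and discard the singular one. Then $N_{\PGL_4}(C_8)=C_{\PGL_4}(C_8)=(\C^\times)^3$ gives $3+15-3=15$. Your generator $g=\mathrm{diag}(-1,1,i,\zeta_8)$ is legitimate: projectively it is the inverse of the paper's $[1,-1,\zeta_8^2,\zeta_8^3]$. Your eigenvalue-multiset argument for normalizer $=$ centralizer $=$ diagonal torus is correct, and more explicit than the paper's one-line assertion.

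The problem is the eigenspace bookkeeping you flagged as delicate: it is wrong as written. Since $g$ negates $x$ and fixes $y$, the monomials $x^3$ and $y^3$ have opposite eigenvalues. In fact $x^3,xy^2,yz^2,zw^2$ all have eigenvalue $-1$, while $x^2y,xz^2,y^3$ have eigenvalue $+1$. So $\mathrm{span}\{x^3,y^3,yz^2,zw^2\}$ is not an eigenspace, and $\mathrm{span}\{x^2y,xy^2,xz^2\}$ is not its complement. Your witness $x^3+y^3+yz^2+zw^2$ is smooth but not $C_8$-semi-invariant at all. It is invariant under $x\mapsto\zeta_3x$, so it is not even of Type VII. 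The root cause is the normal form: the Type VII surface is $x^3+xy^2+yz^2+zw^2$, the $a=0$ member of \eqref{eqn:dolg-family-c4}, not $x^3+y^3+yz^2+zw^2$.

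The repair gives exactly the paper's decomposition. The eigenspace with smooth members is $\mathrm{span}\{x^3,xy^2,yz^2,zw^2\}$, which contains the smooth Type VII form. The other is $\mathrm{span}\{x^2y,xz^2,y^3\}$. Its members do not involve $w$, so they are cones singular at $[0:0:0:1]$ — your parenthetical reason, not divisibility by $x$. The corrected eigenspaces have the same dimensions $4$ and $3$ as the ones you wrote down, which is why your count $\dim U_{3,3}^{C_8}=3$ comes out right. The argument as written does not establish it, but it does once you swap $y^3\leftrightarrow xy^2$.
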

\begin{proof} The cyclic group $C_8$ acts via the diagonal matrix with entries $[1,-1,\zeta_8^2,\zeta_8^3]$. Squaring this and invoking \Cref{prop:Utype9}, we have that smooth cubic surfaces lie in the trivial character eigenspace for $C_4$, hence there are only two character eigenspaces to $C_8$ to consider, namely if it sends the generator to $+1$ or $-1$. These eigenspaces are exactly
\begin{align*}
    W_{\chi_1} &= \text{span} \{ x^3, xy^2, yz^2,w^2z\} \\
    W_{\chi_{-1}} &= \text{span} \{ x^2y,xz^2,y^3, \}
\end{align*}
The only smooth cubic forms are in the trivial character eigenspace, hence it is 3-dimensional. 

The normalizer is equal to the central torus in $\PGL_4$, which is $(\C^\times)^3$, therefore $U_{3,3}^{(C_8)}$ is 15-dimensional.
\end{proof}

\begin{proposition}\label{prop:Utype6}
We have that $U_{3,3}^{S_3\times C_2}$ and $U_{3,3}^{(S_3\times C_2)}$ are closed irreducible subvarieties of $U_{3,3}$ of dimensions 3 and 16, respectively.
\end{proposition}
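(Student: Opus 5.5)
Following the template of the previous propositions in this appendix, I would first fix an explicit model of the Type VI group $S_3\times C_2\le\PGL_4$. The natural choice extends the Type VIII model of \Cref{prop:Utype8}. The $S_3$ factor is generated by the transposition $z\leftrightarrow w$ and the diagonal matrix $[1,1,\zeta_3,\zeta_3^2]$. The central $C_2$ factor should be an Eckardt involution commuting with this $S_3$. Since it must commute with the $3$-cycle and the transposition, it has to act on the $(x,y)$-plane, and I would take it to be the diagonal matrix $[1,-1,1,1]$, i.e.\ $y\mapsto -y$. Because this group contains both the $S_3$ of \Cref{prop:Utype8} and a conjugate of the $C_2$ of \Cref{prop:Utype11}, smooth invariant cubics must lie in the trivial character eigenspace of the $S_3$. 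That is, they lie in
\[
\text{span}\{x^3,x^2y,xy^2,y^3,xzw,yzw,z^3+w^3\}.
\]
Any smooth cubic must also lie in a single eigenspace of $y\mapsto -y$.

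Next I would split this $7$-dimensional space by the parity in $y$. The even part is $\text{span}\{x^3,xy^2,xzw,z^3+w^3\}$ and the odd part is $\text{span}\{x^2y,y^3,yzw\}$. Every form in the odd part is divisible by $y$, hence reducible and singular. So the smooth locus lies in the projectivization of the $4$-dimensional even part, giving $\dim U_{3,3}^{S_3\times C_2}=3$. To see that it is nonempty, I would check that a generic member, for instance $x^3+xy^2+xzw+z^3+w^3$, is smooth; this can also be quoted from Dolgachev's normal form for Type VI. Irreducibility is then automatic, since $U_{3,3}^{S_3\times C_2}$ is a nonempty open subset of a projective space.

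For the normalizer, any element normalizing $S_3\times C_2$ normalizes the characteristic subgroup generated by the $3$-cycle, and it also normalizes the center $C_2$. Together with the block structure found in \Cref{prop:Utype8}, this forces the matrix to be block diagonal. The lower $2\times2$ block agrees up to a scalar with an element of $S_3$. The upper $2\times2$ block must commute with $\mathrm{diag}(1,-1)$, so it is diagonal. This gives $N_{\PGL_4}(S_3\times C_2)\cong S_3\times(\C^\times)^2$, which is $2$-dimensional with connected centralizer $(\C^\times)^2$. Hence
\[
\dim U_{3,3}^{(S_3\times C_2)}=3+15-2=16,
\]
and $U_{3,3}^{(S_3\times C_2)}$ is irreducible as the image of the irreducible variety $\PGL_4\times U_{3,3}^{S_3\times C_2}$.

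I expect the main obstacle to be the initial choice of model, namely confirming that the central involution is the one commuting with $S_3$ in the way I guessed. It may instead need to be realized differently, for example by mixing the $x,y$ coordinates, in which case the eigenspace bookkeeping changes. I would verify this choice against the Eckardt-point description and the explicit equation in \cite[Theorem 4.17]{landiStacksMonodromySymmetric2025} before doing the dimension count.
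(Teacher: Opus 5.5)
Your proposal is correct and is essentially the paper's proof. The paper realizes the central involution as $x\leftrightarrow y$ rather than $y\mapsto -y$, but these are conjugate via $(x,y)\mapsto(x+y,x-y)$, which commutes with the $S_3$. That change of coordinates carries the paper's eigenspace $\mathrm{span}\{x^3+y^3,\,x^2y+xy^2,\,(x+y)zw,\,z^3+w^3\}$ and its torus $\bigl\{\bigl(\begin{smallmatrix} a&b\\ b&a\end{smallmatrix}\bigr)\oplus I_2\bigr\}$ exactly to your even part and your diagonal torus, so your worry about the choice of model is unfounded.

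One step needs tightening. In the normalizer computation you should not quote the block structure of \Cref{prop:Utype8}, because the $S_3$ factor is not characteristic in $S_3\times C_2$. Instead, the characteristic $C_3$ and the center already force $g$ to be block diagonal, with a diagonal upper block and a monomial lower block. Imposing $g\sigma g^{-1}\in G$ for the transposition $\sigma$ then puts the lower block in $\C^\times\cdot S_3$.
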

\begin{proof}
As $S_3$ is subconjugate to $S_3\times C_2$ in $\PGL_4$, we can give a slightly different normal form for these symmetric cubic surfaces than what is found in \cite{Dolgachev}, as this will streamline our computations. In particular we may present $S_3\times C_2$ in $\PGL_4$ as the $S_3$ above, together with the involution which interchanges $x$ and $y$. Smooth cubic surfaces must be in the trivial character eigenspace for $S_3$, so we have two eigenspaces to consider depending on the character attached to this extra involution. We obtain two character eigenspaces
\begin{align*}
    W_{\chi_1} &= \text{span}\{x^3 + y^3, x^2y + xy^2, xzw + yzw, z^3 + w^3\} \\
    W_{\chi_{-1}} &= \text{span}\{x^3-y^3,x^2y - xy^2, xzw - yzw \}.
\end{align*}
Everything in $W_{\chi_{-1}}$ splits off the hyperplane $x=y$, so the smooth cubics are only in the first character eigenspace. Thus $U_{3,3}^{S_3\times C_2}$ is irreducible and 3-dimensional.

To compute the normalizer, we remark that this additional involution centralizes $S_3$ in $\PGL_4$, therefore we can compute that
\[
N_{\PGL_4}(S_3\times C_2) = C_{\PGL_4}(S_3\times C_2) \cdot (S_3\times C_2).
\]
This can be computed as the torus
\begin{align*}
    (\C^\times)^2 = \left\{ 
    \begin{pmatrix}
    a & b & 0 & 0\\
    b & a & 0 & 0 \\
    0 & 0 & 1 & 0 \\
    0 & 0 & 0 & 1
    \end{pmatrix} : a^2-b^2\ne 0
    \right\},
\end{align*}
multiplied with $S_3$, and this multiplication is external. Therefore $U_{3,3}^{(S_3\times C_2)}$ is 16-dimensional.
\end{proof}

\begin{proposition}\label{prop:Utype5}
We have that $U_{3,3}^{S_4}$ and $U_{3,3}^{(S_4)}$ are closed irreducible subvarieties of $U_{3,3}$ of dimensions 2 and 16, respectively.
\end{proposition}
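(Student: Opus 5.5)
The plan follows the template of the previous appendix propositions. First, fix an explicit lift of $S_4$ to $\GL_4$. By the proof of \Cref{thm:irred-UG}, such a lift exists and is isomorphic to $S_4$. I would take $S_4$ acting on $\C^4$ by permuting the coordinates $x,y,z,w$; this is the presentation compatible with the Type V normal form in \cite{Dolgachev}, where the surface is $\sum x_i^3 + a\sum_{i<j<k}x_ix_jx_k$ after a change of coordinates, or equivalently via the Sylvester pentahedral form restricted to the hyperplane $\sum x_i=0$ in $\P^4$.

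Next, decompose $\mathrm{Sym}^3(\C^4)$ into character eigenspaces for $S_4$. Since $S_4^{\mathrm{ab}}=C_2$, only the trivial and sign characters occur. The trivial eigenspace is spanned by the monomial symmetric functions $m_{(3)}, m_{(2,1)}, m_{(1,1,1)}$, so it is $3$-dimensional and $U_{3,3}^{S_4}$ is open in a $\P^2$. The sign eigenspace in degree $3$ in four variables is zero, since any alternating polynomial is divisible by the Vandermonde, which has degree $6$. So there is only one eigenspace, and it is the trivial one. Its smooth locus is the complement of the discriminant in $\P^2$, which is irreducible; it is nonempty because the Fermat cubic lies in it. This gives $\dim U_{3,3}^{S_4}=2$ and irreducibility. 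Closedness in $U_{3,3}$ holds because fixed loci of the group are closed.

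For the normalizer, the permutation representation is $\mathds{1}\oplus V_3$, with $V_3$ the standard irreducible representation. Since $\mathrm{Out}(S_4)=1$ and $S_4$ is self-normalizing under the relevant automorphisms, the normalizer is $N_{\PGL_4}(S_4)=S_4\cdot C_{\PGL_4}(S_4)$. I would compute $C_{\PGL_4}(S_4)$ by Schur's lemma on the lift. Elements commuting with $S_4$ in $\GL_4$ are $\lambda\,\mathrm{id}_{\mathds{1}}\oplus\mu\,\mathrm{id}_{V_3}$, giving a $\C^\times$ in $\PGL_4$. One must also exclude elements that commute only projectively, i.e. $gsg^{-1}=\chi(s)s$ with $\chi$ the sign character. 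Such a $g$ would give an isomorphism $\mathds{1}\oplus V_3\cong \mathrm{sgn}\oplus V_3'$, which is impossible because $\mathds{1}$ does not occur on the right. Hence $N_{\PGL_4}(S_4)\cong S_4\times\C^\times$, with connected centralizer $\C^\times$ and $\pi_0$ equal to $S_4$, matching the table.

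Finally, compute the dimension of $U_{3,3}^{(S_4)}$ as the dimension of the orbit family:
\[
\dim U_{3,3}^{(S_4)}=\dim U_{3,3}^{S_4}+\dim\PGL_4-\dim N_{\PGL_4}(S_4)=2+15-1=16.
\]
This uses that generic points of $U_{3,3}^{S_4}$ have stabilizer exactly $S_4$, so the map $\PGL_4\times^{N}U^{S_4}\to U^{(S_4)}$ is generically finite. Irreducibility of $U_{3,3}^{(S_4)}$ follows because it is the image of the irreducible variety $\PGL_4\times U_{3,3}^{S_4}$.

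The main obstacle is the projective-versus-linear subtlety: checking that no nontrivial character of $S_4$ twists either the invariant cubics or the centralizer. Both are handled by the Vandermonde degree argument and the representation-mismatch argument above.
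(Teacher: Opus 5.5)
Your proposal is correct and follows essentially the same route as the paper: the permutation lift, reduction to the $3$-dimensional space of symmetric cubics (with nonempty smooth locus, e.g.\ the Fermat cubic), $N_{\PGL_4}(S_4)\cong S_4\times\C^\times$, and the count $2+15-1=16$. You differ in two places, neither of which changes the structure of the argument.

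\begin{itemize}
\item You kill the sign eigenspace outright with the Vandermonde degree argument. The paper instead shows smooth cubics avoid it, using the Klein four-subgroup not contained in $A_4$ together with \Cref{prop:Utype10}.
\item You compute the normalizer from completeness of $S_4$ and Schur's lemma. The paper simply cites \cite{brazeltonMonodromySpaceSymmetric2025}.
\end{itemize}

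One small caveat: your closedness remark only covers $U_{3,3}^{S_4}$. Closedness of $U_{3,3}^{(S_4)}$ needs properness of the $\PGL_4$-action on $U_{3,3}$, a point the paper also leaves implicit.
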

\begin{proof} Here $S_4$ is most easily presented by the permutation matrices in $\PGL_4$. Its abelianization is $C_2$, so there are two character eigenspaces to consider. After conjugating $C_2^{\times 2}$ in $\PGL_4$, we see that it is a non-normal subgroup of $S_4$ -  in particular it is not contained in the alternating group $A_4$, and therefore since smooth cubic surfaces must lie in the trivial character eigenspace for $C_2^{\times 2}$ by \Cref{prop:Utype10}, they must lie in the trivial character eigenspace for $S_4$ as well. It is immediate to see that this is given by the elementary homogeneous symmetric polynomials, hence of projective dimension two in $U_{3,3}$. The normalizer was computed in \cite[2.10]{brazeltonMonodromySpaceSymmetric2025} hence we can see that $U_{3,3}^{(S_4)}$ is 16-dimensional. 
\end{proof}

\begin{proposition}\label{prop:Utype4}
We have that $U_{3,3}^{H_3(3)\rtimes C_2}$ and $U_{3,3}^{(H_3(3)\rtimes C_2)}$ are closed irreducible subvarieties of $U_{3,3}$ of dimensions 2 and 16, respectively.
\end{proposition}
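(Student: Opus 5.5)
We follow the template of the earlier appendix propositions. Fix the presentation of $H_3(3)\rtimes C_2$ in $\PGL_4$ compatible with the normal form \eqref{eq:TypeIVprojectiveequation}. The Heisenberg group is generated by the cyclic permutation $x\mapsto y\mapsto z\mapsto x$ and the diagonal matrix $[1,\zeta_3,\zeta_3^2,1]$. The central $C_3$ acts by $w\mapsto\zeta_3 w$, up to a scalar. The extra involution swaps $y$ and $z$. We then determine the character eigenspaces of cubic forms.

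The first step is to reduce to the trivial character. The group $H_3(3)\rtimes C_2$ contains a copy of $S_3$, namely the permutations of $x,y,z$, and also the involution of Type XI. By \Cref{prop:Utype11} and \Cref{prop:Utype8}, smooth invariant cubics must lie in the trivial eigenspace of every such subgroup. The abelianization of $H_3(3)\rtimes C_2$ is $C_3^{\times 2}$ modulo the $C_2$-action, so only a few characters remain. For the central $C_3$, the eigenspaces are spanned by monomials with a fixed power of $w$ mod 3. A form in an eigenspace with every monomial divisible by $w$ (or by $w^2$) contains the plane $w=0$ or is singular along it. This forces every monomial to have $w$-degree $0$ or $3$. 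The remaining invariance under the cyclic permutation, the diagonal element and the involution then cuts the space down to
\[
W_{\chi_1}=\mathrm{span}\{x^3+y^3+z^3,\ xyz,\ w^3\}.
\]
The other characters of the Heisenberg quotient give spans such as $x^3+\zeta_3 y^3+\zeta_3^2z^3$. These we check are singular: their partial derivatives have a common zero at a point with $w=0$, or the forms are cones. So $U_{3,3}^{H_3(3)\rtimes C_2}$ is an open subset of $\P^2$. It is irreducible and of dimension $2$, and it is smooth, consistent with the family $S(a)$ up to rescaling $w$.

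Next we compute the normalizer. An element normalizing $G$ must preserve the central $C_3$. Hence it preserves the eigenspace decomposition $\langle x,y,z\rangle\oplus\langle w\rangle$ and is block diagonal, with blocks $A\in\GL_3$ and a scalar on $w$. The block $A$ must normalize the image of $H_3(3)$ in $\PGL_3$. Its normalizer there is the finite Hesse-type group $C_3^{\times 2}\rtimes\SL_2(\F_3)$. Thus $N_{\PGL_4}(G)$ is an extension of this finite group by $\C^\times$, and the $\C^\times$ scales $w$. We need to be careful with the central scalar of order $3$ to get the exact component group, but only $\dim N_{\PGL_4}(G)=1$ is needed. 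The centralizer is this $\C^\times$, which is connected. Finally
\[
\dim U_{3,3}^{(G)}=\dim U_{3,3}^{G}+\dim\PGL_4-\dim N_{\PGL_4}(G)=2+15-1=16,
\]
and $U^{(G)}_{3,3}$ is irreducible as the image of the irreducible $\PGL_4\times U^G_{3,3}$.

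The main obstacle is the character bookkeeping. The relevant projective representation does not lift to $\GL_4$ as the group itself, since $H_3(3)$ is centrally extended. We would first lift to the linear group $\GL_4$ as in \Cref{thm:irred-UG} using the anticanonical lift, and then check each of the nine characters of $C_3^{\times2}$ explicitly for singularity.
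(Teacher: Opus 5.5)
Your proposal is correct and follows the paper's proof. You force smooth invariant forms into the trivial character eigenspace via subgroups already treated (the paper uses only the $S_3$, whose transposition generates $G^{\mathrm{ab}}\cong C_2$), read off $\mathrm{span}\{x^3+y^3+z^3,\,xyz,\,w^3\}$, and obtain a one-dimensional normalizer from the $\C^\times$ scaling $w$ together with the finite Hessian group $C_3^{\times 2}\rtimes\SL_2(\F_3)$ on the plane $w=0$. Your derivation of the block-diagonal shape from the central $C_3$ is a more explicit justification than the paper gives.

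The worry in your last paragraph is unfounded. Your matrices already generate an isomorphic copy of $G$ (of order $54$) in $\GL_4$. Moreover, $G^{\mathrm{ab}}\cong C_2$, so there are only two characters to check, not nine.
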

\begin{proof}
We observe that $S_3 \le H_3(3)\rtimes C_2$ is not killed by the abelianization of $H_3(3)\rtimes C_2$, hence by \Cref{prop:Utype8} all smooth cubic forms must lie in the trivial character eigenspace for $H_3(3)\rtimes C_2$. We can see that the trivial character eigenspace is
\[
W_{\chi_1} = \text{span} \{ x^3 + y^3 + z^3, w^3, xyz\}.
\]
Thus $U_{3,3}^{H_3(3)\rtimes C_2}$ is irreducible and 2-dimensional.

To compute the normalizer, we observe that $H_3(3)\rtimes C_2$ as presented fixes the point $[0:0:0:1]$ and preserves the plane $w=0$. We can therefore study how $G$ acts on this hyperplane $\P^3$. The center $Z(H_3(3)) \cong C_3$ acts trivially, and since $H_3(3)/C_3 \cong C_3^{\times 2}$, we have that the image in $\PGL_3$ is given by $C_3^{\times 2}\rtimes C_2$. We now claim that
\[
N_{\PGL_3}(C_3^{\times 2} \rtimes C_2) = H_{216},
\]
where $H_{216} = C_3^{\times 2}\rtimes \SL_2(\mathbb{F}_3)$ is the order 216 Hessian group (c.f. \cite{artebaniHessePencilPlane2009}). Since $C_3^{\times 2}$ is the 3-Sylow subgroup of $C_3^{\times 2}\rtimes C_2$, it is characteristic, and therefore
\begin{align*}
    N_{\PGL_3}(C_3^{\times 2}\rtimes C_2) \le N_{\PGL_3}(C_3^{\times 2}) = C_3^{\times 2}\rtimes \SL_2(\mathbb{F}_3).
\end{align*}
Conversely, we can see that $C_3^{\times 2}\rtimes C_2 \le C_3^{\times 2}\rtimes \SL_2(\mathbb{F}_3)$ is normal, and hence we get the other inclusion $N_{\PGL_3}(C_3^{\times 2})\le N_{\PGL_3}(C_3^{\times 2}\rtimes C_2)$. Putting this all together, we get a short exact sequence
\begin{equation}\label{eqn:normalizer-type-4-cubics}
\begin{aligned}
    1 \to \C^\times \to N_{\PGL_4}(H_3(3)\rtimes C_2) \to H_{216} \to 1.
\end{aligned}
\end{equation}
Thus the normalizer is 1-dimensional, and hence $U_{3,3}^{(H_3(3)\rtimes C_2)}$ is 16-dimensional.
\end{proof}

\begin{proposition}\label{prop:Utype3} We have that $U_{3,3}^{H_3(3)\rtimes C_4}$ and $U_{3,3}^{(H_3(3)\rtimes C_4)}$ are closed irreducible subvarieties of $U_{3,3}$ of dimensions 1 and 15, respectively.
\end{proposition}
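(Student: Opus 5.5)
We follow the pattern of \Cref{prop:Utype4}, fixing an explicit copy of $H_3(3)\rtimes C_4$ in $\PGL_4$ that contains the Type IV group $H_3(3)\rtimes C_2$ as its index two subgroup. Concretely, we use the presentation of \Cref{prop:Utype4}, in which $H_3(3)\rtimes C_2$ fixes $[0:0:0:1]$ and preserves the plane $w=0$. We adjoin an element $\tau$ of order four whose square is the involution of $H_3(3)\rtimes C_2$. Such a $\tau$ acts on the plane $w=0$ by an element of the Hessian group $H_{216}$ of order four, normalizing $C_3^{\times 2}\rtimes C_2$. It acts on the $w$-coordinate by a suitable scalar; this scalar is dictated by the standard Type III normal form $x^3+y^3+z^3+w^3+6a\,xyz$ with $a$ satisfying $a^6-20a^3-8=0$, up to the normalization in Dolgachev's table.

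First, we determine the character eigenspace. Since $H_3(3)\rtimes C_2\le H_3(3)\rtimes C_4$, \Cref{prop:Utype4} shows that every smooth invariant cubic lies in $\mathbb{P}\,\mathrm{span}\{x^3+y^3+z^3,\,w^3,\,xyz\}$. The group $H_3(3)\rtimes C_4$ acts on this 3-dimensional space, and $\tau$ acts on it by a linear map commuting with the action of $H_3(3)\rtimes C_2$. We diagonalize $\tau$ on this space. We expect $w^3$ to be a $\tau$-eigenvector, while $\tau$ mixes $x^3+y^3+z^3$ and $xyz$ with two distinct eigenvalues, following the Hesse-pencil action $a\mapsto (a+2)/(a-1)$ up to scaling. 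The eigenvectors in that span are then the special members of the Hesse pencil with $j=1728$.

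Next, we pick the eigenvalue. We check that exactly one eigenvalue has an eigenspace that is 2-dimensional and contains smooth cubics; the scalar on $w$ is chosen so that $w^3$ lies in this eigenspace. Every other eigenspace is at most 1-dimensional and consists of singular cubics: either $w^3$ alone, or a cubic in $x,y,z$ only, which is a cone. The nonsingular members therefore form an open subset of a $\mathbb{P}^1$, which is irreducible and 1-dimensional. Closedness in $U_{3,3}$ is automatic for a fixed locus.

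Finally, we compute the normalizer and the orbit dimension. We argue as in \Cref{prop:Utype4}. An element normalizing $G$ preserves the unique $G$-fixed point and the plane $w=0$, using that $G$ contains $H_3(3)\rtimes C_2$ with its characteristic $3$-Sylow. So we get an extension $1\to\mathbb{C}^\times\to N_{\PGL_4}(G)\to N_{H_{216}}(\bar G)\to1$, where $\mathbb{C}^\times$ rescales $w$. Hence $N_{\PGL_4}(G)$ is 1-dimensional, and
\[\dim U_{3,3}^{(G)}=\dim U_{3,3}^G+15-1=15.\]
Irreducibility of $U_{3,3}^{(G)}$ follows because it is the image of $\PGL_4\times U_{3,3}^G$. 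The main obstacle is the explicit bookkeeping of how $\tau$ acts on the span, in particular confirming that its eigenvalues on the two Hesse-pencil generators are distinct. If we chose the wrong scalar on $w$, we would instead find only 1-dimensional eigenspaces and no smooth surfaces, so this choice must be checked directly against the Type III equation.
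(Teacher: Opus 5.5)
Your proposal is correct and follows essentially the paper's own argument. The paper likewise adjoins the order-four element $\tau=\mathrm{diag}\bigl(\tfrac{1}{\sqrt{3}}F,1\bigr)$, with $F$ the $3\times 3$ Fourier matrix, to the Type IV presentation, restricts to $\mathrm{span}\{x^3+y^3+z^3,\,xyz,\,w^3\}$, and finds that the smooth invariant cubics are $a(x^3+y^3+z^3)+bw^3+3(\sqrt{3}-1)a\,xyz$; your step that this is "the main obstacle" is in fact easy, since $\tau$ fixes $w^3$ and acts on the Hesse pair with trace $0$ and determinant $-1$, hence eigenvalues $\pm 1$, exactly the eigenspace structure you predicted. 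It then bounds the normalizer, as you do, through the invariant point and plane as an extension of a finite group (identified there as $C_3^{\times 2}\rtimes Q_8$) by $\mathbb{C}^\times$, giving $\dim U_{3,3}^{(G)}=1+15-1=15$.
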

\begin{proof} We have that Type III cubic surfaces can be presented with the same subgroup has Type IV, but where we add on one more additional automorphism, this one is given by:
\[
    \frac{\zeta_{12} + \zeta_{12}^{-1}}{3}
    \begin{pmatrix} 1 & 1 & 1 & 0 \\
    1 & \zeta_3^2 & \zeta_3 & 0 \\
    1 & \zeta_3 & \zeta_3^2 & 0 \\
    0 & 0 & 0 & \frac{3}{\zeta_{12} + \zeta_{12}^{-1}}
    \end{pmatrix}.
\]
Since $U^{H_3(3)\rtimes C_4} \subseteq U^{H_3(3)\rtimes C_2}$, we can find that a cubic surface in the latter space admits this additional symmetry if and only if it is of the form
\[
a(x_0^3 + x_1^3 + x_2^3) + b x_3^3 + 3 (\sqrt{3}-1)a x_0 x_1 x_2.
\]
In particular we get that $U^{H_3(3)\rtimes C_4}$ is irreducible of projective dimension 1.

To compute the normalizer, a similar line of reasoning to the proof of \Cref{prop:Utype4}, taking into account this new generator, gives us that the image in $\PGL_3$ by restricting to the plane $w=0$ is $C_3^{\times2}\rtimes C_4$, where the new element has order four. This new element has normalizer $Q_8$ in $\PGL_3$, so we obtain that $N_{\PGL_3}(C_3^{\times 2}\rtimes C_4) = C_3^{\times 2}\rtimes Q_8$, of order 72. Thus we get a short exact sequence
\begin{equation}\label{eqn:normalizer-type-3-cubics}
\begin{aligned}
1\to \C^\times \to N_{\PGL_4}(H_3(3)\rtimes C_4) \to C_3^{\times 2}\rtimes Q_8 \to 1.
\end{aligned}
\end{equation}
Thus $U_{3,3}^{(H_3(3)\rtimes C_4)}$ is 15-dimensional, and $C_{\PGL_4}(H_3(3)\rtimes C_4 = \mathbb{G}_m$.
\end{proof}

\begin{proposition}\label{prop:Utype2}
We have that $U_{3,3}^{S_5}$ and $U_{3,3}^{(S_5)}$ are closed irreducible subvarieties of $U_{3,3}$ of dimensions 0 and 15, respectively.
\end{proposition}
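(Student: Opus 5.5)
We follow the template of the preceding propositions. First present $S_5$ in $\PGL_4$ by its standard $4$-dimensional representation: let $S_5$ act on $\P^4$ by permuting the coordinates $x_0,\dots,x_4$, and restrict to the invariant hyperplane $\sum x_i=0$, which is a copy of $\P^3$. Since $\Aut(X)$ acts on $H^0(X,-K_X)$, this representation lifts to $\GL_4$, as in the proof of \Cref{thm:irred-UG}. The abelianization of $S_5$ is $C_2$, so there are two characters to consider.

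\textbf{Character eigenspaces.} Choose a transposition, or an $S_3\le S_5$, that is conjugate to the Type VIII $S_3$ of \Cref{prop:Utype8} and is not contained in $A_5$. By \Cref{prop:Utype8}, smooth invariant cubics lie in the trivial eigenspace for that $S_3$. Any invariant form with sign character would be anti-invariant under its transpositions, so smooth forms must lie in the trivial $S_5$-eigenspace. Alternatively, one can argue directly. A cubic anti-invariant under $S_5$ restricted to $\sum x_i=0$ is divisible by the Vandermonde, which has degree $10>3$. Hence that eigenspace is zero.

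The trivial eigenspace consists of symmetric cubics modulo $e_1$. It is spanned by $e_3$ (equivalently by the power sum $p_3$), so it is $1$-dimensional. Its projectivization is a single point, the Clebsch surface $\sum x_i^3=\sum x_i=0$. This surface is smooth: a singular point would satisfy $3x_i^2=\lambda$ for all $i$ together with $\sum x_i=0$, and a short check rules this out. Therefore $U_{3,3}^{S_5}$ is a single point, irreducible of dimension $0$.

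\textbf{Normalizer.} Conjugation by any $g\in N_{\PGL_4}(S_5)$ gives an automorphism of $S_5$. Since $\Aut(S_5)=S_5$ is inner, after composing with an element of $S_5$ we may assume $g$ centralizes $S_5$. The $4$-dimensional representation is irreducible, so by Schur's lemma any lift of $g$ is scalar and $g$ is trivial in $\PGL_4$. Thus $N_{\PGL_4}(S_5)=S_5$ and the centralizer is trivial.

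\textbf{Conclusion.} $U_{3,3}^{(S_5)}$ is the $\PGL_4$-orbit of a point with finite stabilizer, so its dimension is $15-0=15$. The orbit is irreducible because $\PGL_4$ is connected. The orbit is closed in $U_{3,3}$ because the stabilizer is finite and the action on smooth cubics is proper (\Cref{setup:all-four-monodromies}), so orbits in $U_{3,3}$ are closed.

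The main obstacle is justifying that the sign-character eigenspace contains no smooth cubic. The Vandermonde degree argument handles this cleanly.
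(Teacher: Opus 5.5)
Your proposal is correct and follows essentially the same route as the paper. You restrict to a subgroup not killed by the abelianization (the paper uses $S_4$, you use $S_3$, and your Vandermonde argument shows the sign eigenspace is actually zero), identify the unique invariant cubic as the Clebsch surface, and get $N_{\PGL_4}(S_5)=S_5$ from completeness plus a trivial centralizer.

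One small point in the Schur step: an element centralizing $S_5$ in $\PGL_4$ has a lift that commutes with the linear lift only up to a character of $S_5$. So before invoking Schur you should rule out the sign twist by noting $\mathrm{std}\not\cong\mathrm{std}\otimes\mathrm{sgn}$ (a transposition has trace $2$ in one and $-2$ in the other).
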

\begin{proof} Since $S_4\le S_5$ is not killed by the abelianization, every smooth $S_5$-symmetric cubic form lies in the trivial character eigenspace. There is a unique such form (up to a scalar), given by the Clebsch cubic surface
\begin{align*}
    x^3 + y^3 + z^3 + w^3 - (x+y+z+w)^3.
\end{align*}
The normalizer is quite easy to compute -- it is straightforward to see that $C_{\PGL_4}(S_5)$ is trivial, hence since $S_5$ is complete, we obtain that the normalizer must be $S_5$ itself. Thus $U_{3,3}^{(S_5)}$ is 15-dimensional. The centralizer is trivial.
\end{proof}

\begin{proposition}\label{prop:Utype1} We have that $U_{3,3}^{(C_3^{\times 3}\rtimes S_4)}$ and $U_{3,3}^{(C_3^{\times 3}\rtimes S_4)}$ are closed irreducible subvarieties of $U_{3,3}$ of dimensions 0 and 15, respectively.
\end{proposition}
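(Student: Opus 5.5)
The plan follows the pattern of \Cref{prop:Utype2,prop:Utype4}. The statement has a typo: the first variety should be $U_{3,3}^{C_3^{\times 3}\rtimes S_4}$, of dimension 0, and the second its orbit $U_{3,3}^{(C_3^{\times 3}\rtimes S_4)}$, of dimension 15.

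\textbf{Step 1: fix the group and reduce to the trivial character.} Present $G=C_3^{\times 3}\rtimes S_4$ in $\PGL_4$ as follows. The $S_4$ acts by permutation matrices. The $C_3^{\times 3}$ is the image of the diagonal matrices with cube-root-of-unity entries, taken modulo scalars; this quotient is $C_3^{\times 4}/C_3\cong C_3^{\times 3}$. As in the earlier propositions, smooth invariant cubics must lie in a character eigenspace. The subgroup $S_4\le G$ is the permutation group of \Cref{prop:Utype5}, so any smooth $G$-invariant cubic lies in the trivial eigenspace for $S_4$, that is, in the span of the symmetric cubics. Since $S_4$ surjects onto $G^{\mathrm{ab}}\cong C_2$ (the $C_3^{\times 3}$ part dies in the abelianization because $S_4$ acts on it without fixed quotients), this already forces the trivial character of $G$.

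\textbf{Step 2: compute the invariant cubics.} Among the symmetric cubics, invariance under the diagonal matrix $\mathrm{diag}(\zeta_3,1,1,1)$, up to scalar, forces every monomial to have the same $x$-degree modulo 3. The same holds for each variable. Only the monomials $x^3,y^3,z^3,w^3$ survive, each with degree vector $\equiv 0$ modulo 3 in every coordinate. (The monomial $xyz$ has a different residue pattern from $x^3$, and $\mathrm{diag}(\zeta_3,\zeta_3^2,1,1)$ rules out mixing them.) So the invariant space is spanned by the Fermat form $x^3+y^3+z^3+w^3$, which is smooth. Hence $U_{3,3}^{G}$ is a single point: closed, irreducible, of dimension $0$.

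\textbf{Step 3: the normalizer and the orbit.} The expected obstacle is to show that $N_{\PGL_4}(G)=G$. First I would show that the centralizer $C_{\PGL_4}(G)$ is trivial. An element commuting projectively with all the diagonal order-3 elements must be monomial, since these elements have distinct eigenvalue patterns, and a monomial matrix centralized up to scalar by the full diagonal $C_3^{\times 3}$ must in fact be diagonal. Commuting with $S_4$ then forces a scalar matrix. Next, any normalizing element must preserve the characteristic subgroup $C_3^{\times 3}$ (the 3-part of the Fitting subgroup). It therefore permutes its eigenlines, the coordinate points, so it is monomial, and modulo $G$ it becomes a diagonal matrix $d$ normalizing $G$. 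Conjugating the permutation matrices by $d$ must stay in $G$, which forces the ratios of the entries of $d$ to be cube roots of unity, so $d\in G$. An alternative route uses that $G$ fixes the unique point of Step 2: any normalizer element $g$ fixes that point too, since $gU^G=U^G$, so $N_{\PGL_4}(G)\le\Aut(\text{Fermat})=G$ by \Cref{thm:possible-aut-gps}. This second argument is cleaner, and I would use it.

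Finally, $U_{3,3}^{(G)}$ is the $\PGL_4$-orbit of the Fermat cubic. It is irreducible because $\PGL_4$ is, and it has dimension $\dim\PGL_4-\dim N_{\PGL_4}(G)=15-0=15$. It is closed in $U_{3,3}$ because the orbit map has finite stabilizer and lands in the closed substack $\cM_{3,3}^{G}$ of the separated Deligne--Mumford stack $\cM_{3,3}$. This shows that the orbit is the preimage of a closed point of the coarse space. That is the same orbit-closedness argument as in \Cref{prop:Utype2}.
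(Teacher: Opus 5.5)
Your proposal is correct, and its skeleton matches the paper's: show that $U_{3,3}^{G}$ is the Fermat point, show that $N_{\PGL_4}(G)=G$, and read off the orbit dimension $15$. You are also right that the first variety in the statement should be $U_{3,3}^{C_3^{\times 3}\rtimes S_4}$.

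\textbf{Uniqueness of the invariant cubic.} The paper simply asserts that the Fermat cubic is the unique invariant cubic. You derive it via the character-eigenspace reduction used elsewhere in the appendix, which is more complete.

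\textbf{The normalizer.} The paper takes your first route: a normalizing matrix is a generalized permutation matrix, and conjugating the permutation matrices forces its entries to differ by cube roots of unity. Your remark that $C_3^{\times 3}$ is characteristic in $G$, so that its coordinate eigenlines are permuted, supplies the justification for monomiality that the paper omits.

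Your preferred second route runs as follows: a normalizing element maps $U_{3,3}^G$ to itself, hence fixes the Fermat cubic, hence lies in its projective stabilizer $G$. This is the argument the paper uses for the Dyck and Klein quartics in \Cref{prop:Vtype2} and \Cref{prop:Vtype1}. It is shorter, but it imports from \Cref{thm:possible-aut-gps} the classification fact that the full projective stabilizer of the Fermat cubic is exactly $C_3^{\times 3}\rtimes S_4$. The matrix computation, by contrast, is self-contained.

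\textbf{Two harmless slips.}
\begin{enumerate}
    \item $\mathrm{diag}(\zeta_3,\zeta_3^2,1,1)$ fixes both $xyz$ and $x^3$, so it does not separate them. The element $\mathrm{diag}(\zeta_3,1,1,1)$, which you already used, does.
    \item \Cref{prop:Utype2} contains no orbit-closedness argument. Closedness holds simply because the orbit is the fiber of $U_{3,3}\to M_{3,3}$ over the Fermat point, which the paper leaves implicit.
\end{enumerate}
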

\begin{proof} It is clear that there is a unique cubic surface fixed by this automorphism group, namely the Fermat cubic
\begin{align*}
    x^3 + y^3 + z^3 + w^3.
\end{align*}
Any matrix in $\PGL_4(\C)$ which normalizes the automorphism group of the Fermat cubic must be a generalized permutation matrix. By considering its conjugation action on the permutation matrices in $C_3^{\times 3}\rtimes S_4$, it is clear that the entries of this matrix must be third roots of unity. But all such matrices are in the group, hence
\[
N_{\PGL_4(\C)}(C_3^{\times 3}\rtimes S_4) = C_3^{\times 3}\rtimes S_4.
\]
Thus $U_{3,3}^{(C_3^{\times 3}\rtimes S_4)}$ is irreducible of dimension 15. The centralizer of this group must therefore be equal to its center, which we can verify is trivial.
\end{proof}

\section{Proof of~\Cref{thm:irred-VG}}\label{sec:U24}

We prove this throughout this section, working upwards through the poset of subconjugacy classes of these groups in $\PGL_3$. As in the case of cubic surfaces, when the group is presented by diagonal matrices, the monomials form an eigenbasis for the action of the group on the space of ternary quartic forms, which makes computations easier.

\begin{proposition}\label{prop:Vtype12} Let $G=C_2$. Then $U_{2,4}^{C_2}$ is irreducible of dimension 8.
\end{proposition}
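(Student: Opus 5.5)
I would mirror the argument of \Cref{prop:Utype11}. Up to conjugacy in $\PGL_3$, an involution of $\P^2$ fixes a line and an isolated point. So I present $C_2$ by the diagonal matrix $[1,1,-1]$, acting by $z\mapsto -z$. The monomials $x^iy^jz^k$ with $i+j+k=4$ form an eigenbasis for this action. The ternary quartic forms therefore split into two character eigenspaces, according to the parity of the exponent of $z$.

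The odd eigenspace is spanned by $x^3z, x^2yz, xy^2z, y^3z, xz^3, yz^3$. Every form in it is divisible by $z$, so it defines a reducible, hence singular, quartic. The smooth $C_2$-invariant quartics therefore all lie in the projectivization of the even eigenspace
\[
W_{\chi_1}=\mathrm{span}\{x^4,x^3y,x^2y^2,xy^3,y^4,x^2z^2,xyz^2,y^2z^2,z^4\},
\]
which has dimension $9$. A generic member is smooth; for instance the Fermat quartic $x^4+y^4+z^4$ lies in it. So $U_{2,4}^{C_2}$ is a nonempty Zariski open subset of $\P(W_{\chi_1})\cong\P^8$. It is thus irreducible of dimension $8$, and smooth, being open in projective space. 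It is also closed in $U_{2,4}$, since it is the intersection of $U_{2,4}$ with a linear subspace.

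For the orbit dimension I would compute $N_{\PGL_3}(C_2)=C_{\PGL_3}(C_2)$. The generator has distinct eigenspaces of dimensions $2$ and $1$, so its centralizer consists of the block diagonal matrices with blocks of sizes $2$ and $1$, taken modulo scalars; this group is isomorphic to $\GL_2$. The normalizer equals the centralizer because a group of order two has trivial automorphism group. This gives
\[
\dim U_{2,4}^{(C_2)}=\dim U_{2,4}^{C_2}+\dim\PGL_3-\dim N_{\PGL_3}(C_2)=8+8-4=12.
\]
Irreducibility of $U_{2,4}^{(C_2)}$ follows because it is the image of the irreducible variety $\PGL_3\times U_{2,4}^{C_2}$.

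The only step needing care is ruling out smooth members of the odd eigenspace, and that is immediate from the factor $z$. I expect no real obstacle beyond checking that the even eigenspace contains a smooth quartic.
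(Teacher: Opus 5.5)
Your proposal is correct and follows the paper's proof essentially step for step. It uses the same diagonal presentation $[1,1,-1]$, the same splitting into even and odd $z$-parity eigenspaces with the odd part ruled out by the factor $z$, and the same centralizer computation giving $\GL_2$ and the count $8+8-4=12$. Beyond the paper, you also check that the even eigenspace contains a smooth quartic (the Fermat quartic) and justify irreducibility of $U_{2,4}^{(C_2)}$; the paper leaves both points implicit.
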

\begin{proof} Here the group $C_2$ is defined by
\begin{align*}
    C_2 = \left\langle \begin{pmatrix}
        1 & 0 & 0 \\
        0 & 1 & 0 \\
        0 & 0 & -1
    \end{pmatrix}\right \rangle \le \PGL_3.
\end{align*}
There are two character eigenspaces to consider, which are $\chi_1$ and $\chi_{-1}$, and are spanned by:
\begin{align}
    W_{\chi_1} &= \text{span}\{x^4,x^3y, x^2 y^2, x^2 z^2, xy^3, xyz^2, y^4, y^2 z^2, z^4\} \\
    W_{\chi_{-1}} &= \text{span} \{x^3z, x^2yz, xy^2z, xz^3, y^3z, yz^3\}.
\end{align}
Nothing in $\mathbb{P}(W_{\chi_{-1}})$ defines a smooth planar quartic, since they all split off the line $z=0$, hence since $\dim W_{\chi_1} = 9$, we have that $U_{2,4}^{G} = \mathbb{P}(W_{\chi_1})$ is 8-dimensional.

We can easily compute that the normalizer is equal to its centralizer, which is
\begin{align*}
    N_{\PGL_3}(C_2) \cong \left\{ \begin{pmatrix}
        \ast & \ast & 0\\
        \ast & \ast & 0\\
        0 & 0 & 1
    \end{pmatrix}\right\} \cong \GL_2.
\end{align*}
Therefore
\[
\dim U_{2,4}^{(C_2)} = \dim U_{2,4}^{C_2} + \dim \PGL_3 - \dim N_{\PGL_3}(C_2) = 8 + 8 - 4 = 12.
\]
\end{proof}

\begin{proposition}\label{prop:Vtype11} Let $G=C_3$. Then $U_{2,4}^{C_3}$ is irreducible of dimension 6.
\end{proposition}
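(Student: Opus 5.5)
We follow the same pattern as \Cref{prop:Vtype12}: present $C_3$ as a diagonal subgroup of $\PGL_3$, decompose the quartic forms into character eigenspaces, and discard the eigenspaces containing no smooth quartics. The natural presentation is
\[
C_3 = \left\langle \begin{pmatrix} 1 & 0 & 0\\ 0 & 1 & 0 \\ 0 & 0 & \zeta_3\end{pmatrix}\right\rangle \le \PGL_3,
\]
since the Type XI normal form has the shape $z^3\ell(x,y) + f_4(x,y)$. The monomials $x^iy^jz^k$ with $i+j+k=4$ are eigenvectors, with eigenvalue $\zeta_3^{k}$. So there are three eigenspaces, indexed by $k \bmod 3$:
\begin{align*}
W_{0} &= \text{span}\{\text{quartic monomials in } x,y\}\oplus\text{span}\{xz^3, yz^3\},\\
W_{1} &= \text{span}\{x^3z, x^2yz, xy^2z, y^3z, z^4\},\\
W_{2} &= \text{span}\{x^2z^2, xyz^2, y^2z^2\}.
\end{align*}
Thus $\dim W_0 = 5+2 = 7$, $\dim W_1 = 5$, and $\dim W_2 = 3$.

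Next I rule out the two nontrivial eigenspaces. Every form in $W_2$ is divisible by $z^2$, so it defines a nonreduced, hence singular, curve. Every form in $W_1$ is of the form $z\cdot g$, so it contains the line $z=0$ and is reducible, hence singular. For the trivial eigenspace $W_0$, smooth members exist: for example $x^4+y^4+xz^3$ is smooth, as one checks from its partials. Hence $U_{2,4}^{C_3}$ is the nonempty open subset of $\P(W_0)\cong\P^6$ complementary to the discriminant. It is therefore irreducible of dimension $6$, and smooth since $C_3$ is linearly reductive.

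For the normalizer, the generator has eigenvalues $(1,1,\zeta_3)$, while its inverse has $(1,1,\zeta_3^2)$; these are not proportional multisets, so the two elements are not conjugate in $\PGL_3$. Hence the normalizer equals the centralizer, which consists of the block-diagonal matrices with an arbitrary $2\times 2$ block and a scalar in the lower corner, i.e.\ $\GL_2$, which is connected of dimension $4$. The orbit count then gives $\dim U_{2,4}^{(C_3)} = 6+8-4 = 10$, matching the table in \Cref{thm:irred-VG}. There is no real obstacle here; the only point requiring an actual check is the existence of one smooth member of $W_0$, which the explicit example above supplies.
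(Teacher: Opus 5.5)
Your proof is correct and follows the paper's argument: the same diagonal presentation of $C_3$, the same three eigenspaces of dimensions $7$, $5$, $3$, discarding the two nontrivial ones, and the same normalizer computation giving $\GL_2$. You are also slightly more careful than the paper. It asserts without justification that the nontrivial eigenspaces contain only singular quartics, and it writes $U_{2,4}^{C_3} = \P(W_{\chi_1})$. You instead justify both exclusions by divisibility by $z$, describe $U_{2,4}^{C_3}$ correctly as the nonempty open complement of the discriminant in $\P(W_0)$, and certify nonemptiness with $x^4+y^4+xz^3$.
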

\begin{proof} We have that
\[
C_3 =\left\langle \begin{pmatrix}
        1 & 0 & 0 \\
        0 & 1 & 0 \\
        0 & 0 & \zeta_3
    \end{pmatrix}\right \rangle \le \PGL_3.
\]
This gives three character eigenspaces, which are
\begin{align*}
    W_{\chi_1} &= \text{span} \{ x^4, x^3y, x^2 y^2, xy^3, xz^3, y^4, yz^3\} \\
    W_{\chi_{\zeta_3}} &= \text{span}\{x^3z, x^2yz, xy^2z, y^3z, z^4\} \\
    W_{\chi_{\zeta_3^2}} &= \text{span}\{x^2z^2, xyz^2, y^2 z^2 \}.
\end{align*}
The latter two consist of singular quartics, hence we have that $U_{2,4}^{C_3} = \mathbb{P}(W_{\chi_1})$ is 6-dimensional. We again compute the normalizer as
\begin{align*}
    N_{\PGL_3}(C_3) \cong \left\{ \begin{pmatrix}
        \ast & \ast & 0\\
        \ast & \ast & 0\\
        0 & 0 & 1
    \end{pmatrix}\right\} \cong \GL_2,
\end{align*}
hence $U_{2,4}^{(G)}$ is 10-dimensional.
\end{proof}

\begin{proposition}\label{prop:Vtype10} We have that $U_{2,4}^{K_4}$ is irreducible of dimension 5.
\end{proposition}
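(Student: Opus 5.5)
We argue exactly as in \Cref{prop:Vtype12} and \Cref{prop:Vtype11}, using the diagonal presentation of $K_4$ from the start of Type X: it is generated by $\sigma_y=\mathrm{diag}(1,-1,1)$ and $\sigma_z=\mathrm{diag}(1,1,-1)$ in $\PGL_3$. Since the generators are diagonal, the monomials $x^iy^jz^k$ with $i+j+k=4$ form an eigenbasis. The monomial $x^iy^jz^k$ lies in the character eigenspace determined by the parities of $j$ and $k$. This gives four eigenspaces $W_{\chi}$, indexed by $\chi\in\{\pm1\}^2$.

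Each generator of $K_4$ is conjugate in $\PGL_3$ to the involution of \Cref{prop:Vtype12}. By that proposition, a smooth quartic invariant under an involution must lie in that involution's trivial eigenspace. Otherwise every monomial contains the variable negated, so the quartic contains a line. Applying this to $\sigma_y$ and to $\sigma_z$, both $j$ and $k$ must be even. Hence the only eigenspace that can contain smooth quartics is
\[
W_{\chi_1}=\mathrm{span}\{x^4,y^4,z^4,x^2y^2,x^2z^2,y^2z^2\}.
\]
This eigenspace does contain smooth members, for instance the Fermat quartic. Therefore $U_{2,4}^{K_4}$ is a nonempty open subset of $\P(W_{\chi_1})\cong\P^5$. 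It is irreducible of dimension $5$, and it is smooth because $K_4$ is linearly reductive.

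For the normalizer and centralizer in $\PGL_3$, note that the three involutions of $K_4$ have pairwise distinct $(-1)$-eigenlines, namely the three coordinate axes. An element normalizing $K_4$ permutes these involutions, so it permutes the three coordinate points. It is therefore a monomial matrix, and $N_{\PGL_3}(K_4)\cong(\C^\times)^2\rtimes S_3$. The centralizer fixes each involution, hence is the diagonal torus $(\C^\times)^2$, which is connected. Consequently
\[
\dim U_{2,4}^{(K_4)}=5+8-2=11.
\]

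The main obstacle is making the reduction to the trivial character rigorous. It relies on the observation from \Cref{prop:Vtype12} that a nontrivial eigenspace splits off a coordinate line. Once that is in place, identifying the normalizer is the only remaining nonroutine step, and the eigenline argument above handles it.
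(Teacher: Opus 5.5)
Your proposal is correct and follows essentially the same route as the paper: you reduce to the trivial character eigenspace $\P(W_{\chi_1})\cong\P^5$ by applying the involution argument of \Cref{prop:Vtype12} to each generator, and you identify the normalizer as the monomial matrices $(\C^\times)^2\rtimes S_3$. Your eigenline argument for the normalizer and your Fermat witness for nonemptiness are slightly more explicit than the paper's wording, but the approach is the same.
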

\begin{proof} Here we have that the Klein-4 group is given as
\begin{align*}
    K_4 =\left\langle \begin{pmatrix}
        1 & 0 & 0 \\
        0 & 1 & 0 \\
        0 & 0 & -1
    \end{pmatrix}, \begin{pmatrix}
        1 & 0 & 0 \\
        0 & -1 & 0 \\
        0 & 0 & 1
    \end{pmatrix}\right \rangle \le \PGL_3.
\end{align*}
Since $C_2 \le K_4 \le \PGL_3$, any smooth $K_4$-symmetric planar quartics must have the property that the first generator acts trivially. By conjugating $C_2$ inside $\PGL_3$, an identical argument to \Cref{prop:Vtype12} shows that smooth planar quartics must lie in the trivial eigenspace for the other generator of $K_4$. Hence altogether, smooth planar $K_4$-symmetric quartic forms must lie in the trivial eigenspace for $K_4$. This is given as
\begin{align*}
    W_{\chi_{\text{triv}}} &= \text{span}\{x^4, x^2 y^2, x^2 z^2, y^4, y^2 z^2, z^4\}.
\end{align*}
Hence $U_{2,4}^{K_4}$ is 5-dimensional.

We remark that all three non-identity elements in $K_4$ are conjugate inside $\PGL_3$, hence $N_{\PGL_3}(K_4)$ surjects onto $S_3$ via its permutation action on these elements. It is straightforward to see that the kernel of this action (the centralizer of all three elements) is just the diagonal torus in $\PGL_3$, hence we get a short exact sequence
\[
0 \to (\mathbb{C}^{\times})^2 \to N_{\PGL_3}(K_4) \to S_3 \to 0.
\]
This is split by the inclusion of the permutation matrices. Thus we get that $N_{\PGL_3}(K_4) \cong (\mathbb{C}^{\times})^2\rtimes S_3$, and thus $U_{2,4}^{(K_4)}$ is 11-dimensional.
\end{proof}

\begin{proposition}\label{prop:Vtype9} $U_{2,4}^{S_3}$ is irreducible of dimension 3.
\end{proposition}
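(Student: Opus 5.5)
The plan follows the same pattern as \Cref{prop:Vtype12} and \Cref{prop:Vtype10}: fix an explicit presentation of $S_3$ in $\PGL_3$, decompose ternary quartic forms into character eigenspaces, and discard those eigenspaces that contain only singular quartics. For the presentation I take $S_3$ generated by the order three element $\mathrm{diag}(1,\zeta_3,\zeta_3^2)$ and the transposition swapping $y$ and $z$. The form $x^4+y^3x+z^3x+\dots$ type normal forms in \cite{Dolgachev} are equivalent to this up to conjugation. Since $S_3^{\mathrm{ab}}=C_2$, there are only two characters to consider, the trivial one and the sign.

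First I handle the trivial character. By \Cref{prop:Vtype11} (applied after conjugating the $C_3$ into the diagonal form used there), a smooth $C_3$-invariant quartic must lie in the trivial eigenspace of $C_3$. Hence the $C_3$-action is trivial, and only the transposition can act by a sign.

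Next I write out the $C_3$-invariant monomials for $\mathrm{diag}(1,\zeta_3,\zeta_3^2)$. These are the monomials $x^ay^bz^c$ with $b+2c\equiv 0 \pmod 3$:
\[
x^4,\; x^2yz,\; y^2z^2,\; xy^3,\; xz^3.
\]
Under the swap $y\leftrightarrow z$ they split into two eigenspaces. The invariant one is
\[
W_{\chi_1}=\mathrm{span}\{x^4,\, x^2yz,\, y^2z^2,\, x(y^3+z^3)\},
\]
and the sign eigenspace is $\mathrm{span}\{x(y^3-z^3)\}$. Every form in the sign eigenspace is divisible by $x$, so it is singular. The smooth locus is therefore open in $\mathbb{P}(W_{\chi_1})\cong\mathbb{P}^3$, which is irreducible of dimension $3$. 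It is nonempty: a general member such as $x^4+y^2z^2+x(y^3+z^3)+tx^2yz$ can be checked to be smooth. Closedness in $U_{2,4}$ is automatic, since this is a union of linear fixed spaces intersected with $U_{2,4}$.

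For the normalizer, I argue as follows. The eigenvalues of the generator of $C_3$ are distinct, so its centralizer in $\PGL_3$ is the diagonal torus. Elements of the centralizer of $S_3$ must also commute with the swap, which forces the form $\mathrm{diag}(a,b,b)$. So $C_{\PGL_3}(S_3)\cong\C^\times$, and it is connected. Since $S_3$ is complete, $N_{\PGL_3}(S_3)=S_3\times C_{\PGL_3}(S_3)\cong S_3\times\C^\times$. This gives
\[
\dim U_{2,4}^{(S_3)}=3+8-1=10.
\]

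The main obstacle is making sure no other conjugacy class of $S_3$ in $\PGL_3$, and no twisted lift, gives a competing component of smooth quartics. I expect to handle this by noting that any $S_3\le\PGL_3$ lifts to $\GL_3$ as a representation that is either $\mathbb{1}\oplus V_2$ or $\mathrm{sgn}\oplus V_2$. These two representations give the same projective group up to a twist by the sign character, which just interchanges the roles of the two eigenspaces. So the argument above covers all cases.
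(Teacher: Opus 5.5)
Your route is the paper's up to relabeling coordinates. Your $\mathrm{diag}(1,\zeta_3,\zeta_3^2)$ and $y\leftrightarrow z$ correspond to the paper's $\mathrm{diag}(\zeta_3,\zeta_3^2,1)$ and $x\leftrightarrow y$. Your five $C_3$-invariant monomials, the $4$-dimensional $W_{\chi_1}$, and the one-dimensional sign eigenspace spanned by $x(y^3-z^3)$ match the paper's exactly. Your smoothness check, which works at $t=0$, is a small addition the paper omits.

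\textbf{The appeal to \Cref{prop:Vtype11} is wrong.} You cannot conjugate your $C_3$ into $\langle\mathrm{diag}(1,1,\zeta_3)\rangle$. Your generator has three distinct eigenvalues after any rescaling, while every scalar multiple of $\mathrm{diag}(1,1,\zeta_3)^{\pm1}$ has a repeated eigenvalue.

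Worse, the conclusion fails for your $C_3$ taken alone. Let $h=\mathrm{diag}(1,\zeta_3,\zeta_3^2)$ and let $\sigma$ be the coordinate $3$-cycle. Then $\sigma h\sigma^{-1}\in\{\zeta_3h,\zeta_3^2h\}$, so $\sigma$ commutes with $h$ in $\PGL_3$. At the same time, $\sigma$ cyclically permutes the three eigenspaces of $h$ on quartics (those containing $x^4$, $y^4$, $z^4$). Hence every eigenspace contains smooth quartics. For instance, $y^4+x^2z^2+x^3y+yz^3$, the image of your smooth example, is a smooth semi-invariant with nontrivial character.

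What actually forces $C_3$ to act trivially is the fact you stated one sentence earlier, and it is the paper's one-line argument. Every character of $S_3$ factors through $S_3^{\mathrm{ab}}=C_2$, hence is trivial on $A_3$. So any $S_3$-semi-invariant quartic is automatically $C_3$-invariant. Replace the appeal to \Cref{prop:Vtype11} by this.

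\textbf{Two smaller points.}

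First, the same $\sigma$ shows that $C_{\PGL_3}(C_3)$ is $T\rtimes\langle\sigma\rangle$, not the diagonal torus $T$. Your final answer $C_{\PGL_3}(S_3)=\{\mathrm{diag}(a,b,b)\}\cong\C^\times$ is still correct. Conjugating the transposition by an element with nontrivial permutation part gives a different transposition, so only diagonal elements survive. In any case the normalizer is not part of this statement.

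Second, twisting the lift by $\mathrm{sgn}$ does not interchange eigenspaces on quartics, since $\mathrm{sgn}^4=1$. The eigenspace decomposition is simply independent of the lift, which is all your last paragraph needs.
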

\begin{proof}
We have that
\begin{align*}
    S_3 =\left\langle \begin{pmatrix}
        \zeta_3 & 0 & 0 \\
        0 & \zeta_3^2 & 0 \\
        0 & 0 & 1
    \end{pmatrix}, \begin{pmatrix}
        0 & 1 & 0 \\
        1 & 0 & 0 \\
        0 & 0 & 1
    \end{pmatrix}\right \rangle \le \PGL_3.
\end{align*}
Since $S_3^\ab = C_2$, the order 3 elements must act trivially. Considering the monomial basis for $W$, this restricts our attention to the subspace
\[
\text{span} \{x^3z, x^2y^2, xyz^2, y^3 z, z^4\}.
\]
We now ask how the element of order two acts, and we obtain two eigenspaces, given by
\begin{align*}
    W_{\chi_1} &= \text{span}\{ (x^3 + y^3)z, x^2 y^2, xyz^2, z^4\} \\
    W_{\chi_{-1}} &= \text{span} \{ (x^3 - y^3)z\}.
\end{align*}
Hence the moduli of $S_3$-symmetric smooth planar quartics lies in the trivial character eigenspace for $S_3$, which is 3-dimensional.

Since $S_3$ is complete, we have that $N_{\PGL_3}(S_3) \cong S_3 \times C_{\PGL_3}(S_3)$. The centralizer can easily be computed as
\[
C_{\PGL_3}(S_3) = \left\{
\begin{pmatrix}
    1 & 0 & 0\\
    0 & 1 & 0\\
    0 & 0 & \ast
\end{pmatrix}\right\},
\]
hence $N_{\PGL_3}(S_3) \cong S_3 \times \mathbb{C}^\times$. We therefore obtain that $U_{2,4}^{(S_3)}$ is 10-dimensional.
\end{proof}

\begin{proposition}\label{prop:Vtype8} $U_{2,4}^{C_6}$ is irreducible of dimension 3.
\end{proposition}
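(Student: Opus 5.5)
I would follow the template of \Cref{prop:Vtype9} and \Cref{prop:Vtype11}: fix a diagonal presentation of $C_6$ in $\PGL_3$, decompose the space of ternary quartics into character eigenspaces, discard the eigenspaces whose members are all singular, and then compute the normalizer. For the presentation, I take $C_6$ to be generated by $\mathrm{diag}(1,-1,\zeta_3)$. Its square $\mathrm{diag}(1,1,\zeta_3^2)$ generates a $C_3$ conjugate to the one in \Cref{prop:Vtype11}, and its cube $\mathrm{diag}(1,-1,1)$ generates a $C_2$ conjugate to the one in \Cref{prop:Vtype12}. This choice seems compatible with the Type VIII normal form $z^3y+x^4+ax^2y^2+y^4$.

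\textbf{Restricting the eigenspaces.} By the argument of \Cref{prop:Vtype12}, after conjugation a smooth quartic must lie in the trivial eigenspace of the involution, so it involves only even powers of $y$. By the argument of \Cref{prop:Vtype11}, after conjugation it must lie in the trivial eigenspace of the order-3 element. Since $C_6$ is cyclic, the character of the full group is then determined by these two restrictions, so a smooth $C_6$-invariant quartic must lie in the trivial eigenspace. The main thing to check is the bookkeeping: that the smoothness constraints for the two subgroups force the \emph{trivial} character of $C_6$ in this coordinate presentation, rather than some twist. I would do this by listing the monomials directly. Monomials $x^iy^jz^k$ of degree 4 with $j$ even and $k\equiv 0 \pmod 3$ give
\[
W_{\chi_1}=\mathrm{span}\{x^4,x^2y^2,y^4,xz^3\},
\]
possibly with $x$ and $y$ swapped depending on the chosen generator. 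This is projectively 3-dimensional. A general member, such as $x^4+x^2y^2+y^4+xz^3$, is smooth, so $U_{2,4}^{C_6}$ is a nonempty open subset of $\P^3$. It is therefore irreducible and smooth of dimension 3, and closed in $U_{2,4}$ because it is a fixed locus. If instead the generator's characters make the other eigenspaces carry smooth members, I would re-choose the generator (replacing $\zeta_3$ by $\zeta_3^2$) so that the smooth locus sits in the trivial eigenspace.

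\textbf{Normalizer and dimension count.} The generator has three distinct eigenvalues $1,-1,\zeta_3$ up to scaling. Its inverse has eigenvalues $1,-1,\zeta_3^2$, and the other generator $g^5$ is not conjugate to it in $\PGL_3$; I would check this by comparing eigenvalue ratios up to scalar. Hence the normalizer equals the centralizer. Because the eigenvalues are distinct, the centralizer of the generator is the diagonal torus $(\C^\times)^2$. This is connected, matching the table in \Cref{thm:irred-VG}. It follows that
\[
\dim U_{2,4}^{(C_6)}=3+8-2=9.
\]
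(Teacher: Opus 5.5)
Your proposal is correct and follows essentially the same route as the paper: a diagonal generator (the paper uses $\mathrm{diag}(-1,1,\zeta_3)$, which gives $yz^3$ where you get $xz^3$), a character-eigenspace decomposition in which only the trivial eigenspace contains smooth forms, and normalizer equal to centralizer equal to the diagonal torus. The differences are cosmetic: you rule out smooth members in the nontrivial eigenspaces by reducing to the $C_2$ and $C_3$ cases instead of listing all six eigenspaces, and you explicitly exhibit a smooth member, which the paper leaves implicit.
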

\begin{proof} We have that
\begin{align*}
    C_6 =\left\langle \begin{pmatrix}
        -1 & 0 & 0 \\
        0 & 1 & 0 \\
        0 & 0 & \zeta_3
    \end{pmatrix}\right \rangle \le \PGL_3.
\end{align*}
We get six character eigenspaces, depending upon how the generator acts. These are spanned by
\begin{align*}
    W_{\chi_1} &= \text{span} \{ x^4, x^2 y^2, y^4, yz^3 \} \\
    W_{\chi_{\zeta_3}} &= \text{span} \{ x^2yz, y^3z, z^4\} \\
    W_{\chi_{\zeta_3^2}} &= \text{span} \{ x^2 z^2, y^2 z^2\} \\
    W_{\chi_{-1}} &= \text{span} \{ x^3y, xy^3, xz^3 \} \\
    W_{\chi_{-\zeta_3}} &= \text{span} \{ x^3 z, xy^2 z \} \\
    W_{\chi_{-\zeta_3^2}} &= \text{span} \{ xyz^2 \}.
\end{align*}
The only locus containing any smooth quartic forms is the trivial character eigenspace, hence $U_{2,4}^{C_6}$ is 3-dimensional.

We can compute directly that no element in $\PGL_3$ can conjugate the generator of $C_6$ to its inverse, hence since these are the only two order 6 elements in the group, they must be fixed by the normalizer, and therefore $N_{\PGL_3}(C_6) = C_{\PGL_3}(C_6)$. It is clear that the centralizer is simply the diagonal matrices, which are isomorphic to $(\mathbb{C}^\times)^2$. Altogether we get that $U_{2,4}^{(C_6)}$ is 9-dimensional.
\end{proof}

\begin{proposition}\label{prop:Vtype7} $U_{2,4}^{D_8}$ is irreducible and 3-dimensional.
\end{proposition}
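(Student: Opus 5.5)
We present $D_8$ as in \cite[\S4.7]{betheabrazelton_bitangentssymmetricquartics}, extending the Klein four-group $K_4$ of \Cref{prop:Vtype10} by the transposition $\tau$ swapping $x$ and $y$:
\[
D_8=\left\langle K_4,\ \tau \right\rangle,\qquad \tau=\begin{pmatrix} 0&1&0\\ 1&0&0\\ 0&0&1\end{pmatrix}.
\]
We then follow the same pattern as the previous propositions. Since $K_4\le D_8$, \Cref{prop:Vtype10} shows that smooth $D_8$-symmetric quartics lie in the trivial $K_4$-eigenspace $\mathrm{span}\{x^4,x^2y^2,x^2z^2,y^4,y^2z^2,z^4\}$. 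On this space $\tau$ acts by swapping $x^4\leftrightarrow y^4$ and $x^2z^2\leftrightarrow y^2z^2$, and it fixes $x^2y^2$ and $z^4$. This splits the space into two eigenspaces:
\[
W_{\chi_1}=\mathrm{span}\{x^4+y^4,\ x^2y^2,\ (x^2+y^2)z^2,\ z^4\},\qquad W_{\chi_{-1}}=\mathrm{span}\{x^4-y^4,\ (x^2-y^2)z^2\}.
\]
Every form in $W_{\chi_{-1}}$ is divisible by $x^2-y^2$, so it is reducible and hence singular. The remaining characters of $D_8$, those nontrivial on $K_4$, are already excluded by \Cref{prop:Vtype10}. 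Therefore $U_{2,4}^{D_8}$ is a nonempty open subset of $\P(W_{\chi_1})\cong\P^3$: it contains, for instance, the Fermat quartic. This gives irreducibility and dimension $3$. If one uses the alternative presentation appearing in the $W_1$ computation, with the $XYZ^2$ term, the argument is identical after the change of coordinates.

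Next we compute the normalizer, which gives $\dim U_{2,4}^{(D_8)}$ and the centralizer recorded in \Cref{thm:irred-VG}.

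First, the centre of $D_8$ is generated by $\mathrm{diag}(-1,-1,1)$, which is the element of $K_4$ fixed by $\tau$. Any normalizing element must preserve this centre, so it preserves its eigenspaces, namely the line $z=0$ and the point $[0:0:1]$. Hence $N_{\PGL_3}(D_8)$ consists of block matrices
\[
\begin{pmatrix} A&0\\ 0&1\end{pmatrix},\qquad A\in\GL_2.
\]
Second, we pass to the quotient $D_8/Z\cong C_2^2$, which acts on $\P^1$ by $x\mapsto -x$ and $x\leftrightarrow y$. The matrix $A$ must normalize this Klein group in $\PGL_2$. That normalizer is finite, isomorphic to $S_4$, and its identity component is just scalars. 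So $N_{\PGL_3}(D_8)$ is an extension of a finite group by the $\C^\times$ of scalar $A$'s, i.e. $\mathrm{diag}(t,t,1)$.

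Third, we identify the centralizer: $C_{\PGL_3}(D_8)$ consists of those $A$ acting trivially on the Klein group, which is exactly this $\C^\times$, and in particular it is connected. The main obstacle is pinning down the finite component group, and here it seems sensible to record it as an explicit equation, labelled \texttt{eqn:normalizer-quartictype7}. We would check directly which elements of the binary octahedral lift actually preserve $D_8$ as a subgroup of $\PGL_3$, expecting the answer to be $D_8$ modulo the centre. Either way, $\dim N_{\PGL_3}(D_8)=1$, and therefore
\[
\dim U_{2,4}^{(D_8)}=3+8-1=10.
\]
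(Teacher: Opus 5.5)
Your proof is correct and is essentially the paper's argument. You reduce to the trivial eigenspace of a Klein four-subgroup via \Cref{prop:Vtype10}, split that $6$-dimensional space by the remaining generator, and discard the nontrivial eigenspace as singular; the differences are cosmetic (your presentation $\langle K_4,\tau\rangle$ is the paper's $D_8^1$ from the Type X section, so \Cref{prop:Vtype10} applies verbatim, and you check nonemptiness with the Fermat quartic).

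The normalizer discussion is not needed for this statement, but your expected component group is too small. In your coordinates $\left(\begin{smallmatrix} R&0\\ 0&1\end{smallmatrix}\right)$, with $R=\tfrac{1}{\sqrt2}\left(\begin{smallmatrix}1&-1\\ 1&1\end{smallmatrix}\right)$ the rotation by $45^\circ$, normalizes $D_8$ but does not lie in $D_8\cdot\C^\times$. Hence the image of $N_{\PGL_3}(D_8)$ in $N_{\PGL_2}(K_4)\cong S_4$ has order $8$, not $|D_8/Z(D_8)|=4$.
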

\begin{proof} Here we have
\begin{align*}
    D_8 = \left\langle \begin{pmatrix}
        i & 0 & 0 \\
        0 & -i & 0 \\
        0 & 0 & 1
    \end{pmatrix}, \begin{pmatrix}
        0 & 1 & 0 \\
        1 & 0 & 0 \\
        0 & 0 & 1
    \end{pmatrix}\right \rangle \le \PGL_3.
\end{align*}

Since the copy of $K_4$ in \Cref{prop:Vtype10} is subconjugate to $D_8$ in $\PGL_3$, any smooth $D_8$-symmetric forms must have the property that they lie in the trivial character eigenspace for the corresponding subgroup $K_4 \le D_8$. The relevant copy of $K_4$ here is
\[
\left\langle \begin{pmatrix}
        1 & 0 & 0 \\
        0 & 1 & 0 \\
        0 & 0 & -1
    \end{pmatrix}, \begin{pmatrix}
        0 & 1 & 0 \\
        1 & 0 & 0 \\
        0 & 0 & 1
    \end{pmatrix}\right\rangle.
\]
The trivial character eigenspace for this copy of $K_4$ is given by symmetric polynomials in $x$ and $y$ times even powers of $z$. This is 6-dimensional, spanned by:
\begin{align*}
    \text{span} \{x^4 + y^4, x^3y + xy^3, x^2y^2, (x^2 + y^2)z^2, xyz^2, z^4\}.
\end{align*}
This vector space decomposes to our two relevant character eigenspaces for $D_8$, depending upon whether the element $\begin{pmatrix}
        i & 0 & 0 \\
        0 & -i & 0 \\
        0 & 0 & 1
    \end{pmatrix}$
acts by $+1$ or $-1$. We get
\begin{align*}
    W_{\chi_1} &= \text{span}\{ x^4 + y^4, x^2 y^2, xyz^2, z^4\} \\
    W_{\chi_{-1}} &= \text{span}\{ x^3y + xy^3,  (x^2 + y^2)z^2\}.
\end{align*}
Anything in $W_{\chi_{-1}}$ has a singularity at $[0:0:1]$, hence all the smooth quartics lie in the trivial eigenspace, and hence $U_{2,4}^{D_8}$ is 3-dimensional.

To compute the normalizer, we fix some notation for the generators. Let
\begin{align*}
    r &= \begin{pmatrix}
        i & 0 & 0 \\
        0 & -i & 0 \\
        0 & 0 & 1
    \end{pmatrix} \\
    s &= \begin{pmatrix}
        0 & 1 & 0 \\
        1 & 0 & 0 \\
        0 & 0 & 1
    \end{pmatrix}.
\end{align*}
We note that $Z(D_8) = \langle r^2\rangle \cong C_2$, hence any element $g\in N_{\PGL_4}(D_8)$ must centralize $r^2$. This tells us that
\begin{equation}\label{eqn:block-diagonal}
\begin{aligned}
    N_{\PGL_4}(D_8) \le \left\{ \begin{pmatrix}
    \ast & \ast & 0 \\
    \ast & \ast & 0 \\
    0 & 0 & 1
\end{pmatrix}\right\}.
\end{aligned}
\end{equation}
That upper left $2\times 2$ block, considered in $\GL_2$, must either fix or interchange $r$ and $r^3$ under conjugation (as they are the unique things of order 4). Hence this upper block must be diagonal or anti-diagonal. If it is diagonal, then by conjugating with the transposition, we get
\begin{align*}
    \begin{pmatrix}
        a & 0 & 0\\
        0 & b & 0 \\
        0 & 0 & 1
    \end{pmatrix}s
    \begin{pmatrix}
        a^{-1}& 0 \\
        0 & b^{-1} \\
        0 & 0 & 1
    \end{pmatrix} &= \begin{pmatrix}
        0 & ab^{-1} & 0 \\
        a^{-1}b & 0 & 0 \\
        0 & 0 & 1.
    \end{pmatrix}
\end{align*}
In order for this to be in $D_8$, we must have that $a/b \in \{i,-1,-i,1\}$. Hence every such matrix is of the form
\[\begin{pmatrix}
    i^kb & 0 & 0 \\
    0 & b & 0 \\ 
    0 & 0 & 1
\end{pmatrix} = \begin{pmatrix}
    i^k & 0 & 0 \\
    0 & 1 & 0 \\ 
    0 & 0 & b
\end{pmatrix}
\]
for some power of $k$. If the block were anti-diagonal, a similar analysis shows that $a/b = i^k$ for some $k$, hence it differs from a diagonal matrix as before by a transposition in the group. Altogether, we get that the normalizer is given by
\begin{align*}
    N_{\PGL_3}(D_8) &= D_8 \cdot \left\langle \begin{pmatrix}
        1 & 0 & 0 \\
        0 & 1 & 0 \\
        0 & 0 & \ast
    \end{pmatrix}, 
    \begin{pmatrix}
        i & 0 & 0\\
        0 & 1 & 0\\
        0 & 0 & \ast
    \end{pmatrix}\right\rangle.
\end{align*}
We therefore obtain a short exact sequence
\begin{equation}\label{eqn:normalizer-quartictype7}
0 \to D_8 \to N_{\PGL_3}(D_8) \to \C^\times \to 0.
\end{equation}
Putting everything together, we get that $U_{2,4}^{(D_8)}$ has dimension 10.
\end{proof}

\begin{proposition}\label{prop:Vtype6} $U_{2,4}^{C_9}$ is irreducible and 2-dimensional.
\end{proposition}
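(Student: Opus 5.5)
We follow the same template as the earlier appendix computations. We present $C_9$ diagonally, generated by
\[
g=\begin{pmatrix} 1 & 0 & 0\\ 0 & \zeta_9^{3} & 0\\ 0 & 0 & \zeta_9\end{pmatrix}\in\PGL_3,
\]
or by whichever diagonal matrix is conjugate to the standard presentation of the Type VI automorphism group. The Type VI quartic $x^4+xy^3+yz^3$ is the model we want invariant, and the exponents should be adjusted until that is the case. With $g$ diagonal, the 15 quartic monomials form an eigenbasis. We will sort them by the character $x^ay^bz^c\mapsto \zeta_9^{3b+c}$, taken modulo scaling of the whole form. This decomposes $H^0(\P^2,\cO(4))$ into character eigenspaces $W_\chi$.

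\textbf{Step 1: reduce to one eigenspace.} Since $C_3=\langle g^3\rangle\le C_9$, we can invoke \Cref{prop:Vtype11} after conjugating this $C_3$ into the standard form there. Smooth $C_9$-symmetric quartics must then lie in the eigenspace on which $g^3$ acts by the character of \Cref{prop:Vtype11} that contains smooth forms. This leaves only three candidate characters for $g$, namely those extending that character of $C_3$. We will list the monomials in each. The expected outcome is that one eigenspace is spanned by three monomials, $x^4, xy^3, yz^3$. For each of the other two eigenspaces, we will check that every form in it is singular, for instance by exhibiting a coordinate point (such as $[0:0:1]$ or $[0:1:0]$) where all partial derivatives of every member vanish, or a line that splits off. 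Hence $U_{2,4}^{C_9}$ is the open smooth locus in $\P(W_{\chi})\cong\P^2$. It is nonempty because it contains the Type VI quartic, so it is irreducible of dimension 2.

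\textbf{Step 2: the normalizer.} The generator $g$ has three distinct eigenvalues. Any element normalizing $C_9$ sends $g$ to some power $g^k$ with $k\in(\Z/9)^\times$. We will show that no $g^k$ with $k\neq1$ is conjugate to $g$ in $\PGL_3$, by comparing eigenvalue multisets up to a common scalar. The scalar ambiguity means we must check the nine rescalings $\lambda\{1,\zeta_9^3,\zeta_9\}$ against each $\{1,\zeta_9^{3k},\zeta_9^{k}\}$. Once this holds, $N_{\PGL_3}(C_9)=C_{\PGL_3}(C_9)$. The centralizer of a diagonal element with distinct eigenvalues, even up to scalar, is the diagonal torus $(\C^\times)^2$, provided no nontrivial permutation matrix conjugates $g$ to a scalar multiple of itself. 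That is the same eigenvalue check, now with $k=1$. Therefore $\dim U_{2,4}^{(C_9)}=2+8-2=8$, matching \Cref{thm:irred-VG}.

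\textbf{Main obstacle.} The only step with real content is the bookkeeping in the eigenvalue comparison, made up to projective scalars. The singularity checks on the rejected eigenspaces are routine, and for those I would pick explicit coordinate points.
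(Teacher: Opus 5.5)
Your proposal follows the paper's proof step for step: reduce to the trivial eigenspace of $g^3=\mathrm{diag}(1,1,\zeta_3)$ via \Cref{prop:Vtype11}, split it into three $C_9$-eigenspaces, observe that exactly one contains smooth forms, and identify $N_{\PGL_3}(C_9)$ with the diagonal torus. Your eigenvalue-translation check for the normalizer is correct and more explicit than the paper's one-line assertion.

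The only slip is in the bookkeeping for your stated $g=\mathrm{diag}(1,\zeta_9^3,\zeta_9)$. There $x^4$ and $xy^3$ have weight $1$, but $yz^3$ has weight $\zeta_9^6$, so these three monomials do not share an eigenspace. For your $g$ the smooth eigenspace is $\langle x^3y,\,xz^3,\,y^4\rangle$, which is the Type VI quartic with $x\leftrightarrow y$. If you want $\langle x^4,xy^3,yz^3\rangle$, use the paper's generator $\mathrm{diag}(\zeta_9^3,1,\zeta_9)$: it puts those three monomials together in weight $\zeta_9^3$. The other two eigenspaces, $\langle x^3y,y^4\rangle$ and $\langle x^2y^2,xz^3\rangle$, then consist of reducible forms.
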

\begin{proof} We have that
\[
C_9 = \left\langle \begin{pmatrix}
    \zeta_9^3 & 0 & 0 \\
    0 & 1 & 0 \\
    0 & 0 & \zeta_9
\end{pmatrix}\right\rangle \le \PGL_3.
\]
Since this generator cubed is the generator of $C_3$ in \Cref{prop:Vtype11}, we can only obtain smooth symmetric quartics in the trivial eigenspace for this character. Hence if powers of $z$ exist, they must be $z^3$'s. Among the remaining monomials, they break into character eigenspaces:
\begin{align*}
    W_{\chi_1} &= \text{span}\{x^3y, y^4\} \\
    W_{\chi_{\zeta_9^3}} &= \text{span}\{x^4, xy^3, yz^3\} \\
    W_{\chi_{\zeta_9^6}} &= \text{span}\{x^2 y^2, xz^3\} \\
\end{align*}
Interestingly, the only smooth quartics are not in the identity character eigenspace, but in the character eigenspace where the action of the generator scales the form by $\zeta_9^3$. We therefore obtain that $U_{2,4}^{C_9}$ is 2-dimensional.

The normalizer necessarily coincides with the centralizer, which we can see is the diagonal torus. Thus
\[
N_{\PGL_3}(C_9) = \left\{ \begin{pmatrix}
    \ast & 0 & 0 \\
    0 & \ast & 0 \\
    0 & 0 & \ast
\end{pmatrix}\right\} \cong (\mathbb{C}^\times)^2.
\]
Therefore $U_{2,4}^{(C_9)}$ is $8$-dimensional.
\end{proof}

\begin{proposition}\label{prop:Vtype5} $U_{2,4}^P$ is irreducible of dimension 2.
\end{proposition}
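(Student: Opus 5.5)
Following the pattern of \Cref{prop:Vtype7} and \Cref{prop:Vtype10}, I would first fix an explicit presentation of the Pauli group $P\le\PGL_3$ of order $16$. I take it to contain the copy of $D_8=\langle r,s\rangle$ from \Cref{prop:Vtype7}, and add one further diagonal element, say $t=\mathrm{diag}(1,1,i)$ or an equivalent, so that $P=D_8\cdot\langle t\rangle$ is a central product $C_4\circledcirc D_8$. The precise generator is to be chosen so that it matches the standard Type V normal form $x^4+y^4+z^4+a x^2y^2+b xyz^2$ (up to coordinate change) from \cite[6.5.2]{Dolgachev}.

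Since $D_8\le P$, any smooth $P$-symmetric quartic is in particular $D_8$-symmetric. By \Cref{prop:Vtype7} it therefore lies in $\mathbb{P}(W_{\chi_1})$ with
\[
W_{\chi_1}=\mathrm{span}\{x^4+y^4,\ x^2y^2,\ xyz^2,\ z^4\}.
\]
The extra generator $t$ is diagonal, so it acts on these four basis vectors by scalars. This splits $W_{\chi_1}$ into at most a few $t$-eigenspaces. I would then check which eigenspace contains a smooth quartic. I expect this to be the eigenspace spanned by $x^4+y^4$, $x^2y^2$ and $z^4$, possibly together with $xyz^2$ depending on the chosen generator. The other eigenspaces should be excluded because every form in them is divisible by $z$ or otherwise singular at a coordinate point. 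The surviving eigenspace is a $3$-dimensional linear space containing smooth members, for example the Fermat-like quartic $x^4+y^4+z^4$. Hence $U_{2,4}^P$ is an open subset of a $\P^2$, and is therefore irreducible of dimension $2$.

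For the normalizer, I would argue as for $D_8$. The center of $P$ contains $r^2$, so the normalizer preserves the block decomposition of \eqref{eqn:block-diagonal}. Conjugation must also permute the elements of order $4$. These two constraints force the normalizer to consist of monomial matrices in the upper block with entries in powers of $i$, together with an arbitrary scalar in the $z$-entry. This gives an extension of a finite group, $S_4$ in the table, by $\C^\times$, which would be the display \eqref{eqn:normalizer-P} referenced in the table. The dimension count $2+8-1=9$ then gives $\dim U_{2,4}^{(P)}$.

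The main obstacle is choosing the generators correctly and pinning down the finite part of the normalizer. In particular, I must make sure no non-monomial element, such as the Type III generator mixing $x\pm y$, normalizes $P$. A non-monomial normalizing element would enlarge the component group beyond what the block analysis predicts.
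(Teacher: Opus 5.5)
Your argument for the statement itself is correct and is the paper's argument: restrict to the $D_8$-invariant space $\mathrm{span}\{x^4+y^4,x^2y^2,xyz^2,z^4\}$ of \Cref{prop:Vtype7}, then split it under one further element of $P$. The paper uses $\mathrm{diag}(-1,1,1)$ and you use $\mathrm{diag}(1,1,i)$. These differ by $r\in D_8$, and each acts by $+1$ on $x^4+y^4$, $x^2y^2$, $z^4$ and by $-1$ on $xyz^2$. So your hedge is unnecessary: every element of $P\setminus D_8$ induces the same splitting, and $xyz^2$ can never survive. Hence $U_{2,4}^P$ is the complement of the discriminant in $\mathbb{P}(\mathrm{span}\{x^4+y^4,x^2y^2,z^4\})\cong\mathbb{P}^2$, which is nonempty because it contains the Fermat quartic. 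Relatedly, $x^4+y^4+z^4+ax^2y^2+bxyz^2$ is the Type VII normal form, not Type V. The Type V form is $x^4+y^4+z^4+ax^2y^2$, which is what your computation actually yields; keeping $xyz^2$ would give dimension $3$.

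The normalizer part goes beyond the statement, but the paper also carries it out in this proof, and as written it is wrong. Your block-diagonal reduction via the central involution $r^2$ is fine. The second step, however, does not transfer from $D_8$: there only $r^{\pm1}$ have order $4$, whereas $P$ has eight elements of order $4$, and the normalizer is not monomial. Under $\mathrm{diag}(A,1)\mapsto A$, $P$ maps onto the Klein group $\{z\mapsto\pm z,\ z\mapsto\pm1/z\}\le\PGL_2$. Its normalizer is the full octahedral group of $\{0,\infty,\pm1,\pm i\}$, of order $24$. Monomial matrices with entries powers of $i$ give only its dihedral subgroup of order $8$, namely the elements preserving $\{0,\infty\}$.

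For instance, $\mathrm{diag}(H,1)$ with $H=\left(\begin{smallmatrix}1&1\\1&-1\end{smallmatrix}\right)$ normalizes $P$, since it conjugates $\mathrm{diag}(1,-1)$ to the swap matrix and back. The Type III generator $M$ that you wanted to rule out also normalizes $P$, because $P$ is the normal Sylow $2$-subgroup of $C_4\circledcirc A_4$, as used in \Cref{prop:Vtype3}. Your constraints would therefore produce a component group of order $8$, not the $S_4$ in \eqref{eqn:normalizer-P}. The paper obtains $S_4$ by identifying the image in $\PGL_2$ with $N_{\PGL_2}(K_4)$. The error does not affect $\dim U_{2,4}^{(P)}=2+8-1=9$, since only $\dim N_{\PGL_3}(P)=1$ enters that count.
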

\begin{proof} The Type V Pauli group is given by
\begin{align*}
    P = \left\langle \begin{pmatrix}
        -1 & 0 & 0 \\
        0 & 1 & 0 \\
        0 & 0 & 1
    \end{pmatrix},
    \begin{pmatrix}
        i & 0 & 0 \\
        0 & -i & 0 \\
        0 & 0 & 1
    \end{pmatrix},
    \begin{pmatrix}
        0 & 1 & 0 \\
        1 & 0 & 0 \\
        0 & 0 & 1
    \end{pmatrix}
    \right\rangle \le \PGL_3.
\end{align*}
Observe that the dihedral group in \Cref{prop:Vtype7} is a subgroup of this group $P$, hence any smooth symmetric quartics lie in the trivial eigenspace for this group, which was a 4-dimensional vector space spanned by $x^4 + y^4$, $x^2 y^2$, $xyz^2$, and $z^4$. There are two relevant character eigenspaces for the Pauli group, corresponding to whether the order two element $x\mapsto -x$ scales the generator by $+1$ or $-1$. We obtain
\begin{align*}
    W_{\chi_1} &= \text{span}\{ x^4 + y^4, x^2 y^2, z^4 \} \\
    W_{\chi_{-1}} &= \text{span}\{ xyz^2 \}.
\end{align*}
Hence we again land in the trivial character eigenspace for the group, which in this case has dimension $\dim U_{2,4}^{P} = 2$.

To compute the normalizer, we observe the center of the Pauli group is cyclic of order 4, generated by $\begin{pmatrix}
    1 & 0 & 0\\
    0 & 1 & 0 \\
    0 & 0 & i
\end{pmatrix}$. In particular the normalizer has the same block diagonal structure as in \Cref{eqn:block-diagonal}. There is therefore a map
\begin{align*}
    N_{\PGL_3}(P) &\to \PGL_2 \\
    \begin{pmatrix}
        A & 0 \\ 0 & 1
    \end{pmatrix} &\mapsto A,
\end{align*}
and its image consists of every $2\times 2$ matrix which normalizes the $2\times 2$ upper left blocks of the elements of the Pauli group when considered in $\PGL_2$. It is clear though that the first two generators of $P$ become identified under this map to $\PGL_2$, hence the image is the normalizer of the image of $P$, which is a Klein 4-group. We therefore obtain a short exact sequence
\[
0 \to \mathbb{C}^\times \to N_{\PGL_3}(P) \to N_{\PGL_2}(K_4) \to 0.
\]
We can compute that the normalizer of $K_4$ is given by the rigid symmetries of the fixed points of the involutions in $K_4$, which is precisely the octahedron on $\mathbb{P}^1$ with vertices $\{0,\pm 1, \pm i, \infty\}$. Thus $N_{\PGL_2}(K_4) \cong S_4$ is the octahedral group, and we obtain a non-split short exact sequence
\begin{equation}\label{eqn:normalizer-P}
\begin{aligned}
    0 \to \mathbb{C}^\times \to N_{\PGL_3}(P) \to S_4 \to 0.
\end{aligned}
\end{equation}
With this we obtain that $U_{2,4}^{(P)}$ has dimension 9.
\end{proof}

\begin{proposition}\label{prop:Vtype4} $U_{2,4}^{S_4}$ is irreducible and 1-dimensional.
\end{proposition}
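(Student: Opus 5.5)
We follow the pattern of the preceding propositions: fix an explicit copy of $S_4$ in $\PGL_3$ and reduce to character eigenspaces using the subgroups already treated. The natural choice is the group of signed permutation matrices modulo scalars. This contains the diagonal Klein group $K_4$ of \Cref{prop:Vtype10}, generated by $y\mapsto -y$ and $z\mapsto -z$, together with the $S_3$ permuting $x,y,z$. This is exactly the $S_4$ appearing in the proof of \Cref{prop:type10quartics-monpar}, whose fixed family is $a_0(x^4+y^4+z^4)+b_0(x^2y^2+x^2z^2+y^2z^2)$.

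By \Cref{prop:Vtype10}, every smooth $K_4$-symmetric quartic lies in the trivial eigenspace
\[
W_{\chi_{\text{triv}}}=\text{span}\{x^4,x^2y^2,x^2z^2,y^4,y^2z^2,z^4\}.
\]
Since $K_4$ is normal in $S_4$, the quotient $S_3$ acts on this space by permuting the variables. The invariant forms must transform by a character of $S_3$, so there are only two cases.

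1. For the trivial character, the invariants are spanned by $x^4+y^4+z^4$ and $x^2y^2+x^2z^2+y^2z^2$.
2. For the sign character, the eigenspace is zero. Each $S_3$-orbit of monomials contains transposition-fixed monomials such as $x^2y^2$, and these are killed under the sign character.

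The symmetric family is smooth for generic $a$ (for instance the Fermat quartic at $a=0$), so $U_{2,4}^{S_4}$ is a dense open subset of $\P^1$. This gives irreducibility and dimension $1$.

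For the normalizer, the table claims $N_{\PGL_3}(S_4)=S_4$. First, the centralizer is trivial: commuting with the diagonal $K_4$ forces a matrix to be diagonal, and commuting with the permutation matrices then forces it to be scalar. Second, $S_4$ is complete, since it is centerless and $\Aut(S_4)=S_4$ with all automorphisms inner. Together these give $N_{\PGL_3}(S_4)=S_4\times C_{\PGL_3}(S_4)=S_4$. Consequently
\[
\dim U_{2,4}^{(S_4)}=1+8-0=9.
\]

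The main subtlety I expect is justifying that the $S_4$ fixing a Type IV quartic is conjugate to this signed-permutation copy. I would settle it by noting that the three involutions of the normal $K_4$ have distinct fixed points in general position, so $K_4$ can be diagonalized. Then $S_4/K_4\cong S_3$ permutes those three points, and after rescaling coordinates it acts by monomial matrices. Every remaining step is a short computation.
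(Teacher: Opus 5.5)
Your proof is correct. It uses the same copy of $S_4$ as the paper: the paper's generators, the $3$-cycle and $(x,y,z)\mapsto(-y,x,z)$, generate the rotation group of the cube, which is your signed-permutation group modulo $\pm1$. The reduction step is different, however.

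\emph{The paper's route.} It kills the sign character by restricting to a point-stabilizer $S_3\le S_4$ and invoking \Cref{prop:Vtype9}, where the sign eigenspace is spanned by the singular form $(x^3-y^3)z$. It then simply asserts what the trivial eigenspace is.

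\emph{Your route.} You restrict to the normal Klein group instead. You then decompose the $6$-dimensional space of $K_4$-invariants under $S_4/K_4\cong S_3$, which acts by permuting the variables. This computes both eigenspaces explicitly rather than asserting them.

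This route needs no smoothness input at all. Since $K_4\le A_4=[S_4,S_4]$, every character of $S_4$ is automatically trivial on $K_4$, so the appeal to \Cref{prop:Vtype10} is superfluous. Your computation therefore shows more: the entire $S_4$-fixed locus in $\P^{14}$ is the pencil spanned by $x^4+y^4+z^4$ and $x^2y^2+x^2z^2+y^2z^2$.

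You also check, via the Fermat quartic, that this pencil actually meets $U_{2,4}$. The paper leaves this implicit, but it is needed for the dimension to be $1$ rather than the locus being empty. There is a harmless notational slip where you switch from $a_0,b_0$ to a single parameter $a$.

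Your normalizer computation and the conjugacy discussion go beyond the statement. They are consistent with the paper, which likewise fixes an explicit presentation of $S_4$ and does not address conjugacy.
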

\begin{proof} Here our $S_4$ is given by
\begin{align*}
    S_4 = \left\langle 
    \begin{pmatrix}
        0 & 0 & 1 \\
        1 & 0 & 0 \\
        0 & 1 & 0
    \end{pmatrix},
    \begin{pmatrix}
        0 & -1 & 0 \\
        1 & 0 & 0 \\
        0 & 0 & 1
    \end{pmatrix}
    \right\rangle\le \PGL_3.
\end{align*}
Since $S_4^\ab = C_2$ is detected on transpositions, which lie in conjugates of the $S_3$ from \Cref{prop:Vtype9}, we conclude that the smooth $S_4$-symmetric quartics must all lie in the trivial character eigenspace for $S_4$. Such forms are symmetric in cycling $x,y,z$, and are invariant with swapping $x$ with $-y$. It is clear that the vector space of such forms is
\[
\text{span}\{ x^4 + y^4 + z^4, x^2 y^2 + x^2 z^2 + y^2 z^2\}.
\]
Hence $U_{2,4}^{S_4}$ is 1-dimensional.

Since the projective representation of $S_4$ given here is irreducible, Schur's lemma says that the endomorphisms of this representation (after lifting to $\GL_3$) are all scalar multiples of the identity matrix. That is, every linear matrix in $\GL_3$ commuting with this copy of $S_4$ is a linear multiple of the identity, and thus the centralizer of $S_4$ in $\PGL_3$ is trivial. Since $S_4$ is complete, we therefore obtain that
\[
N_{\PGL_3}(S_4) \cong S_4.
\]
Thus $U_{2,4}^{(S_4)}$ is 9-dimensional.
\end{proof}

\begin{proposition}\label{prop:Vtype3} We have that $U_{2,4}^{C_4\circledcirc A_4}$ is irreducible and 1-dimensional.
\end{proposition}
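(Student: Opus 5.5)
We will follow the same template as the preceding propositions in this appendix: choose an explicit diagonal/monomial presentation of $C_4\circledcirc A_4\le \PGL_3$ and reduce to a subgroup already treated. The natural choice is $P\le C_4\circledcirc A_4$, with the Pauli group of \Cref{prop:Vtype5}, matching the poset where Type V lies below Type III. We adjoin to the generators of $P$ an order-three element. Concretely, this is the cyclic permutation matrix $x\mapsto y\mapsto z$, suitably twisted so that it normalizes $P$; it induces the $A_4$ quotient acting on the Klein group $P/Z(P)$. In practice we take the standard model $x^4+y^4+z^4+a(x^2y^2+y^2z^2+z^2x^2)$ family in coordinates where the order-three element is a genuine permutation, conjugating $P$ appropriately.

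Since $P\le C_4\circledcirc A_4$ and smooth $P$-symmetric forms lie in the trivial character eigenspace $W_{\chi_1}(P)=\mathrm{span}\{x^4+y^4,x^2y^2,z^4\}$ (up to the conjugation), it suffices to decompose this 3-dimensional space under the order-three generator. Its eigenvalues are $1,\zeta_3,\zeta_3^2$ or a trivial 2-dimensional piece plus a line. We will show that the trivial eigenspace is 2-dimensional, spanned by the analogues of $x^4+y^4+z^4$ and a fixed quadratic-in-squares combination, and that the other eigenspace consists of singular forms. Alternatively, note that $A_4$ has abelianization $C_3$, so nontrivial characters are possible; we must check directly that they contain no smooth forms, e.g.\ by exhibiting a common singular point or a split-off line. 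Then $U_{2,4}^{C_4\circledcirc A_4}=\P(W_{\chi_1})\setminus\Delta$ is an open subset of $\P^1$, hence irreducible and 1-dimensional, and nonempty since the Type III quartic $x^4+y^4+z^4+(4\zeta_3+2)x^2y^2$-type representative lies in it.

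For the normalizer entry, we argue as in \Cref{prop:Vtype5}. The center $Z(C_4\circledcirc A_4)\cong C_4$ equals $Z(P)$, so the normalizer is block diagonal as in \Cref{eqn:block-diagonal} and maps to $\PGL_2$, with image normalizing the image $A_4$ of the group in $\PGL_2$. Since $N_{\PGL_2}(A_4)=S_4$, this recovers the sequence \Cref{eqn:normalizer-P} with kernel $\C^\times$, so $C_{\PGL_3}=\C^\times$ is connected. This gives $\dim U_{2,4}^{(C_4\circledcirc A_4)}=1+8-1=8$.

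The main obstacle is the bookkeeping of the presentation. The order-three element does not preserve the diagonal form of $P$ used in \Cref{prop:Vtype5}, so we must either conjugate $P$ or compute the eigenspace decomposition of the order-three element on $W_{\chi_1}(P)$ by hand. We also need to rule out smooth forms in the nontrivial $C_3$-character eigenspaces. This is exactly the situation where, as in \Cref{prop:Vtype6}, the smooth locus could a priori live in a nontrivial character, so it must be checked rather than assumed.
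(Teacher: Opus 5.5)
Your skeleton is exactly the paper's: pass to the $P$-invariant space $\mathrm{span}\{x^4+y^4,\,x^2y^2,\,z^4\}$ from \Cref{prop:Vtype5}, split it under one extra generator, and discard the summand consisting of singular forms. But the step that carries all of the content is left as ``we will show'', namely what that generator is and how it splits the space. The route you sketch for that step cannot work.

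\begin{itemize}
\item In the coordinates of \Cref{prop:Vtype5}, every element of $C_4\circledcirc A_4$ commutes with the central homology $\mathrm{diag}(1,1,i)$. Hence it is block diagonal and preserves $\langle x,y\rangle$ and $z$. A (twisted) cyclic permutation $x\mapsto y\mapsto z$ is not of this form. It does not even preserve your three-dimensional space, since it moves $z^4$ to a multiple of $x^4$ or $y^4$.
\item Changing coordinates does not rescue this. A monomial matrix with a $3$-cycle pattern has three distinct eigenvalues, the cube roots of its determinant. Every element of order three in the Type III group has a repeated eigenvalue: such elements are conjugate to $M^{\pm 2}$ for the paper's generator $M$, and $M^2$ has eigenvalues $\zeta_3,\zeta_3^2,\zeta_3^2$.
\item The model $x^4+y^4+z^4+a(x^2y^2+y^2z^2+z^2x^2)$ is the Type IV family and contains no Type III quartic. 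An $S_4$ inside $C_4\circledcirc A_4$ would inject into the quotient by the center, which is $A_4$.
\end{itemize}

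So your predicted basis for the two-dimensional eigenspace (an analogue of $x^4+y^4+z^4$ plus a symmetric term in squares) is modeled on the wrong group.

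The missing computation uses a block-diagonal generator: an order-three element of the binary tetrahedral group on the $(x,y)$-plane, and a cube root of unity on $z$. For the paper's $M$, the substitution is $x\mapsto u(x+iy)$, $y\mapsto u(x-iy)$, $z\mapsto\zeta_3 z$, with $u=\tfrac{1+i}{2}$ and $u^4=-\tfrac14$. It sends
\begin{itemize}
\item $x^4+y^4\mapsto -\tfrac12(x^4+y^4)+3x^2y^2$,
\item $x^2y^2\mapsto -\tfrac14(x^4+y^4)-\tfrac12x^2y^2$,
\item $z^4\mapsto\zeta_3z^4$.
\end{itemize}
The $2\times 2$ block has characteristic polynomial $t^2+t+1$. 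So the space splits as $\mathrm{span}\{x^4+y^4-2i\sqrt3\,x^2y^2,\ z^4\}\oplus\mathrm{span}\{x^4+y^4+2i\sqrt3\,x^2y^2\}$, the summands being Klein's two tetrahedral forms. Which summand is ``trivial'' depends on the lift of $M$ to $\GL_3$.

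The one-dimensional summand is a binary form, i.e.\ four concurrent lines, hence singular. In the other summand, $\alpha(x^4+y^4-2i\sqrt3\,x^2y^2)+\beta z^4$ is smooth exactly when $\alpha\beta\neq0$. Thus $U_{2,4}^{C_4\circledcirc A_4}$ is $\P^1$ minus two points, irreducible of dimension one.

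Finally, your normalizer discussion is not needed for this statement, and ``$N_{\PGL_2}(A_4)=S_4$'' only bounds the image. The element $\mathrm{diag}(1,i,1)$ normalizes $P$ but exchanges the two tetrahedral forms, so it does not normalize the group. In fact $N_{\PGL_3}(C_4\circledcirc A_4)=(C_4\circledcirc A_4)\cdot\C^\times$, with component group $A_4$; the paper's proof makes the same overstatement. The centralizer $\C^\times$ and $\dim U_{2,4}^{(C_4\circledcirc A_4)}=8$ are unaffected.
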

\begin{proof}
The Type III group is given by
\begin{align*}
    C_4 \circledcirc A_4 &= \left\langle
    \begin{pmatrix}
        \frac{1+i}{2} & \frac{-1+i}{2} & 0 \\
        \frac{1+i}{2} & \frac{1-i}{2} & 0 \\
        0 & 0 & \zeta_3
    \end{pmatrix},
    \begin{pmatrix}
        \frac{1+i}{2} & \frac{-1-i}{2} & 0 \\
        \frac{-1-i}{2} & \frac{-1+i}{2} & 0 \\
        0 & 0 & \zeta_3^2
    \end{pmatrix}
    \right\rangle \le \PGL_3
\end{align*}
In particular the Pauli group in \Cref{prop:Vtype5} is a subgroup of this group, hence the smooth $C_4 \circledcirc A_4$-symmetric quartics lie in the trivial character eigenspace for the Pauli group, which was generated by $x^4 + y^4$, $x^2 y^2$, and $z^4$. Let $M$ be the first generator of the group above:
\[
    M = \begin{pmatrix}
        \frac{1+i}{2} & \frac{-1+i}{2} & 0 \\
        \frac{1+i}{2} & \frac{1-i}{2} & 0 \\
        0 & 0 & \zeta_3
    \end{pmatrix},
\]
so that $C_4\circledcirc A_4$ is generated by $P$ and $M$. Then we can divide into three eigenspaces depending upon how $M$ acts on the relevant forms. Under the change of basis above, we can compute that
\begin{align*}
    M \cdot (x^4 + y^4) &= -\frac{1}{2} \left( x^4 + y^4\right) - \frac{1}{2} (x^4 + y^4) + 3x^2 + y^2 \\
    M \cdot (x^2 + y^2) &= -\frac{1}{4} (x^4 + y^4) - \frac{1}{2} x^2 y^2 \\
    M\cdot z^4 &= \zeta_3 z^4.
\end{align*}
Hence in the 3D basis for the trivial eigenspace for $P$, we have that $M$ is represented by the matrix
\[
\begin{pmatrix}
    -\frac{1}{2} & 3 & 0 \\
    -\frac{1}{4} & -\frac{1}{2} & 0 \\
    0 & 0 & \zeta_3
\end{pmatrix}.
\]
The eigenvalues are therefore $\zeta_3$ and $\zeta_3^2$, and the eigenspaces are given by
\begin{align*}
    M_{\zeta_3} &= \text{span} \{ x^4 + y^4 - 2i\sqrt{3} x^2 y^2, z^4 \} \\
    M_{\zeta_3^2} &= \text{span}\{ x^4 + y^4 + 2i\sqrt{3} x^2 y^2\}.
\end{align*}
The latter is singular, so $U_{2,4}^{C_4\circledcirc A_4}$ is 1-dimensional.

To compute the normalizer, we remark that the Pauli group is the unique Sylow 2-subgroup of $C_4\circledcirc A_4$, therefore
\[
N_{\PGL_3}(C_4 \circledcirc A_4) \subseteq N_{\PGL_3}(P).
\]
However we can observe that every normalizer of the Pauli group will also normalize $C_4\circledcirc A_4$. Thus $U_{2,4}^{(C_4\circledcirc A_4)}$ is 8-dimensional.
\end{proof}

\begin{proposition}\label{prop:Vtype2} We have that $U_{2,4}^{C_4^{\times 2}\rtimes S_3}$ is a single point, and $U_{2,4}^{(C_4^{\times 2}\rtimes S_3)}$ is a closed 8-dimensional subvariety of $U_{2,4}$.
\end{proposition}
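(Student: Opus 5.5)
We follow the pattern of the earlier computations in this appendix: fix an explicit projective representation of $C_4^{\times 2}\rtimes S_3$ in $\PGL_3$ and use a subgroup whose invariant quartics are already known. The natural model is the Dyck/Fermat quartic $x^4+y^4+z^4$. Here $C_4^{\times 2}$ acts by the diagonal matrices $\mathrm{diag}(i^a,i^b,1)$ and $S_3$ by the permutation matrices. This group contains the Type IV group $S_4$ of \Cref{prop:Vtype4}, generated by the $3$-cycle and by $x\mapsto -y$, $y\mapsto x$. That $S_4$ is a product of a diagonal element of $C_4^{\times 2}$ and a transposition.

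We first restrict by character arguments, as in \Cref{prop:Vtype4}. Every smooth $C_4^{\times 2}\rtimes S_3$-symmetric quartic is $S_4$-symmetric, so it lies in $\mathrm{span}\{x^4+y^4+z^4,\ x^2y^2+x^2z^2+y^2z^2\}$. This uses the fact, established in \Cref{prop:Vtype4}, that smooth $S_4$-symmetric forms lie in the trivial character eigenspace. The diagonal element $\mathrm{diag}(i,1,1)$ fixes $x^4+y^4+z^4$ and multiplies $x^2y^2+x^2z^2$ by $-1$ while fixing $y^2z^2$. Hence the only eigenvectors in this span are multiples of $x^4+y^4+z^4$, together with combinations involving $x^2y^2+x^2z^2+y^2z^2$ that are not eigenvectors. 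Checking this precisely shows that the invariant locus is the single point $[x^4+y^4+z^4]$, which is smooth. There is one subtlety: a priori a quartic could be invariant only up to a nontrivial character of the whole group. This is handled because the eigenspace restriction already came from the subgroup $S_4$, and within the resulting $2$-dimensional space we diagonalize $\mathrm{diag}(i,1,1)$ directly, which covers every character. So $U_{2,4}^{C_4^{\times2}\rtimes S_3}$ is a point.

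For the dimension of the orbit space $U^{(G)}_{2,4}$ we compute $N_{\PGL_3}(G)$, which the table asserts is $G$ itself. The subgroup $C_4^{\times2}$ is the unique normal subgroup of its isomorphism type generated by diagonal elements of order $4$; equivalently, it is the full $4$-torsion of the diagonal torus. Its normalizer is therefore contained in the normalizer of the torus, namely the monomial matrices $(\C^\times)^2\rtimes S_3$. A diagonal matrix normalizing $G$ must conjugate the permutation matrices into $G$. This forces its entries to be $4$th roots of unity up to scaling, so the normalizer is finite and equals $G$.

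The main obstacle is justifying that $C_4^{\times 2}$ is characteristic, or at least preserved by the normalizer. We would argue that it is the unique abelian normal subgroup of order $16$; alternatively, every normalizing element permutes the fixed-point configuration of the order-$4$ elements, namely the coordinate triangle. Either way $N_{\PGL_3}(G)$ is finite, and so
\[
\dim U_{2,4}^{(G)}=0+\dim\PGL_3-0=8.
\]
Finally, $U_{2,4}^{(G)}$ is closed: it is the single $\PGL_3$-orbit of a stable point with finite stabilizer, and such an orbit is closed in the open locus $U_{2,4}$ of stable points.
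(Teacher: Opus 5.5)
Your proof is correct. For the invariant locus you follow the paper: restrict to the two-dimensional $S_4$-invariant span from \Cref{prop:Vtype4} and test one more group element. The paper uses its generator $N$; your $\mathrm{diag}(i,1,1)$ is cleaner because the monomials are its eigenvectors. One small point: the span is not stable under $\mathrm{diag}(i,1,1)$, so you are not diagonalizing on the span. What you are really checking is which vectors of the span are eigenvectors, and that forces the coefficient of $x^2y^2+x^2z^2+y^2z^2$ to vanish.

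You diverge from the paper in the normalizer computation. The paper observes that $N_{\PGL_3}(G)$ preserves $U_{2,4}^G$, which you have just shown is the single point $[x^4+y^4+z^4]$. Hence $N_{\PGL_3}(G)$ lies in the stabilizer of that quartic, which is $G$ by the classification. That one line removes your ``main obstacle'', and you could have used it immediately after your first step.

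Your route is instead a self-contained group computation that does not need the classification's assertion $\Aut(X)=G$. It does need $C_4^{\times 2}$ to be preserved by the normalizer, which you assert but do not prove. This holds because $G/C_4^{\times 2}\cong S_3$ has no nontrivial normal $2$-subgroup, so $C_4^{\times 2}=O_2(G)$ is characteristic. Your fixed-point alternative also works, since the union of the fixed loci of the order-$4$ elements of $G$ is exactly $\{xyz=0\}$. The monomial reduction and the diagonal computation then give $N_{\PGL_3}(G)=G$.

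Finally, your GIT argument for closedness supplies something the paper's proof leaves implicit. The orbit of a stable point is closed in the semistable locus, hence closed in $U_{2,4}$.
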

\begin{proof} The Type II quartic's automorphism group is given by
\begin{align*}
    C_4^{\times 2}\rtimes S_3 &= \left\langle
    \begin{pmatrix}
        0 & 0 & 1 \\
        1 & 0 & 0 \\
        0 & 1 & 0
    \end{pmatrix},
    \begin{pmatrix}
        -i & 0 & 0 \\
        0 & 0 & 1 \\
        0 & i & 0
    \end{pmatrix}\right\rangle \le \PGL_3.
\end{align*}
Observe that the $S_4$ from \Cref{prop:Vtype4} is a subgroup of this group as listed above, hence we can consider the character eigenspace
\[
\text{span}\{ x^4 + y^4 + z^4, x^2 y^2 + x^2 z^2 + y^2 z^2\},
\]
and consider how the additional generator of $C_4^{\times 2}\rtimes S_4$ acts on these elements. Let
\[
N = \begin{pmatrix}
        -i & 0 & 0 \\
        0 & 0 & 1 \\
        0 & i & 0
    \end{pmatrix}.
\]
Then
\begin{align*}
    N \cdot (x^4 + y^4 + z^4) &= x^4 + y^4 + z^4 \\
    N\cdot (x^2 y^2 + x^2 z^2 + y^2 z^2) &= -x^2 y^2 + x^2 z^2 - y^2z^2. 
\end{align*}
This latter action is not even in an eigenspace for a 1-dimensional character. Thus there is a unique Type II quartic, given by $x^4 + y^4 + z^4$.

The normalizer of this group is the group itself, since if any other matrix normalized it, then the image of the Type II quartic under that matrix would again be the Type II quartic, however this cannot happen as the quartic is unique. We conclude then that $U_{2,4}^{(C_4^{\times 2}\rtimes S_3)}$ is 8-dimensional.
\end{proof}

\begin{proposition}\label{prop:Vtype1} We have that $U_{2,4}^{\PSL_2(7)}$ is a single point, and $U_{2,4}^{(\PSL_2(7))}$ is a closed 8-dimensional subvariety of $U_{2,4}$.
\end{proposition}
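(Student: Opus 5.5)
The plan follows the pattern of \Cref{prop:Vtype2}, with one change. For \Cref{prop:Vtype2} we specialized from a known invariant subspace. For the Klein group we will instead use an irreducibility (Schur) argument.

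First, fix the standard presentation of $\PSL_2(7)$ in $\PGL_3$, lifted to $\SL_3$ as the order $168$ group preserving the Klein quartic $x^3y+y^3z+z^3x$. Since $\PSL_2(7)$ is simple, its abelianization is trivial, so every $\PSL_2(7)$-semi-invariant quartic form is actually invariant. There is no character eigenspace bookkeeping to do: $U_{2,4}^{\PSL_2(7)}$ is the open locus of smooth forms in $\mathbb{P}(H^0(\P^2,\cO(4))^{\PSL_2(7)})$. By the classical invariant theory of Klein, the invariant ring of this $\SL_3$-lift has generators in degrees $4,6,14,21$. Hence the degree $4$ invariants are one-dimensional, spanned by the Klein quartic. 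Alternatively, one can check this by a character inner-product computation on $\mathrm{Sym}^4$ of the $3$-dimensional representation. This form is smooth, so $U_{2,4}^{\PSL_2(7)}$ is a single point.

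Second, compute the normalizer. The $3$-dimensional representation is irreducible, so by Schur's lemma the centralizer $C_{\PGL_3}(\PSL_2(7))$ is trivial, exactly as in \Cref{prop:Vtype4}. Any $g\in N_{\PGL_3}(\PSL_2(7))$ sends $U^{\PSL_2(7)}$ to itself, so it fixes the unique Klein quartic. Therefore $g\in\PGL_{3,X}=\Aut(X)=\PSL_2(7)$, and $N_{\PGL_3}(\PSL_2(7))=\PSL_2(7)$, which is finite. One could also argue that $\mathrm{Out}(\PSL_2(7))=C_2$ and that the outer automorphism swaps the two $3$-dimensional representations, so it is not realized in $\PGL_3$. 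The fixed-quartic argument is simpler, though.

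Third, obtain the dimension count. The orbit map $\PGL_3\times U^{\PSL_2(7)}\to U_{2,4}$ has image $U_{2,4}^{(\PSL_2(7))}$, which is a single $\PGL_3$-orbit. Its dimension is $0+\dim\PGL_3-\dim N_{\PGL_3}(\PSL_2(7))=8-0=8$, by the same formula used in the earlier propositions. For closedness in $U_{2,4}$, note that the orbit of a smooth form has finite stabilizer. Since $\PGL_3$ acts properly on $U_{2,4}$, its orbits there are closed, and the orbit is irreducible as the image of the irreducible group $\PGL_3$.

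The main obstacle is the first step: justifying that the space of invariant quartics is exactly one-dimensional. I would settle this by a direct character computation, averaging the character of $\mathrm{Sym}^4(V)$ over the six conjugacy classes of $\PSL_2(7)$ in the lifted representation.
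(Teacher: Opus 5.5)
Your proposal is correct and follows essentially the same route as the paper. In both, perfectness of the group forces the trivial character, the invariant quartics are spanned by the Klein quartic, the normalizer equals the group because it must fix the unique invariant quartic (the same argument as for Type II), and the orbit has dimension $8-0=8$. You also supply two things the paper leaves implicit: a justification of the ``straightforward'' uniqueness step (your character average does give $\frac{1}{168}(15+3\cdot 21+0\cdot 56+42+24+24)=1$), and closedness of the orbit via properness of the action.
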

\begin{proof} We have that $\PSL_2(7)$ is as presented in \cite[2.4]{betheabrazelton_bitangentssymmetricquartics}. It is simple and perfect so abelianizes to the identity, hence any symmetric quartic must lie in the trivial character eigenspace. It is straightforward to see that the only quartic form fixed by this group is the Klein quartic. By an analogous argument to \Cref{prop:Vtype2}, the normalizer is then the group itself, and $U_{2,4}^{(\PSL_2(7))}$ is 8-dimensional.
\end{proof}

\bibliography{galois}
\bibliographystyle{amsalpha}
\end{document}

%% file: commands.tex
\usepackage{%
  mathtools,  % math extensions and fixes; loads amsmath
  amssymb,    % extra math symbols
  amsthm,     % enhanced theorem environments
  amsfonts,   % additional math fonts
  thmtools,   % customization tools for theorems
  graphicx,
  float,
  color,
  xcolor,
  tikz,
  tikz-cd,
  mathrsfs,   % for script letters
  etoolbox    % for csvlist
}

\definecolor{darkred}{rgb}{0.75,0,0}
\def\customcitecolor{darkred}
\def\customlinkcolor{darkred}

\usepackage[%
    colorlinks,
    citecolor=\customcitecolor,%
    linkcolor=\customlinkcolor,%
    urlcolor=\customlinkcolor%
]{hyperref}

\usepackage[capitalise,nameinlink,noabbrev]{cleveref}

\usepackage[margin=1in]{geometry}

\usepackage[final,nopatch=footnote]{microtype}

\theoremstyle{definition}

\newtheorem{introtheorem}{Theorem}

\newtheorem{theorem}{Theorem}[section]
\numberwithin{theorem}{section} % important bit
\numberwithin{equation}{section} % number equations like (1.1), (1.2), etc.

\usepackage{mfirstuc}
\newcommand{\capitalizename}[1]{\makefirstuc{#1}}

\newcommand{\defthm}[1]{%
  \newtheorem{#1}[theorem]{\capitalizename{#1}}%
}

\newcommand{\defthms}[1]{%
  \forcsvlist{\defthm}{#1}%
}

\defthms{%
  answer,assumption,claim,computation,conjecture,construction,corollary,
  counterexample,definition,digression,discussion,example,
  examples,exercise,fact,goal,idea,intuition,lemma,
  motivation,notation,note,proposition,question,remark,setup,
  slogan,strategy,terminology,upshot,warning%
}

\Crefname{assumption}{Assumption}{Assumptions}

\Crefname{construction}{Construction}{Constructions}

\Crefname{notation}{Notation}{Notations}

\Crefname{question}{Question}{Questions}
\Crefname{setup}{Setup}{Setups}

\newcommand{\deftextcommand}[1]{%
  \expandafter\providecommand\csname #1\endcsname{\mathrm{#1}}%
}
\newcommand{\deftextcommands}[1]{%
    \forcsvlist{\deftextcommand}{#1}%
}

\deftextcommands{ab,alg,an,ann,Aut,BG,BGL,Bl,BO,BP,BSL,BSO,BSp,BSU,BU,can,cd,cdh,cl,coBar,codim,codom,coeq,coev,cof,cofib,coker,colim,coim,cone,conj,const,core,coTor,cyc,diag,Desc,dg,Disc,disc,dual,eff,EKL,End,eq,ess,et,Et,EU,ev,Ex,ex,Exc,Ext,fib,Fix,Fl,fppf,fpqc,Frac,Frob,Fun,Gal,gen,GL,gp,Gr,gr,GW,Her,Ho,hocofib,hocolim,hofib,holim,Hom,id,Idem,im,incl,Ind,ind,inj,Inn,Inv,inv,iso,Jac,KGL,kgl,KH,KO,ko,KQ,kq,KR,KSp,KU,ku,Lan,Map,map,MGL,MO,Mon,Mor,mor,MSL,MSO,MSp,MSU,MU,mult,MUP,Nm,ob,obj,op,Orb,ord,Out,perf,Perm,PGL,pr,pre,Proj,proj,prom,PSL,quot,Ran,rank,Res,RO,sep,sgn,SH,sig,Sing,SL,SO,soc,Sp,Span,Spec,Spin,spn,Sq,st,Stab,SU,supp,Supp,Syl,syl,Sym,syn,SYT,TC,td,Th,THH,Tor,Tot,TP,TR,Tr,tr,triv,univ,var,veff,vol,Wel,Wr}

\deftextcommands{Ab,Aff,Alg,Ani,Bimod,CAlg,Cat,CDGA,CG,CGWH,Ch,CMon,coAlg,Coh,CommRing,ConjSub,coMod,Cor,Corr,CoSh,Cov,CRing,CW,Field,Fin,FinSet,Gpd,Grp,Grpd,Grph,Kan,Kar,LMod,Mfld,Mod,NAlg,Ouv,Perf,Poset,Pr,Pre,PSh,PShv,qCat,QCoh,Rep,Ring,RMod,sAb,Set,SH,Sh,Shv,Sm,Sp,Spc,Spectra,sPre,sSet,sShv,Stack,Sub,Top,Tors,Var,Vect}

\newcommand{\defblackboardletter}[1]{%
  \expandafter\providecommand\csname #1\endcsname{\mathbb{#1}}
}
\newcommand{\defblackboardletters}[1]{%
  \forcsvlist{\defblackboardletter}{#1}%
}

\defblackboardletters{A,C,F,P,Q,R,Z}

\providecommand{\xto}[1]{\xrightarrow{#1}}

\RequirePackage{bbm}

\RequirePackage{pict2e,picture}
\makeatletter
\DeclareRobustCommand{\DDelta}{{\mathpalette\bb@Delta\relax}}
\newcommand{\bb@Delta}[2]{%
  \begingroup
  \sbox\z@{$\m@th#1\Delta$}%
  \dimendef\Dht=6 \dimendef\Dwd=8
  \setlength{\Dwd}{\wd\z@}%
  \setlength{\Dht}{\ht\z@}%
  \begin{picture}(\Dwd,\Dht)
  \put(0,0){$\m@th#1\Delta$}
  \put(.42\Dwd,.7\Dht){\line(10,-26){.25\Dht}}
  \end{picture}%
  \endgroup
}

\let\emptyset\varnothing

\newcommand{\FEt}{\text{F\'{E}t}}

\newcommand{\gpdiagram}[2]{{\color{gray}\mathrm{#1}}\,|\,#2}

\usepackage{fancyvrb,newverbs,xcolor,comment}
\definecolor{cverbbg}{gray}{0.93}

%% file: TABLE-cubics.tex
\begin{center}
\footnotesize
\begin{tabular}{c c | c c c c}
Type &  $G$ & $\Monpar_G(\LinesProblem)$ & $\Monmarked_G(\LinesProblem)$ & $\Monstack_G(\LinesProblem)$ & $\Mongeom_G(\LinesProblem)$  \\
\hline
1 & $C_3^{\times 3} \rtimes S_4$ & $0$ & $0$ & $C_3^{\times 3} \rtimes S_4$ & $0$ \\
2 & $S_5$ & $0$ & $0$ & $S_5$ & $0$\\
3 & $H_3(3)\rtimes C_4$ & $C_3$ & $C_3$ & $H_3(3)\rtimes C_4$ & $0$\\
4 & $H_3(3)\rtimes C_2$ & $C_3$ & $C_3$ & $W(L_3)$ & $A_4$  \\
5 & $S_4$ &  $K_4$ & $K_4$ & $S_4\times K_4$ & $K_4$ \\
6 & $D_6$ & $S_3\times C_2$ & $S_3\times C_2$ & $S_3^2\times C_2$ &  $S_3$ \\
7 & $C_8$ & $C_8$ & $C_8$ & $C_8$ & $0$\\
8 & $S_3$ & $S_3^2$ & $S_3^2$ & $S_3^3$ & $S_3^2$ \\
9 & $C_4$ & $U_2(\mathbb{F}_3)$ & $U_2(\mathbb{F}_3)$ & $U_2(\mathbb{F}_3)$ & $S_4$ \\
10 & $C_2^{\times 2}$ & $C_2^{\times 2}\times S_4$ & $C_2^{\times 2}\times S_4$ & $(C_2^{\times 2}\rtimes C_2)\times S_4$ & $C_2\times S_4$ \\
11 & $C_2$ & $\GO_4^+(3)$ & $\GO_4^+(3)$ & $\GO_4^+(3)=W(F_4)$ & $\PGO_4^+(3)$
\end{tabular}
\end{center}

%% file: TABLE-quartics.tex
\begin{center}
\footnotesize
    \begin{tabular}{c c | c c c c}
    Type &  $G$ & $\Monpar_G(\mathfrak{B})$ & $\Monmarked_G(\mathfrak{B})$ & $\Monstack_G(\mathfrak{B})$ & $\Mongeom_G(\mathfrak{B})$  \\
    \hline
    1  & $\PSL_2(7)$ & 0 & 0 & $\PSL_2(7)$ & 0\\
    2 & $C_4^{\times 2} \rtimes S_3$ & 0 & 0 & $C_4^{\times 2} \rtimes S_3$ & 0 \\
    3 & $C_4\circledcirc A_4$ & $C_4$ & $C_4$ & $C_4\circledcirc A_4$ & 0 \\
    4 & $S_4$ & $C_2\times C_2$ & $C_2\times C_2$ & $S_4\times C_2\times C_2$ & $C_2\times C_2$ \\
    5 & $P$ & $C_4\times K_4$ & $C_4\times K_4$ & $W(B_4)$ & $S_4$ \\
    6 & $C_9$ & $C_9$ & $C_9$ & $C_9$ & $0$ \\
    7 & $D_8$ & $D_8\times K_4$ & $D_8\times K_4$ & [256,25876] & $C_2^{\times 4}\rtimes C_2$\\
    8 & $C_6$ & $C_3\times\SL_2(\F_3)$ & $C_3\times\SL_2(\F_3)$ & $C_3\times\SL_2(\F_3)$ & $A_4$ \\
    9 & $S_3$ & $S_3^2$ & $S_3^2$ & $S_3^3$ & $S_3^2$ \\
    10 & $K_4$ & [256,51978] & [256,51978] & $N_{W^+(E_7)}(K_4)$ & [384,20089]\\
    11 & $C_3$ & $W(L_3)$ & $W(L_3)$ & $W(L_3)$ & $\ASL(2,3)$ \\
    12 & $C_2$ & $N_{W^+(E_7)}(C_2)$ & $N_{W^+(E_7)}(C_2)$ & $N_{W^+(E_7)}(C_2)$ & $N_{W^+(E_7)}(C_2)/C_2$
    \end{tabular}
\end{center}